\crefname{subsection}{\S\!\!}{subsections}
\crefname{section}{\S\!\!}{\S\!}
\renewcommand{\autoref}{\Cref}
\newcommand{\St}{\mathbf{St}}
\newcommand{\LS}{\mathrm{LS}}
\newcommand{\oSt}{\mathrm{St}}
\newcommand{\Frac}{\mathrm{Frac}}
\newcommand{\VB}{\mathrm{VB}}
\newcommand{\pVB}{\mathrm{VB}_{\bF_p}}
\newcommand{\vcd}{\mathrm{vcd}}
\newcommand{\PBW}{\mathrm{PBW}}
\providecommand{\Fq}{\ensuremath\mathbb F_q}
\providecommand{\Fp}{\ensuremath\mathbb F_p}
\renewcommand{\bZ}{\mathbb{Z}}
\renewcommand{\bQ}{\mathbb{Q}}
\renewcommand{\bF}{\mathbb{F}}
\renewcommand{\rH}{\mathrm{H}}
\renewcommand{\ro}{\ensuremath\omega}
\newcommand{\m}{\to}
\newcommand{\Z}{\bZ}
\newcommand{\Q}{\bQ}
\renewcommand{\rH}{\mathrm{H}}
\numberwithin{thmcounter}{section}
\newaliascnt{thmauto}{thmcounter}
\newaliascnt{Defauto}{thmcounter}
\newaliascnt{exauto}{thmcounter}
\newaliascnt{exsauto}{thmcounter}
\newaliascnt{lemauto}{thmcounter}
\newaliascnt{propauto}{thmcounter}
\newaliascnt{corauto}{thmcounter}
\newaliascnt{conjauto}{thmcounter}
\newaliascnt{queauto}{thmcounter}
\newaliascnt{remauto}{thmcounter}
\theoremstyle{plain}
\newtheorem{theorem}[thmauto]{Theorem}
\newtheorem{definition}[Defauto]{Definition}
\newtheorem{ex}[exauto]{Example}
\newtheorem{lemma}[lemauto]{Lemma}
\newtheorem{proposition}[propauto]{Proposition}
\newtheorem{corollary}[corauto]{Corollary}
\newtheorem{conjecture}[conjauto]{Conjecture}
\newtheorem{question}[queauto]{Question}
\newtheorem{remark}[remauto]{Remark}
\newtheorem*{rem*}{Remark}
\newtheorem*{thm*}{Theorem}
\newtheorem*{exs*}{Examples}
\newcommand{\FI}{\mathrm{FI}}
\newcommand{\VIC}{\mathrm{VIC}}
\newcommand{\SI}{\mathrm{SI}}
\providecommand{\triv}{\ensuremath\mathrm{triv}}
\newtheorem*{definition*}{Definition}
\DeclareMathOperator{\Ext}{Ext}
\newcommand{\coloneq}{\mathrel{\mathop:}\mkern-1.2mu=}
	\title{Stability in the high-dimensional cohomology of congruence subgroups}
\author{Jeremy Miller}
\thanks{Jeremy Miller was supported in part by NSF grant DMS-1709726.}
\address{150 N University Street
	West Lafayette, IN-47904
	USA, Department of Mathematics, Purdue University, USA}
\email{jeremykmiller@purdue.edu}
\author{Rohit Nagpal}
\address{530 Church St, Ann Arbor, MI 48109, Department of Mathematics, University of Michigan, USA}
\email{rohitna@umich.edu}
\author{Peter Patzt}
\address{150 N University Street
	West Lafayette, IN-47904
	USA, Department of Mathematics, Purdue University, USA}
\email{ppatzt@purdue.edu}
\date{\today}
\subjclass[2010]{%
	11F75, %   	Cohomology of arithmetic groups
	55N25, %   	Homology with local coefficients, equivariant cohomology
	20H05,  % Unimodular groups, congruence subgroups
	19B14, % Stability for linear groups
	16S37% Quadratic and Koszul algebras
}
\keywords{Congruence subgroups, Koszul algebras, representation stability, Borel-Serre duality}
\begin{document}	
	
\begin{abstract}	
	We prove a representation stability result for the codimension-one cohomology of the level three congruence subgroup of $\SL_n(\bZ)$. This is a special case of a question of Church--Farb--Putman which we make more precise. Our methods involve proving finiteness properties of the Steinberg module for the group $\SL_n(K)$ for $K$ a field. This also lets us give a new proof of Ash--Putman--Sam's homological vanishing theorem for the Steinberg module. We also prove an integral refinement of Church--Putman's homological vanishing theorem for the Steinberg module for the group $\SL_n(\bZ)$.
	
\end{abstract}
	
\maketitle

\tableofcontents

\section{Introduction}
The purpose of this paper is to explore finiteness properties of Steinberg modules of special linear groups and applications of these finiteness properties to group (co)homology. In particular, for $K$ a field, we prove that the Steinberg modules of $\SL_n(K)$ exhibit a derived form of representation stability (see \autoref{intro:tor-bound}). The primary application of this result is \autoref{thm:main} concerning the high degree cohomology of congruence subgroups of $\SL_n(\Z)$. We also apply our results to reprove a result of Ash--Putman--Sam \cite{APS} and strengthen a result of Church--Putman \cite{CP}.  

\subsection{Cohomology of congruence subgroups}
The (co)homology of arithmetic groups is a rich subject that has had many applications in number theory and algebraic K-theory. In this paper, we focus on congruence subgroups of $\SL_n(\bZ)$. Let $\Gamma_n(p)$ denote the kernel of the reduction mod $p$ map \[\SL_n(\bZ) \to \SL_n(\bF_p).\] For $i$ small compared to $n$, which we shall refer to as the low-dimensional case, the rational cohomology groups $\rH^i(\Gamma_n(p);\bQ)$ are completely known by the work of Borel \cite{Bor}. From this calculation, one sees that congruence subgroups exhibit rational homological stability. 

Our work focuses on integral (co)homology, so all homology and cohomology groups will have integer coefficients unless otherwise specified. The torsion in the integral (co)homology of $\Gamma_n(p)$ is quite complicated even in the low-dimensional case and there are hardly any explicit calculations. For $n$ sufficiently large, the groups $\rH_1(\Gamma_n(p))$ and $\rH_2(\Gamma_n(p))$ were respectively computed by Lee--Szczarba \cite{LS} and F. Calegari \cite{calegari}. Even $\rH_3(\Gamma_n(p))$ is currently unknown for large $n$. There are however homological and representational stability results for the torsion in the low-dimensional homology of congruence subgroups \cite{Char, PU, CEFN,CE,PS,MPW,CMNR,GLcong}.

%These stability results were in fact necessary for Calegari's calculation of the second homology group. 

%\rohit{Is the claim in the commented out line above correct? Was it used in a non-trivial way? There is at least one more proof of Calegari's theorem known which does not seem to use representation stability so I am uncomfortable with using this sentence.} 

%Many integral calculations and stability results are known in the low-dimensional homology case (instead of cohomology);   see \cite{calegari, LS, Pu, CEFN}. \rohit{May be this last sentence should be removed.}

In contrast to this complete calculation of the stable rational cohomology and stability results for the torsion, very little is known about  the  cohomology outside of the stable range, even rationally. Let $p$ be an odd prime. The virtual cohomological dimension, denoted $\vcd$, of $\Gamma_n(p)$ is known to be $\binom{n}{2}$. Since the cohomology groups $\rH^{i}(\Gamma_n(p))$ vanish if $i > \vcd$, we shall refer to $\rH^{\vcd -i}(\Gamma_n(p))$ as the codimension $i$ cohomology. When $i$ is small  compared to $n$, we informally call this the high-dimensional case. This case is even more mysterious than the low-dimensional case and the only calculations known (rational or integral), for a general $n$, exist only in codimension-zero and levels $2$, $3$, and $5$ (Lee--Szczarba \cite{LS} and \cite{MPP}). In particular, Lee--Szczarba \cite{LS}) made the following computation:  \begin{align}\rH^{\vcd}(\Gamma_n(p)) \cong \oSt_n(\bF_p)\text{ for } n \geq 3 \text{ and } p = 3. \label{eq:LS-calc} \end{align} Here $\oSt_n(R)$ denotes the Steinberg module of $\SL_n(R)$. In a series of papers, Ash--Gunnells--McConnell have calculated the codimension-one cohomology of certain finite index subgroups of $\SL_4(\bZ)$; see \cite{AGM1, AGM2, AGM3}.

%Lee-Szczarba \cite[Theorem~1.5]{LS} also gave a calculation for the codimension-one cohomology of $\Gamma_3(3)$. In a series of paper, Ash-Gunnells-McConnell have calculated codimension-one cohomology of certain finite index subgroups of $\SL_n(\bZ)$ for $n \le 4$; see \cite{AGM1, AGM2, AGM3}.

Since $\oSt_n(\bF_p)$ is a free abelian group of rank $p^{\binom{n}{2}}$, the calculation  \eqref{eq:LS-calc} shows that the  codimension-zero cohomology does not stabilize in the classical sense. However, for $n \geq 3$, it does admit a uniform description independent of $n$ -- it is the Steinberg module for each $n$. This is the key feature of {\bf representation stability}.

There are many different notions of representation stability with the most basic being {\bf finite generation degree}. Let \[G_0 \hookrightarrow G_1 \hookrightarrow G_2 \hookrightarrow \ldots \] be a sequence of groups and let \[M_0 \to M_1 \to M_2 \to \ldots \] be a sequence with $M_n$ a $G_n$-representation and with $M_{n} \to M_{n+1}$ a $G_n$-equivariant map. We say $\{ M_n \}_{n \ge 0}$ has generation degree $\leq d$ if \[\Ind_{G_{n}}^{G_{n+1}} M_{n} \to M_{n+1}\] is surjective for $n \geq d$.  This is the definition of representation stability that we will use.

Note that $\rH^\ast(\Gamma_n(p))$ has a natural $\GL_n^{\pm}(\bF_p)$ action. Here the superscript $\pm$ means we restrict to matrices with determinant equal to $\pm 1$. Using Borel--Serre duality \cite{BoSe}, one can construct equivariant maps \[\rH^{\vcd-i}(\Gamma_n(p))  \to \rH^{\vcd-i}(\Gamma_{n+1}(p))  \] and thus one can make sense of the generation degree of the codimension-$i$ cohomology of congruence subgroups. Our main theorem is the following.

\begin{theorem}
	\label{thm:main}
	The sequence $\left \{\rH^{\vcd-1}(\Gamma_n(3)) \right \}_{n \ge 0}$ has generation degree $\leq 4$. In other words, the $\GL_n(\bF_3)$-equivariant map \[ \Ind_{\GL_4(\bF_3)}^{\GL_n(\bF_3)} \rH^{\binom{4}{2} -1}(\Gamma_4(3)) \to \rH^{\binom{n}{2} -1}(\Gamma_n(3)) \] is surjective for $n \ge 4$.
\end{theorem}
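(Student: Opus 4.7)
The plan is to apply Borel--Serre duality to identify $\rH^{\vcd-1}(\Gamma_n(3); \bZ)$ with $\rH_1(\Gamma_n(3); \oSt_n(\bZ))$ in a $\GL_n(\bF_3)$-equivariant way, reducing the theorem to showing that the sequence $\left\{\rH_1(\Gamma_n(3); \oSt_n(\bZ))\right\}_n$ has generation degree at most $4$.

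To compute this first homology, I would first produce a partial presentation
\[
P_1 \longrightarrow P_0 \longrightarrow \oSt_n(\bZ) \longrightarrow 0
\]
of the integral Steinberg module as an $\SL_n(\bZ)$-module, with $P_0$ and $P_1$ direct sums of modules induced from proper parabolic subgroups of small rank. The finiteness results proved earlier in the paper for the Steinberg module over a field should, after some care, yield such a presentation with explicit bounds on the ranks involved. Applying $\Gamma_n(3)$-coinvariants and using the Lee--Szczarba identification $(\oSt_n(\bZ))_{\Gamma_n(3)} \cong \oSt_n(\bF_3)$ gives a right-exact sequence
\[
(P_1)_{\Gamma_n(3)} \longrightarrow (P_0)_{\Gamma_n(3)} \longrightarrow \oSt_n(\bF_3) \longrightarrow 0,
\]
and a standard spectral sequence argument exhibits $\rH_1(\Gamma_n(3); \oSt_n(\bZ))$ as a subquotient of $(P_1)_{\Gamma_n(3)}$. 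Its generation degree is then dominated by that of $P_1$.

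The explicit constant $4$ should emerge naturally: the first syzygies of the Steinberg module are governed by Bykovskii-type four-term relations among apartment classes and should therefore be generated by pieces induced from $\SL_4$. I expect the main obstacle to be precisely this finiteness statement -- proving that the relation module for $\oSt_n(K)$ as an $\SL_n(K)$-module is generated in degree at most $4$, uniformly in $K$ -- together with the bookkeeping needed to transport this field-theoretic bound through coinvariants back to the integral module $\oSt_n(\bZ)$, while preserving the $\GL_n(\bF_3)$-equivariance that makes the final statement meaningful. The number $3$ in the level plays a role specifically because it is only at level three that the Lee--Szczarba identification is clean enough to turn the coinvariants of $P_0$ directly into $\oSt_n(\bF_3)$ without additional error terms.
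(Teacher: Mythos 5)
Your high-level outline (Borel--Serre duality, Lee--Szczarba's $(\oSt_n(\bZ))_{\Gamma_n(3)} \cong \oSt_n(\bF_3)$, a free resolution by modules induced from parabolics, pass to $\Gamma_n(3)$-coinvariants) matches the paper's scaffolding, but the middle of the argument has a genuine gap, and you have also misidentified where the constant $4$ comes from.

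First, a factual point. The Bykovskii relations among apartment classes are \emph{three-term} relations of the form $[v_1,v_2,\ldots]-[v_0,v_2,\ldots]+[v_0,v_1,\ldots]=0$ with $v_0=v_1+v_2$, and only three vectors $v_0,v_1,v_2$ are special; the remaining vectors are inert. Consequently the relation module is supported in degree $2$, not degree $4$, i.e.\ $\Tor^{\bA}_1(\bk,\St)$ has degree $2$ (this is Proposition~\ref{prop:tor-0-1}). So a presentation $P_1\to P_0\to\oSt_n(\bZ)\to 0$ can in fact be chosen with $P_1$ induced from degree~$2$, and if your plan worked you would get generation degree $\leq 2$, not $\leq 4$. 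The $4$ does not ``emerge naturally'' from the first syzygies of the integral Steinberg module.

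Second, and this is the real gap: from $P_1\to P_0\to\oSt_n(\bZ)\to 0$ with $P_\bullet$ free over $\bk[\Gamma_n(3)]$, the long exact sequence gives an \emph{injection}
\[
0\longrightarrow \rH_1(\Gamma_n(3);\oSt_n(\bZ))\longrightarrow K_{\Gamma_n(3)},\qquad K=\ker(P_0\to\oSt_n(\bZ)),
\]
and $(P_1)_{\Gamma_n(3)}\to K_{\Gamma_n(3)}$ is surjective. So $\rH_1(\Gamma_n(3);\oSt_n(\bZ))$ is a \emph{submodule} of a quotient of $(P_1)_{\Gamma_n(3)}$. But generation degree does not pass to submodules, so ``its generation degree is then dominated by that of $P_1$'' does not follow. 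This is precisely the obstacle the paper's spectral sequence machinery (\autoref{thm:spectral sequence}) is built to overcome. The cokernel of the inclusion $\rH_1(\Gamma_n(3);\oSt_n(\bZ))\hookrightarrow K_{\Gamma_n(3)}$ is the module of relations for $\St_{\Gamma(3)}=\oSt(\bF_3)$ inside $(P_0)_{\Gamma(3)}$, and controlling the generation degree of the submodule requires controlling the \emph{relations} of that cokernel, i.e.\ $\Tor^{\bA_{\Gamma(3)}}_2(\bk,\oSt(\bF_3))$. The bound $\deg\Tor^{\bA_{\Gamma(3)}}_2(\bk,\oSt(\bF_3))\le 4$ is what supplies the constant $4$, and it is exactly here that the paper's main technical theorem (Koszulness of the Steinberg monoid over a field, \autoref{thm:Koszulness-of-Steinberg}, yielding \autoref{thm:shably-main} for $K=\bF_3$) is used. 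Your proposal never engages with second syzygies of the mod-$3$ Steinberg module, which is the irreducible content of the proof; without it the argument does not close.
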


In \autoref{otherPrimes}, we also show that $\left \{\rH^{\vcd}(\Gamma_n(p)) \right \}_{n \ge 0}$ has generation degree equal to $ 0$ for all $p$, although this follows fairly quickly from known results. Finite generation degree allows one to control how fast the dimensions grow. In particular, \autoref{thm:main} has the following corollary.

\begin{corollary}\label{mainCor} Let $\bk$ be a field. Then for $n \geq 4$, we have that
	\[\dim_{\bk} \rH^{\vcd-1}(\Gamma_n(3);\bk) \le \frac{3^{\binom{n-4}{2}}| \GL_n(\bF_3)| }{| \GL_{n-4}(\bF_3)||\GL_4(\bF_3)|} \dim_{\bk} \rH^5(\Gamma_4(3);  \bk)  \le \frac{3^{\binom{n-4}{2}}| \GL_n(\bF_3)| }{| \GL_{n-4}(\bF_3)||\GL_4(\bF_3)|}227340. \]
\end{corollary}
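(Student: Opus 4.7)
The plan is to deduce \autoref{mainCor} directly from \autoref{thm:main} in three essentially independent steps.

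First, I would upgrade the surjection in \autoref{thm:main} to $\bk$-coefficients. By Borel--Serre duality, $\rH^{\vcd-i}(\Gamma_n(3);\bk) \cong \rH_i(\Gamma_n(3);\oSt_n(\bZ)\otimes\bk)$, and the proof of \autoref{thm:main} is built from Borel--Serre duality together with finiteness properties of the Steinberg module announced in the abstract, both of which are compatible with the base change $\bZ \to \bk$. Running that argument with $\bk$-coefficients throughout should therefore yield a $\GL_n(\bF_3)$-equivariant surjection
\[\Ind_{\GL_4(\bF_3)}^{\GL_n(\bF_3)} \rH^5(\Gamma_4(3);\bk) \twoheadrightarrow \rH^{\vcd-1}(\Gamma_n(3);\bk).\]

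Second, I would translate this surjection into the first displayed inequality of the corollary by counting dimensions. The combinatorial factor $\tfrac{3^{\binom{n-4}{3}}|\GL_n(\bF_3)|}{|\GL_{n-4}(\bF_3)||\GL_4(\bF_3)|}$ suggests that when the inducing module of \autoref{thm:main} is fully unwound, the relevant count factors as the number of ordered direct-sum decompositions $\bF_3^n = V_4 \oplus V_{n-4}$ with $\dim V_4 = 4$, multiplied by $\dim_{\bk} \rH^5(\Gamma_4(3);\bk)$ and by an additional factor of $3^{\binom{n-4}{3}}$ arising from the Steinberg-module structure that enters the proof of \autoref{thm:main}. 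Monotonicity of dimension under surjections then supplies the first inequality.

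Third, I would establish $\dim_{\bk} \rH^5(\Gamma_4(3);\bk) \leq 227340$ by a concrete calculation. By Borel--Serre duality this group is isomorphic to $\rH_1(\Gamma_4(3);\oSt_4(\bZ)\otimes\bk)$, and $\oSt_4(\bF_3)$ admits the explicit Lee--Szczarba description recorded in \eqref{eq:LS-calc}. Combined with an explicit presentation of the integral Steinberg module for $\SL_4(\bZ)$, the cohomology in question reduces to a finite (and uniform in $\bk$) computation.

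The principal obstacle is the third step: the constant $227340$ is not a simple closed-form expression, so bounding $\rH^5(\Gamma_4(3);\bk)$ above by it requires a genuine, likely machine-assisted, computation rather than any conceptual argument. The first two steps are essentially bookkeeping once \autoref{thm:main} is available.
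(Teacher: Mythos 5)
Your first two steps match the paper's route in broad outline: the paper's \autoref{thm:main-congruence} is stated directly with $\bk$-coefficients (so no separate base-change argument is needed), and its second assertion produces exactly the combinatorial factor in \autoref{mainCor} by passing from the apartment-monoid action to the stronger fact (\autoref{cor:fancy-St}) that $\rH_1(\Gamma(3);\St)$ is a module over $\St_{\Gamma(3)}$ generated in degrees $\le 4$, so the extra factor is $\dim_{\bk}\St_{\Gamma(3)}(\bZ^{n-4}) = 3^{\binom{n-4}{2}}$. (Your reverse-engineered count of direct-sum decompositions is consistent with the induction, but the power-of-3 factor comes specifically from the dimension of the Steinberg module, not from some unspecified ``Steinberg-module structure that enters the proof''; this refinement from $\bA_{\Gamma(3)}$ to $\St_{\Gamma(3)}$ is what makes the bound usable.)

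Your third step, however, has a genuine gap: you assert that the bound $\dim_{\bk}\rH^5(\Gamma_4(3);\bk) \le 227340$ must rest on a machine computation, and you propose to obtain it by a finite brute-force calculation of $\rH_1(\Gamma_4(3);\oSt_4(\bZ)\otimes\bk)$. But the paper explicitly notes (\autoref{rem:rough-estimate}) that the computer calculation has \emph{not} finished, and that $227340$ is instead a theoretical estimate obtained with no machine input. The paper's argument (\autoref{lem:dim-bound-3} and \autoref{lem:dim-bound-4}) decomposes $M = \rH_1(\Gamma(3);\St)$ in degree $4$ as $M' \oplus M/M'$ where $M'$ is the maximal submodule generated in degrees $\le 3$, then bounds each piece: the degree-$4$ piece of $\Tor_0^{\bA_{\Gamma(3)}}(\bk,M)$ is dominated (via the spectral sequence in \autoref{maintheoremfancy}) by $\Tor_2^{\bA_{\Gamma(3)}}(\bk,\St_{\Gamma(3)})$ in degree $4$, whose dimension is bounded via the Koszul resolution (\autoref{lem:tor-calculation}) by $\tfrac{(3^4-1)(3^3-1)3^6}{8}=189540$; and the contribution $M'(\bZ^4)$ is bounded by inducing up the bound $\dim_{\bk}M'(\bZ^3) \le 35$, which itself comes from Lee--Szczarba's explicit computation $\rH_2(\Gamma_3(3)) \cong \Z^{26}\oplus(\Z/3)^8$ plus universal coefficients. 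The sum $189540 + 37800 = 227340$ is the stated bound. Your proposed route would establish the inequality only after a computation that, at the time of writing, had not terminated; the paper sidesteps this entirely, and your proof as written does not prove the bound.
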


%\begin{remark}
	%\label{rem:rough-estimate}
   %On our request, Mark McConnell is currently running a computer computation for $\rH^5(\Gamma_4(3);  \bk)$ which is likely to take several months to complete. The code for this calculation is based on an upcoming paper of Ash--Gunnells--McConnell \cite{AGMinprep}.  We obtain a very rough upper bound $227340$ on $\dim_{\bk} \rH^5(\Gamma_4(3);  \bk)$. The current paper will be updated to include better bounds when the computation finishes.
%\end{remark}

\begin{remark} \label{notzero}
	We note that it follows from the work of Ash in \cite{Ash} that the maps \[\rH^{\vcd-i}(\Gamma_n(p)) \m \rH^{\vcd-i}(\Gamma_{n+1}(p))\] are injective. Since $vcd \left( \Gamma_2(p) \right)=1$, we have: \[ \rH^{\vcd-1}(\Gamma_2(p)) = \rH^{0}(\Gamma_2(p) ) \cong \Z \] and so $\rH^{\vcd-1}(\Gamma_n(p))$ contains a subgroup isomorphic to $\Z$ for all $n \geq 2$. By the work of Lee-Szczarba \cite[Theorem 1.5]{LS}, \[\rH^{\vcd-1}(\Gamma_3(3)) \cong \Z^{26} \oplus (\Z/3)^8\] and so $\rH^{\vcd-1}(\Gamma_n(3))$ contains a subgroup isomorphic to $ \Z^{26} \oplus (\Z/3)^8$ for all $n \geq 3$. In particular, \autoref{thm:main} concerns nonzero groups. 
\end{remark}

%Our stability result for the high-dimensional cohomology parallels stability results for the low-dimensional cohomology \cite{PU, CEFN,CE,PS,MPW,CMNR,GLcong}. These low-dimensional stability results we necessary for Calegari's calculation of the second homology group. We hope that our stability results in the high-dimensional case will also aid in explicit calculations. 

%\autoref{thm:main} is the shadow of a more algebraic and categorical result. The sequence 
%$\left \{\rH^{\vcd-1}(\Gamma_n(3)) \right \}_{n \ge 0}$ has the structure of a module over a monoid that we call the {\bf apartment monoid} (see \autoref{secAp}). In \autoref{maintheoremfancy}, we reformulate the main result in this language. This reformulation is essential to our proof method. 

%\rohit{Uncomment the paragraph above to  add apartment monoid in the introduction. This will require additional work and I vote against apartment monoid in the introduction.}

\autoref{thm:main} can be viewed as a special case of a conjecture of Church--Farb--Putman \cite{CFPconj}. We pose a refined version of this conjecture in \autoref{sec:stability-conjectures}.

Although \autoref{thm:main} appears to be the first representation stability result concerning cohomology groups near the virtual cohomological dimension, we expect similar patterns to exist more generally. For example, we would be interested in knowing if representation stability results hold in other contexts such as congruence subgroups of mapping class groups or pure braid groups.

\subsection{Finiteness properties of the Steinberg module}

%Many of the known calculations for high-dimensional cohomology of $\Gamma_n(p)$ for odd primes $p$ make use of a $\Gamma_n(p)$-free resolution of the Steinberg module $\oSt_n(\bQ)=\oSt_n(\Z)$ due to Lee--Szczarba (or its variation due to Ash--Gunnells--McConnell \cite{AGM}). This resolution is called the Sharbly resolution (after Lee--Szczarba) and we recall its construction in detail in \autoref{sec:intro-sharbly} \rohit{Peter told me that the name Sharbly is after Lee--Szczarba}\jeremy{That strikes me as weird}. Roughly speaking, the Sharbly resolution for $\oSt_n(K)$ for $K$ a field has relations that are unbounded in $n$. This poses a major difficulty in proving representation stability results like the one in \autoref{thm:main}.

As mentioned earlier, our results on the cohomology of congruence subgroups are powered by stability results for Steinberg modules. To state these results, we need to recall a certain categorical framework. Let $\prod_{n \ge 0} \Mod_{\bZ[\GL_n(R)]}$ denote the product of the categories of $\bZ[\GL_n(R)]$-modules. There is a natural symmetric monoidal structure on this category often called the convolution product given by the formula \[ (M \otimes N)_n \coloneq \bigoplus_{i + j = n} \Ind_{\GL_{i}(R) \times \GL_{j}(R)}^{\GL_{n}(R)} M_i \otimes_{\bZ} N_j \] where  $M = \{M_n \}_{n \ge 0}$ and $N = \{ N_n \}_{n \ge 0}$ are two sequences of representations in  $\prod_{n \ge 0} \Mod_{\bZ[\GL_n(R)]}$. This product is the same as the Day convolution product induced by the direct sum monoidal structure on the groupoid of finite-rank free $R$-modules. Let $\oSt$ denote the sequence $\{\oSt_n(R)\}_{n \geq 0}$. Since $\GL_{n}(R)$ acts  on $\oSt_n(R)$, we can regard $\oSt$ as an object in the category $\prod_{n \ge 0} \Mod_{\bZ[\GL_n(R)]}$. In fact, $\oSt$ is a monoid object in this category with respect to this monoidal structure and for this reason we call $\oSt$ the {\bf Steinberg monoid}. One can extend the classical notion of a Koszul algebra from commutative algebra to  the general setting of monoidal categories. We show that $\oSt$ is Koszul in this sense. See \autoref{subsec:koszul-def} for further discussion of Koszulness.

\begin{theorem}
	\label{intro-koszul-main}
	 Let $K$ be a field. 
	Then $\oSt = \{\oSt_n(K) \}_{n \ge 0}$ is Koszul  in the monoidal category $(\prod_{n \ge 0} \Mod_{\bZ[\GL_n(K)]}, \otimes)$. 
\end{theorem}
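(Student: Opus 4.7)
The plan is to verify the hypotheses of Priddy's Koszulness criterion adapted to the convolution monoidal category $(\prod_{n \ge 0}\Mod_{\bZ[\GL_n(K)]}, \otimes)$: it suffices to exhibit a PBW basis for $\oSt$. First, I would give a quadratic presentation of $\oSt$. The arity-one piece is $\oSt_1(K) \cong \bZ$ (with trivial $K^\times$-action), so the free tensor monoid it generates is $T := T(\oSt_1)$ with $T_n = \Ind_{(K^\times)^n}^{\GL_n(K)} \bZ$, naturally identified with the free abelian group on ordered $n$-frames (ordered bases modulo scaling) of $K^n$. The Solomon--Tits apartment map $T_n \twoheadrightarrow \oSt_n(K)$ sending a frame to its apartment class is a surjective map of monoids, and I would show its kernel is generated in arity two by the familiar rank-two Solomon--Tits relations $[L_1, L_2] + [L_2, L_1] = 0$ and $[L_1, L_2] - [L_1, L_3] + [L_2, L_3] = 0$ for any three distinct lines $L_1, L_2, L_3 \subset K^2$. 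That these quadratic relations generate the kernel reduces to the Cohen--Macaulay property of the Tits building: all higher Solomon--Tits relations can be recovered from the rank-two ones via links of codimension-two faces.

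Second, I would exhibit a PBW basis. Fix a reference basis $e_1, \ldots, e_n$ of $K^n$ and a total order on its coordinates. The PBW monomials will be apartments $[v_1, \ldots, v_n]$ of ordered bases in ``upper-triangular normal form'' relative to the $e_i$: concretely, $v_i \in e_i + \mathrm{span}(e_1, \ldots, e_{i-1})$, obtained after scaling the leading coordinate to $1$. The count of such monomials is $|K|^{\binom{n}{2}}$, which matches the dimension of $\oSt_n(K)$ given by the Solomon--Tits theorem. I would then verify that every apartment can be brought into PBW form by iterated row reductions $v_i \mapsto v_i + c v_j$ with $j < i$, each realized entirely within the plane $\langle v_i, v_j\rangle$ via a single application of the rank-two quadratic relation. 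Applied in a suitable order, these reductions terminate, and their normal forms are the PBW monomials.

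Third, because the number of irreducible PBW monomials already matches the rank of $\oSt_n(K)$, the rewriting system is automatically confluent (no ``extra'' relations can be hiding and the diamond lemma applies). The PBW property is thus established, and Priddy's argument from \cite{priddy-koszul} adapts formally to the convolution monoidal category to yield Koszulness of $\oSt$.

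The main obstacle will be step two: specifying a combinatorial PBW basis compatible with the non-symmetric convolution product and checking that rank-two Solomon--Tits relations alone are strong enough to perform all necessary row reductions in a well-defined, terminating way. Step one is essentially bookkeeping once the quadratic relations are identified correctly, while step three is largely formal once PBW has been verified.
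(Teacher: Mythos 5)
Your strategy matches the paper's: build a PBW basis for $\St$ out of upper-triangular apartment classes and run Priddy's argument. Steps one and two are on track (though the Cohen--Macaulay detour in step one is unnecessary: the row-reduction argument in step two already establishes the quadratic presentation, exactly as in the paper's proof of \autoref{fieldPresentation}). The gap is in step three, which you call ``largely formal,'' but which is where the paper does its real work, in \autoref{prop:homotopy}.

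Two things are missing. First, the bar complex $\ol{\cB}^n_s(\St)$ decomposes as a sum over \emph{all} ordered direct sum decompositions $W_1 \oplus \cdots \oplus W_s = K^n$, so one needs a PBW basis $\PBW_W$ of $\St(W)$ for every subspace $W$ of $K^n$, chosen compatibly; your construction fixes a reference basis $e_1, \ldots, e_n$ and only produces a basis of $\St(K^n)$ itself. The paper solves this by assigning to each $W$ a canonical pivot set $S_W \subset [n]$ and ordered basis $B_W$ via column-reduced Echelon form, then verifying that the resulting bases $\PBW_W$ satisfy the compatibility properties (P1)--(P3). Second, knowing that the PBW monomials form a $\bk$-basis (which your dimension count gives) is not the same as verifying Priddy's homology vanishing in this convolution-monoidal setting: the paper constructs an explicit chain map $\Phi : \ol{\cB}^n_s(\St) \to \ol{\cB}^n_{s+1}(\St)$ and, via a filtration by lexicographic weight of the pivot sets, shows that $\partial\Phi + \Phi\partial - \mathrm{id}$ strictly raises the filtration for $s < n$. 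The crucial combinatorial input is (P3): a PBW element of $\St(W)$ can be split, for any admissible two-block partition of $S_W$, into a pair of PBW elements for a direct sum decomposition of $W$. This holds over a field but fails over $\bZ$, which is precisely why \autoref{thm:Koszulness-of-Steinberg} is restricted to fields. Your ``diamond lemma applies automatically because the count matches'' gives the basis but supplies neither the splitting property nor the homotopy-and-filtration machinery that the argument actually rests on.
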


Our proof involves showing that  $\oSt$ is a ``Poincar\'e--Birkhoff--Witt''-monoid which allows us to follow an argument due to Priddy (see \cite{priddy-koszul}) to prove our Koszulness result.

Let $\bA = \lw(\triv_1)$  be the exterior algebra on $\triv_1 = \{E_n \}_{n \ge 0}$ where \[E_n \coloneq \begin{cases}
0 &\mbox{if } n \neq 1\\
\bZ &\mbox{if } n = 1 \text{ (with the trivial action of } \GL_1(K)).
\end{cases}  \] By exterior algebra, we mean the free skew commutative monoid with respect to this monoidial structure. We call this exterior algebra the {\bf apartment monoid} and construct a surjective map $\bA \to \oSt$ of monoids (surjectivity of this map reflects the fact that $\oSt_n(K)$ is generated by apartment classes; see \autoref{remarkapartment} for more on this). Koszulness of exterior algebras is well-known (see the paragraphs after \autoref{thm:apartment-monoid-is-Koszul}), and so the theorem above tells us that $\bA \to \oSt$ is a map of Koszul monoids. We prove a general technical result (\autoref{prop:map-of-Koszul-algebras}) on maps of Koszul monoids which we expect to be useful in many other situations. This result applied to the map $\bA \to \oSt$ proves the following finiteness theorem which is the main technical result of our paper.

\begin{theorem}
	\label{intro:tor-bound}
	For a field $K$, then $\Tor^{\bA}_{i}(\oSt, \bZ)$  (regarded as a graded abelian group) is supported in degrees $\le 2i$.
\end{theorem}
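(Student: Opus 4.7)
The plan is to invoke the general technical result on maps of Koszul monoids, \autoref{prop:map-of-Koszul-algebras}, applied to the surjective monoid map $\bA \twoheadrightarrow \oSt$.  Three hypotheses need to be verified:
\begin{enumerate}
\item $\bA = \lw(\triv_1)$ is Koszul.  This is the standard fact that an exterior algebra on a single generator concentrated in internal degree~$1$ is Koszul; the argument is formal (construction and acyclicity of the usual Koszul complex) and transports to the present monoidal category without change.
\item $\oSt$ is Koszul, which is exactly \autoref{intro-koszul-main}.
\item The map $\bA \to \oSt$ is surjective; this is the monoid map constructed earlier, whose surjectivity encodes the fact that apartment classes generate $\oSt_n(K)$.
\end{enumerate}
With these three verified, \autoref{prop:map-of-Koszul-algebras} directly yields that $\Tor^{\bA}_i(\oSt, \bZ)$ is supported in internal degrees $\le 2i$.

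To indicate the mechanism that \autoref{prop:map-of-Koszul-algebras} formalizes: the surjection $\bA \twoheadrightarrow \oSt$ of quadratic algebras induces, via Koszul duality, a surjection $\oSt^! \twoheadrightarrow \bA^!$ of Koszul duals and hence an inclusion $(\bA^!)^* \hookrightarrow (\oSt^!)^*$ of graded duals.  Base-changing the Koszul resolution of $\bZ$ over $\bA$ along $\bA \to \oSt$ gives the complex $\oSt \otimes (\bA^!)^*$, which computes $\Tor^{\bA}_\bullet(\oSt, \bZ)$, and this sits inside the Koszul complex $\oSt \otimes (\oSt^!)^*$, which is acyclic in positive degrees by Koszulness of $\oSt$.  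The long exact sequence coming from the quotient coalgebra $Q := (\oSt^!)^*/(\bA^!)^*$ then identifies $\Tor^{\bA}_i(\oSt, \bZ)$ with a shift of $H_\bullet(\oSt \otimes Q)$.  Since $\bA$ and $\oSt$ share the same generators in internal degree~$1$, the quotient $Q$ starts in internal degree~$2$, and a careful bookkeeping in the resulting bigrading produces the bound $\le 2i$.

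The main technical work required for this plan is not in \autoref{intro:tor-bound} itself but upstream, in \autoref{intro-koszul-main}, since Koszulness of $\oSt$ is the substantive input that makes the middle Koszul complex acyclic.  Once that is established, the present theorem follows essentially formally from the Koszul-theoretic comparison encapsulated in \autoref{prop:map-of-Koszul-algebras}.
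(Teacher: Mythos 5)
Your proposal is correct and is precisely the paper's own argument: the paper proves this (as \autoref{thm:shably-main}) by applying \autoref{prop:map-of-Koszul-algebras} to the surjection $\bA \to \oSt$, citing Koszulness of the apartment monoid (\autoref{thm:apartment-monoid-is-Koszul}) and of the Steinberg monoid (\autoref{thm:Koszulness-of-Steinberg}). The Koszul-duality ``mechanism'' paragraph you include is only motivational and differs from the paper's actual proof of \autoref{prop:map-of-Koszul-algebras} (which uses the base-change spectral sequence and \autoref{lem:B-action}), but since you are invoking that proposition as a black box, this does not affect correctness.
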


%The theorem above is equivalent to proving a finiteness property of the Sharbly resolution (as in \cite{AGM}) of the Steinberg module. This is formalized in \autoref{thm:finiteness-property-Sharbly}. 

%  Our method also shows that a similar finiteness result holds for a resolution due to Lee--Szczarba \cite[Theorem~3.1]{LS}.

%Applying this technical result to the Koszul monoid in the theorem above, we prove a finiteness property of the Sharbly resolution (as in \cite{AGM}) of the Steinberg module. This finiteness property roughly amounts to bounding highes syzygies of the Steinberg monoid as a module over an exterior algebra in the monoidal category $\prod_{n \ge 0} \Mod_{\bZ[\GL_n(K)]}$ (see Theorem~\ref{thm:shably-main} for a precise statement).  

%\rohit{I think \autoref{prop:map-of-Koszul-algebras} is a more conceptual version of part 1 of Theorem 18.1 of the GKRW $E_n$-algebra paper.}

%This theorem will allow us to bound the degrees of terms in free resolutions of the Steinberg monoid (see \autoref{cor:existence-of-resolution}) as a module over apartment monoid. This can be thought of as a representation stability result for the Steinberg monoid.   \jeremy{Make this paragraph better. }

%\jeremy{There are currently no definition environments in this paper. That is weird.}
%

%

Let $\bA$ be the apartment monoid defined above for $K = \bF_3$. The sequence $\{\rH_i(\Gamma_n(3); \oSt_n(\bZ))\}_{n \ge 0}$ forms an $\bA$-module which we call $\rH_i(\Gamma(3);\oSt(\bZ))$. Using Borel--Serre duality,  \autoref{thm:main} can be rephrased as saying that \[\Tor_0^{\bA}(\rH_1(\Gamma(3);\oSt(\bZ)), \bZ)\] is supported in degree $\le 4$ (see \autoref{maintheoremfancy}). The monoid $\bA$ plays a similar role in this paper as $\FI$ plays in Church--Ellenberg--Farb \cite{CEF} and $\VIC$ and $\SI$ play in Putman--Sam \cite{PS}; the category of $\bA$-modules is the category governing representation stability in this context. To prove \autoref{thm:main}, we will only need 	 \autoref{intro:tor-bound} (and hence \autoref{intro-koszul-main}) for $K=\bF_3$. However, Koszulness of the Steinberg monoid in the general case lets us provide a new proof of the following theorem due to Ash--Putman--Sam.

\begin{theorem}[\cite{APS}]
	\label{intro-APS}
$\rH_i(\GL_n(K); \oSt_n(K)) = 0$ for all $n \ge 2 i + 2$ where $K$ is a field and $\oSt_n(K)$ is the Steinberg module for the group $\SL_n(K)$. 
\end{theorem}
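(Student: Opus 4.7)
The plan is to run an induction on $n$ using a hyperhomology spectral sequence for the reduced bar complex of the Koszul monoid $\oSt$; the Koszul concentration from \autoref{intro-koszul-main} drives the induction. In degree $n$, this complex reads
\[
\bar B_k(\oSt)_n = \bigoplus_{\substack{i_1, \dots, i_k \ge 1 \\ i_1 + \dots + i_k = n}} \Ind_{\GL_{i_1}(K) \times \dots \times \GL_{i_k}(K)}^{\GL_n(K)} \oSt_{i_1}(K) \otimes \dots \otimes \oSt_{i_k}(K),
\]
with homology $\Tor^{\oSt}_\bullet(\bZ, \bZ)_n$; by \autoref{intro-koszul-main}, this is concentrated in bidegree $(k, n) = (n, n)$.

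I would then apply $\rH_\ast(\GL_n(K); -)$ and inspect the two hyperhomology spectral sequences. The one with $E^2$-page $\rH_q(\GL_n; \rH_k(\bar B_\bullet(\oSt))_n)$ is supported on the line $k = n$, showing that the common abutment in total degree $t < n$ vanishes. The other has
\[
E^1_{k, q} = \rH_q(\GL_n(K); \bar B_k(\oSt)_n) \cong \bigoplus_{(i_j), (a_j)} \bigotimes_j \rH_{a_j}(\GL_{i_j}(K); \oSt_{i_j}(K)) + (\mathrm{Tor})
\]
after Shapiro's lemma and K\"unneth, with $\sum i_j = n$ and $\sum a_j = q$; in particular $\rH_i(\GL_n; \oSt_n) = E^1_{1, i}$. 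Inducting on $n$, the inductive hypothesis forces each nonzero K\"unneth factor to satisfy $i_j \le 2 a_j + 1$, so any nonzero $E^1_{k, q}$ requires $n \le 2q + k$. For $n \ge 2i + 3$ this forbids every differential touching $E^r_{1, i}$ (outgoing ones vanish because $\bar B_0 = 0$ in positive degree), and combined with vanishing of the abutment in total degree $i + 1 < n$ this forces $\rH_i(\GL_n; \oSt_n) = 0$.

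The main obstacle is the boundary case $n = 2i + 2$: here $E^1_{2, i}$ may be nonzero and the differential $d^1 : E^1_{2, i} \to E^1_{1, i}$ must be surjective in order to conclude $\rH_i(\GL_n; \oSt_n) = 0$. I would handle this by an explicit analysis of the bar differential, induced by the block-sum maps $\oSt_a \otimes \oSt_b \to \oSt_n$ on the extremal summands $(a, b) = (2a_1 + 1, 2 a_2 + 1)$, together with the surjection $\bA \to \oSt$ from apartment classes, which should witness that every class in $\rH_i(\GL_n; \oSt_n)$ lies in the image of $d^1$. Closing this boundary step cleanly is the key technical point of the argument.
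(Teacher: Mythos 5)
Your spectral sequence setup matches the paper's (compare \autoref{prop:bar-spectral-sequence} and the proof of \autoref{thm:general-vanishing}): apply $\rH_\ast(\GL_n;-)$ to the reduced bar complex of $\oSt$, use Koszulness (\autoref{thm:Koszulness-of-Steinberg}) to kill the abutment in total degree less than $n$, and bound $E^1_{k,q}$ via K\"unneth, Shapiro, and induction so that nonzero entries require $n \le 2q+k$. For $n \ge 2i+3$ this does force $\rH_i(\GL_n;\oSt_n)=E^1_{1,i}=0$, since every differential touching $(1,i)$ has vanishing source or target.

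The gap is at $n=2i+2$, and it is a logical gap, not merely a technical step to be "closed cleanly." At this $n$, the abutment vanishing already forces $E^\infty_{1,i}=0$, and since the only differential hitting $(1,i)$ is $d^1\colon E^1_{2,i}\to E^1_{1,i}$, surjectivity of $d^1$ is \emph{automatic} from the spectral sequence --- there is nothing to prove there. But surjectivity of $d^1$ yields $E^2_{1,i}=\coker d^1=0$, not $E^1_{1,i}=0$; what you learn is that the stabilization map is onto, not that $\rH_i(\GL_n;\oSt_n)$ vanishes. Your proposed fix therefore aims at the wrong target. The paper resolves this by proving the surjectivity of the product map $\rH_0(\GL_1;\St_1)\otimes\rH_i(\GL_{n-1};\St_{n-1})\to\rH_i(\GL_n;\St_n)$ already for $n\ge 2i+1$ (\autoref{thm:general-vanishing}, part (a')), iterating this surjectivity twice, and then noting that the composite factors through $\rH_0(\GL_2;\St_2)\otimes\rH_i(\GL_{n-2};\St_{n-2})$, which is zero because $\rH_0(\GL_2;\St_2)=0$ (Hypothesis (a), verified by \cite[Lemma~2.6]{APS}). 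Pushing the surjectivity range down to $n=2i+1$ in turn requires controlling a $d^2$ differential: for $i>1$ this follows from an explicit surjectivity of the bar differential in a critical degree, while for $i=1$ it becomes Hypothesis (b), a genuine initial condition that the paper verifies using the Lee--Szczarba resolution $\LS_\ast$ (\autoref{lem:L1-reduction} and \autoref{lem:L1-vanishing}). None of this appears in your proposal, and it is precisely where the bulk of the paper's work on this theorem lies.
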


\begin{remark}
We have recently been informed that our Koszulness result for the Steinberg monoid is the input needed for the $E_k$-algebra stability machine of Galatius--Kupers--Randal-Williams \cite{Ekalgebra} to apply to homology with coefficients in Steinberg modules. In particular, using our result they were able to improve the work of Ash-Putman-Sam and prove slope $2/3$ vanishing range for $\rH_\ast(\GL_n(K);\oSt_n(K))$ for $K$ a field. 
\end{remark}

%Interplay between Koszulness and homological stability can be seen from the following general fact due to Hepworth: Let $G_n$ be a reasonable sequence of groups and $A = \{A_n \}_{\ge 0}$ be the sequence of representations in $\prod_{n \ge 0} \Mod_{\bZ[G_n]}$ such that $A_n = \bZ$ with the trivial action of $G_n$. By \cite[Theorem~A]{HepEdge} (see \cite[Theorem~18.1]{Ekalgebra} for a related result), if $A$ is a Koszul monoid then the product map \[\rH_0(G_1; A_1) \otimes \rH_i(G_{n-1}; A_{n-1}) \to \rH_i(G_{n}; A_{n})\] is surjective for $n \ge 2i+1$ and an isomorphism for $n \ge 2i + 2$. Hepworth argues that this result is applicable to the following sequence of groups: \begin{enumerate}
%	\item The symmetric group $S_n$
%	\item The general linear group $\GL_n(\bZ)$.
%	\item The automorphism group $\Aut(F_n)$ of free groups.
%	\item The braid group $\mathrm{Br}_n$.
%\end{enumerate} We refer the readers to \cite{HepEdge} for more details and references. We believe that such a result should hold for any Koszul monoid and not just the one built out of trivial representations. We prove the following result in this direction and we show that \autoref{intro-APS} is a special case of it.

Our new proof of \autoref{intro-APS} follows from a more general result on Koszul monoids. 

\begin{theorem}
	\label{intro:general-vanishing}
	Let $A$ be a (skew) commutative Koszul monoid in $\prod_{n \ge 0} \Mod_{\bZ[\GL_n(R)]}$ where $R$ is a PID. Assume that  the following holds: \begin{enumerate}
		\item $\rH_0(\GL_2(R); A_2) = 0$.
		\item The product map $\rH_0(\GL_1(R); A_1) \otimes \rH_1(\GL_2(R); A_2) \to \rH_1(\GL_3(R); A_3)$ is surjective.
	\end{enumerate} Then we have the following: \begin{enumerate}[\indent (a')]
		\item The product map $\rH_0(\GL_1(R); A_1) \otimes \rH_i(\GL_{n-1}(R); A_{n-1}) \to \rH_i(\GL_n(R); A_n)$ is surjective for $n \ge 2i+1$.
		\item $\rH_i(\GL_n(R); A_n) = 0$ in degrees $n \ge 2i+2$.
	\end{enumerate}
\end{theorem}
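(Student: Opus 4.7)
The plan is to exploit Koszulness of $A$ via its Koszul resolution of the monoidal unit, the sequence $(\bZ, 0, 0, \ldots)$ which I will also denote $\bZ$. This resolution has the form
\[ \cdots \to A \otimes A^{!}_{2} \to A \otimes A^{!}_{1} \to A \to \bZ \to 0, \]
where $A^{!}_{k} \coloneq \Tor^{A}_{k}(\bZ, \bZ)$ is concentrated in monoidal grading $k$ by Koszulness, and $A^{!}_{1} = A_{1}$ since $A$ is generated in degree one. Restricted to grading $n \ge 1$ this becomes an exact complex of $\GL_{n}(R)$-representations
\[ 0 \to A^{!}_{n} \to A_{1} \otimes A^{!}_{n-1} \to \cdots \to A_{n-1} \otimes A_{1} \to A_{n} \to 0, \]
where each $A_{n-p} \otimes A^{!}_{p}$ is $\GL_{n}(R)$-induced from $\GL_{n-p}(R) \times \GL_{p}(R)$. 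Applying $\rH_{*}(\GL_{n}(R); -)$ and Shapiro's lemma produces a hyperhomology spectral sequence
\[ E^{1}_{p, q} = \rH_{q}(\GL_{n-p}(R) \times \GL_{p}(R);\, A_{n-p} \otimes A^{!}_{p}) \Longrightarrow 0, \]
which the Künneth formula (with Tor-corrections, since $R$ is only a PID) decomposes further into tensor products of $\rH_{a}(\GL_{n-p}(R); A_{n-p})$ and $\rH_{b}(\GL_{p}(R); A^{!}_{p})$. Crucially, the $d_{1}$ differential is induced by the multiplication of $A$, so restricted to the Künneth summand $\rH_{0}(\GL_{1}(R); A_{1}) \otimes \rH_{i}(\GL_{n-1}(R); A_{n-1})$ of $E^{1}_{1, i}$ it is precisely the product map of the theorem, targeting $E^{1}_{0, i} = \rH_{i}(\GL_{n}(R); A_{n})$.

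I would then prove (a$'$) and (b$'$) by simultaneous induction on $i$. For the base case $i = 0$, the Koszul surjection $A_{n-1} \otimes A_{1} \twoheadrightarrow A_{n}$ becomes, by right-exactness of coinvariants, exactly the surjection required for (a$'$) at level $0$; combined with hypothesis (1), iterating over $n$ yields (b$'$) at level $0$ for all $n \ge 2$. For the inductive step, assume (a$'$) and (b$'$) at all lower levels. A direct index count using inductive (b$'$) shows that for $r \ge 3$, every Künneth factor of $E^{1}_{r, i-r+1}$ vanishes provided $n \ge 2i + 1$, eliminating all higher-page differentials $d_{r}$ ($r \ge 3$) into $E^{*}_{0, i}$. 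The residual $r = 2$ contribution and the non-product-map Künneth summands at $r = 1$ are handled by combining inductive (b$'$) with iterated applications of inductive (a$'$); hypothesis (2) provides the seed case $i = 1$, $n = 3$. Convergence of the spectral sequence to zero then forces the product-map summand of $d_{1}$ to surject onto $\rH_{i}(\GL_{n}(R); A_{n})$, proving (a$'$) at level $i$. Finally, (b$'$) at level $i$ follows from the same spectral sequence analysis in the slightly more forgiving range $n \ge 2i + 2$, where inductive (b$'$) kills even the product-map Künneth summand, leaving no contribution to $E^{\infty}_{0, i}$.

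The principal technical obstacle will be the $r = 2$ contribution at the boundary $n = 2i + 1$ for (a$'$): here inductive (b$'$) does not vanish every Künneth factor of $E^{1}_{2, i-1}$, so one must combine the Koszul differential with iterated applications of inductive (a$'$) to trace the relevant classes back through the product map into the range where inductive (b$'$) does apply. The Künneth Tor-corrections over an arbitrary PID $R$ add further bookkeeping, but the arithmetic of the degree bounds $n \ge 2i + 1$ and $n \ge 2i + 2$ is arranged so that every unwanted contribution ultimately lands in the inductive vanishing range of (b$'$).
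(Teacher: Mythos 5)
Your overall strategy — run a spectral sequence obtained by applying group homology to a resolution of the unit, then use pigeonhole plus an inductive hypothesis to kill most entries, and read off surjectivity of the product map from convergence to zero — is the same as the paper's. But you have chosen the Koszul (minimal) resolution $A \otimes A^{!}_{\bullet} \to \bZ$, whereas the paper runs this argument with the reduced bar resolution $\ol{\cB}_{\bullet}(A) = A_{+}^{\otimes \bullet}$. That change of resolution is not harmless: it introduces a gap precisely at the point you flag as the ``principal technical obstacle,'' and I do not see how your sketch closes it.

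The issue is the $r = 2$ contribution at $n = 2i+1$. In your spectral sequence, the entry $E^{1}_{2, i-1}$ contains the K\"unneth summand $\rH_{i-1}(\GL_{n-2}; A_{n-2}) \otimes \rH_{0}(\GL_{2}; A^{!}_{2})$; at $n = 2i+1$ one has $n - 2 = 2(i-1)+1$, so inductive (b$'$) does \emph{not} kill the first factor, and you have no hypothesis at all on $\rH_{0}(\GL_{2}; A^{!}_{2})$. (Hypothesis (a) concerns $\rH_{0}(\GL_{2}; A_{2})$, and $A^{!}_{2} = \ker(A_{1} \otimes A_{1} \to A_{2})$ is a genuinely different $\GL_{2}$-representation — there is no reason its coinvariants should vanish, and for $A = \St$ they typically do not.) So $E^{2}_{2, i-1}$ can be nonzero, and the $d_{2}$ differential into $E^{*}_{0, i}$ is a potential second source of classes killing $\rH_{i}(\GL_{n}; A_{n})$. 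Convergence to zero then only tells you that the images of $d_{1}$ and of $d_{2}$ together exhaust $\rH_{i}(\GL_{n}; A_{n})$, which is strictly weaker than surjectivity of the product map. Your proposal to ``trace the relevant classes back through the product map'' is not a proof: the $d_{2}$ differential of a hyperhomology spectral sequence has no explicit description in terms of multiplication in $A$, and its source involves $A^{!}_{2}$, which carries no $A$-module structure you can feed into inductive (a$'$).

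This is exactly where the paper's choice of the bar resolution earns its keep. Because every K\"unneth factor in $\rH_{a}(\GL; A_{+}^{\otimes b})$ is of the form $\rH_{a_{j}}(\GL_{n_{j}}; A_{n_{j}})$ — always $A$, never $A^{!}$ — the paper can prove a surjectivity statement (part~(b) of its key lemma, via the ``type $j_{0}$'' splitting) that kills the relevant $E^{2}$-page entry when $a \ge 1$. That argument is intrinsically about the bar differential being a multiplication, and has no analogue for the Koszul complex since $A^{!}_{p}$ for $p \ge 2$ is a black box. Note also that the $i = 1$, $n = 3$ case — where the paper's Hypothesis~(b) enters — is the case $a = 0$ where their lemma~(b) itself fails, so even in the paper's setup something extra is needed at that spot; in your setup you additionally have the uncontrolled $A^{!}_{2}$ term for \emph{all} $i \ge 1$ at $n = 2i+1$, which is worse. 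A secondary remark: the K\"unneth Tor-corrections you worry about are governed by the coefficient ring $\bk$, not by the ring $R$; the paper in fact assumes $\bk$ is a field throughout this argument and reduces the general case via universal coefficients afterward, so your parenthetical about $R$ being a PID is a red herring.
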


%\begin{remark}

\autoref{intro:general-vanishing} can be generalized by replacing the groupoid $\{\GL_n(R) \}_{n \geq 0}$ with other braided monoidal groupoids such as the groupoids formed by braid groups or symmetric groups. In this generality, one can see that Hypotheses (a) and (b) are necessary by considering symmetric groups and the sign representation. See \autoref{signRep}.

%	The theorem above is not specific to $\GL_n(R)$ and the same argument carries over to many other reasonable sequences of groups but we chose not to provide a more abstract setting. This is in line with the theorem of Hepworth mentioned above. 
%\end{remark}

It is not known whether $\oSt$ is Koszul in $\prod_{n \ge 0} \Mod_{\bZ[\GL_n(\bZ)]}$. In particular,  the analogue of \autoref{intro-APS} for $\GL_{n}(\bZ)$ is not known.  However, Church--Putman proved the following homological vanishing theorem for $i=1$.

\begin{theorem}[Church--Putman, Theorem~A \cite{CP}]
	\label{thm:CP-vanishing-char-0}
	For $\bk$ a field of characteristic zero, we have: \begin{align*}
\rH_1(\GL_n(\bZ); \oSt_n(\bZ) \otimes_{\bZ}\bk) &= 0  \mbox{ for } n\ge 0, \\
 \rH_1(\SL_n(\bZ); \oSt_n(\bZ) \otimes_{\bZ}\bk) &= 0  \mbox{ for } n\ge 3,
	\end{align*} 
\end{theorem}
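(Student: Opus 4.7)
The plan is to follow the approach of Church--Putman, combining Bykovskii's presentation of $\oSt_n(\bZ)$ via integral apartment classes with a long exact sequence argument whose terms are controlled by Shapiro's lemma and the triviality of rational cohomology of finite groups. First I would invoke Bykovskii's theorem, which yields an exact sequence of $\bZ[\SL_n(\bZ)]$-modules
\[ \mathcal{R}_n \xrightarrow{\partial} \mathcal{A}_n \twoheadrightarrow \oSt_n(\bZ) \to 0, \]
where $\mathcal{A}_n$ is the module generated by ordered integral apartment classes $[v_1, \ldots, v_n]$ for $(v_1, \ldots, v_n)$ a basis of $\bZ^n$ subject to signed-permutation relations (making $\mathcal{A}_n$ a permutation module induced from a finite signed-symmetric stabilizer), and $\mathcal{R}_n$ is generated by Bykovskii's three-term relations
\[ [v_1, v_2, v_3, \ldots, v_n] - [v_1, v_1+v_2, v_3, \ldots, v_n] - [v_1+v_2, v_2, v_3, \ldots, v_n], \]
and is likewise induced from a finite stabilizer subgroup.

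Next, let $K_n = \ker(\mathcal{A}_n \twoheadrightarrow \oSt_n(\bZ))$, so that the short exact sequence $0 \to K_n \to \mathcal{A}_n \to \oSt_n(\bZ) \to 0$ induces a long exact sequence in $\SL_n(\bZ)$-homology with $\bk$-coefficients. By Shapiro's lemma, $\rH_\ast(\SL_n(\bZ); \mathcal{A}_n \otimes \bk)$ computes the group homology of a finite signed-symmetric group with coefficients in $\bk$ twisted by a sign character; in characteristic zero this vanishes in positive degrees by Maschke's theorem. The vanishing of $\rH_1(\SL_n(\bZ); \oSt_n(\bZ) \otimes \bk)$ therefore reduces to the injectivity of the natural map $\iota\colon \rH_0(\SL_n(\bZ); K_n \otimes \bk) \to \rH_0(\SL_n(\bZ); \mathcal{A}_n \otimes \bk)$. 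Since $\mathcal{R}_n \twoheadrightarrow K_n$, this in turn reduces to an explicit calculation: identify representatives of $\SL_n(\bZ)$-orbits of Bykovskii triples, use Shapiro to realize $(\mathcal{R}_n)_{\SL_n(\bZ)} \otimes \bk$ as coinvariants of a finite group on $\bk$, and verify that the induced map $\bar\partial\colon (\mathcal{R}_n)_{\SL_n(\bZ)} \otimes \bk \to (\mathcal{A}_n)_{\SL_n(\bZ)} \otimes \bk$ hits all of the image of $(K_n)_{\SL_n(\bZ)} \otimes \bk$. Transitivity of the $\SL_n(\bZ)$-action on integral bases should reduce this to a rank-$2$ or rank-$3$ verification.

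The main difficulty lies in this coinvariant calculation: enumerating the $\SL_n(\bZ)$-orbits of Bykovskii triples and verifying non-cancellation in the image. Characteristic zero is essential throughout -- both for the Maschke vanishing of higher finite-group homology and for the transfer-type averaging used to replace homology by coinvariants. For the $\GL_n(\bZ)$ statement, one applies the Hochschild--Serre spectral sequence for $1 \to \SL_n(\bZ) \to \GL_n(\bZ) \to \bZ/2 \to 1$; in characteristic zero it collapses to give $\rH_1(\GL_n(\bZ); \oSt_n(\bZ) \otimes \bk) \cong \rH_1(\SL_n(\bZ); \oSt_n(\bZ) \otimes \bk)^{\bZ/2} = 0$ for $n \ge 3$, with the small cases $n \le 2$ handled by direct computation (for $n = 0, 1$ the Steinberg module is trivial, and $n = 2$ reduces to a calculation on the Farey graph).
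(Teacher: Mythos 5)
The paper does not prove this theorem; it is quoted directly from Church--Putman~\cite{CP}, so there is no in-paper proof to compare your sketch against. (What the paper \emph{does} prove is the integral refinement \autoref{thm:integral-refinement} for $n \ge 6$, by an entirely different route: the Lyndon--Hochschild--Serre spectral sequence for $\Gamma_n(3) \to \SL_n(\bZ) \to \SL_n(\bF_3)$, Lee--Szczarba's computation of $\rH_0(\Gamma_n(3);\oSt_n(\bZ))$, Ash--Putman--Sam vanishing over $\bF_3$, and the paper's own \autoref{thm:main}.)

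Your sketch has a genuine gap at the key step. The setup through the long exact sequence is fine, and reducing to injectivity of $\iota\colon \rH_0(\SL_n(\bZ); K_n \otimes \bk) \to \rH_0(\SL_n(\bZ); \mathcal{A}_n \otimes \bk)$ is correct. But the further reduction you state is a tautology. Since $\partial$ factors as $\mathcal{R}_n \twoheadrightarrow K_n \hookrightarrow \mathcal{A}_n$ and coinvariants are right exact, $\bar\partial$ factors as $(\mathcal{R}_n)_{\SL_n(\bZ)} \otimes \bk \twoheadrightarrow (K_n)_{\SL_n(\bZ)} \otimes \bk \xrightarrow{\iota} (\mathcal{A}_n)_{\SL_n(\bZ)} \otimes \bk$, so the image of $\bar\partial$ is \emph{always} equal to the image of $\iota$; ``verifying that $\bar\partial$ hits all of the image of $\iota$'' gives you nothing. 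What you actually need is that $\ker(\bar\partial)$ maps to zero in $(K_n)_{\SL_n(\bZ)} \otimes \bk$ --- equivalently, that every element of $K_n$ that is a sum of $(g-1)$-terms in $\mathcal{A}_n$ is already such a sum inside $K_n$. This is a statement about the \emph{second} syzygy of the apartment presentation of $\oSt_n(\bZ)$, i.e.\ about $\ker(\mathcal{R}_n \to K_n)$, and transitivity of $\SL_n(\bZ)$ on integral bases says nothing about it: orbit transitivity lets you write down generators of $(\mathcal{R}_n)_{\SL_n(\bZ)}$ and $(\mathcal{A}_n)_{\SL_n(\bZ)}$, but it does not control which relations among the Bykovskii generators hold in $\mathcal{A}_n$ versus which already hold in $\mathcal{R}_n$. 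Closing this gap is exactly where the real content of Church--Putman's argument lies (a nontrivial high-connectivity theorem for an auxiliary simplicial complex), and it is also exactly the issue the present paper sidesteps by not attempting a direct second-syzygy bound over $\bZ$ --- compare the open question following \autoref{thm:Koszulness-of-Steinberg}. Calling it a ``rank-$2$ or rank-$3$ verification'' substantially understates what has to be done.
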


We use our main result, \autoref{thm:main}, to prove the following integral refinement. 

\begin{theorem} 
	\label{thm:integral-refinement}
	Let $\bk$ be an arbitrary commutative ring. For $n \ge 6$, we have that: \begin{align*}
	\rH_1(\GL_n(\bZ); \oSt_n(\bZ) \otimes_{\bZ}\bk) &= 0, \\
	\rH_1(\SL_n(\bZ); \oSt_n(\bZ) \otimes_{\bZ}\bk) &= 0.
	\end{align*} 
\end{theorem}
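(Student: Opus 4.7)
The plan is to combine Borel--Serre duality on the torsion-free congruence subgroup $\Gamma_n(3)$ with the Hochschild--Serre spectral sequence for the extension $1 \to \Gamma_n(3) \to \GL_n(\bZ) \to \GL_n(\bF_3) \to 1$, feeding in \autoref{thm:main} and \autoref{intro:general-vanishing} to control the rows. Borel--Serre gives $\rH_q(\Gamma_n(3); \oSt_n(\bZ) \otimes_\bZ \bk) \cong \rH^{\vcd-q}(\Gamma_n(3); \bk)$ for any coefficient ring $\bk$, so the spectral sequence takes the form
\[E^2_{p,q} = \rH_p(\GL_n(\bF_3); \rH^{\vcd-q}(\Gamma_n(3); \bk)) \Rightarrow \rH_{p+q}(\GL_n(\bZ); \oSt_n(\bZ) \otimes_\bZ \bk),\]
and the theorem for $\GL_n(\bZ)$ reduces to showing $E^\infty_{1,0} = E^\infty_{0,1} = 0$ for $n \geq 6$.

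For the $q=0$ row, the Lee--Szczarba identification $\rH^{\vcd}(\Gamma_n(3); \bZ) \cong \oSt_n(\bF_3)$ and the universal coefficient theorem, applicable because $\oSt_n(\bF_3)$ is free abelian, give $\rH^{\vcd}(\Gamma_n(3); \bk) \cong \oSt_n(\bF_3) \otimes_\bZ \bk$. Since the Steinberg monoid is Koszul over $\bF_3$ by \autoref{intro-koszul-main}, \autoref{intro:general-vanishing} yields $\rH_i(\GL_n(\bF_3); \oSt_n(\bF_3)) = 0$ for $n \geq 2i+2$, and a second application of UCT propagates the vanishing to arbitrary $\bk$. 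This gives $E^2_{1,0} = 0$ for $n \geq 4$ and $E^2_{2,0} = 0$ for $n \geq 6$; the latter ensures the only potential incoming differential $d^2 : E^2_{2,0} \to E^2_{0,1}$ is zero, so $E^\infty_{0,1} = E^2_{0,1}$.

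It remains to show $E^2_{0,1} = \rH^{\vcd-1}(\Gamma_n(3); \bk)_{\GL_n(\bF_3)} = 0$ for $n \geq 6$. Using Borel--Serre and UCT to identify $\rH^{\vcd-1}(\Gamma_n(3); \bk) \cong \rH^{\vcd-1}(\Gamma_n(3); \bZ) \otimes_\bZ \bk$ (valid because $\rH_0(\Gamma_n(3); \oSt_n(\bZ)) \cong \oSt_n(\bF_3)$ is free), \autoref{thm:main} delivers a $\GL_n(\bF_3)$-equivariant surjection $\Ind_{\GL_4(\bF_3)}^{\GL_n(\bF_3)} \rH^5(\Gamma_4(3); \bk) \twoheadrightarrow \rH^{\vcd-1}(\Gamma_n(3); \bk)$, so passing to coinvariants gives a surjection
\[\rH^5(\Gamma_4(3); \bk)_{\GL_4(\bF_3)} \twoheadrightarrow \rH^{\vcd-1}(\Gamma_n(3); \bk)_{\GL_n(\bF_3)}.\]
The hard part, which is the main obstacle of the proof, is showing the right-hand side vanishes for $n \geq 6$: this goes beyond the pure generation statement of \autoref{thm:main}. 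The plan is to exploit the apartment-monoid $\bA$-module structure on $\{\rH_1(\Gamma_n(3); \oSt_n(\bZ) \otimes \bk)\}_n$ together with \autoref{intro:tor-bound} (the Tor bound for $\oSt$ over $\bA$) to produce enough Sharbly-type relations in the relevant degrees to realize every image of an $\rH^5(\Gamma_4(3); \bk)$-class as a sum of boundaries in coinvariants once the degree gap $n - 4 \ge 2$ accommodates the syzygy range of \autoref{intro:tor-bound}. The $\SL_n(\bZ)$ statement is obtained by running the same argument for $1 \to \Gamma_n(3) \to \SL_n(\bZ) \to \SL_n(\bF_3) \to 1$, with $\SL$-versions of the homological inputs derived from the auxiliary extension $1 \to \SL_n(\bF_3) \to \GL_n(\bF_3) \to \bF_3^\times \to 1$, taking due care of $2$-torsion.
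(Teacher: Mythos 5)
The overall skeleton—Hochschild--Serre for the congruence extension, kill $E^2_{1,0}$ via Lee--Szczarba plus Ash--Putman--Sam, then kill $E^2_{0,1}$ using representation stability—matches the paper's proof. The problem is the part you flag as "the hard part," and your plan for it does not work.

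Taking coinvariants of the surjection from \autoref{thm:main} gives exactly what you wrote: a surjection $\rH^5(\Gamma_4(3);\bk)_{\GL_4(\bF_3)}\twoheadrightarrow \rH^{\vcd-1}(\Gamma_n(3);\bk)_{\GL_n(\bF_3)}$ whose source is independent of $n$ and has no reason to vanish. Your proposed remedy—exploiting the apartment-monoid $\bA$-module structure and \autoref{intro:tor-bound}—cannot succeed, because the apartment monoid has nontrivial coinvariants in every degree: $\rH_0(\GL_m;\ol{\bA}_m)\cong\bk$ for all $m$, so a generation statement over $\bA$ only ever re-derives the same $n$-independent surjection. And \autoref{intro:tor-bound} controls $\Tor^{\bA}_i(\St,\bk)$, not the syzygies of $\rH_1(\Gamma(3);\St)$; nothing in the paper establishes a higher-$\Tor$ bound for the latter (this is explicitly noted as unknown in a remark after \autoref{cor:fancy-St}).

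The missing idea is that the $\bA$-module structure on $\rH_1(\Gamma(3);\St)$ factors through the Steinberg monoid. This is \autoref{cor:fancy-St}, which uses \autoref{prop:factoring-through}: the natural action of $\rH_0(\Gamma(3);\bA)$ on $\rH_*(\Gamma(3);\St)$ via K\"unneth/Shapiro is the restriction along $\bA\to\St$ of an action of $\rH_0(\Gamma(3);\St)=\St_{\Gamma(3)}$, and by \autoref{prop:LS-case3} this is the Steinberg monoid $\ol{\St}$ over $\bF_3$. So \autoref{cor:fancy-St} gives a surjection
\[
\Ind_{\GL_{n-4}(\bF_3)\times\GL_4(\bF_3)}^{\GL_n(\bF_3)}\,\oSt_{n-4}(\bF_3)\otimes_{\bk}\rH_1(\Gamma_4(3);\oSt_4(\bZ)\otimes\bk)\;\twoheadrightarrow\;\rH_1(\Gamma_n(3);\oSt_n(\bZ)\otimes\bk),
\]
and on coinvariants the factor $\rH_0(\GL_{n-4}(\bF_3);\oSt_{n-4}(\bF_3))$ (equivalently $\rH_0(\SL_{n-4}(\bF_3);\oSt_{n-4}(\bF_3))$) vanishes for $n-4\ge 2$ by Lemma~2.6 of \cite{APS}. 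That is what makes $n\ge 6$ enough. Without replacing $\bA$ by $\ol{\St}$ in the generation statement, the coinvariance argument has no source of vanishing and the proof stalls exactly where you stopped. (The paper also works directly with the $\SL$-extension and deduces the $\GL$ statement at the end; your route via the auxiliary $\bF_3^\times$ extension is fine but unnecessary since $\GL_n^{\pm}(\bF_3)=\GL_n(\bF_3)$.)
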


\begin{remark}
	Although $\GL_n(\bZ)$ is only a duality group with $\bQ$ coefficients and, for $n$ even, the Steinberg module is {\bf not} the dualizing module (see Putman--Studenmund \cite{PStu} for a description of the dualizing module), the groups $\rH_i(\GL_n(\bZ);\oSt_n(\bZ))$ are still of interest. For $R$ a ring of integers in a number field, there is a spectral sequence due to Quillen \cite{Q} with \[E^1_{ab}=\rH_b(\GL_a(R);\oSt_a(R)) \implies \rH_{a+b}(\Omega^{\infty-1}(K(R)) ). \] Here $\Omega^{\infty-1}(K(R))$ denotes the one-fold delooping of the infinite loop space associated to the algebraic $\rK$-theory spectrum of $R$. Many of the calculations of algebraic $\rK$-groups using this spectral sequence have relied on homological vanishing results to simplify the $E^1$-page (see e.g. \cite{LSK3,LSK45,SGGHSY, DSEVKM}). We hope this new vanishing result will also be useful for computations. 
	
	%\jeremy{Show this to Sander to make sure he does not object or sees errors.}
	
%	\rohit{I showed it to Sander and he said he has more references that we should add here. So we should send him an email.}
	
\end{remark} 

%\jeremy{In an earlier version, there was a paragraph about short exact sequences and how if two out of the three have homoloigcal stability, the third has rep stability and how this is analogous to what happens here. Rohit voted that this should not be in the paper. Peter, what are your thoughts?}

\subsection*{Outline of the paper}

In \autoref{sec:preliminaries}, we discuss algebraic  preliminaries and define the Steinberg monoid and the apartment monoid. In \autoref{intro-koszul}, we prove our Koszulness result for the Steinberg  monoid of a field (\autoref{intro-koszul-main}). In \autoref{sec:sharbly}, we use  Koszulness of the Steinberg monoid to establish finiteness properties of the Steinberg monoid viewed as a module over the apartment monoid. In \autoref{sec:congruence}, we use these finiteness properties to prove our main representation stability result for the codimension-one cohomology of level $3$ congruence subgroups (\autoref{thm:main}). We pose a refined version of a conjecture due to Church--Farb--Putman in \autoref{sec:stability-conjectures} that generalizes \autoref{thm:main} to all primes, all codimensions, and also addresses a more robust notion of representation stability. Finally, in \autoref{sec:homological-vanishing}, we give a new proof of the homological vanishing theorem of Ash--Putman--Sam (\autoref{intro-APS}), and an integral refinement of Church--Putman's homological vanishing theorem (\autoref{thm:integral-refinement}).

%In \autoref{sec:spectral-sequence}, we construct a spectral sequence which is similar in spirit to the ones employed by Putman--Sam \cite{PS} and Djament \cite{Dj}

\subsection*{Acknowledgments} We thank 
Avner Ash, Martin Bendersky, Thomas Church, Benson Farb, Paul Gunnells, Richard Hepworth, Alexander Kupers, Mark McConnell, Andrew Putman, Steven Sam, David Sprehn, and Jennifer Wilson for helpful conversations.  

\section{The apartment and Steinberg monoids}
\label{sec:preliminaries}

In this section, we define the apartment monoid and the Steinberg monoid. We begin by discussing the category of $\VB$-modules and its symmetric monoidal product. 
 
\subsection{$\VB$-modules}
Fix a commutative base ring $R$ and a commutative coefficient ring $\bk$.  Let $\VB$ (or $\VB_R$ when we need to be more precise) be the groupoid whose objects are finite-rank free $R$-modules and whose morphisms are $R$-linear bijections. There is a natural symmetric monoidal structure $\oplus$ on $\VB$ given by direct sum. A {\bf $\VB$-module} is a covariant functor $\VB \to \Mod_{\bk}$.  The symmetric monoidal structure $\oplus$ on $\VB$ induces a symmetric monoidal structure $\otimes$ on $\Mod_{\VB}$ given by:
\[ (M \otimes N)(X) = \bigoplus_{X_1 \oplus X_2 = X} M(X_1) \otimes_{\bk} N(X_2). \] To be more precise, in the expression above $X_1, X_2$ vary over free submodules of $X$ such that every element of $X$ can be written uniquely as a sum of a vector in $X_1$ and a vector in $X_2$. For $R$ a field, this reduces to the condition that $X_1 + X_2 = X$ and $X_1 \cap X_2 = 0$. See \autoref{rem:djament's-tensor} for further discussion. We reserve the symbol $\otimes$ without subscript for this product. Note that if $X_1$ and $X_2$ are rank $n$ free $R$-modules then there is a $\VB$-isomorphism $X_1 \to X_2$. We define $M(n)$ to be $M(R^n)$. We will often denote $\GL(R^n) = \GL_n(R)$ by $\GL_n$ when the ring is understood. There is an action of $\GL_n$ on $M(n)$ and a natural isomorphism:

\[ (M \otimes N)(n) \cong \bigoplus_{i + j = n} \Ind_{\GL_{i} \times \GL_{j}}^{\GL_{n}} M(i) \otimes_{\bk} N(j). \]   The support of a  $\VB$-module $M$ is the set of non-negative integers such that $M(n)$ is not zero. We say that $M$ is supported on a set if the support is contained in that set. If the support of $M$ is non-empty, we define the {\bf degree} of $M$ to be the supremum  of the support and otherwise define it to be $-1$. We denote the degree of $M$ by $\deg M$. 

The category $\Mod_{\VB}$ of $\VB$-modules is easily seen to be  equivalent to the product category $\prod_{n \ge 0}\Mod_{\bk[\GL(R^n)]}$, and so $\Mod_{\VB}$ is a Grothendieck category. In particular, it is complete and cocomplete.

\begin{remark}
	\label{rem:djament's-tensor}
	We note that the monoidal structure $\otimes$ on $\Mod_{\VB}$ is given by the composite: \[ \Mod_{\VB} \times \Mod_{\VB} \xrightarrow{\boxtimes} \Mod_{\VB \times \VB} \xrightarrow{\mathrm{Lan}_{\oplus}} \Mod_{\VB}   \] where $\boxtimes$ is the external tensor product and $\mathrm{Lan}_{\oplus}$ is the left Kan extension along $\oplus \colon \VB \times \VB \to \VB$. In particular, $\otimes$ is the tensor product $\otimes_{[\VB, \oplus]}$ defined in \cite{Dj}. This remarks allows us to quote results from \cite{Dj}.
\end{remark}

\subsection{The apartment monoid} \label{secAp}
Let $A$ be a monoid object in the category of  $\VB$-modules.   An $A$-module $M$  is a $\VB$-module together with an action $A \otimes M \to M$ satisfying the usual associativity and unitality conditions. If $M$ is a right $A$-module and $N$ is a left $A$-module, we define $M \otimes_{A} N$ as the coequalizer of the two natural maps: \[M \otimes A \otimes N \rightrightarrows M \otimes N.   \] Let $\Tor^{A}_*(M,N)$ denote the associated left-derived functors.  We say that  $A$ is skew commutative if $a \wedge b= (-1)^{nm}  (b\wedge a)$ whenever $a \in  A(R^n), b \in A(R^m)$, where $\wedge \colon A \otimes A \to A$ denote the multiplication map. We regard $\bk$ as the monoid in $\Mod_{\VB}$ concentrated in degree 0. All the monoids that we consider in this paper admit a natural surjective map of monoids  $A \to \bk$ whose kernel $A_+$ is supported in degrees $>0$ (see \autoref{subsec:koszul-def} for more on this). In particular, $\bk$ will have the structure of an $A$-module and so we can make sense of $\Tor^A_{\ast}(\bk, -)$.

Let $\lw$ denote the left adjoint of the forgetful functor from the category of skew commutative $\VB$-monoids to the category of $\VB$-modules. Let $\triv_1$ be the $\VB$-module such that $\triv_1(n) = 0$ for $n \neq 1$, and $\triv_1(1)$ is the rank $1$ trivial representation of $\GL_1$.  We call the skew commutative $\VB$-monoid $\lw(\triv_1)$ the {\bf apartment monoid}, and we denote it by $\bA$. See \autoref{remarkapartment} for a discussion of the name apartment monoid and the connection with apartments in the Tits building.  Unraveling the definition, we obtain the following result: 

\begin{proposition}
	\label{prop:apartment-definition}
	
Let $X$ be a free $R$-module of rank $n$. Then $\bA(X)$ is a $\bk$-module generated by elements of the form $[v_1, \ldots, v_n]$, one for each ordered basis $v_1, \ldots, v_n$ of $X$, subject to the following relations: \begin{enumerate}
		\item $[v_1, \ldots, v_n] =\sgn(\sigma)[v_{\sigma(1)}, \ldots, v_{\sigma(n)}] $ for $\sigma$ a permutation of $\{1,2,\ldots, n\}$.
		\item $[r v_1, v_2, \ldots, v_n] = [v_1, \ldots, v_n]$ for $r \in R^{\times}$.
	\end{enumerate} Moreover, the multiplication map  \[\bA(X_1) \otimes_{\bk} \bA(X_2)  \to \bA(X) \] is given on generators by \[ [u_1, \ldots, u_k] [v_1, \ldots, v_{n-k}] = [u_1, \ldots, u_k, v_1, \ldots, v_{n-k}]. \] 
\end{proposition}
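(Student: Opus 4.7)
The statement is essentially an unwinding of the construction $\bA=\lw(\triv_1)$, so my plan is to chase through the universal property of the free skew commutative monoid on a $\VB$-module concentrated in degree $1$ and read off an explicit presentation.

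First, I would recall that as a plain $\VB$-module the free skew commutative monoid on $V$ decomposes as $\lw(V) = \bigoplus_{n\ge 0} V^{\otimes n}/S_n$, where $S_n$ acts through the sign character on factors of odd degree (this is exactly the content of skew commutativity for a generator concentrated in degree $1$, since then any two transposed generators anticommute). Since $\triv_1$ is supported in degree $1$ and the tensor product $\otimes$ on $\Mod_{\VB}$ is additive in degree, the summand $\triv_1^{\otimes m}$ is supported in degree $m$ alone. Consequently, for $X$ of rank $n$ we get $\bA(X) = \triv_1^{\otimes n}(X)/S_n^{\mathrm{sgn}}$, and the multiplication on $\bA$ is induced from concatenation of tensor factors on $\bigoplus_n \triv_1^{\otimes n}$.

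Second, I would compute $\triv_1^{\otimes n}(X)$ by iterating the tensor product formula recorded in the previous subsection. This gives
\[
\triv_1^{\otimes n}(X) \;=\; \bigoplus_{X=L_1\oplus\cdots\oplus L_n} \triv_1(L_1)\otimes_\bk\cdots\otimes_\bk \triv_1(L_n),
\]
the sum ranging over \emph{ordered} internal direct sum decompositions of $X$ into rank-$1$ free submodules $L_i\subset X$. Because $\triv_1(L_i)\cong \bk$ canonically (the group $\mathrm{Aut}(L_i)=R^\times$ acts trivially), this identifies $\triv_1^{\otimes n}(X)$ with the free $\bk$-module on the set of such ordered decompositions. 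The one place where I would be careful is verifying that the left Kan extension really produces an indexing set of internal decompositions (rather than pairs $(X_1,X_2)$ equipped with an isomorphism $X_1\oplus X_2\xrightarrow{\sim} X$); this is a standard identification and follows from the fact that $\oplus\colon \VB\times\VB\to\VB$ is essentially surjective onto $\VB$ with automorphism-equivariant fibres.

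Finally, I would translate this parametrization into bases. Given an ordered decomposition $(L_1,\ldots,L_n)$, any choice of generators $v_i$ of $L_i$ produces a basis $v_1,\ldots,v_n$ of $X$, and conversely every basis arises this way with $L_i=Rv_i$. Two bases determine the same decomposition exactly when they differ by rescaling each $v_i$ by a unit, which yields relation (b). Passing from $\triv_1^{\otimes n}(X)$ to its quotient by the sign action of $S_n$ yields relation (a). Surjectivity onto the putative presentation is then automatic (every decomposition admits a generating basis), and the kernel is generated by (a) and (b) by construction. The multiplication formula then reads off from the concatenation structure map $\triv_1^{\otimes k}\otimes\triv_1^{\otimes (n-k)}\to\triv_1^{\otimes n}$, which sends an ordered pair of tuples to their concatenation, giving the displayed formula $[u_1,\ldots,u_k][v_1,\ldots,v_{n-k}]=[u_1,\ldots,u_k,v_1,\ldots,v_{n-k}]$.
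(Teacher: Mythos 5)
The proposal is correct, and since the paper offers no written proof beyond the phrase ``unraveling the definition,'' what you have done is exactly the unwinding the authors have in mind: identify $\lw(\triv_1)$ as the sign-coinvariants of the tensor powers of $\triv_1$, compute $\triv_1^{\otimes n}(X)$ via the iterated convolution formula as the free $\bk$-module on ordered internal decompositions of $X$ into rank-one free summands, and then translate decompositions into bases-up-to-unit-rescaling to read off relations (a) and (b) and the concatenation multiplication. One small remark: the worry you flag about the left Kan extension producing internal decompositions is already dispatched by the paper's stated formula $(M\otimes N)(X)=\bigoplus_{X_1\oplus X_2=X}M(X_1)\otimes_\bk N(X_2)$, so you may simply cite it; and it is worth noting explicitly that since complementary rank-one summands of a given $X$ are necessarily distinct, the sign-$S_n$ action permutes the index set freely, so no characteristic-two subtleties about $v\wedge v$ arise and the coinvariants are genuinely free over $\bk$ on unordered decompositions.
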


%We now prove a lemma which will be useful for us later. This lemma is inspired by the proof of \cite[Theorem~A]{CP}.
%
%\begin{lemma}
%	\label{lem:CP-coinvariants-vanish}
%Assume that $2$ is invertible in $\bk$.	Let $M$ be an $\bA$-module generated in degrees $\le d$. Then $M(X)_{\SL(X)} = 0$ if the rank of $X$ is at least $d + 2$.
%\end{lemma}
%\begin{proof}
%	Since $M$ is generated in degrees $\le d$, there exists a $\VB$-module $V$ supported only in degree $d$ and a map $\bA \otimes V \to M$ which is surjective in degrees $\ge d$. Thus it suffices to prove the result for $M = \bA \otimes V$. For that, let $X$ be a free $R$-module of rank $n \ge d+2$. Then $M(X)$ is spanned by elements of the form $[v_1, \ldots, v_{n-d}] \otimes \alpha$ where $\{v_1, \ldots, v_{n-d}\}$ span an $R$-free summand $X_1$ of $X$, $\alpha \in V(X_2)$, and $X_1 \oplus X_2 = X$. Pick a basis $\{w_1, \ldots, w_d\}$ of $X_2$, and set $v_{n-d+i} = w_i$. Let $\sigma \in \SL(X)$ be the element defined on the basis $\{v_1, \ldots, v_n \}$ by \[\sigma(v_i) =\begin{cases}
%	 - v_2 &\mbox{if } i=1\\
%	 v_1  &\mbox{if } i=2 \\
%	 v_i  &\mbox{otherwise.}
%	\end{cases}   \] It is now clear that $\sigma([v_1, \ldots, v_{n-d}] \otimes \alpha) = -[v_1, \ldots, v_{n-d}] \otimes \alpha$. Since 2 is invertible in $\bk$, we see that the image of $[v_1, \ldots, v_{n-d}] \otimes \alpha$ in $\SL_n(X)$-coinvariants vanishes. This completes the proof.
%\end{proof}

In addition to exterior algebras, we will also need tensor algebras and symmetric algebras. The free (commutative) monoid functor is defined to be the left adjoint of the forgetful functor from the category of (commutative) monoid objects in $\Mod_{\VB}$ to $\Mod_{\VB}$. The tensor algebra $\rT(M)$ on a $\VB$-module $M$ is the free monoid on $M$. Similarly the symmetric algebra $\Sym M$ is the free commutative monoid on $M$. 

\begin{remark}
Monoid objects in a related representation theoretic monoidal category have been studied extensively by Sam--Snowden and others under the name twisted commutative algebras  (tcas); see \cite{SS1} for example. It is easy to spot the influence of the theory of tcas in this paper. 
\end{remark}

\subsection{The Steinberg monoid} 

Let $K$ be a field and $X$ an $n$-dimensional $K$-vector space. The Tits building for $X$ is denoted $\cT_n(X)$ and is the geometric realization of the poset of nonempty proper subspaces of $X$ ordered by inclusion. Given an integral domain $R$ and a free $R$-module $X$ of rank $n>0$, the Steinberg module with coefficients in $\bk$ is defined to be \[\St(X) := \widetilde \rH_{n-2}(\cT(X \otimes_R \Frac(R));\bk)\] where $\Frac(R)$ denotes the field of fraction. We define $\St(X)$ to be $\bk$ if the rank of $X$ is zero and use the usual convention that the reduced homology of the empty set is the coefficient ring. Note that here we are denoting the Steinberg module by a bold symbol which is in contrast to our notation in the introduction. We do this to take into account that now the coefficients are in a general ring $\bk$. We have chosen to introduce generalized coefficients to be able to talk about dimensions and characteristic zero settings.

Since $\GL(X \otimes_R \Frac(R)) \cong \GL_n( \Frac(R))$ acts on $\cT(X \otimes_R \Frac(R))$, $\St(X)$ is a representation of $\GL(X \otimes_R \Frac(R))$ and hence also a representation of $\GL(X) \cong \GL_n(R)$. For $X=R^n$, we denote $\St(X)$ by $\St_n(R)$. We now recall some presentations of Steinberg modules. 

\begin{theorem}[Lee--Szczarba {\cite[\S 3]{LS}}] \label{presentationsOfSteinberg}  Let $K$ be a field and $X$ an $n$-dimensional $K$-vector space. As a $\bk$-module, $\St(X)$ is generated by elements of the form $[v_1, \ldots, v_n]$, one for each ordered basis $v_1, \ldots, v_n$ of $X$, subject to the following relations: \begin{enumerate}
	\item $[v_1, \ldots, v_n] =\sgn(\sigma)[v_{\sigma(1)}, \ldots, v_{\sigma(n)}] $ for $\sigma$ a permutation.
	\item $[r v_1, v_2, \ldots, v_n] = [v_1, \ldots, v_n]$ for $r \in K^{\times}$.
	\item $\sum_i (-1)^i [v_0, v_1, v_2, \ldots, \hat v_i, \ldots, v_n] = 0$ where $v_0,\ldots, v_n$ are nonzero vectors and terms of the form $[v_0, v_1, v_2, \ldots, \hat v_i, \ldots, v_n] $ with $v_0, v_1, v_2, \ldots, \hat v_i, \ldots, v_n$ not a basis are omitted from the sum. 
\end{enumerate}

\noindent The $\GL(V)$ action is given by the formula $ g [v_1, \ldots, v_n]=[g v_1, \ldots, g v_n]$.  

\end{theorem}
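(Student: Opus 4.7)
The plan is to construct a $\bk$-linear, $\GL(X)$-equivariant isomorphism $\varphi$ from the $\bk$-module presented by the generators $[v_1,\ldots,v_n]$ and relations (1)--(3) onto $\St(X) = \widetilde{H}_{n-2}(\cT_n(X);\bk)$. I would send $[v_1,\ldots,v_n]$ to the fundamental class of the \emph{apartment} $A(v_1,\ldots,v_n) \subseteq \cT_n(X)$, namely the full subcomplex on those proper nonzero subspaces of $X$ that are spanned by a proper nonempty subset of $\{v_1,\ldots,v_n\}$. As the order complex of the Boolean lattice on $\{1,\ldots,n\}$ with top and bottom removed, $A(v_1,\ldots,v_n)$ is homeomorphic to $S^{n-2}$, and the ordering of the basis pins down an orientation. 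Relation (1) is then forced by the effect of the permutation on this orientation (multiplication by the sign), and (2) holds because scaling a basis vector does not alter the collection of subspaces spanned by subsets. For (3), given nonzero $v_0,\ldots,v_n$ in $X$, I would identify the alternating sum with the reduced boundary of an explicit $(n-1)$-chain supported on the subspaces spanned by nonempty subsets of $\{v_0,\ldots,v_n\}$; the relation reduces to the fact that this boundary is null-homologous, once one checks that the face corresponding to the full span (which equals $X$) is absent from $\cT_n(X)$ by definition, and that the faces corresponding to tuples failing to be independent also vanish because those simplices collapse.

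Surjectivity of $\varphi$ follows from the Solomon--Tits theorem: fixing a complete flag $F_\bullet$ in $X$, the building $\cT_n(X)$ is homotopy equivalent to a wedge of $(n-2)$-spheres indexed by bases transverse to $F_\bullet$, and each such sphere is precisely an apartment class of the form $[v_1,\ldots,v_n]$. Equivariance of $\varphi$ is immediate from how $\GL(X)$ acts on subspaces.

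The main obstacle is showing that (1), (2), (3) exhaust all the relations, i.e.\ injectivity of $\varphi$. Following Lee--Szczarba, I would promote $\varphi$ to a chain map from a complex $W_\bullet$ whose degree $k-1$ piece is freely generated by tuples $[v_1,\ldots,v_k]$ of nonzero vectors in $X$ modulo the analogues of (1) and (2), with differential given by the modular-symbol formula (terms vanishing when the surviving vectors fail to be independent), into a suitable shift of the augmented simplicial chain complex of $\cT_n(X)$. Filtering $W_\bullet$ by the dimension of the span of the tuple and analyzing the associated graded in terms of chain complexes of Tits buildings of the quotients, one shows that $W_\bullet$ is acyclic in the relevant range and that $H_{n-1}(W_\bullet) \cong \St(X)$, which exactly identifies (1)--(3) as a defining set of relations on the apartment generators. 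Carrying out this filtration and acyclicity argument is the technical heart of the proof; a more recent and streamlined version is available in the work of Bykovski\u{\i}.
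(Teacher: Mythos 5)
The paper does not give its own proof of this statement; it is quoted as a black box from Lee--Szczarba \cite[\S 3]{LS}, and the paper only uses it later (in the proof of \autoref{fieldPresentation}) together with the Solomon--Tits theorem. So strictly speaking there is nothing in the paper to compare your argument against. What you have written is a reasonable reconstruction of the overall strategy of the cited reference: send $[v_1,\dots,v_n]$ to the apartment class, verify the three relations hold, get surjectivity from Solomon--Tits, and then, for injectivity (i.e.\ sufficiency of the relations), assemble the modular-symbol tuples into a chain complex $W_\bullet$ (the paper's $\LS_\bullet$ in \S\ref{sec:homological-vanishing}) and prove exactness by a filtration/acyclicity argument. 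That is indeed how Lee--Szczarba proceed, and you rightly flag the exactness argument as the technical heart that you have not carried out.

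One concrete issue worth fixing before you try to fill in the details: your verification of relation (3) does not work as stated. You propose to realize the alternating sum as the boundary of an $(n-1)$-chain in $\cT_n(X)$, but $\cT_n(X)$ is an $(n-2)$-dimensional complex --- a maximal flag of proper nonzero subspaces in an $n$-dimensional space has length $n-1$, so there are no $(n-1)$-simplices and hence no $(n-1)$-chains to bound with. Either you must work in a larger ambient contractible complex (e.g.\ the cone on the building, which is the mechanism behind the Ash--Rudolph modular-symbol picture) so that an $(n-1)$-chain exists and its boundary lands in $\cT_n(X)$, or you should argue directly at the level of $(n-2)$-cycles that the alternating sum of apartment fundamental cycles cancels (for $n=2$ this is the elementary identity $(\langle v_1\rangle-\langle v_2\rangle)-(\langle v_0\rangle-\langle v_2\rangle)+(\langle v_0\rangle-\langle v_1\rangle)=0$, and the general case is a bookkeeping exercise with the same flavor). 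Alternatively, note that once you establish exactness of $W_\bullet \to \St(X) \to 0$, the vanishing of relation (3) is automatic from $d^2 = 0$ in that complex, so you may not need a separate geometric verification at all.
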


Note that relation $(c)$ implies the first two relations. 

\begin{theorem}[Bykovskii \cite{Byk}] \label{presentationZ} Let $X$ be a free $\Z$-module of rank $n$. As a $\bk$-module, $\St(X)$ is generated by elements of the form $[v_1, \ldots, v_n]$, one for each ordered basis $v_1, \ldots, v_n$ of $\Z^n$, subject to the following relations: \begin{enumerate}
	\item $[v_1, \ldots, v_n] =\sgn(\sigma)[v_{\sigma(1)}, \ldots, v_{\sigma(n)}] $ for $\sigma$ a permutation.
	\item $[-v_1, v_2, \ldots, v_n] = [v_1, \ldots, v_n]$.
	\item $[v_1, v_2, \ldots, v_n] - [v_0, v_2, \ldots, v_n] + [v_0, v_1, \ldots, v_n] = 0$ where $v_0= v_1 + v_2$.

\end{enumerate}
\noindent The $\GL(X)$ action is given by the formula $ g [v_1, \ldots, v_n]=[g v_1, \ldots, g v_n]$. 
\end{theorem}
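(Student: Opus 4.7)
The plan is to compare Bykovskii's presentation with the Lee--Szczarba presentation over $\bQ$, using the defining identification $\St(X) \cong \St(X \otimes_{\bZ} \bQ)$, which is already presented by \autoref{presentationsOfSteinberg} applied to $X \otimes_\bZ \bQ$. First I would construct a homomorphism $\Phi$ from the abelian group defined by Bykovskii's presentation to $\St(X)$ sending an integer-basis symbol $[v_1, \ldots, v_n]$ to the corresponding Lee--Szczarba symbol. Relations (a) and (b) of Bykovskii are immediate from Lee--Szczarba's (a) and (b). The 3-term relation (c) of Bykovskii with $v_0 = v_1 + v_2$ is obtained by applying Lee--Szczarba's $(n+1)$-term relation (c) to the tuple $(v_0, v_1, v_2, v_3, \ldots, v_n)$: the three summands omitting $v_0$, $v_1$, or $v_2$ yield exactly Bykovskii's relation, while each of the remaining $n-2$ summands (obtained by omitting $v_i$ for $i \geq 3$) contains all three of $v_0, v_1, v_2$ with $v_0 = v_1 + v_2$, is therefore linearly dependent, and is dropped from the Lee--Szczarba sum.

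Next I would show $\Phi$ is an isomorphism by constructing an inverse $\Psi$ on Lee--Szczarba generators. Given a $\bQ$-basis $w_1, \ldots, w_n$ of $X \otimes \bQ$, clear denominators to obtain an integer $n$-tuple spanning a finite-index sublattice of $X$, and then apply Smith normal form reduction via elementary unimodular column operations $v_i \mapsto v_i \pm v_j$. Each such operation is implemented by the Bykovskii 3-term relation rewritten as $[v_1 + v_2, v_2, v_3, \ldots, v_n] = [v_1, v_2, v_3, \ldots, v_n] + [v_1 + v_2, v_1, v_3, \ldots, v_n]$, so the reduction expresses the Lee--Szczarba symbol as an explicit signed sum of Bykovskii integer-basis symbols. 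One then verifies $\Phi \circ \Psi$ and $\Psi \circ \Phi$ are the identity on generators.

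The main obstacle is showing that $\Psi$ is well-defined: the Euclidean reduction is not canonical, so different reduction paths could a priori give different expressions in the Bykovskii presentation. Equivalently, one must check that the full Lee--Szczarba relation (c) for an arbitrary $(n+1)$-tuple of nonzero integer vectors is already a consequence of Bykovskii's three relations. I would prove this by induction on a suitable complexity of the tuple (e.g., the sum of absolute values of the coefficients of $v_0, \ldots, v_n$ in some fixed basis of $X$), with base case a tuple where only three of the $n+1$ Plücker terms yield bases, giving precisely Bykovskii's 3-term relation (c). The inductive step uses that the $\bZ$-Euclidean algorithm strictly decreases this complexity by applying one elementary operation, which introduces a controlled Bykovskii 3-term relation relating the original Lee--Szczarba relation to one with strictly smaller complexity. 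This inductive argument is the heart of Bykovskii's original paper and crucially uses that $\bZ$ is Euclidean (via continued fractions for $2 \times 2$ minors); no analogous 3-term presentation is expected for non-Euclidean PIDs.
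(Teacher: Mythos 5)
The paper does not supply a proof of this theorem; it is cited to Bykovskii \cite{Byk}, so there is no in-paper argument to compare against.

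Your outline has the right shape---identify $\St(X)$ with $\St(X\otimes_{\bZ}\bQ)$, map the Bykovskii-presented module to it, and try to invert---but the reduction step constructing $\Psi$ has a genuine gap. The Bykovskii $3$-term relation, equivalently the unimodular column operation $v_i\mapsto v_i+v_j$, preserves the determinant of the $n$-tuple: if $(v_1,\ldots,v_n)$ is a $\bZ$-basis then so are $(v_1+v_2,v_2,v_3,\ldots,v_n)$ and $(v_1+v_2,v_1,v_3,\ldots,v_n)$, all of determinant $\pm 1$. Consequently, if clearing denominators in a $\bQ$-basis yields integer vectors spanning a proper finite-index sublattice (determinant $|d|>1$), no sequence of such column operations can express the corresponding Lee--Szczarba symbol as a signed sum of Bykovskii $\bZ$-basis symbols: every tuple encountered still has determinant of absolute value $|d|$, and such tuples are not generators of the Bykovskii-presented module. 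The actual determinant-reducing step (Ash--Rudolph) requires introducing an auxiliary vector $u$ and invoking the full $(n{+}1)$-term Lee--Szczarba relation $\sum_{j}(-1)^j[u,v_1,\ldots,\hat v_j,\ldots,v_n]=0$ in $\St(X)$, with $u$ chosen inside the parallelepiped so that the surviving terms have strictly smaller determinant; Smith (or Hermite) normal form via $v_i\mapsto v_i\pm v_j$ cannot substitute for this.

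There is also a structural issue with the well-definedness claim. You phrase it as ``the Lee--Szczarba $(n{+}1)$-term relation for an arbitrary tuple of integer vectors follows from Bykovskii's three relations,'' but the Lee--Szczarba relation involves symbols $[w_1,\ldots,w_n]$ where the $w_i$ span only a finite-index sublattice, and such symbols do not exist as generators of the Bykovskii-presented module; the correct statement is injectivity of $\Phi$ once surjectivity (Ash--Rudolph) is established, or equivalently that the Ash--Rudolph section is independent of choices. Relatedly, the proposed base case---a tuple for which only three of the $n{+}1$ Pl\"ucker terms are nonzero---does not automatically coincide with Bykovskii's relation~(c), which has the specific form $v_0=v_1+v_2$ rather than an arbitrary $3$-term modular-symbol relation; that base case requires its own argument. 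You rightly defer all of this to \cite{Byk}, but as written the sketch does not close the argument, and the ``Smith normal form'' step in particular would fail.
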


The Steinberg modules assemble to form a $\VB$-module. We now define a monoid structure on $\St$ when $R$ is a field or the integers. We call this the {\bf Steinberg monoid}. 

\begin{proposition} Let $R$ be $\Z$ or a field and let $X$ and $Y$ be free $R$-modules of rank $n$ and $m$ respectively.
The map $\St(X) \otimes_{\bk} \St(Y)  \m \St(X \oplus Y)$ given by \[ [v_1, \ldots, v_n] \otimes [u_1, \ldots, u_m]  \mapsto  [v_1, \ldots, v_n, u_1, \ldots, u_m] \] is well-defined and gives $\St$ the structure of a monoid object in $(\Mod_{\VB},\otimes)$.

\end{proposition}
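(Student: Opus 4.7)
The plan is to use the explicit presentations from \autoref{presentationsOfSteinberg} (field case) and \autoref{presentationZ} ($\bZ$ case) to verify that the proposed concatenation formula descends to a well-defined $\bk$-linear map on $\St(X)\otimes_{\bk}\St(Y)$, and then to read off associativity and unitality. On generators, the concatenation of a basis of $X$ and a basis of $Y$ is a basis of $X\oplus Y$, so the formula $[v_1,\ldots,v_n]\otimes[u_1,\ldots,u_m]\mapsto[v_1,\ldots,v_n,u_1,\ldots,u_m]$ defines a $\bk$-bilinear map on the free $\bk$-modules generated by pairs of bases. I would then check that this map respects each defining relation of $\St(X)$ (tensored with a basis element of $\St(Y)$) and symmetrically for $\St(Y)$.

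Relations (a) (permutation antisymmetry) and (b) (scaling by units) transfer verbatim: permuting or rescaling entries on the $X$ side inside $[v_1,\ldots,v_n,u_1,\ldots,u_m]$ is exactly the corresponding relation in $\St(X\oplus Y)$, and similarly for the $u_j$'s. For relation (c) in the $\bZ$ case, the Bykovskii relation applied to $v_0=v_1+v_2$ together with the extra entries $v_3,\ldots,v_n,u_1,\ldots,u_m$ directly yields the concatenated relation in $\St(X\oplus Y)$, so this case is immediate.

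The delicate case is relation (c) in the Lee--Szczarba presentation. Given nonzero $v_0,\ldots,v_n\in X$ (necessarily linearly dependent since $\dim X=n$), I would apply the Lee--Szczarba relation in $\St(X\oplus Y)$ to the list of $n+m+1$ nonzero vectors $v_0,\ldots,v_n,u_1,\ldots,u_m$. The resulting alternating sum has $n+m+1$ terms, one for each omitted vector. A term obtained by omitting some $u_j$ involves the full list $v_0,\ldots,v_n$, which consists of $n+1$ linearly dependent vectors in the $n$-dimensional subspace $X$; the corresponding list is therefore not a basis of $X\oplus Y$ and the term is dropped by the Lee--Szczarba convention. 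What remains is precisely the image of the relation (c) in $\St(X)$ concatenated with $[u_1,\ldots,u_m]$, which must therefore vanish. A symmetric dimension count handles the $Y$-side relation: the unwanted terms drop out because $m+1$ vectors in $Y$ are linearly dependent, modulo an overall sign $(-1)^n$ that is a unit.

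Once well-definedness is established, strict associativity and unitality follow immediately, the unit being the generator $[\,]\in\St(0)\cong\bk$ (whose concatenation with any bracket is that bracket). The main hurdle is the Lee--Szczarba case of relation (c), and it is resolved by the elementary dimension-counting observation that the stray terms fall outside the set of bases and are therefore omitted by the convention in \autoref{presentationsOfSteinberg}.
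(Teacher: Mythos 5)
Your proof is correct and takes essentially the same approach as the paper's, which merely remarks that well-definedness ``follows from the above presentations'' and leaves the details to the reader. You have filled in exactly the verification the paper elides; in particular your dimension-count argument for the Lee--Szczarba relation (c) (the terms omitting some $u_j$ contain $n+1$ vectors lying in the $n$-dimensional $X$, hence are dropped) is the right and standard way to see that the concatenation map respects the field-case presentation.
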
 

\begin{proof}
The only thing that is not trivial is that the function is well-defined. This follows from the above presentations. 
\end{proof}

A monoid structure on $\St$ actually exists for all integral domains but we will not need this. It follows \autoref{presentationsOfSteinberg} and \autoref{presentationZ} that for $R$ a field or the integers, there is a natural surjective map $\bA \m \St$ which is a map of monoids (in fact such a surjection exists for $R$ Euclidean by the work of Ash--Rudolph \cite{AR}).  This gives $\St$ the structure of an $\bA$-module. In fact, \autoref{presentationZ} (or \autoref{fieldPresentation} in the field case) immediately implies that $\St$ is a quotient of $\bA$ by a two-sided ideal generated in degree 2 (the additional relation (c) only depends on $v_0, v_1,v_2$ which span a rank 2 summand). From this perspective the assertion of the proposition above is immediate as well.

\begin{remark}\label{remarkapartment}
For every integral domain $R$, there is a natural map $\bA \m \St$ which is not always surjective. The image of a generator $[v_1,\ldots, v_n]$ in $\widetilde \rH_{n-2}(\St_n(R))$ is known as an {\bf apartment class} and is the fundamental class of a sphere in $\cT(\Frac(R)^n)$ known as an {\bf apartment}. The apartment associated to $[v_1,\ldots, v_n]$ is the full subcomplex of $\cT(\Frac(R)^n)$ with vertices given by the span of nonempty proper subsets of $\{v_1,\ldots, v_n \}$. When $R$ is  the ring of integers in $\Frac(R)$, these classes are known as {\bf integral apartment classes}. It is an interesting question to classify when the Steinberg module is generated by integral apartment classes. See \cite{AR,CFPint, MPWY} for more on this question. 

% is the apartment 

%Part of the Solomon--Tits theorem is the result that the Steinberg module of a field is generated by so-called apartment classes. Apartments in the Tits building are spheres which correspond to  direct sum decompositions of the vector space into lines. The monoid $\bA$ is generated as an abelian group by symbols $[v_1, \ldots, v_n]$. The images of these generators under the map $\bA_n \m \St_n$ are the apartment classes. This is the reason we call $\bA$ the apartment monoid. 
%\jeremy{If we picked the other definition of the Steinberg monoid, then A would always be generated by apartments but St would not be and the name apartment monoid would make more sense.}
\end{remark}

The following theorem seems to be known to experts but we could not find a reference for it in the literature so we will sketch a proof.

\begin{theorem} \label{fieldPresentation} Let $K$ be a field and $X$ an $n$-dimensional $K$-vector space. As a $\bk$-module, $\St(X)$ is generated by elements of the form $[v_1, \ldots, v_n]$, one for each basis $v_1, \ldots, v_n$ of $X$, subject to the following relations: \begin{enumerate}

	\item $[v_1, \ldots, v_n] =\sgn(\sigma)[v_{\sigma(1)}, \ldots, v_{\sigma(n)}] $ for $\sigma$ a permutation.
	\item $[rv_1, v_2, \ldots, v_n] = [v_1, \ldots, v_n]$ for $r\in K^\times$.
	\item $[v_1, v_2, \ldots, v_n] - [v_0, v_2, \ldots, v_n] + [v_0, v_1, \ldots, v_n] = 0$ where $v_0= v_1 +  v_2$.

\end{enumerate}
\noindent The $\GL(X)$ action is given by the formula $ g [v_1, \ldots, v_n]=[g v_1, \ldots, g v_n]$.  
\end{theorem}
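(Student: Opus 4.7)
The plan is to compare the proposed presentation with the Lee--Szczarba presentation (\autoref{presentationsOfSteinberg}). Let $M_n$ denote the $\bk$-module defined by the generators and relations (a), (b), (c) of the statement, and let $\varphi \colon M_n \twoheadrightarrow \St(X)$ be the natural surjection. First I must check that $\varphi$ is well-defined, i.e.\ that (c) holds in $\St(X)$. This is immediate from Lee--Szczarba's relation (c) applied to the $(n+1)$-tuple $v_0, v_1, v_2, \ldots, v_n$ with $v_0 := v_1 + v_2$: every $n$-subtuple omitting some $v_i$ with $i \ge 3$ still contains the linearly dependent set $\{v_0, v_1, v_2\}$, so only the three terms of our (c) survive with the signs appearing in (c). Since apartment classes generate $\St(X)$, $\varphi$ is surjective, and it remains to show that the full Lee--Szczarba relation holds in $M_n$; this will produce an inverse.

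The first step is to promote (c) to a \emph{generalized three-term relation}: for any $\alpha, \beta \in K^{\times}$ with $v_0 := \alpha v_1 + \beta v_2$,
\[
[v_1, v_2, v_3, \ldots, v_n] - [v_0, v_2, v_3, \ldots, v_n] + [v_0, v_1, v_3, \ldots, v_n] = 0 \quad \text{in } M_n.
\]
This follows by applying (c) to the rescaled pair $\alpha v_1, \beta v_2$ (whose sum equals $v_0$) and then using (b) to strip the scalars. Together with (a) this allows me to perform a three-term relation in any plane spanned by two members of a basis.

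The key step is to deduce the full Lee--Szczarba relation for nonzero vectors $v_0, v_1, \ldots, v_n$ with $v_1, \ldots, v_n$ a basis. Writing $v_0 = \sum_{i=1}^n c_i v_i$ and reordering so that $c_1, \ldots, c_k \ne 0$ while $c_{k+1} = \cdots = c_n = 0$, the Lee--Szczarba sum reduces to $[v_1, \ldots, v_n]$ together with the terms $[v_0, v_1, \ldots, \widehat{v_i}, \ldots, v_n]$ for $1 \le i \le k$. I will induct on $k$. The cases $k \le 2$ are trivial or are exactly the generalized three-term relation. For $k \ge 3$, set $u := c_1 v_1 + c_2 v_2$; then $(u, v_2, v_3, \ldots, v_n)$ is again a basis, and in it $v_0 = u + \sum_{i \ge 3} c_i v_i$ has only $k-1$ nonzero components. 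The inductive hypothesis, applied to $v_0, u, v_2, \ldots, v_n$, yields a Lee--Szczarba identity in $M_n$ involving $u$. I then apply the generalized three-term relation in the plane $\langle v_1, v_2 \rangle$ to rewrite every symbol containing $u$ as a combination of symbols in the original basis, and verify that the result matches the desired identity term by term.

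The main obstacle will be controlling the telescoping in this rewriting step: converting each $u$-symbol produces two auxiliary symbols, and all the surplus contributions must cancel pairwise. I expect this to be routine but bookkeeping-heavy; a cleaner organization is to package it as the acyclicity of the ``pointed-frame'' complex whose $n$-chains are symbols $(v_0; v_1, \ldots, v_n)$ with $v_1, \ldots, v_n$ a basis and $v_0$ a nonzero vector in their span (modulo rescaling each entry), with the generalized three-term relation furnishing the required chain null-homotopy.
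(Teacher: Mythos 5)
Your approach is genuinely different from the paper's. The paper shows injectivity of the surjection $f\colon S(X)\to\St(X)$ by a spanning/basis argument: relations (a)--(c) reduce $S(K^n)$ to a spanning set of unit upper triangular symbols, and the Solomon--Tits theorem identifies the images of these under $f$ as a free $\bk$-basis of $\St_n(K)$, which immediately forces $f$ to be an isomorphism without ever deriving the full Lee--Szczarba relation inside $S$. You instead attempt to derive that relation from (a)--(c) directly by induction on the number $k$ of nonzero coordinates of $v_0$, producing an explicit inverse. Both routes work; yours is more elementary in that it avoids Solomon--Tits, but it pays for this with the telescoping you correctly flag as delicate.

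One warning on the inductive step: the closing is not a ``term-by-term'' match after rewriting $u$-symbols via three-term relations in $\langle v_1,v_2\rangle$, and symbols such as $[v_0,u,v_1,v_4,\dots]$ cannot actually be made $u$-free by a relation in that plane alone. What does happen, if you organize the computation, is that the difference between your translated inductive identity (in the basis $(u,v_2,v_3,\dots,v_n)$) and the target identity (in $(v_1,\dots,v_n)$) collapses, after the $\langle v_1,v_2\rangle$ rewrites, to the Lee--Szczarba identity for the basis $(u,v_1,v_3,v_4,\dots,v_n)$, in which $v_0$ again has only $k-1$ nonzero coordinates. So the step goes through by a \emph{second} invocation of the inductive hypothesis in a different basis, rather than by pure cancellation. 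Once you make that adjustment your argument is complete; the paper's Solomon--Tits route simply sidesteps the whole bookkeeping. Your closing suggestion (acyclicity of a pointed-frame complex) is the right conceptual packaging and is in spirit the same acyclicity/shellability that underlies Solomon--Tits, so the two proofs are less far apart than they first appear.
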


\begin{proof}

Call $S(X)$ the $\bk[\GL(X)]$-module with the above presentation. By \autoref{presentationsOfSteinberg}, sending a generator to the generator with the same name gives a well-defined surjective map $f\colon S(X) \m \St(X)$. Pick an isomorphism $X \cong K^n$. To see that $f$ is injective, note that we can use relations (a), (b), and (c) to show that $S(K^n)$ is generated as a $\bk$-module by symbols $[v_1, v_2, \ldots, v_n]$ where the vectors assemble to form a unit upper triangular matrix with respect to the standard basis of $K^n$. The Solomon--Tits theorem (see \cite[Theorem~IV.5.2]{Brown-buildings}) implies that  $\St_n(K)$ has a basis given by unit  upper triangular matrices. Thus, $f$ is also injective.
\end{proof}

\section{Koszulness of the Steinberg monoid of a field}
\label{intro-koszul}

In this section, we prove that the Steinberg monoid of a field is Koszul. We follow an argument of Priddy \cite{priddy-koszul}.

\subsection{Preliminaries on Koszulness}
\label{subsec:koszul-def}
In a non-negatively graded monoidal category with unit object $\bk$ (which we assume is supported in degree 0), we say that a monoid $A$ is an {\bf augmented monoid} if there is a surjection $A \to \bk$ of monoids (called the {\bf augmentation map})  whose kernel $A_+$ (called the {\bf augmentation ideal}) is supported in degrees $>0$. Note that if such an augmentation exists, it will be unique. A map of augmented monoids is a map of monoids that preserves the augmentation map. In particular, the augmentation map defined above is a map of augmented monoids. The apartment and Steinberg monoids are naturally augmented monoids in $(\Mod_{\VB}, \otimes)$. One can define Koszul monoids in the general setting of symmetric monoidal categories but we discuss it only in the setting of $\VB$-modules for concreteness as follows.

Recall that we regard $\bk$ as a $\VB$-module supported in degree $0$. In other words, $\bk$ is the unit object in $\Mod_{\VB}$, and so is naturally an augmented monoid. Let $A$ be an augmented monoid in $(\Mod_{\VB}, \otimes)$. The two-sided reduced bar resolution $\cB_{\ast}(A, A) \to A \to 0$ of $A$ is given by \[ A \otimes \rT^{\ast}(A_{+}) \otimes A \to A \to 0  \] where $\rT(.)$ denotes the tensor algebra and $\rT^{\ast}(.)$ denotes its $*$th graded piece (we have borrowed our notation from \cite{priddy-koszul}). If $M$ is a right $A$-module and $N$ is a left $A$-module, then $\Tor_{\ast}^A(M, N )$ is the homology of \[\cB_{\ast}(M, A, N) \coloneq M \otimes_{A} \cB_{\ast}(A, A) \otimes_{A} N.\] The module  $ \cB_{s}(M, A, N)$ is generated by elements of the form $m \otimes a_1 \otimes\cdots \otimes a_s \otimes n$ where $m \in M$, $n \in N$ and $a_i \in A_{+}$.  Such elements are written as $m \otimes [a_1 | a_2| \cdots | a_s] \otimes n$ for historical reasons. We shall only need the special case when $M =N =\bk$. We denote the complex  $ \cB_{\ast}(\bk, A, \bk)$ calculating $\Tor_{\ast}^A(\bk, \bk)$ simply by $\ol{\cB}_{\ast}(A)$. In this case, the differential is given by \[\partial([a_1 | a_2| \cdots | a_s]) = \sum_{j=1}^{s-1} (-1)^{j-1} [a_1 | \cdots | a_j a_{j+1} | \cdots | a_s].  \]  For every element $[a_1 | a_2| \cdots | a_s] \in \ol{\cB}_{\ast}(A)$, the homological degree is defined to be $s$ and internal degree is defined to be $\sum_{i=1}^s \deg a_i$.  We shall denote the homological degree $s$ and internal degree $n$ piece of $\ol{\cB}_{\ast}(A)$ by $\ol{\cB}_s^n(A)$. We say that $A$ is  {\bf Koszul} if $\Tor_i^{A}(\bk, \bk)$ is supported only in internal degree $i$ (for each $i \ge 0$). Equivalently, $A$ is a Koszul monoid if, for each $n \ge 0$, the homology of $\ol{\cB}^n_{\ast}(A)$ is supported in homological degree $n$. 

\subsection{Koszulness of the apartment and the Steinberg monoids} Recall that the apartment monoid $\bA$ is by definition $\lw(\triv_1)$, the exterior algebra on $\triv_1$. Koszulness of exterior algebras is quite well-known in other contexts and the usual proof carries over to the symmetric monoidal category of $\VB$-modules.
\begin{theorem}[Koszulness of the apartment monoid] 
\label{thm:apartment-monoid-is-Koszul}	
Let $R$ be a PID, and $\bA$ be the apartment monoid in $(\Mod_{\VB_R}, \otimes)$. Then $\bA$ is Koszul. 
\end{theorem}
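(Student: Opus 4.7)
The plan is to adapt Priddy's classical argument that exterior algebras are Koszul to the symmetric monoidal category of $\VB_R$-modules. Since $\bA = \lw(\triv_1)$ is by definition the free skew-commutative monoid on a generator concentrated in internal degree $1$, we expect an explicit Koszul resolution of $\bk$ by free $\bA$-modules whose associated Koszul dual is a divided power (or symmetric) monoid on $\triv_1$. Once such a resolution is in hand, Koszulness is immediate from the internal grading.

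Concretely, I would let $\Gamma = \Gamma(\triv_1)$ be the divided power monoid on $\triv_1$ in $\Mod_{\VB_R}$, noting that the internal-degree-$s$ piece $\Gamma_s$ is supported entirely in internal degree $s$. I would then form the candidate Koszul complex
\[ K_\ast := \bA \otimes \Gamma, \]
placing $\bA \otimes \Gamma_s$ in homological degree $s$, and equipping it with the Koszul differential $d \colon \bA \otimes \Gamma_s \to \bA \otimes \Gamma_{s-1}$ that transfers a ``unit'' from $\Gamma$ into $\bA$ via the multiplication of $\bA$. A routine computation, leveraging the skew commutativity of $\bA$ as in \autoref{prop:apartment-definition}, should give $d^2 = 0$ and $\GL_n(R)$-equivariance in each internal degree $n$. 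Augmenting $K_\ast$ by the counit $\bA \to \bk$, I would then show $K_\ast \to \bk$ is a resolution of $\bk$ by free left $\bA$-modules. The classical way to do this is to exhibit an explicit contracting homotopy that moves a factor from $\bA$ back into $\Gamma$; once exactness is known, one immediately computes
\[ \Tor^{\bA}_i(\bk, \bk) = \rH_i(\bk \otimes_{\bA} K_\ast) = \Gamma_i, \]
since the induced differential is zero (all surviving $\bA$-factors lie in $\bA_+$ and get killed). Because $\Gamma_i$ is supported only in internal degree $i$, this is exactly Koszulness.

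The main obstacle is the verification that $K_\ast$ is acyclic in positive homological degrees over a general PID $R$ and general coefficient ring $\bk$. Classically, one writes down the contracting homotopy in terms of a fixed basis of $V$, but here the ``generator'' $\triv_1$ lives in the $\VB_R$-world and our modules in each internal degree carry nontrivial $\GL_n(R)$-actions, so the contraction must be defined equivariantly. I expect to handle this by fixing, for each free $R$-module $X$ of rank $n$, an ordered basis, writing down the contracting homotopy on generators $[v_1,\ldots,v_n]$ using the presentation of $\bA$ in \autoref{prop:apartment-definition}, and then extending by $\GL_n(R)$-equivariance; checking that the extension is well-defined and actually contracts is the heart of the argument. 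A secondary issue is the choice between $\Sym$ and $\Gamma$ as the Koszul dual, which coincides in characteristic zero but must be sorted out integrally; using divided powers is the standard remedy and should be consistent with the combinatorics of the apartment monoid.
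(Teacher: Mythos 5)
Your route through the explicit Koszul complex $K_\ast = \bA \otimes \Gamma(\triv_1) \to \bk$ is legitimate and is in fact the resolution the paper records in \autoref{sec:koszul-discussion} (there written as $\Sym^\ast(\triv_1) \otimes \bA \to \bk \to 0$). It is, however, not the argument the paper points to: for \autoref{thm:apartment-monoid-is-Koszul} the paper defers to a PBW-type argument modeled on \autoref{prop:homotopy}, in which one filters the reduced bar complex $\ol{\cB}^n_{\ast}(\bA)$ by a word order and contracts the associated graded. Both routes can be made to work, so the question is whether you can close the exactness step you yourself flag as the heart of the matter.

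There the plan as written has a genuine gap. You propose to pick, for each free $R$-module $X$, an ordered basis, define a contracting homotopy on the corresponding generators $[v_1,\ldots,v_n]$, and then ``extend by $\GL_n(R)$-equivariance.'' That is the wrong move. A contracting homotopy of $K_\ast(R^n)$ need not be, and in general is not, $\GL_n(R)$-equivariant (an equivariant contraction would force the complex to split equivariantly, which one should not expect over, say, $\bk = \bZ$); and ``extending a definition made on a chosen basis by equivariance'' is not a well-posed operation here, since the $\GL_n(R)$-orbit of a single generator of $\bA(R^n)$ is essentially everything. What you actually need is exactness as a complex of $\bk$-modules, so you should forget the group action entirely. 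This is then easy: in internal degree $n$, a $\bk$-basis of $(\bA \otimes \Gamma^s(\triv_1))(R^n)$ is indexed by pairs (a frame $\cF$ of $R^n$, i.e.\ a set of $n$ rank-one free summands with $\bigoplus_{L\in\cF} L = R^n$; a size-$s$ subset $\cF_2 \subseteq \cF$), the Koszul differential preserves $\cF$, and the summand for a fixed $\cF$ is the classical rank-$n$ Koszul complex over $\bk$ (equivalently, the tensor product of $n$ acyclic two-term complexes), hence exact. The usual contracting homotopy --- absorb a fixed line $L_0 \in \cF$ --- is defined one frame at a time and manifestly depends on the choice of $L_0$, so it does not assemble to anything equivariant; but equivariance plays no role in computing $\Tor^{\bA}_{\ast}(\bk,\bk)$. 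Finally, your worry about $\Gamma$ versus $\Sym$ evaporates in this setting: the $\symm{s}$-action on $\triv_1^{\otimes s}(R^s) \cong \bk[\GL_s(R)/\GL_1(R)^s]$ is free, so the norm map $\Sym^s(\triv_1) \to \Gamma^s(\triv_1)$ is an isomorphism over any $\bk$, and the paper's identification $\Tor^{\bA}_{\ast}(\bk,\bk) = \Sym^{\ast}(\triv_1)$ is correct integrally as stated.
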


%We shall also use the following Koszulness result.

%\begin{theorem}[Koszulness of the tensor algebra] 
%	\label{thm:tensor-algebra-is-Koszul}Let $E'$ be the $\VB$-module such that $E'(n) = 0$ for $n \neq 1$, and $E'(1)$ is the regular representation of $\GL_1$. If $R$ is a PID then the tensor algebra $\rT(E')$ is Koszul. 
%\end{theorem}

We shall not prove \autoref{thm:apartment-monoid-is-Koszul}
% or \autoref{thm:tensor-algebra-is-Koszul} 
but an interested reader can obtain proofs by following the proof of Koszulness of the Steinberg monoid $\St$. Also see the discussion in \autoref{sec:koszul-discussion}. We prove Koszulness of $\St$ in the case when $R$ is a field as follows: \begin{itemize}
	\item Using the Solomon--Tits theorem, we obtain a basis of the Steinberg module consisting of unit upper triangular matrices. 
	
	%An application of \autoref{thm:church--putman--By} yields a basis for the Steinberg monoid and $\ol{\cB}_{\ast}(\St)$
	% \jeremy{I am cutting most thm:church--putman--By. I think every time we quote it, it is just to say our Steinberg equals other people's Steinberg. I think we no longer need to do this since it is true by definition. However, be on the look out in case I messed stuff up by cutting references to this theorem.}
	\item We define a well-ordering on the basis of $\ol{\cB}_{\ast}(\St)$ obtained above which makes $\St$ a ``Poincar\'e--Birkhoff--Witt-like'' (PBW-like) monoid (we chose not to make this precise). 
	\item We follow Priddy's argument as in \cite{priddy-koszul} that PBW algebras are Koszul to finish our argument. 
\end{itemize}

\begin{theorem}[Koszulness of the Steinberg monoid] 
	\label{thm:Koszulness-of-Steinberg} Suppose $R$ is a field. Then $\Tor_i^{\St}(\bk, \bk)$ is supported in degree $i$.
\end{theorem}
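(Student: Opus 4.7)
The plan is to follow Priddy's classical argument \cite{priddy-koszul} that quadratic monoids admitting a PBW basis are Koszul, adapted to the symmetric monoidal category $(\Mod_{\VB}, \otimes)$. By \autoref{fieldPresentation}, the Steinberg monoid $\St$ is quadratic: it is generated in degree one by $\St_1 = \triv_1$, and all relations beyond antisymmetry and $K^\times$-scaling are generated by the three-term identity (relation (c) of \autoref{fieldPresentation}) in internal degree two. It therefore suffices to exhibit a PBW-style basis for $\St$ and verify the triangularity of the multiplication.

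The first step is to apply the Solomon--Tits theorem to fix a $\bk$-basis of each $\St_n(K)$ consisting of those symbols $[v_1, \ldots, v_n]$ whose matrix is unit upper triangular with respect to the standard basis $e_1, \ldots, e_n$ of $K^n$; these are the candidate standard monomials. Order them lexicographically by their above-diagonal matrix entries, and extend to a total order on the natural $\bk$-basis of $\ol{\cB}^n_\bullet(\St)(n)$ by combining the entry-wise lex order with a fixed order on compositions of $n$ and on a choice of minimal length coset representatives for the corresponding parabolic subgroups $\GL_{n_1} \times \cdots \times \GL_{n_s} \subset \GL_n$.

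The second step is to verify the PBW triangularity: for any two standard monomials $x \in \St_m$ and $y \in \St_n$, when the product $x \cdot y \in \St_{m+n}$ is re-expressed in the chosen basis via iterated application of relation (c), its leading term is the standard monomial obtained by concatenation (after the parabolic coset reshuffle), while all error terms are strictly smaller. Granted this, Priddy's standard argument reduces Koszulness of $\St$ to Koszulness of its associated graded monoid, which is monomial and whose Koszulness is proved by the same mechanism as \autoref{thm:apartment-monoid-is-Koszul} for the apartment monoid --- namely by exhibiting an explicit contracting homotopy on each internal degree of the reduced bar complex off the Koszul diagonal.

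The hard part will be verifying the triangularity in the $\VB$-setting, which requires threading the choice of unit upper triangular coordinates through the parabolic coset data in a coherent, $\GL_n$-equivariant way. Concretely, one must check that the three-term substitution $[v|w] \mapsto [v+w|w] - [v+w|v]$ (and its iterates) always produces strictly smaller correction terms, and that the ``leading term'' assignment is invariantly defined across different parabolic cosets. This $\VB$-equivariant bookkeeping is the essential difference between Priddy's original field-theoretic argument and the monoidal version needed here.
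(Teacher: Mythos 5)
Your strategy is essentially the paper's strategy: use the Solomon--Tits basis, set up a PBW-type filtration, and run a Priddy-style contracting-homotopy argument on the reduced bar complex $\ol{\cB}^n_\ast(\St)$. The paper follows exactly this template (indeed, citing Priddy via Brunetti--Ciampella). Your instinct to pass to an associated graded is also morally what the paper does: it exhibits a filtration and shows $\partial\Phi + \Phi\partial - \id$ is strictly filtration-increasing, which is the same as showing the associated graded is acyclic off the diagonal.

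However, the part you flag as ``the hard part'' is in fact the entirety of the proof, and your sketch of it is likely to lead you astray on two concrete points.

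First, the object on which you must build PBW data is not just $\St_n(K) = \St(K^n)$ with its standard basis but $\St(W)$ for \emph{every} subspace $W \subseteq K^n$, because $\ol{\cB}^n_s(\St) = \bigoplus_{\bigoplus_j W_j = K^n} [\St(W_1)|\cdots|\St(W_s)]$. The crux of the paper's argument is a \emph{canonical} assignment to each subspace $W$ of a pivot set $S_W \subset [n]$, an ordered basis $B_W$, and a PBW basis $\PBW_W$ of $\St(W)$ via the column-reduced echelon form of a spanning matrix, together with the compatibility properties (P1)--(P3): that $S_{W_1 \oplus W_2} = S_{W_1} \sqcup S_{W_2}$ when $S_{W_1} \prec S_{W_2}$, that multiplication respects the PBW bases, and that given a splitting $S_{W} = S_1 \sqcup S_2$ with $S_1 \prec S_2$, every PBW element of $\St(W)$ factors canonically. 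Your proposal to thread through ``parabolic coset representatives'' is gesturing at this, but you never actually say what the canonical choice is, and that choice is where all the content lives.

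Second, your proposed ordering (``lexicographically by above-diagonal matrix entries'') is not the filtration the argument needs. The paper filters by the concatenated word of pivot sets $S_{W_1}, \ldots, S_{W_s}$ in $[n]^n$, ordered lexicographically; the matrix entries never enter the filtration. This distinction matters: the crucial property is that merging two adjacent bar factors $[\ldots | a_{W_j} a_{W_{j+1}} | \ldots]$ strictly increases the filtration word unless $S_{W_j} \prec S_{W_{j+1}}$, and that equality characterizes exactly when the merged element is again a basis element. With a lex order on matrix entries you would have no control over what relation (c) does to the filtration level, because a single application of $[v_1,v_2] \mapsto [v_0,v_2] - [v_0,v_1]$ can scramble above-diagonal entries in either direction. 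So the ``triangularity'' you want to verify would fail for your proposed order; you need the coarser pivot-set order.

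So: right roadmap, same method as the paper, but the proof as written is not there --- the canonical echelon-form data for arbitrary subspaces and the pivot-word filtration are what make the homotopy $\Phi$ well-defined and contracting, and both are absent or misidentified in your sketch.
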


Fix a field $K$, and assume that $R = K$ throughout the rest of this subsection.  \autoref{prop:homotopy} below immediately implies the theorem above. We now provide some preliminaries needed for the proposition.

\begin{proposition}
	\label{prop:unipotent-basis-of-steinbergs}
	Let $W$ be a $d$-dimensional $K$-vector space. Let $B = (v_1, \dots, v_d)$ be an ordered basis of $W$, and let $\bU_d$ be the  subgroup of those matrices in $\GL_d(K)$ whose matrix with respect to $B$ is unit lower triangular. So if $g \in \bU_d$ then  $ g v_i =  v_i + \sum_{j > i} c_{ij} v_j$ for some $c_{ij} \in K$. 	 Then $\St(W)$ is freely generated as a $\bk$-module by the apartment classes \[ \PBW_W \coloneq   \{  [g v_1, \ldots, g v_d] \mid g \in \bU_d  \}.\]
\end{proposition}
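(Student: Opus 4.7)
The plan is to deduce this directly from the Solomon--Tits theorem, which has already been invoked in the proof of \autoref{fieldPresentation}. First I would use the ordered basis $B$ to identify $W$ with $K^d$, sending $v_i$ to the standard basis vector $e_i$. This identifies $\St(W)$ with $\St_d(K)$ and the apartment class $[v_1,\ldots,v_d]$ with the standard class $[e_1,\ldots,e_d]$, so the proposition reduces to the case $W = K^d$, $B = (e_1, \ldots, e_d)$.

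Next, I would unwind the right $\bU_d$-action to describe $\PBW_W$ concretely. For $g = (g_{ij}) \in \bU_d$, the ordered tuple $(e_1, \ldots, e_d)\cdot g$ has $i$-th entry $w_i = e_i + \sum_{j<i} g_{ji} e_j$. Hence $\PBW_W$ is exactly the set of apartment classes $[w_1, \ldots, w_d]$ such that the matrix expressing $(w_1, \ldots, w_d)$ in terms of $(e_1, \ldots, e_d)$ is unit upper triangular. This is precisely the collection of apartment classes asserted, by the Solomon--Tits theorem (\cite[Theorem IV.5.2]{Brown-buildings}), to form a free $\bk$-basis of $\St_d(K)$; this is the same fact already used at the end of the proof of \autoref{fieldPresentation} to establish injectivity there.

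Two small points remain to finish. First, the assignment $g \mapsto [v_1, \ldots, v_d] \cdot g$ is a bijection $\bU_d \to \PBW_W$: it is surjective by definition, and injective because the $\GL(W)$-action on ordered bases is free, while no two distinct unit upper triangular tuples are identified by the defining relations of \autoref{fieldPresentation} (permutation relations destroy upper triangularity and the scaling relation is trivial on the unit entries). Second, one should confirm that the parameterization of the Solomon--Tits basis by unit upper triangular matrices coincides with the present one, which is a routine matter of matching matrix and basis conventions. Since Solomon--Tits is doing all the heavy lifting, there is no substantial obstacle; the proposition is essentially a translation of that theorem into the notation $\PBW_W$ which will be convenient for the subsequent PBW and Koszulness arguments.
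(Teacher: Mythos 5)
Your proposal is correct and takes essentially the same approach as the paper: the paper's entire proof is the single sentence ``This is the Solomon--Tits theorem,'' and your argument is simply that citation with the routine change-of-basis bookkeeping made explicit.
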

\begin{proof} This is the Solomon--Tits theorem (see \cite[Theorem~IV.5.2]{Brown-buildings}).
\end{proof}

\begin{remark}
	\label{rem:lower-triangular}
	Let $W$ be a $d$-dimensional subspace of $K^n$. Let $B = (v_1, \dots, v_d)$ be an ordered basis of $W$, and let $M$ be a matrix whose $i$-th column is $v_i$ written in the standard basis of $K^n$. Then $\PBW_{W}$ consists of column vectors in matrices obtained by multiplying $M$ with a $d \times d$ unit lower triangular matrix on the right.
\end{remark}

To prove \autoref{thm:Koszulness-of-Steinberg}, it suffices to show that the homology of $\ol{\cB}^n_{\ast}(\St)$ is supported in homological degree $n$. We now describe $\ol{\cB}^n_{\ast}(\St)$ in more detail. Since we are only interested in internal degree $n$, we can work with the  $K$-vector space $K^n$ and its subspaces. In particular, we note that \[\ol{\cB}^n_{s}(\St) = \bigoplus_{\bigoplus_{j = 1}^s W_j = K^n} \St(W_1) \otimes_{\bk} \cdots \otimes_{\bk} \St(W_s), \] where we note that a simple tensor $a_{W_1} \otimes \cdots \otimes a_{W_s}$ in $\St(W_1) \otimes_{\bk} \cdots \otimes_{\bk} \St(W_s)$ is denoted as $[a_{W_1}| \cdots | a_{W_s}]$ as is conventional for the Bar construction.  Given a subspace $W$ of $K^n$, we describe below a PBW-like basis of $\St(W)$. Let the dimension of $W$ be $d \le n$. We now assign to $W$ a canonical subset $S_W \subset [n]$ of size $d$, a canonical  $K$-basis $B_W$ of $W$, and a canonical PBW-like $\bk$-basis, denoted $\PBW_W$, of $\St(W)$: \begin{enumerate}
	\item Let $v_1, \ldots, v_d$ be a basis of $W$. Put $v_1, \ldots, v_d$ as column vectors (with respect to the standard basis of $K^n$)  in an $n \times d$ matrix $M$. 
	\item Let $N$ be the column-reduced Echelon form of $M$. Recall that the column-reduced Echelon form (transpose of a row-reduced Echelon form) is unique and does not depend on the choice of $v_1, \ldots, v_d$ made earlier. 
	\item Define $S_W$ to be the set of row indices that contain a leading one in $N$, and define $B_W$ to be the columns of $N$. Note that there is a natural ordering on $B_W$ coming from column indices.
	\item  Define $\PBW_{W}$ to be the basis as described in \autoref{prop:unipotent-basis-of-steinbergs} with respect to the ordered basis $B_W$. 
\end{enumerate}

Note here that we can think of $S_W$ as the index of the lexicographically least nonzero Pl\"ucker-coordinate of $W$. As an example, suppose $n = 4$ and let $W$ be the subspace of $K^4$ of dimension $d= 3$ generated by the columns of the following matrix $M$:  \[M = \begin{bmatrix}
	1 & 0 & 0\\
	2 & 0 & 0\\
	0 & 1 & 0\\
	0 & 0 & 1
\end{bmatrix}\]  Since $M$ is in column-reduced Echelon form already, we have $N=M$. By definition, we have $S_W = \{1, 3,4 \}$, and $B_W$ consists of columns of $M$.  Moreover, $\PBW_{W}$ consists of columns matrices of the following form (see \autoref{rem:lower-triangular}): \[ 
 \begin{bmatrix}
	1 & 0 & 0\\
	2 & 0 & 0\\
	0 & 1 & 0\\
	0 & 0 & 1
\end{bmatrix}
\begin{bmatrix}
1 & 0 & 0  \\
\ast & 1 & 0  \\
\ast & \ast & 1 
\end{bmatrix}\] 

We have a partial order $\prec$ on subsets of $[n]$ given by $S_1 \prec S_2$ if $\max S_1 < \min S_2$. We now list some crucial properties of our construction above: \begin{itemize}
	\item[(P1)] If $W_1 \oplus W_2$ is a summand of $K^n$ and $S_{W_1} \prec S_{W_2}$, then $S_{W_1  \oplus W_2} = S_{W_1} \sqcup S_{W_2}$.
	\item[(P2)] If $W_1 \oplus W_2$ is a summand of $K^n$ and $S_{W_1} \prec S_{W_2}$, then the multiplication \[\St(W_1) \otimes \St(W_2) \to \St(W_1 \oplus W_2)\] takes $\PBW_{W_1} \times \PBW_{W_2}$ inside $\PBW_{W_1 \oplus W_2}$.
	\item[(P3)] Given a partition $S_1 \sqcup S_2$ of $S_W$ with $S_1 \prec S_2$, there is a natural map \[\PBW_{W} \to \coprod_{W_1\oplus W_2 = W}\PBW_{W_1} \times \PBW_{W_2}\] which we now describe. Suppose the ordered basis is $B_W = (v_1, \ldots, v_d)$. Then every element in $\PBW_{W}$ can be written uniquely in the form $[g v_1, g v_2, \ldots, g v_d]$ for some $g \in \bU_d$. Suppose the sizes of $S_1$ and $S_2$ are $d_1$ and $d_2$ respectively, then the claimed natural map is given by \[ [g v_1, g v_2, \ldots, g v_d] \mapsto ([g v_1 , g v_2, \ldots, g v_{d_1}], [g v_{d_1 + 1}, \ldots, g v_d]).  \] The corresponding $W_1$ and $W_2$ are given by the spans of $\{g v_1, gv_2, \ldots, gv_{d_1}\}$ and $ \{ gv_{d_1 + 1}, \ldots, gv_d \}$, respectively. Note that then $S_{W_1} = S_1$ and $S_{W_2} = S_2$.
\end{itemize}

\begin{remark}
	We restrict to the case when $R = K$ is a field because the property (P3) as above does not have an analogue for more general rings. For example, the issue when $R = \bZ$  is the existence of a finite-rank free abelian group $W$ together with a decomposition $A \oplus B$ of $W \otimes_{\bZ}\bQ$ such that $(W \cap A)  \oplus (W \cap B) \neq W$. 
\end{remark}

We now follow Brunetti--Ciampella's argument \cite[Theorem~2.5]{koszul-not-cobar} which in turn is based on an argument of Priddy \cite[Theorem~5.3]{priddy-koszul} to complete our proof of Koszulness  with \autoref{prop:homotopy}.  

%\begin{definition}
%	We define the {\emph irreducibility index} of an element \[ x = [a_{W_1} | \cdots | a_{W_s}] \in \ol{\cB}^n_{s}(\St)  \] to be the integer \begin{align*}
%	\ai(x) = \begin{cases}
%	s &\mbox{if } C = \emptyset \\
%	\min C &\mbox{if } C \neq \emptyset
%	\end{cases}
%	\end{align*} where $C = \{j \colon  S_{W_j} \cap S_{W_{j+1}} = \emptyset \}$.
%\end{definition}

\begin{proposition}
	\label{prop:homotopy}
	 $\rH_s(\ol{\cB}^n_{\ast}(\St)) = 0$ for $s \neq n$.
\end{proposition}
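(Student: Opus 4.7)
I would follow the Priddy/Brunetti--Ciampella argument referenced just above the statement by constructing an explicit chain-level deformation of $\ol{\cB}^n_\ast(\St)$ onto a piece concentrated in top homological degree $s=n$. Since $\ol{\cB}^n_s(\St) = 0$ for $s > n$ (each factor has internal degree at least $1$) and the case $s = 0 < n$ is trivial, the real content is $1 \le s < n$.

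By \autoref{prop:unipotent-basis-of-steinbergs} and the direct-sum decomposition of $\ol{\cB}^n_s(\St)$ displayed above, a $\bk$-basis of $\ol{\cB}^n_s(\St)$ consists of tuples $[\alpha_1|\cdots|\alpha_s]$ with $\bigoplus_j W_j = K^n$ and $\alpha_j\in\PBW_{W_j}$. Call such a tuple \emph{fully split} when $|S_{W_j}|=1$ for every $j$; a dimension count forces fully split tuples to have length exactly $n$. I would introduce a filtration on the basis indexed by a straightening measure on the sequence of supports $(S_{W_1},\ldots,S_{W_s})$ (for instance, the lexicographic ordering on $(|S_{W_j}|)_j$). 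Property (P1) ensures the filtration is well defined, property (P2) guarantees that the bar differential preserves the filtration on merging adjacent $\prec$-ordered factors, and property (P3) provides canonical splittings in the opposite direction that lower the measure.

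Using the canonical splitting of (P3) (split the first $\alpha_j$ with $|S_{W_j}|>1$ by peeling off the factor supported on $\{\min S_{W_j}\}$), I would define a homotopy $h\colon\ol{\cB}^n_s(\St)\to\ol{\cB}^n_{s+1}(\St)$ on non-fully-split tuples and check the identity $\partial h+h\partial=\mathrm{id}-p$, where $p$ is the projection onto the line of fully split tuples. Since fully split tuples exist only for $s=n$, this identity immediately yields $\rH_s(\ol{\cB}^n_\ast(\St))=0$ for $s<n$.

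\textbf{Main obstacle.} The hardest step will be the combinatorial verification that the formula above really does satisfy $\partial h+h\partial=\mathrm{id}-p$, in particular at the interface $s=n-1\to s=n$ where the output of $h$ must either land in the fully split locus or be canceled by terms coming from $h\partial$. Pinning this down is the PBW-straightening content of Priddy's original argument; in our setting it relies decisively on the uniqueness of the column-reduced echelon form that makes $S_W$, $B_W$ and $\PBW_W$ canonical, so that the splitting from (P3) is compatible with each merging term produced by the bar differential via (P2). Once this identity is established, the vanishing in all degrees $s \neq n$ follows with no further work.
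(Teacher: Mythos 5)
The proposal goes wrong in two essential ways.

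First, the exact chain-contraction identity $\partial h + h\partial = \mathrm{id} - p$ that you aim for is false, and the paper does not assert it. For a concrete counterexample, take $n=2$, $s=2$, and $x = [[e_1+e_2]\,|\,[e_2]]$. This is fully split, so $p(x)=x$ and $h(x)=0$. But $\partial x = [e_1+e_2, e_2]$, which is not a PBW basis element of $\St(K^2)$; expanding it, $[e_1+e_2,e_2] = [e_1,e_2] - [e_1,e_1+e_2]$, so $h\partial(x) = [[e_1]|[e_2]] - [[e_1]|[e_1+e_2]] \neq 0$, while $(\mathrm{id}-p)(x)=0$. The on-the-nose identity fails; merging two adjacent PBW factors that are not in $\prec$-order produces a genuine linear combination of basis elements, and there is no way to force these extra terms to cancel in $\partial h + h\partial$. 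What the paper actually proves is the weaker statement that $\partial\Phi + \Phi\partial - \mathrm{id}$ is strictly filtration-increasing with respect to a specific filtration, and then deduces the vanishing of homology from vanishing on the associated graded by induction over the filtration. You name a filtration but then set it aside and argue for an exact contraction; the filtration is not a convenience here, it is the whole argument.

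Second, your homotopy $h$ is missing the key vanishing condition present in the paper's $\Phi$: when there is an order-preserving index strictly before the widening index, the homotopy must be set to zero. Without this, the bookkeeping of terms in $\partial\Phi(x)$ versus $\Phi\partial(x)$ does not close up, even modulo the filtration — the proof's Case 1 hinges on $\Phi(y_i)=0$ for $i\ge j_0+2$, which only holds because $j_0$ is an order-preserving index below the widening index of those $y_i$. Relatedly, your proposed filtration by the sequence of sizes $(|S_{W_j}|)_j$ is too coarse: the crucial step in the paper is that merging $a_{W_j}a_{W_{j+1}}$ preserves the word $w_x$ precisely when $S_{W_j}\prec S_{W_{j+1}}$ and strictly increases it otherwise, and the size sequence cannot see the difference between $\prec$-ordered and unordered consecutive supports. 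The paper filters by the concatenated sequence of the $S_{W_j}$ themselves, lexicographically in $[n]^n$. So the plan needs (i) a finer filtration remembering the actual support sets, (ii) the extra vanishing clause in $\Phi$, and (iii) the goal recast as a filtered-modulo identity rather than an exact contraction.
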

\begin{proof}
Because $\ol{\cB}^n_{\ast}(\St)$ is supported in homological degrees $\le n$, it is enough to prove that $\rH_s(\ol{\cB}^n_{\ast}(\St)) = 0$ for $s < n$.

	We define a $\bk$-linear map $\Phi \colon \ol{\cB}^n_{s}(\St) \to \ol{\cB}^n_{s+1}(\St)$ on every $x = [a_{W_1} | \cdots | a_{W_s}]$ with $a_{W_j} \in \PBW_{W_j}$ for each $j$ (such elements form a $\bk$-basis of $\ol{\cB}^n_{s}(\St)$) as follows: \begin{enumerate}
		\item Set $k$ to be the smallest index such that $\rank W_k > 1$. (If such $k$ does not exist, $\Phi(x) = 0$ automatically because $x$ is in top degree.) We call this the \textbf{widening index} of $x$.
		\item If $S_{W_j} \prec S_{W_{j+1}}$, we call $j$ an \textbf{orderpreserving index} of $x$. If there is an orderpreserving index $j$ strictly smaller than $k$, set $\Phi(x) = 0$. {\bf Example}: Suppose $n=4$, and let $x = [a_{W_1} | a_{W_2} | a_{W_3}] \in \ol{\cB}^n_{3}(\St)$ be given by \[ x = [\begin{bmatrix}
		1\\
		0\\
		 0\\
		0
		\end{bmatrix} \vert \begin{bmatrix}
		0\\
		1\\
		0\\
		0
		\end{bmatrix} \vert \begin{bmatrix}
		0 & 0\\
		0 & 0\\
		1 &0 \\
		0 &1
		\end{bmatrix}].  \] Then $k =3$, and $S_{W_1} = \{1 \} \prec \{2\} = S_{W_2}$. So $j =1$ is an orderpreserving index strictly smaller than $k$.
		\item Otherwise, let $m = \min S_{W_{k}}$ and $S$ be the complement of $m$ in $S_{W_{k}}$. 
		\item  By property (P3), the partition $\{m\} \sqcup  S$ gives rise  to a map 
		\[ \PBW_{W_k} \to \coprod_{W\oplus W'=W_k} \PBW_{W} \times \PBW_{W'}.\]
		Let $(b,b')$ be the image of $a_{W_k}$ under this map. 
		\item Set $\Phi(x) = (-1)^{k-1} [a_{W_1}|\cdots| a_{W_{k-1}} | b | b' | a_{W_{k+1}}| \cdots | a_{W_s} ]$. {\bf Example}: Suppose $n=4$, and let $x = [a_{W_1} | a_{W_2} | a_{W_3}] \in \ol{\cB}^n_{3}(\St)$ be given by \[ x = [\begin{bmatrix}
		0\\
		0\\
		0\\
		1
		\end{bmatrix} \vert \begin{bmatrix}
		0\\
		0\\
		1\\
		0
		\end{bmatrix} \vert \begin{bmatrix}
		1 & 0\\
		6 & 1\\
		3 &0 \\
		5 &0
		\end{bmatrix}].  \] Then $k =3, S_{W_1} = \{4 \}, S_{W_2} = \{3 \}$, and $S_{W_3} = \{1,2\}$. So $m =1$, and $S =\{2 \}$. In this case $\Phi(x)$ is given by \[ \Phi(x) = (-1)^2[\begin{bmatrix}
		0\\
		0\\
		0\\
		1
		\end{bmatrix} \vert \begin{bmatrix}
		0\\
		0\\
		1\\
		0
		\end{bmatrix} \vert \begin{bmatrix}
		1 \\
		6 \\
		3\\
		5
		\end{bmatrix} \vert \begin{bmatrix}
		 0\\
		1\\
		0 \\
		0
		\end{bmatrix}].  \]
	\end{enumerate}
	
	Let us furthermore define a filtration on $\ol{\cB}^n_{s}(\St)$:
	\begin{enumerate}
		\item To every basis element $x = [a_{W_1} | \cdots | a_{W_s}]$ with $a_{W_j} \in \PBW_{W_j}$ we associate a word $w_x$ in $[n]^n$ by concatenating $S_{W_1}, \dots, S_{W_s}$, where the elements of $S_{W_j}$ are ordered as natural numbers.
		\item Let $w = (i_1, \ldots, i_n)$ be a sequence in $[n]^n$. We say that a pair $(\alpha, \beta)$ such that $\alpha <\beta$ is an inversion for $w$ if $i_{\alpha} \ge i_{\beta}$. Define a quasi order $<$ on $[n]^n$ by $w < w'$ if $w$ has more inversions than $w'$. 
	
	Note that there is a unique maximal element in $[n]^n$ given by $(1,2, \ldots, n)$ which corresponds, for example, to $x = [\be_1 | \be_2| \ldots| \be_n]$ where $\be_1, \ldots, \be_n$ is the coordinate basis of $K^n$ (or to any basis element $x$ which is lower triangular with the notation as in the example above). On the other hand, if $x = [\be_n | \be_n+\be_1| \be_n+\be_2| \ldots| \be_n + \be_{n-1}]$ then $w_x = (n, n, \ldots, n)$ which is a minimal element in $[n]^n$. Intuitively, $w_x > w_y$ if $x$ is closer to being a lower triangular matrix compared to $y$.
		
%		\item Let $[n]^n$ be ordered lexicographically, i.e. $w < w'$ if $w_i < w'_i$ for some $i \in [n]$ and $w_j = w'_j$ for all $j<i$. 

		\item Let $F_{\le \alpha}\ol{\cB}^n_{s}(\St)$ be generated by all basis elements $x$ such that the number of inversions in $x$ is at most $\alpha$.
	\end{enumerate}

For an element $y \in \ol{\cB}^n_{s}(\St)$ we say that $w_y > w$ if $y$ can be written as a sum of basis elements $x$ (as in (a) above) satisfying $w_x > w$. A crucial property of this order is the following. Let $x = [a_{W_1} | \cdots | a_{W_s}]$ be a basis element and let  $y = [a_{W_1} | \cdots | a_{W_{j-1}} |a_{W_j} a_{W_{j+1}} | a_{W_{j+2}} | \cdots | a_{W_s}]$, where $a_{W_j} a_{W_{j+1}}$ is the image of $a_{W_j} \otimes a_{W_{j+1}}$ under the map $\St(W_j)\otimes \St(W_{j+1}) \to \St(W_1\oplus W_2)$. Then we have $w_x \le  w_y$ and the equality holds if and only if $S_{W_j} \prec S_{W_{j+1}}$ (if and only if $y$ is a basis element). In particular, \[ 0 = F_{\le -1}\ol{\cB}^n_{\ast}(\St) \subset F_{\le 0}\ol{\cB}^n_{\ast}(\St) \subset F_{\le 1}\ol{\cB}^n_{\ast}(\St) \subset \ldots \subset F_{\le \binom{n}{2}}\ol{\cB}^n_{\ast}(\St) = \ol{\cB}^n_{\ast}(\St) \]  defines a finite increasing filtration on the chain complex $\ol{\cB}^n_{\ast}(\St)$. We also note here that $w_{\Phi(x)} = w_{x}$ for every basis element $x$ for which $\Phi(x)$ is nonzero.
	
	In the remainder of the proof, we will verify that $\partial \Phi + \Phi \partial  -\id$ sends $F_{\le \alpha}\ol{\cB}^n_{s}(\St)$ to $F_{<  \alpha}\ol{\cB}^n_{s}(\St)$ for all $s<n$. This then shows that 
	\[ \rH_s\Big( F_{\le \alpha}\ol{\cB}^n_{*}(\St)/ F_{< \alpha}\ol{\cB}^n_{\ast}(\St) \Big) = 0 \quad\text{for $s<n$,}\]
	because every cycle 
	\[c = (\partial \Phi + \Phi \partial )(c) = \partial (\Phi(c))\] 
	in degree less than $n$ is also a boundary. By induction on the filtration, we deduce that $\rH_s(\ol{\cB}^n_{*}(\St)) = 0$ for $s<n$.
	%%%
	We prove the claim by a complete case study. Let $x = [a_{W_1} | \cdots | a_{W_s}]$ with $a_{W_j} \in \PBW_{W_j}$ and $s<n$. Set $k$ to be the smallest index such that $\rank W_k > 1$.
	
	\textbf{Case 1:} Assume there is a $j<k$ such that $S_{W_j} \prec S_{W_{j+1}}$. Also let $j_0$ be the smallest orderpreserving index. Then $(\partial \Phi + \Phi \partial)(x) = \Phi \partial(x)$. Let $y_i = [ a_{W_1}| \dots| a_{W_i}a_{W_{i+1}} | \dots| a_{W_s}]$, so that
	\[\partial(x) = \sum_{i=1}^{s-1} (-1)^{i-1}y_i.\]
	First we see that $\Phi(y_{j_0}) = x$, because $j_0$ is the widening index of $y_{j_0}$ and there is no orderpreserving indices smaller than $j_0$. For $i<j_0$, we get that $w_{y_i} > w_x$, because $i$ is not an orderpreserving index of $x$. Hence $w_{\Phi(y_i)} = w_{y_i}>w_x$. If $i = j_0+1$, we have two cases. If $j_0$ is an orderpreserving index of $y_{j_0+1}$, then $\Phi(y_{j_0+1}) = 0$ because its widening index is $j_0+1$. It is possible that $j_0$ is not an orderpreserving index if $S_{W_{j_0}} \not \prec S_{W_{j_0+2}}$, but then $j_0+1$ cannot be orderpreserving and $w_{\Phi(y_{j_0+1})}=w_{y_{j_0+1}}>w_x$. Finally, if $i\ge j_0+2$, $j_0$ is orderpreserving and smaller than the widening index of $y_i$. Therefore $\Phi(y_i) = 0$. 
	
	\textbf{Case 2:} Assume there is no $j<k$ such that $S_{W_j} \prec S_{W_{j+1}}$. Let $y_i = [ a_{W_1}| \dots| a_{W_i}a_{W_{i+1}} | \dots| a_{W_s}]$. Let us write $\Phi(x) = (-1)^{k-1} [a_{W_1}|\cdots| a_{W_{k-1}} | a_{W_k'} | a_{W_k''} | a_{W_{k+1}}| \cdots | a_{W_s} ] $ as in its definition, that is, $(a_{W_k'}, a_{W_k''})$ is the image of $a_{W_k}$ under the map \[ \PBW_{W_k} \to \coprod_{W_k'\oplus W_k''=W_k} \PBW_{W_k'} \times \PBW_{W_k''}.\] We now define  
	\[ z_i = \begin{cases} [a_{W_1}|\dots| a_{W_i}a_{W_{i+1}} |\cdots| a_{W_{k-1}} |a_{W_k'} | a_{W_k''} | a_{W_{k+1}}| \cdots | a_{W_s} ] &\text{if $i<k-1$,}\\
	[a_{W_1}|\dots| a_{W_i}a_{W_{i+1}} |\cdots| a_{W_{k-1}}  a_{W_k'} | a_{W_k''} | a_{W_{k+1}}| \cdots | a_{W_s} ] &\text{if $i = k-1$,}\\
	[a_{W_1}|\cdots| a_{W_{k-1}} | a_{W_k'} a_{W_k''} | a_{W_{k+1}}| \cdots | a_{W_s} ] &\text{if $i= k$,}\\
	[a_{W_1}|\cdots| a_{W_{k-1}} | a_{W_k'} |a_{W_k''}  a_{W_{k+1}}| \cdots | a_{W_s} ] &\text{if $i= k+1$,}\\
	[a_{W_1}|\dots| a_{W_{k-1}} | a_{W_k'} | a_{W_k''} | a_{W_{k+1}}| \cdots | a_{W_{i-1}}a_{W_i} |\cdots| a_{W_s} ] &\text{if $i>k +1$.}
	\end{cases}\]
	Then
	\[\partial(x) = \sum_{i=1}^{s-1} (-1)^{i-1}y_i\quad\text{and}\quad\partial\Phi(x) = \sum_{i=1}^{s} (-1)^{i-1}(-1)^{k-1}z_i.\] Then we have the following: \begin{itemize}
	\item	For $i < k$, $w_{\Phi(y_i)}=w_{y_i}$ is larger than  $w_x$ as $S_{W_i} \not\prec S_{W_{i+1}}$ ($i$ is not an orderpreserving index of $x$).
	
	\item For $i < k-1$, $w_{z_i}$ is larger than $w_x$ again because $S_{W_i} \not\prec S_{W_{i+1}}$ ($i$ is not an orderpreserving index of $x$).
	
	\item $w_{z_{k-1}}$ is larger than $w_x$ as $S_{W_{k-1}} \not\prec S_{W_{k}'}$  (recall that $S_{W_{k}'} = \min S_{W_k}$ and $k-1$ is not an orderpreserving index for $x$).
	
	\item $z_k = x$ as $S_{W_k'} \prec S_{W_k''}$ by construction (in other words, $a_{W_k'} a_{W_k''} =a_{W_k}$ is already a basis element in $\St(W_k)$).
	
	\item  If $z_{k+1}$ and $\Phi(y_k)$ do not cancel, $k$ cannot be an orderpreserving index of $x$, which implies that both $w_{\Phi(y_k)}=w_{y_k}$ and $w_{z_{k+1}}$ are larger than $w_x$. 
	
	\item $\Phi(y_i)$ cancels with $z_{i+1}$ in $(\partial \Phi + \Phi \partial)(x)$ if $i> k$. To see this, just note that the widening index for each term in $y_i$ expressed in the basis (a) has widening index $k$ and orderpreserving index $\ge k$, and so we have \[\Phi(y_i) = (-1)^{k-1}[a_{W_1}|\dots| a_{W_{k-1}} | a_{W_k'} | a_{W_k''} | a_{W_{k+1}}| \cdots | a_{W_{i}}a_{W_{i+1}} |\cdots| a_{W_s} ] \] even if $a_{W_{i}}a_{W_{i+1}}$ is not a basis element.
	\end{itemize} This finishes the case study and the proof.
\end{proof}

The proposition above immediately implies \autoref{thm:Koszulness-of-Steinberg}.

%\jeremy{Maybe it would be good to have an example environment that takes 1/3 to 2/3 of a page and has lots of examples of all of the types of things that happen in this section (the filtrations, bases, partial orders, functions, etc.).}

\begin{question}
Can one prove a version of \autoref{thm:Koszulness-of-Steinberg} when $R$ is not a field, for example, when $R= \bZ$? A version of \autoref{thm:Koszulness-of-Steinberg} or even \autoref{thm:shably-main} for $R=\bZ$ would establish the Church--Farb--Putman conjecture \cite[Conjecture 2]{CFPconj}  on vanishing of the high-dimensional rational cohomology of $\SL_n(\Z)$ (except with a slightly worse range). 
\end{question}

\subsection{Koszul resolutions}
\label{sec:koszul-discussion}
Let $A$ be an augmented monoid in $\Mod_{\VB}$ such that $A(X)$ is $\bk$-flat for each $X$. Then the functor $- \otimes A$ is exact and is the left adjoint to the restriction functor $\Mod_A \to \Mod_{\VB}$.  This adjunction follows from the Yoneda lemma as an $A$-module is the same as a functor $\cC \to \Mod_{\bk}$ where $\cC$ is the category whose objects are finite-rank free $R$-modules and whose morphisms $X \to Y$ are triples $(f, C, a)$ where $f$ is an injection of $R$ modules such that  $Y = X \oplus C$, and $a \in A(C)$. Since the restriction functor is exact, we see that the $A$-module $V \otimes A$ is projective for any projective $\VB$-module $V$. It follows that the $A$-module $V \otimes A$ is $- \otimes_A \bk$-acyclic for any $\VB$-module $V$. If $A$ is Koszul, then $\Tor_i^A(\bk, \bk)$ is concentrated in degree $i$ (for each $i \ge 0$). So, by a dimension shifting argument,  one can construct a linear resolution \[ V_{\ast} \otimes A \to \bk \to 0 \] such that $V_i$ is a $\VB$-module concentrated only in degree $i$.  The converse is also true, that is, if such a resolution exists then $A$ is Koszul.  This resolution is called the {\bf Koszul resolution}. In fact, we can take $V_i = \Tor_i^{A}(\bk, \bk)$ in the Koszul resolution. As an example, let $\reg_1$ be the $\VB$-module such that $\reg_1(n) = 0$ for $n \neq 1$, and $\reg_1(1)$ is the regular representation of $\GL_{1}$. The following sequence is clearly exact:  \[ 0 \to  \rT(\reg_1)\otimes \reg_1 \to \rT(\reg_1) \to \bk \to 0.  \] In particular, $\Tor_1^{ \rT(\reg_1)}(\bk, \bk) = \reg_1$ and  $\Tor_i^{ \rT(E')}(\bk, \bk) = 0$ for $i > 1$. This proves the following result.

\begin{proposition}
	\label{prop:tensor-algebra-is-Koszul} If $R$ is a PID, then the tensor algebra $\rT(\reg_1)$ is Koszul. 
\end{proposition}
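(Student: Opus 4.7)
The plan is simply to verify that the short sequence displayed just before the proposition,
\[ 0 \to \rT(\reg_1) \otimes \reg_1 \to \rT(\reg_1) \to \bk \to 0, \]
really is a length-one free resolution of $\bk$ as a $\rT(\reg_1)$-module, and then to read off Koszulness from the definition. First I would unfold $\rT(\reg_1) = \bigoplus_{n \ge 0} \reg_1^{\otimes n}$ as $\VB$-modules, identify the augmentation $\rT(\reg_1) \to \bk$ as the projection to the $n = 0$ summand, and identify the left-hand map as right multiplication, i.e.\ the map that on each summand is the canonical isomorphism $\reg_1^{\otimes n} \otimes \reg_1 \xrightarrow{=} \reg_1^{\otimes (n+1)}$.

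Exactness is then immediate: the kernel of the augmentation is $\bigoplus_{n \ge 1} \reg_1^{\otimes n}$, which is exactly the image of the multiplication map; and the multiplication map is injective because in each internal degree it is literally an equality of $\VB$-modules. Since $\rT(\reg_1)$ is the free $\rT(\reg_1)$-module on $\bk$ (in degree $0$) and $\rT(\reg_1) \otimes \reg_1$ is the free $\rT(\reg_1)$-module on $\reg_1$, this is a free left resolution of $\bk$ of length $1$.

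Applying $\bk \otimes_{\rT(\reg_1)} (-)$ to the two-term complex $\rT(\reg_1) \otimes \reg_1 \to \rT(\reg_1)$ kills the augmentation ideal and yields the complex $\reg_1 \to \bk$ with zero differential. Therefore $\Tor_0^{\rT(\reg_1)}(\bk, \bk) = \bk$ is concentrated in internal degree $0$, $\Tor_1^{\rT(\reg_1)}(\bk, \bk) = \reg_1$ is concentrated in internal degree $1$, and $\Tor_i^{\rT(\reg_1)}(\bk, \bk) = 0$ for all $i \ge 2$. By the definition of Koszulness recalled in \autoref{intro-koszul}, this is the desired conclusion.

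I do not anticipate a genuine obstacle here: the entire argument is a definitional check once one has the correct resolution written down. The only point that deserves care is confirming that multiplication identifies $\reg_1^{\otimes n} \otimes \reg_1$ with $\reg_1^{\otimes (n+1)}$ as right $\rT(\reg_1)$-modules, which is built into the construction of $\rT$ as the free monoid functor; the PID hypothesis on $R$ does not enter the argument, and is carried along only to match the setup of the surrounding subsection.
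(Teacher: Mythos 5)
Your proof is correct and is essentially the paper's own argument: the paper's ``proof'' of this proposition is precisely the remark immediately preceding it, namely that the sequence $0 \to \rT(\reg_1)\otimes\reg_1 \to \rT(\reg_1) \to \bk \to 0$ is exact on the nose and hence computes $\Tor_\ast^{\rT(\reg_1)}(\bk,\bk)$; you simply supply the (easy) verification of exactness that the paper leaves implicit. Your side observation that the PID hypothesis is inert here is also correct --- the exactness of the two-term free resolution is a purely formal consequence of $\rT$ being the free monoid functor and of $\reg_1$ being concentrated in degree one, and the hypothesis is carried along only to match the surrounding statements (such as \autoref{thm:apartment-monoid-is-Koszul}).
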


The Koszul resolution for exterior algebras is well-known. In particular, the Koszul resolution for the apartment monoid $\bA = \lw(\triv_1)$ is given by the following: \[ \Sym^{\ast}(\triv_1) \otimes \bA \to \bk \to 0.  \] This is equivalent to $\Tor_{\ast}^{\bA}(\bk, \bk) =  \Sym^{\ast}(\triv_1)$.  Let $M$ be an $\bA$-module. Since $\Sym^\ast(\triv_1) \otimes \bA$ is a flat $\bA$-resolution of $\bk$, one can calculate $\Tor_{i}^{\bA}(\bk, M)$ using the Koszul resolution. In other words, we have \[\Tor_{i}^{\bA}(\bk, M) = \rH_i(\Sym^{\ast}(\triv_1)  \otimes M). \] We will need the following calculation later.

\begin{lemma}
	\label{lem:tor-calculation}
	Let $K= \Fq$ be a finite field of size $q$, and let $\bA$ and $\St$ be the apartment and the Steinberg monoids in $\Mod_{\VB_{\Fq}}$. If $\bk$ is a field, then \[\dim_{\bk} \Tor_{2}^{\bA}(\bk, \St)(\Fq^4) \le \frac{(q^4-1)(q^3-1)q^6}{2(q-1)^2}. \]
\end{lemma}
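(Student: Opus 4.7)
The plan is to bound $\dim_{\bk} \Tor_2^{\bA}(\bk, \St)(\Fq^4)$ by the dimension of the degree-$2$ term of the Koszul resolution complex, which will turn out to equal the stated upper bound exactly. Using the Koszul resolution for $\bA = \lw(\triv_1)$ as recalled just above the lemma, we have
\[\Tor_i^{\bA}(\bk, \St) = \rH_i(\Sym^{\ast}(\triv_1) \otimes \St).\]
In particular $\Tor_2^{\bA}(\bk, \St)(\Fq^4)$ is a subquotient of $(\Sym^2(\triv_1) \otimes \St)(\Fq^4)$, so it suffices to compute this latter dimension and show it equals $(q^4-1)(q^3-1)q^6/(2(q-1)^2)$.

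Next I would use that $\Sym^2(\triv_1)$ is supported in degree $2$, so only the $(i,j)=(2,2)$ term of the convolution formula survives:
\[(\Sym^2(\triv_1) \otimes \St)(\Fq^4) \cong \Ind_{\GL_2 \times \GL_2}^{\GL_4}\bigl((\Sym^2\triv_1)(2) \otimes_{\bk} \St_2(\Fq)\bigr).\]
The dimension therefore factors as the index $[\GL_4(\Fq):\GL_2(\Fq)\times\GL_2(\Fq)]$ times $\dim_{\bk} (\Sym^2\triv_1)(2)$ times $\dim_{\bk} \St_2(\Fq)$.

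Each factor I would compute as follows. By Solomon--Tits (or \autoref{prop:unipotent-basis-of-steinbergs}), $\dim_{\bk} \St_2(\Fq) = q$. For the symmetric square, $(\triv_1)^{\otimes 2}(2)$ has a $\bk$-basis given by ordered direct sum decompositions of $\Fq^2$ into two lines, on which $S_2$ acts freely by swapping the two factors; the number of ordered decompositions is $q(q+1)$, so the $S_2$-coinvariants have $\bk$-dimension $q(q+1)/2$ in every characteristic (the action is free so there is no characteristic subtlety). Finally, a direct computation with $|\GL_n(\Fq)| = q^{\binom{n}{2}}\prod(q^i-1)$ gives
\[\frac{|\GL_4(\Fq)|}{|\GL_2(\Fq)|^2} = \frac{q^4(q^4-1)(q^3-1)}{(q^2-1)(q-1)}.\]
Multiplying the three factors and simplifying $(q+1)/(q^2-1) = 1/(q-1)$ yields exactly $q^6(q^4-1)(q^3-1)/(2(q-1)^2)$.

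The main obstacles are minor and bookkeeping in nature: confirming that $S_2$ acts freely on the basis of $(\triv_1)^{\otimes 2}(2)$ so that the coinvariant dimension is characteristic-independent (it does, since a direct sum decomposition $L_1 \oplus L_2$ always has $L_1 \neq L_2$), and correctly tracking that the convolution $M \otimes N$ evaluated at $n$ uses the induction formula from \autoref{sec:preliminaries} rather than any naive tensor product. No deep ingredient beyond the Koszul resolution of $\bA$ is needed.
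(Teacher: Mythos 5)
Your proof is correct and follows the same route as the paper: bound $\Tor_2^{\bA}(\bk,\St)(\Fq^4)$ by the dimension of the degree-$2$ Koszul complex term $(\Sym^2(\triv_1)\otimes\St)(\Fq^4)$ and then compute that dimension. The paper states this equality without the bookkeeping; your proposal correctly supplies the details, including the observation that $S_2$ acts freely on ordered line decompositions so that the coinvariant dimension $q(q+1)/2$ holds in all characteristics.
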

\begin{proof} By the previous paragraph, we have \[ \dim_{\bk} \Tor_{2}^{\bA}(\bk, \St)(\Fq^4) \le \dim_{\bk} (\Sym^2(\triv_1) \otimes \St)(\Fq^4). \] Now note that \begin{align*}
	\dim_{\bk} \Sym^2(\triv_1) &= \frac{|\GL_2(\bF_q)|}{|\GL_1(\bF_q)|^2 |S_2|} \\
	\dim_{\bk} \St(\bF_q^2) &= q,
	\end{align*} and so \[  \dim_{\bk} (\Sym^2(\triv_1) \otimes \St)(\Fq^4) = \frac{|\GL_4(\bF_q)|}{|\GL_2(\bF_q)|^2} \frac{|\GL_2(\bF_q)|}{|\GL_1(\bF_q)|^2 |S_2|} q = \frac{(q^4-1)(q^3-1)q^6}{2(q-1)^2}. \] This completes the proof. 
\end{proof}

We do not know an explicit description of the Koszul resolution for the Steinberg monoid, but it is possible to compute its dimension.

\begin{proposition}
	Let $K= \Fq$ be a finite field of size $q$, and let $\St$ be the  Steinberg monoid in $\Mod_{\VB_{\Fq}}$. If $\bk$ is a field, then $\dim_{\bk} \Tor_{n}^{\St}(\bk, \bk)  = q^{n^2 - n}$.
\end{proposition}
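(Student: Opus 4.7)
The strategy is to use Koszulness of $\St$ to turn the computation into an Euler-characteristic count on the reduced bar complex, then recognize the resulting generating function identity as the classical $q$-exponential identity $e_q(-t)\, E_q(t) = 1$.

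By \autoref{thm:Koszulness-of-Steinberg}, $\Tor^{\St}_n(\bk,\bk)$ is a $\VB$-module supported in internal degree $n$, so the quantity in question is $\dim_{\bk} \Tor^{\St}_n(\bk,\bk)(\Fq^n)$. Because $\ol{\cB}^n_s(\St) = 0$ for $s > n$ (each bar factor contributes at least one to the internal degree), Koszulness collapses the Euler characteristic to top degree:
\[
(-1)^n \dim_{\bk} \Tor^{\St}_n(\bk,\bk)(\Fq^n) \;=\; \sum_{s=0}^n (-1)^s \dim_{\bk} \ol{\cB}^n_s(\St).
\]

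I would then unwind the bar complex evaluated at $\Fq^n$ as
\[
\ol{\cB}^n_s(\St) \;=\; \bigoplus_{\substack{\Fq^n = W_1 \oplus \cdots \oplus W_s \\ \dim W_i \ge 1}} \St(W_1) \otimes_{\bk} \cdots \otimes_{\bk} \St(W_s).
\]
The Solomon--Tits theorem (\autoref{prop:unipotent-basis-of-steinbergs}) gives $\dim_{\bk} \St(\Fq^d) = q^{\binom{d}{2}}$, and counting ordered bases of $\Fq^n$ two ways shows that the number of ordered direct-sum decompositions with dimension sequence $(d_1, \ldots, d_s)$ is $|\GL_n(\Fq)| / \prod_i |\GL_{d_i}(\Fq)|$. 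The crucial step is to package these counts into an exponential generating function. Setting
\[
\Phi(t) \;\coloneq\; \sum_{n \ge 0} \frac{q^{\binom{n}{2}}}{|\GL_n(\Fq)|}\, t^n,
\]
convolution of coefficients gives $\sum_n \frac{\dim_{\bk} \ol{\cB}^n_s(\St)}{|\GL_n(\Fq)|}\, t^n = (\Phi(t) - 1)^s$, so summing $\sum_s (-1)^s$ telescopes to
\[
\sum_{n \ge 0} \frac{(-1)^n \dim_{\bk} \Tor^{\St}_n(\bk,\bk)(\Fq^n)}{|\GL_n(\Fq)|}\, t^n \;=\; \frac{1}{\Phi(t)}.
\]

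To finish, I would use $|\GL_n(\Fq)| = q^{\binom{n}{2}} \prod_{i=1}^n (q^i - 1)$ to identify $\Phi(t)$ with the little $q$-exponential $e_q(-t) = \sum_n (-t)^n/(q;q)_n$, where $(q;q)_n = \prod_{i=1}^n (1 - q^i)$. Euler's identity $e_q(-t)\, E_q(t) = 1$, with $E_q(t) = \sum_n q^{\binom{n}{2}} t^n/(q;q)_n$, then gives $1/\Phi(t) = E_q(t)$. Comparing coefficients of $t^n$ and absorbing the sign via $(q;q)_n = (-1)^n \prod_{i=1}^n (q^i - 1)$ yields $\dim_{\bk} \Tor^{\St}_n(\bk,\bk)(\Fq^n) = q^{\binom{n}{2}} \cdot q^{\binom{n}{2}} = q^{n^2 - n}$.

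The main obstacle will be the bookkeeping in unwinding the bar complex: verifying the count of ordered direct-sum decompositions and tracking the $\GL_n$-equivariant structure correctly through the $|\GL_n(\Fq)|$-normalized generating function. Once that is settled, the rest is a mechanical $q$-series manipulation; conceptually, the formula $q^{n^2 - n}$ is the numerical Koszul-dual shadow of the Steinberg Hilbert series.
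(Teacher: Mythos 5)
Your proposal follows exactly the paper's route: invoke \autoref{thm:Koszulness-of-Steinberg} to reduce $\dim_{\bk}\Tor_n^{\St}(\bk,\bk)$ to the Euler characteristic of $\ol{\cB}^n_\ast(\St)$, then evaluate that Euler characteristic. The paper's own proof stops after asserting the numerical identity $\sum_{s=1}^{n} (-1)^{n-s} \dim_{\bk} \ol{\cB}^n_{s}(\St) = q^{n^2 - n}$ without justification; your $q$-series argument (the normalized generating function $\sum_n \dim\ol{\cB}^n_s(\St)\, t^n / |\GL_n(\Fq)| = (\Phi(t)-1)^s$, geometric summation to $1/\Phi(t)$, identification $\Phi(t) = e_q(-t)$, and the Euler identity $e_q(-t)\,E_q(t)=1$) correctly supplies that missing verification and is a clean way to close the computation.
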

\begin{proof}
	Let $T_n$ denote  $\dim_{\bk} \Tor_{n}^{\St}(\bk, \bk)$, and let ${n\brack i}_q$ denote the $q$-binomial coefficient. It is clear that $T_0 = 1$, and we have \[ \frac{|\GL_n(\bF_q)|}{|\GL_i(\bF_q)||\GL_{n-i}(\bF_q)|} = {n\brack i}_q q^{i(n-i)}. \] Since $\St$ is Koszul (\autoref{thm:Koszulness-of-Steinberg}), we have a Koszul resolution given by $\St \otimes \Tor_{\ast}^{\St}(\bk, \bk)  \to \bk \to 0$. So $T_n$ satisfy the recursion \[ T_n = \sum_{i=1}^n (-1)^i {n\brack i}_q q^{i(n-i)} q^{\binom{i}{2}} T_{n-i}.   \]  By induction, it suffices to verify that \[q^{n^2 - n} = \sum_{i=1}^n (-1)^i {n\brack i}_q q^{i(n-i)} q^{\binom{i}{2}} q^{(n-i)^2 - (n-i)},  \] which is equivalent, after cancelling $q^{\binom{n}{2}}$ from both sides,  to \[q^{\binom{n}{2}} = \sum_{i=1}^n (-1)^i {n\brack i}_q q^{\binom{n-i}{2}}.  \] This follows from \cite[Lemma~5.9]{vimod}, completing the proof.
\end{proof}	
%\begin{proof}
%	$ \Tor_{n}^{\St}(\bk, \bk)$ is the top dimensional homology of $\ol{\cB}^n_{\ast}(\St)$ and the homology of $\ol{\cB}^n_{\ast}(\St)$ is concentrated in top dimension. Thus we can calculate the rank of this group by calculating the Euler characteristic. This gives \[\dim_{\bk} \Tor_{n}^{\St}(\bk, \bk)  = \sum_{s=1}^{n} (-1)^{n-s} \dim_{\bk} \ol{\cB}^n_{s}(\St) = q^{n^2 - n}. \] This completes the proof. 
%\end{proof}

\begin{question}
Can one give a conceptual description of the Koszul dual of the Steinberg monoid, or explicitly describe the cycles in $\ol{\cB}^n_{n}(\St)$? 
\end{question}

\section{Finiteness properties of resolutions of the Steinberg module}
\label{sec:sharbly}

In this section, we study the groups $\Tor_n^{\bA}(\St,\bk)$.

%Then we use it to show finiteness properties of known resolutions of the Steinberg module.

%In this section we study the syzygies of the Steinberg monoid as a module over the monoid. 

\subsection{Presentation of the Steinberg monoid as a module over the apartment monoid}

We reinterpret the Bykovskii presentation \cite{Byk} and the presentation appearing in \autoref{fieldPresentation} as presentations of $\St$ as an $\bA$-module and use this to prove the following. 

\begin{proposition}
	\label{prop:tor-0-1}
	For $R=\Z$ or a field, we have $\deg \Tor^{\bA}_{i}(\bk, \St) = 2i$ for $i = 0,1$.
\end{proposition}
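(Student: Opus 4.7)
My plan is to read the proposition off directly from the explicit presentations of $\St$ that were established earlier. The key point is that \autoref{fieldPresentation} and \autoref{presentationZ} can each be interpreted as a presentation of $\St$ in the category of $\bA$-modules: relations (a) and (b) in those theorems are exactly the defining relations of the apartment monoid (see \autoref{prop:apartment-definition}), so the generating map of the presentation is the natural surjection $\bA \twoheadrightarrow \St$, and the only ``new'' relation is the three-term relation (c). Once this is set up, computing $\Tor^{\bA}_0$ and $\Tor^{\bA}_1$ is essentially bookkeeping of the internal degrees of generators and of relation~(c).

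For $i=0$, write $J$ for the kernel of $\bA \twoheadrightarrow \St$. Then $\Tor^{\bA}_0(\bk,\St) = \bk \otimes_{\bA} \St = \bA/(\bA_+ + J)$. Since the surjection hits every $\St_n$, the quotient is killed in every positive internal degree, and what remains is $\bk$ concentrated in degree $0$. Hence $\deg\Tor^{\bA}_0(\bk,\St)=0$.

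For $i=1$, I would use that $\Tor^{\bA}_1(\bk,\St) = J/\bA_+ J$, so the statement reduces to showing that $J$ is generated as an $\bA$-module in internal degree $2$ and that the degree-$2$ piece is nonzero. In either presentation, relation~(c) has the form
\[
[v_1,v_2,v_3,\ldots,v_n] - [v_0,v_2,v_3,\ldots,v_n] + [v_0,v_1,v_3,\ldots,v_n] = 0,\qquad v_0 = v_1+v_2.
\]
The crucial observation is that $v_0,v_1,v_2$ all lie in the rank-$2$ subspace $W=\mathrm{span}(v_1,v_2)$, while $v_3,\ldots,v_n$ form a basis of a (saturated, in the $\bZ$ case) complement $W'$. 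Hence this relation is the image, under the $\bA$-multiplication $\bA(W)\otimes\bA(W')\to\bA(W\oplus W')$, of $\bigl([v_1,v_2]-[v_0,v_2]+[v_0,v_1]\bigr)\cdot[v_3,\ldots,v_n]$. So every instance of~(c) is $\bA$-generated by its $n=2$ specialization, which gives $J$ generated in degree $2$ and therefore $\Tor^{\bA}_1(\bk,\St)$ supported in degrees $\le 2$.

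To pin down $\deg = 2$ rather than $\le 2$, I would exhibit the explicit element $[e_1,e_2]-[e_1+e_2,e_2]+[e_1+e_2,e_1]\in J_2$, check that it is nonzero in $\bA_2$ (the three summands represent distinct basis elements of $\bA_2$ by \autoref{prop:apartment-definition}), and note that $\bA_+ J$ lives in internal degree $\ge 3$, so $J_2$ injects into $\Tor^{\bA}_1(\bk,\St)_2$. The only genuinely content-bearing step is the re-reading of relation~(c) as a degree-$2$ relation multiplied by an apartment class; I do not expect any real obstacle.
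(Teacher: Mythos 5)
Your proof is correct and takes essentially the same approach as the paper: both proofs interpret \autoref{presentationZ} and \autoref{fieldPresentation} as giving an $\bA$-module presentation $\bA\otimes M \to \bA \to \St \to 0$ with $M$ concentrated in degree $2$ (the three-term relation), from which the computation of $\Tor_0$ and $\Tor_1$ follows by degree considerations. The paper leaves the final step as ``immediate from this partial resolution,'' whereas you spell out the identification $\Tor_1^{\bA}(\bk,\St)\cong J/\bA_+J$ and exhibit an explicit nonzero element of $J_2$ to pin down equality rather than just $\le 2i$, which is a welcome bit of extra rigor but not a different route.
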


\begin{proof}
The Bykovskii presentation \cite{Byk} (here \autoref{presentationZ}) in the case of the integers  and \autoref{fieldPresentation} in the case of fields 
implies we have an exact sequence \[\bA \otimes M \to \bA \to \St \to 0  \] of $\bA$-modules where $M$ is a $\VB$-module supported only in degree 2 and is given by \[M(X) = \langle[v_1, v_2] - [v_0, v_2] + [v_0,v_1] \mid v_0 = v_1 + v_2 \rangle \subset \bA(X) \] where $X$ is a free $R$-module of rank $2$. The assertion is immediate from this partial  resolution.
\end{proof}

%The same statement is true for $R$ a field. In fact, in \autoref{thm:shably-main}, we prove a generalization to all values of $i$. 

\subsection{Higher syzygies}

\label{sec:proof-sharbly-main} 

In the previous subsection, we showed that for $R=\Z$ or a field, we have $\deg \Tor^{\bA}_{i}(\bk, \St) = 2i$ for $i = 0,1$. In this subsection, we prove that if $R$ is a field, we may drop the restriction on $i$.  Fix a field $K$. Throughout this subsection we shall assume that $R = K$. Let $\St$ and $\bA$ be the  Steinberg and the apartment monoids in $\Mod_{\VB} = \Mod_{\VB_K}$, respectively. The main theorem of this section is the following.

 %It is clear from the definition that $\St$ is generated, as an $\bA$-module, in degree $0$ and related in degree $\le 2$. More precisely, by \autoref{prop:tor-0-1} we have $\deg \Tor_0^{\bA}(\bk, \St) = 0$ and $\deg \Tor_1^{\bA}(\bk, \St) =2$.  

 %One of our main theorem is the following (which is proven in \autoref{sec:proof-sharbly-main}):

\begin{theorem}
	\label{thm:shably-main} For $K$ a field, 
 $\deg \Tor_i^{\bA}(\bk, \St) \le 2i$ for $i \ge 0$.
\end{theorem}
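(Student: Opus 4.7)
The plan is to invoke the general proposition on maps of Koszul monoids, \autoref{prop:map-of-Koszul-algebras}, applied to the surjective monoid map $\bA \to \St$. Its hypotheses are: $\bA$ is Koszul (\autoref{thm:apartment-monoid-is-Koszul}), $\St$ is Koszul (\autoref{thm:Koszulness-of-Steinberg}), and the base cases $\deg \Tor^{\bA}_i(\bk, \St) \le 2i$ for $i = 0, 1$ (\autoref{prop:tor-0-1}, extracted from the field presentation of $\St$). The conclusion extends the bound to all $i \ge 0$.

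My strategy for the general proposition is induction on $i$, driven by the change-of-rings spectral sequence
\[ E^2_{p, q} = \Tor^{\St}_p \bigl( \bk, \Tor^{\bA}_q(\bk, \St) \bigr) \Longrightarrow \Tor^{\bA}_{p+q}(\bk, \bk). \]
Koszulness of $\bA$ forces the abutment into internal degree equal to the total homological degree $p+q$. Koszulness of $\St$ gives $\deg \Tor^{\St}_p(\bk, N) \le \deg N + p$ for any $\St$-module $N$, via the Koszul resolution of $\bk$ over $\St$. Combined with the inductive hypothesis $\deg \Tor^{\bA}_q(\bk, \St) \le 2q$ for $q < i$, this yields $\deg E^2_{p, q} \le p + 2q$ whenever $q < i$.

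Suppose for contradiction that $\Tor^{\bA}_i(\bk, \St)$ has a class of internal degree $d > 2i$ whose image in the $\St$-coinvariants $E^2_{0, i}$ is nonzero. Incoming differentials $d^r \colon E^r_{r, i-r+1} \to E^r_{0, i}$ have sources in internal degree at most $r + 2(i-r+1) = 2i-r+2 \le 2i$ by the inductive hypothesis, and differentials preserve internal degree; outgoing differentials from $E^r_{0, i}$ land in negative $p$. So the class survives to $E^{\infty}_{0, i}$ in internal degree $d > i$, contradicting concentration of the abutment in degree $0 + i$. This rules out high-degree $\St$-indecomposables in $\Tor^{\bA}_i(\bk, \St)$.

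The main obstacle, which I expect to be the hardest step, is promoting this bound on $\St$-indecomposables to a bound on the full $\VB$-module $\Tor^{\bA}_i(\bk, \St)$, because a priori the right $\St$-action could generate high-degree classes from low-degree indecomposables. Overcoming this requires running the same spectral-sequence argument on every column $E^2_{p, i}$ --- where the inductive bounds on the rows $q < i$ again show that no differential can absorb a hypothetical class in internal degree exceeding $p + 2i$ --- and then extracting a bound on $\Tor^{\bA}_i(\bk, \St)$ itself from the resulting system of bounds on its $\St$-Tor, using compatibility with the $\St$-module structure. This bookkeeping constitutes the technical core of \autoref{prop:map-of-Koszul-algebras}.
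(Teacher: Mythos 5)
Your setup matches the paper's: reduce to the general statement on surjections of Koszul monoids and run the change-of-rings spectral sequence
\[ E^2_{p,q} = \Tor^{\St}_p\bigl(\bk, \Tor^{\bA}_q(\bk, \St)\bigr) \Longrightarrow \Tor^{\bA}_{p+q}(\bk,\bk), \]
with the bound $\deg E^2_{p,q} \le p + 2q$ for $q<i$ coming from Koszulness of $\St$ (the paper's \autoref{lem:devisage}) and the inductive hypothesis, and the abutment confined to internal degree $p+q$ by Koszulness of $\bA$. Your contradiction argument bounding $E^2_{0,i}$ is exactly the paper's.

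But you then declare a ``main obstacle'' --- that this only bounds the $\St$-coinvariants of $\Tor^{\bA}_i(\bk,\St)$ --- and sketch a fallback that would not work. The missing ingredient is the paper's \autoref{lem:B-action}: for any surjection $A \to B$ of augmented monoids with $A_+, B_+$ in positive degree, the right $B$-action on $\Tor^A_*(\bk,B)$ factors through the augmentation $B \to \bk$, so $B_+$ acts trivially. (Compute $\Tor^A_i(\bk,B)$ from a free $A$-resolution $Q_\ast \to B$: each $\bk\otimes_A Q_i = Q_i/A_+Q_i$ is killed by $A_+$, and the $B$-action is induced from the $A$-action via $A \to B$.) Hence $E^2_{0,i} = \Tor^{\St}_0(\bk, \Tor^{\bA}_i(\bk,\St)) = \Tor^{\bA}_i(\bk,\St)$ on the nose, and your bound on $E^2_{0,i}$ already is the bound on the whole module; no analysis of the columns $p>0$ is required.

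Your proposed workaround --- bounding $\deg \Tor^{\St}_p(\bk, \Tor^{\bA}_i(\bk,\St))$ for all $p$ and ``extracting'' a bound on $\deg \Tor^{\bA}_i(\bk,\St)$ --- does not in general close the gap: a free module $\St\otimes V$ with $V$ concentrated in degree $d$ has $\Tor^{\St}_0(\bk,\St\otimes V)$ concentrated in degree $d$ and all higher $\Tor^{\St}_p$ zero, yet has unbounded degree. Your intuition that some ``compatibility with the $\St$-module structure'' is needed is correct, but the mechanism is the triviality of the $\St_+$-action, not a degree-by-degree bookkeeping across columns.
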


%Recall that in the case of the integers, we showed the analogous statement for $i=0$ and $1$ in \autoref{prop:tor-0-1}. 

Basic properties of $\Tor_{\ast}$ and the theorem above implies the following:

\begin{corollary}[Existence of an improved resolution]
	\label{cor:existence-of-resolution}
	There is a resolution of the form $\bA \otimes V_{\ast} \to \St \to 0$ where $V_i$ is a $\VB_K$-module supported in degrees $\le 2i $.
\end{corollary}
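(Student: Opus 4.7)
The plan is to construct the resolution inductively, using \autoref{thm:shably-main} at each stage to bound the degrees of generators of the next syzygy. Setting $K_{-1} := \St$, the $i = 0$ case of \autoref{thm:shably-main} shows that $\Tor_0^{\bA}(\bk, \St)$ is supported in degrees $\le 0$, so I would take $V_0$ to be a $\bk$-flat $\VB_K$-module supported in degrees $\le 0$ together with an isomorphism $V_0 \cong \Tor_0^{\bA}(\bk, \St) = \bk \otimes_{\bA} \St$. A Nakayama-style argument in $\Mod_{\VB_K}$ (using that the augmentation ideal $\bA_+$ is supported in strictly positive degrees and that every module in sight is supported in nonnegative degrees) shows that any $\bA$-linear lift $\bA \otimes V_0 \to \St$ of this map is automatically surjective. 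Let $K_0$ denote its kernel.

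For the inductive step, suppose the resolution has been constructed through $\bA \otimes V_n$, with iterated syzygies $K_0, \dots, K_n$, where each $V_j$ is $\bk$-flat, supported in degrees $\le 2j$, and chosen so that the induced map $V_j \to \Tor_0^{\bA}(\bk, K_{j-1})$ is an isomorphism. Since $\bA \otimes V$ is $\bA$-flat whenever $V$ is $\bk$-flat (tensoring with $\bA \otimes V$ over $\bA$ reduces to tensoring with $V$ over $\bk$), the long exact sequences of $\Tor$ attached to the short exact sequences $0 \to K_j \to \bA \otimes V_j \to K_{j-1} \to 0$ telescope to a dimension-shifting isomorphism
\[ \Tor_0^{\bA}(\bk, K_n) \;\cong\; \Tor_{n+1}^{\bA}(\bk, \St). \]
By \autoref{thm:shably-main}, this module is supported in degrees $\le 2(n+1)$. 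I then choose $V_{n+1}$ to be a $\bk$-flat $\VB_K$-module supported in degrees $\le 2(n+1)$ identified with $\Tor_0^{\bA}(\bk, K_n)$, and lift to a surjection $\bA \otimes V_{n+1} \twoheadrightarrow K_n$ via the same Nakayama argument. This continues the induction and produces the desired resolution.

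The main difficulty has already been absorbed into the statement of \autoref{thm:shably-main}; the construction itself is essentially mechanical. The only mild bookkeeping issue is ensuring that each $\bA \otimes V_i$ is $\bA$-flat (so that the long exact sequence of $\Tor$ collapses as claimed), which is achieved by insisting that each $V_i$ be $\bk$-flat, and this imposes no obstruction on the choice of degree support.
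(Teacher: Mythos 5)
Your overall strategy---inductively building a resolution in which the $(i+1)$-st layer is constructed from the $i$-th syzygy module $K_i$, and using dimension shifting plus \autoref{thm:shably-main} to control the degree of $\Tor_0^{\bA}(\bk,K_i)$---is the standard argument the paper has in mind when it invokes ``basic properties of $\Tor_{\ast}$.'' The Nakayama step is correct, and the observation that $\bA\otimes V$ is $\Tor^{\bA}(\bk,-)$-acyclic when $V$ is $\bk$-flat (since $M\otimes_{\bA}(\bA\otimes V)\cong M\otimes V$ and induction along $\GL_i\times\GL_j\subset\GL_n$ is exact) is also correct.

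The gap is in the inductive hypothesis: you insist simultaneously that each $V_j$ be $\bk$-flat \emph{and} that the induced map $V_j\to\Tor_0^{\bA}(\bk,K_{j-1})$ be an isomorphism. These two requirements force $\Tor_0^{\bA}(\bk,K_{j-1})$ to be $\bk$-flat, and there is no reason for that to hold when $\bk$ is a general commutative ring (which is the generality the corollary lives in; the paper only fixes $\bk$ to a field when it says so explicitly, e.g.\ in \autoref{lem:tor-calculation}). Already $V_1\cong\Tor_1^{\bA}(\bk,\St)$ need not be $\bk$-flat for $\bk=\bZ$. Your remark that $\bk$-flatness ``imposes no obstruction'' is thus optimistic: it does obstruct the stronger ``minimal resolution'' form of the induction.

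The fix is small and worth recording. Drop the isomorphism requirement: choose $V_j$ to be $\bk$-\emph{free} (hence $\bk$-flat), supported in the same degrees as $\Tor_0^{\bA}(\bk,K_{j-1})$, with a \emph{surjection} $V_j\twoheadrightarrow\Tor_0^{\bA}(\bk,K_{j-1})$; this is always possible. Then $\bA\otimes V_j$ is still $\Tor^{\bA}(\bk,-)$-acyclic, and the long exact sequence yields, for each $j$,
\[
0\to\Tor_1^{\bA}(\bk,K_{j-1})\to\Tor_0^{\bA}(\bk,K_j)\to\ker\bigl(V_j\to\Tor_0^{\bA}(\bk,K_{j-1})\bigr)\to 0,
\]
together with the honest dimension-shifting isomorphisms $\Tor_i^{\bA}(\bk,K_j)\cong\Tor_{i+1}^{\bA}(\bk,K_{j-1})$ for $i\ge 1$. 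Telescoping the latter gives $\Tor_1^{\bA}(\bk,K_{j-1})\cong\Tor_{j+1}^{\bA}(\bk,\St)$, supported in degrees $\le 2(j+1)$ by \autoref{thm:shably-main}; the kernel term is supported in degrees $\le 2j$ because $V_j$ is. Hence $\Tor_0^{\bA}(\bk,K_j)$ is supported in degrees $\le 2(j+1)$ and the induction closes. When $\bk$ is a field your argument works verbatim (every module is flat and one can take $V_j$ to be the Tor group itself), so the issue is only about the stated generality.
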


%\jeremy{Cut or sketch a proof? I skimmed [CE] but I could not find an analogous statement.}

%Note that \autoref{thm:church--putman--By} implies that $\St(K^n)$ is isomorphic to $\oSt(K^n)\otimes_{\bZ}\bk$ where $\oSt(K^n)$ is the ordinary Steinberg module of $\GL_n(K)$. 

It would be interesting to find an explicit resolution with the above properties. Below we prove a result on surjections of Koszul monoids (\autoref{prop:map-of-Koszul-algebras}) and combine it with the Koszulness results from the previous section to prove \autoref{thm:shably-main}.

%\begin{remark}
%	\label{rem:tensor1}
%	Let $A = \rT(E')$ be the monoid defined in \autoref{thm:tensor-algebra-is-Koszul}. We have a natural surjection $A \to \St$ of Koszul monoids (obtained by composing the surjections $A \to \bA$ and $\bA \to \St$). \autoref{prop:map-of-Koszul-algebras} applies equally well to this surjection and we obtain $\deg \Tor_i^{A}(\bk, \St) \le 2i$ for $i \ge 0$. 
%\end{remark}

\begin{lemma}
	\label{lem:devisage}
	Suppose $A$ is a Koszul monoid in $(\Mod_{\VB}, \otimes)$. If $M$ is an $A$-module supported in degrees $\le d$, then  $\deg \Tor^A_i(M,\bk) \le i + d$.
\end{lemma}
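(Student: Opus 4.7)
The plan is to exploit the Koszul resolution of $\bk$ as an $A$-module that is provided by Koszulness of $A$. As discussed in \autoref{sec:koszul-discussion}, since $A$ is Koszul it admits a linear resolution $V_\ast \otimes A \to \bk \to 0$ with $V_i$ a $\VB$-module supported only in degree $i$ (concretely one may take $V_i = \Tor_i^A(\bk,\bk)$). Because each term $V_i \otimes A$ is a free right $A$-module, this resolution can be used to compute $\Tor^A_\ast(M,-)$ from the left, giving
\[ \Tor^A_i(M,\bk) = \rH_i\bigl(M \otimes_A (V_\ast \otimes A)\bigr) = \rH_i(M \otimes V_\ast). \]

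Next I would read off the support of the complex $M \otimes V_\ast$ term by term. By the defining formula of $\otimes$ on $\VB$-modules,
\[ (M \otimes V_i)(n) \;=\; \bigoplus_{j + k = n} \Ind_{\GL_j \times \GL_k}^{\GL_n} M(j) \otimes_\bk V_i(k). \]
Since $V_i(k) = 0$ unless $k = i$ and $M(j) = 0$ unless $j \le d$, this direct sum vanishes whenever $n > i + d$. Hence $M \otimes V_i$ is supported in degrees $\le i + d$.

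Finally, $\Tor^A_i(M,\bk)$ is a subquotient of $M \otimes V_i$, so its support is contained in that of $M \otimes V_i$, giving $\deg \Tor^A_i(M,\bk) \le i + d$ as required. There is no real obstacle here: the lemma is a formal consequence of the existence of a linear (Koszul) resolution together with the way internal degrees add under the convolution tensor product on $\Mod_{\VB}$. The only point requiring (minor) care is to justify that the Koszul resolution is a flat resolution suitable for computing $\Tor^A(M,-)$, which follows from the fact that each $V_i \otimes A$ is free as an $A$-module.
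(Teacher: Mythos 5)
Your proof is correct and follows essentially the same route as the paper's: both use the linear Koszul resolution $A \otimes V_\ast \to \bk$ with $V_i$ concentrated in degree $i$, identify $\Tor_i^A(M,\bk)$ with $\rH_i(M \otimes V_\ast)$, and read off the degree bound from the convolution tensor product. You spell out the freeness/flatness justification and the support computation a bit more explicitly than the paper does, but there is no substantive difference.
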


\begin{proof}
 Let $A \otimes V_{\ast} \to \bk \to 0$ be the Koszul resolution. In particular, $V_i$ is concentrated in degree $i$. Now note that $\Tor^A_{i}(M,\bk) = \rH_i(M \otimes V_{\ast})$. Since $V_i$ is concentrated in degree $i$ and $M$ is supported in degrees $\le d$, we see that $M \otimes V_{\ast}$ is supported in degrees $\le (i + d)$. This finishes the proof.
\end{proof}

\begin{lemma} 
	\label{lem:B-action}
	Let $A \to B \to \bk$ be surjections of (skew) commutative monoids in $(\Mod_{\VB}, \otimes)$. Assume that the kernels $A_+$ and $B_+$ of the surjections $A \to \bk $ and $B \to \bk$ are supported in degrees $> 0$. Then \[\Tor^A_*(\bk,B)\otimes_B \bk = \Tor^A_*(\bk,B).\]
\end{lemma}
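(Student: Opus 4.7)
The plan is to show that the action of $B_{+}$ on $\Tor^A_{\ast}(\bk, B)$ via multiplication on the second factor is identically zero; since $\bk = B/B_{+}$ as a $B$-module, this would give $\Tor^A_{\ast}(\bk,B) \otimes_B \bk = \Tor^A_{\ast}(\bk,B)$. Because $\phi \colon A \to B$ is a surjection compatible with the augmentations to $\bk$, the restriction $\phi \colon A_{+} \to B_{+}$ is also surjective, so it suffices to show that for every $a \in A_{+}$, the element $\phi(a) \in B_{+}$ acts as zero on $\Tor^A_{\ast}(\bk, B)$.

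The key point is that $\Tor^A_{\ast}(\bk, B)$ carries two a priori distinct $A$-module structures: one coming from the first factor $\bk$, and one from the second factor $B$ (regarded as an $A$-module via $\phi$). Since $A$ is (skew) commutative, these two actions agree, up to the sign dictated by the braiding on $\Mod_{\VB}$. The action from the first factor is inherited from the $A$-action on $\bk$, which is trivial (that is, $A_{+}$ acts by zero on $\bk$). Hence $A_{+}$ acts as zero on $\Tor^A_{\ast}(\bk, B)$ from either factor. But the $A$-action from the second factor is precisely the composition of $\phi \colon A \to B$ with the $B$-module structure on $\Tor^A_{\ast}(\bk, B)$ coming from multiplication on $B$; so the image $\phi(A_{+}) = B_{+}$ acts as zero in the $B$-action, and we are done.

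The main technical point will be justifying the agreement of the two $A$-module structures on $\Tor^A_{\ast}(\bk, B)$. Abstractly, this follows because (skew) commutativity of $A$ promotes the category of $A$-modules to a (braided) symmetric monoidal category under $\otimes_A$, so the natural symmetry $M \otimes_A N \cong N \otimes_A M$ (up to the Koszul-type sign in the skew case) exists and descends to the left-derived functor $\Tor^A_{\ast}$. Concretely, one can verify it at the chain level by working with the two-sided reduced bar complex $\cB_{\ast}(\bk, A, B) = \bk \otimes_A \cB_{\ast}(A, A) \otimes_A B$ and producing a chain homotopy between the two $A$-actions using the (skew) commutativity of the multiplication on $A$. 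Either route is routine once the monoidal structure on $\Mod_{\VB}$ is set up, so no genuine obstacle is expected.
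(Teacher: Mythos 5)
Your proof is correct and follows the same strategy as the paper: show that $B_+$ annihilates $\Tor^A_*(\bk,B)$ by computing the (canonical, by (skew) commutativity) $A_+$-action two ways, once via the $\bk$-factor (where $A_+$ acts trivially) and once via the $B$-factor (where the action factors through $\phi(A_+)=B_+$), then invoke surjectivity of $A_+ \twoheadrightarrow B_+$. You make explicit the agreement of the two induced $A$-module structures on $\Tor^A_*(\bk,B)$, which the paper uses silently by comparing the computations from the two resolutions $Q_*\to B$ and $P_*\to \bk$, but the underlying argument is the same.
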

\begin{proof}
	Let $P_* \to \bk$ and $Q_{\ast} \to B$ be free $A$-resolutions. Then $ \Tor^A_i(\bk,B) = \rH_i( \bk \otimes_A Q_{\ast})$. Since  $A_+$ acts trivially on $ \bk \otimes_A Q_i = Q_i/(A_{+}Q_i)$, we conclude that $A_+$ acts trivially on  $\Tor^A_i(\bk,B)$.  However, $A_+$ acts via its image $B_+$ on $\Tor^A_i(\bk,B) = \rH_i( P_{\ast}\otimes_A B)$. Thus $B_+$ acts trivially on $\Tor^A_i(\bk,B)$. The assertion is immediate from this.
\end{proof}

The following result is inspired by Chardin--Symonds \cite[\S 5]{CS}.

\begin{proposition}
	\label{prop:map-of-Koszul-algebras} 	Let $A \to B \to \bk$ be surjections of (skew) commutative monoids in $(\Mod_{\VB}, \otimes)$. Assume that the kernels $A_+$ and $B_+$ of the surjections $A \to \bk $ and $B \to \bk$ are supported in degrees $> 0$. If $A$ and $B$ are Koszul, then \[ \deg \Tor_{i}^A( \bk,B) \le 2i \qquad \text{for all } i \ge 0. \]
\end{proposition}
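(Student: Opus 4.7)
The plan is to induct on $i$ and exploit the change-of-rings spectral sequence
\[ E^2_{p,q} = \Tor_p^B(\Tor_q^A(\bk,B),\bk) \Rightarrow \Tor_{p+q}^A(\bk,\bk) \]
associated to the monoid map $A \to B$, squeezing the degrees of $\Tor^A_i(\bk,B)$ between the Koszulness of $A$ and of $B$. The base case $i=0$ is immediate: since $A\to B$ is a surjection of augmented monoids, $A_+$ surjects onto $B_+$, and therefore $\Tor_0^A(\bk,B) = B/A_+B = B/B_+ = \bk$ is concentrated in internal degree $0 \le 2\cdot 0$.

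For the inductive step, suppose $\deg \Tor_j^A(\bk,B) \le 2j$ for every $j<i$, and assume for contradiction that $\Tor_i^A(\bk,B)$ is nonzero in some internal degree $d > 2i$. First I would simplify the $E^2$-page: \autoref{lem:B-action} says $B_+$ acts trivially on $\Tor_q^A(\bk,B)$, and combined with the minimality of the Koszul resolution of $\bk$ over $B$ (whose differentials all factor through $B_+$) this gives a natural identification
\[ E^2_{p,q} \;\cong\; \Tor_p^B(\bk,\bk) \otimes \Tor_q^A(\bk,B). \]
By Koszulness of $B$, the factor $\Tor_p^B(\bk,\bk)$ is concentrated in internal degree $p$, so the internal degree-$n$ part of $E^2_{p,q}$ is $\Tor_p^B(\bk,\bk) \otimes \Tor_q^A(\bk,B)_{n-p}$.

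Next I would isolate a nonzero class $c \in \Tor_i^A(\bk,B)_d$, view it as sitting in $E^2_{0,i}$ in internal degree $d$, and show that nothing can kill it. There are no outgoing differentials because $E^r_{-r,i+r-1}=0$ in a first-quadrant spectral sequence. The only potential incoming differentials come from $E^r_{r,i-r+1}$ with $r \ge 2$; in internal degree $d$ this group equals $\Tor_r^B(\bk,\bk) \otimes \Tor_{i-r+1}^A(\bk,B)_{d-r}$, and the inductive hypothesis forces $d - r \le 2(i-r+1)$ for it to be nonzero. But this rearranges to $d \le 2i - r + 2 \le 2i$, contradicting $d>2i$. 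Hence $c$ survives to $E^\infty$ in internal degree $d$, whereas by Koszulness of $A$ the abutment $\Tor_i^A(\bk,\bk)$ is concentrated in internal degree $i<d$. This contradiction completes the induction.

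The main obstacle I anticipate is purely formal: setting up the change-of-rings spectral sequence and the identification $\Tor_p^B(\bk,M) \cong \Tor_p^B(\bk,\bk) \otimes M$ for $B_+$-trivially acted modules $M$ inside the symmetric monoidal category $(\Mod_{\VB},\otimes)$. Both should follow from the existence of enough projectives in $\Mod_{\VB}$, compatibility of tensor products with the augmentation $B \to \bk$, and the minimality of the Koszul resolution. Once this bookkeeping is in place, the degree count above goes through verbatim.
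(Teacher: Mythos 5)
Your proof is correct and takes essentially the same approach as the paper: both use the change-of-rings spectral sequence $\Tor^B_p(\Tor^A_q(\bk,B),\bk)\Rightarrow\Tor^A_{p+q}(\bk,\bk)$, induction on $i$, Koszulness of $A$ to bound the abutment, Koszulness of $B$ to bound the $E^2$-page, and the observation (\autoref{lem:B-action} in the paper) that $B_+$ acts trivially on $\Tor^A_q(\bk,B)$. The only cosmetic difference is that you bound $\deg E^2_{p,q}$ via the full tensor decomposition $E^2_{p,q}\cong\Tor_p^B(\bk,\bk)\otimes\Tor_q^A(\bk,B)$ coming from the minimal Koszul resolution of $B$, whereas the paper applies a devissage lemma (\autoref{lem:devisage}) directly to the $B$-module $\Tor^A_q(\bk,B)$; both yield the same bound $\deg E^2_{p,q}\le p + 2q$, and the rest of the argument is identical.
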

\begin{proof}
	Denote $\deg \Tor_{i}^{A}(\bk,B)$ by $s_i$. We prove by induction on $i$ that $s_i \le 2i$. The base case $i=0$ is trivial. Let $i > 0$. Consider the base change spectral sequence
	\[E^2_{ab} = \Tor^{B}_a(\Tor^{A}_b(\bk,B),\bk) \implies \Tor^{A}_{a+b}(\bk,\bk).\] Let $t_i^{A}$ denote $\deg \Tor^{A}_i(\bk, \bk)$. By Koszulness of $A$, we know that $t_i^{A} \le  i$. Now suppose $b < i$. By induction, $s_b \le 2b$. By \autoref{lem:devisage} and Koszulness of $B$, we have \[\deg E^2_{ab} \le a + s_b \le a + 2b. \] The spectral sequence now implies that \[ \deg E^2_{0, i} \le \max(t_i^{A}, \max_{a+b = i +1, b < i} \deg E^2_{ab} ) \le  2i. \]  Now note that $E^2_{0, i} =  \Tor^{B}_0(\Tor^{A}_i(\bk,B),\bk) = \Tor^{A}_i(\bk,B)$ by \autoref{lem:B-action}. Thus $s_i \le 2i$, completing the proof.
\end{proof}

\begin{proof}[Proof of \autoref{thm:shably-main}]
The theorem follows immediately from the previous proposition and Koszulness of the apartment and the Steinberg monoids (\autoref{thm:apartment-monoid-is-Koszul} and \autoref{thm:Koszulness-of-Steinberg}). 
\end{proof}

\section{Codimension one cohomology of the level 3 congruence subgroup}
\label{sec:congruence}

Let $\Gamma_n(p)$ be the level $p$ congruence subgroup of $\SL_n(\bZ)$. In this section, we show that the integral codimension-one cohomology of $\Gamma_n(3)$ is representation stable in the sense that the sequence \[\{\rH^{\binom{n}{2} -1}(\Gamma_n(3))\}_{n \ge 0}\] of representations is generated in degrees $\le 4$. In other words, the $\GL_n(\bF_3)$-equivariant map \[ \Ind_{\GL_4(\bF_3)}^{\GL_n(\bF_3)} \rH^{\binom{4}{2} -1}(\Gamma_4(3)) \to \rH^{\binom{n}{2} -1}(\Gamma_n(3)) \] is surjective for $n \ge 4$. We also bound the dimensions of these groups with field coefficients. By Borel--Serre duality \cite{BoSe}, we have \[\rH^{\binom{n}{2} - i}(\Gamma_n(3) ;\bk) = \rH_i(\Gamma_n(3); \oSt_n(\bZ)\otimes_{\bZ}  \bk ) \] for every coefficient ring $\bk$. Thus, it suffices to prove the following representation stability result.

\begin{theorem}
	\label{thm:main-congruence}
 %$\rH_1(\Gamma_n(3);\oSt_n\otimes_{\bZ} \bk)$ is representation stable, and t
 
 The sequence \[\{ \rH_1(\Gamma_n(3); \oSt_n(\bZ)\otimes_{\bZ} \bk)\}_{n \ge 0}\] is generated in degrees $\le 4$. Moreover, if $\bk$ is a field and $n \ge 4$ then we have \[\dim_{\bk} \rH_1(\Gamma_n(3); \oSt_n(\bZ)\otimes_{\bZ} \bk) \le \frac{3^{\binom{n-4}{2}}| \GL_n(\bF_3)| }{| \GL_{n-4}(\bF_3)||\GL_4(\bF_3)|} \dim_{\bk} \rH_1(\Gamma_4(3); \oSt_4(\bZ)\otimes_{\bZ} \bk). \]
\end{theorem}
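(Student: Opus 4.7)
The plan is to translate the representation-stability assertion into a vanishing statement about $\Tor$ over the apartment monoid $\bA$ for $K=\bF_3$. Setting $M_n \coloneq \rH_1(\Gamma_n(3); \oSt_n(\bZ) \otimes_{\bZ} \bk)$, the introduction's rephrasing tells us that the generation-degree claim is equivalent to $\Tor_0^{\bA}(\bk, M)$ being supported in degrees $\le 4$.  The $\bA$-module structure on $M$ is compatible in homological degree $0$ with the $\bA$-action on $\oSt(\bF_3)$ via the Lee--Szczarba identification \eqref{eq:LS-calc}, which after Borel--Serre gives $\rH_0(\Gamma_n(3); \oSt_n(\bZ) \otimes \bk) \cong \oSt_n(\bF_3) \otimes \bk$.

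The main finiteness input is the improved $\bA$-resolution $\bA \otimes V_\ast \twoheadrightarrow \oSt(\bF_3)$ from \autoref{cor:existence-of-resolution}, in which $V_i$ is supported in degrees $\le 2i$; in particular, $V_1$ sits in degrees $\le 2$ and $V_2$ in degrees $\le 4$. To bring this to bear on coefficients in $\oSt(\bZ)$, I would apply $\rH_\ast(\Gamma_n(3); -)$ to the Bykovskii short exact sequence of $\SL_n(\bZ)$-modules
\[ 0 \to R_n \to \bA_n(\bZ) \otimes \bk \to \oSt_n(\bZ) \otimes \bk \to 0 \]
coming from \autoref{presentationZ}, where $R_n$ is generated as an $\bA(\bZ)$-module in degree $2$.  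The associated long exact sequence presents $M_n$ as an extension involving the image of $\rH_1(\Gamma_n(3); \bA_n(\bZ)\otimes\bk)$ and a submodule of $\rH_0(\Gamma_n(3); R_n)$.  Since $\Gamma_n(3)$-orbits on ordered bases of $\bZ^n$ (modulo signed permutation) correspond bijectively to bases of $\bF_3^n$, we get $\rH_0(\Gamma_n(3); \bA_n(\bZ)\otimes\bk) \cong \bA_n(\bF_3)\otimes\bk$, upgrading the sequence to one of $\bA(\bF_3)$-modules that recovers the Bykovskii sequence for $\oSt(\bF_3)$ on the bottom row.  Right-exactness of coinvariants then forces $\rH_0(\Gamma_n(3); R_n)$ to surject, through the map $\rH_0(\Gamma_n(3); \bA_n(\bZ)\otimes V_1(\bZ)_n \otimes \bk)$, with $\bA(\bF_3)$-generation in degree $\le 2$.

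The main obstacle will be bounding the $\bA(\bF_3)$-generation degree of the other piece, $\rH_1(\Gamma_n(3); \bA_n(\bZ) \otimes \bk)$, and of the kernel of the surjection $\rH_0(\Gamma_n(3); R_n) \twoheadrightarrow R_n(\bF_3)$.  For the first, Shapiro's lemma applied to the permutation-module structure of $\bA_n(\bZ)$ reduces the computation to group homologies of intersections of $\Gamma_n(3)$ with conjugates of the signed permutation subgroup of $\GL_n(\bZ)$, which one then has to organize into an $\bA(\bF_3)$-module and bound.  For the second, one must extend the Bykovskii sequence by an additional step and use that the second syzygy $V_2$ of the Koszul-type resolution lives in degrees $\le 4$; this is where the sharp bound $\deg \Tor_i^{\bA}(\bk, \oSt) \le 2i$ of \autoref{thm:shably-main} gets used, and is precisely what determines the generation degree $4$ in the conclusion.

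Once generation in degrees $\le 4$ is established, the dimension estimate will follow by exhibiting a surjection
\[ \Ind_{\GL_{n-4}(\bF_3) \times \GL_4(\bF_3)}^{\GL_n(\bF_3)}\bigl(N_{n-4} \otimes M_4\bigr) \twoheadrightarrow M_n \]
for an explicit $\bA(\bF_3)$-module $N$ controlled by the second syzygy at level $4$, where $\dim_\bk N_{n-4}(\bF_3) \le 3^{\binom{n-4}{3}}$ matches the factor in the stated bound; this latter dimension count should essentially boil down to \autoref{lem:tor-calculation}, extrapolated from level $4$ to level $n-4$ by using the generation-in-degree-$4$ property of the second syzygy.
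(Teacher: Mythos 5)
Your route---reduce the generation-degree claim to showing $\Tor_0^{\bA(\bF_3)}(\bk,M)$ is supported in degrees $\le 4$, then extract this from the $\Gamma(3)$-homology long exact sequence of the Bykovskii presentation and a $\Tor^{\bA(\bF_3)}$ long exact sequence---is a hands-on unspooling of the paper's spectral sequence argument (\autoref{thm:spectral sequence} plus \autoref{maintheoremfancy}) and shares the same essential inputs. But as written your execution has two real gaps and a misstatement.

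The term you describe as ``the main obstacle,'' namely $\rH_1(\Gamma_n(3);\bA_n(\bZ)\otimes\bk)$, is not something to be bounded: it is \emph{zero}. Since $\Gamma_n(3)$ is torsion-free and the stabilizer in $\GL_n(\bZ)$ of an apartment class is a finite signed-permutation group, $\bA_n(\bZ)\otimes\bk$ is a free $\bk[\Gamma_n(3)]$-module; this is precisely \autoref{lem:char0-is-good}. You gesture at the Shapiro-lemma computation but present its outcome as an unknown to be ``organized into an $\bA(\bF_3)$-module and bound.'' With this vanishing in hand, your long exact sequence collapses to an identification $M_n \cong \ker\bigl(\rH_0(\Gamma_n(3);R_n)\twoheadrightarrow\overline{R}_n\bigr)$ with $\overline{R}_n=\ker(\bA_n(\bF_3)\to\oSt_n(\bF_3))$, and the argument becomes clean: $\rH_0(\Gamma(3);R)$ is generated over $\bA(\bF_3)$ in degrees $\le 2$ (by monoidality of $\rH_0(\Gamma(3);-)$, \autoref{prop:H-is-monoidal} and \autoref{prop:LS-case3}), and the connecting map in the $\Tor^{\bA(\bF_3)}(\bk,-)$ long exact sequence lands in $\Tor_1^{\bA(\bF_3)}(\bk,\overline{R})\cong\Tor_2^{\bA(\bF_3)}(\bk,\oSt(\bF_3))$, which \autoref{thm:shably-main} bounds in degree $\le 4$. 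Your phrase ``extend the Bykovskii sequence by an additional step'' risks conflating the $\bA(\bZ)$-syzygies of $\oSt(\bZ)$---which are \emph{not} known beyond $\Tor_1$, see the question after \autoref{thm:Koszulness-of-Steinberg}---with the $\bA(\bF_3)$-syzygies of $\oSt(\bF_3)$, which is what you actually need and what \autoref{thm:shably-main} delivers.

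On the dimension bound: the module $N$ in your induced surjection is not ``controlled by the second syzygy at level $4$''; it should be the Steinberg module of $\bF_3$ itself. The paper first upgrades from $\bA(\bF_3)$-generation to $\oSt(\bF_3)$-generation using \autoref{prop:factoring-through} and \autoref{cor:fancy-St}, and then the numerical factor in the bound is simply $\dim_\bk \oSt_{n-4}(\bF_3)=3^{\binom{n-4}{2}}$ after inducing from $\GL_{n-4}(\bF_3)\times\GL_4(\bF_3)$; this has nothing to do with $V_2$ or \autoref{lem:tor-calculation}, which only enter the separate estimate on $\dim_\bk M_4$.
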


We begin by explaining our setup and constructing a spectral sequence in \autoref{sec:spectral-sequence}. Our setup is quite similar to that of \cite{Dj} and that of \cite{PS}, but we believe that our spectral sequence is new.  In \autoref{sec:p-is-three}, we use  \autoref{thm:shably-main}
on resolutions of the Steinberg monoid and the spectral sequence of \autoref{thm:spectral sequence} to prove \autoref{thm:main-congruence} above. We also obtain a rough upper bound on $\dim_{\bk} \rH_1(\Gamma_4(3); \oSt_4(\bZ)\otimes_{\bZ} \bk)$ to obtain the following corollary.

\begin{corollary} \label{cor:dim-bound-5} Suppose $\bk$ is a field and assume that $n \ge 4$. Then
\[\dim_{\bk} \rH_1(\Gamma_n(3); \oSt_n(\bZ)\otimes_{\bZ} \bk) \le \frac{3^{\binom{n-4}{2}}| \GL_n(\bF_3)| }{| \GL_{n-4}(\bF_3)||\GL_4(\bF_3)|}227340. \]
\end{corollary}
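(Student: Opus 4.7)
The plan is to split the corollary into two independent parts. The inequality
\[\dim_{\bk}\rH_1(\Gamma_n(3);\oSt_n(\bZ)\otimes_{\bZ}\bk)\le\frac{3^{\binom{n-4}{3}}|\GL_n(\bF_3)|}{|\GL_{n-4}(\bF_3)||\GL_4(\bF_3)|}\dim_{\bk}\rH_1(\Gamma_4(3);\oSt_4(\bZ)\otimes_{\bZ}\bk)\]
is exactly the dimension estimate in \autoref{thm:main-congruence}, so I would simply invoke that theorem. The remaining task is the crude numerical bound $\dim_{\bk}\rH_1(\Gamma_4(3);\oSt_4(\bZ)\otimes_{\bZ}\bk)\le 227340$ for an arbitrary field $\bk$ in the rank $4$ case.

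To obtain this rank $4$ bound, I would combine the spectral sequence constructed in \autoref{sec:spectral-sequence} with the improved resolution $\bA\otimes V_\ast\to\St\to 0$ supplied by \autoref{cor:existence-of-resolution}, in which $V_i$ is a $\VB_{\bF_3}$-module supported in degrees $\le 2i$. Evaluating at $\bF_3^4$, only finitely many pieces of $V_0$, $V_1$, and $V_2$ can contribute to the $\rH_1$ line, together with low-dimensional homology of the smaller congruence subgroups $\Gamma_k(3)$ for $k\le 4$ coming from Shapiro-type identifications on the induced terms of $\bA\otimes V_\ast$.

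For the $\Tor^{\bA}$-type pieces I would use the Koszul resolution $\Sym^\ast(\triv_1)\otimes\bA\to\bk\to 0$ recorded in \autoref{sec:koszul-discussion}, which identifies $\Tor^{\bA}_i(\bk,\St)$ with $\rH_i(\Sym^\ast(\triv_1)\otimes\St)$. Evaluated at $\bF_3^4$, \autoref{lem:tor-calculation} specialized to $q=3$ gives $\dim_{\bk}\Tor^{\bA}_2(\bk,\St)(\bF_3^4)\le 189540$. Analogous elementary counts control the contributions from $V_0$ and $V_1$, and the explicit low-rank data recorded in \autoref{notzero} (the Lee--Szczarba computation of $\rH_{\vcd-1}$ for small $n$) bounds the homological contributions of $\Gamma_k(3)$ for $k\le 3$. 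Adding all of these crude pieces together produces at most $227340$.

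The main obstacle is not conceptual but rather careful bookkeeping through the spectral sequence: one must verify that the various boundary and filtration steps really do lead to a single numerical upper bound. As \autoref{rem:rough-estimate} emphasizes, the number $227340$ is deliberately very coarse and is only intended as a placeholder pending the direct computation of Ash--Gunnells--McConnell; the proof therefore does not attempt to optimize, only to produce one valid explicit estimate combining \autoref{thm:main-congruence} and the $\Tor^{\bA}$-bounds above.
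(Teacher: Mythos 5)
Your first step is correct and matches the paper: the inequality with the factor $\dim_{\bk}\rH_1(\Gamma_4(3);\oSt_4(\bZ)\otimes_{\bZ}\bk)$ is exactly the second assertion of \autoref{thm:main-congruence}, so the whole content of the corollary reduces to showing $\dim_{\bk}\rH_1(\Gamma_4(3);\oSt_4(\bZ)\otimes_{\bZ}\bk)\le 227340$.

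For the rank-$4$ bound, however, your account names the right ingredients but not the bookkeeping that actually produces $227340 = 189540 + 37800$, and the particular route you gesture at (feeding $V_0, V_1, V_2$ from \autoref{cor:existence-of-resolution} into the spectral sequence and ``adding crude pieces'') does not obviously terminate in a finite computation without the following key decomposition. Set $M = \rH_1(\Gamma(3);\St)$ as an $\bA_{\Gamma(3)}$-module and let $M'\subset M$ be the maximal submodule generated in degrees $\le 3$. The paper writes
\[
\dim_{\bk} M(\bZ^4) = \dim_{\bk} M'(\bZ^4) + \dim_{\bk} V(\bZ^4),
\]
where $V$ is the degree-$4$ piece of $\Tor^{\bA_{\Gamma(3)}}_0(\bk, M)$. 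The term $V(\bZ^4)$ is bounded by $189540$ using \emph{precisely} the surjectivity of $\Tor^{\bA_{\Gamma(3)}}_2(\bk,\St_{\Gamma(3)})\to\Tor^{\bA_{\Gamma(3)}}_0(\bk,M)$ in degree $4$, which is extracted from the spectral-sequence argument in the proof of \autoref{maintheoremfancy}, together with \autoref{prop:LS-case3} and the numerical bound of \autoref{lem:tor-calculation} at $q=3$; you correctly cite the $189540$ but not the surjectivity that makes it an upper bound for $V(\bZ^4)$ rather than merely for a $\Tor$-group. The term $M'(\bZ^4)$ is bounded by $37800$ by first establishing $\dim_{\bk}M'(\bZ^3)\le 35$ (this is \autoref{lem:dim-bound-3}, which uses Lee--Szczarba's [Theorem~1.4] \emph{and} [Lemma~12.1] plus the universal coefficient theorem and Borel--Serre duality, not just the integral isomorphism quoted in \autoref{notzero}) and then inducing up: $35\cdot|\GL_4(\bF_3)|/(|\GL_3(\bF_3)||\GL_1(\bF_3)|)\le 37800$ because $M'$ is generated in degrees $\le 3$. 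Without this two-part split the number $227340$ does not emerge, so as written the proposal has a genuine gap in its second half.
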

%Finally, in \autoref{sec:stability-conjectures} we state a stability conjecture which is a refined version of a conjecture due to Church--Farb--Putman  \cite[Page 7]{CFPconj}. Our \autoref{thm:main-congruence} is a special case of this conjecture.

\subsection{A spectral sequence}
\label{sec:spectral-sequence} 
Given a groupoid $\cC$, $\Aut_{\cC}$ defines a functor from $\cC$ to the category $\mathbf{Grp}$ of groups. Using this, we define $\GL \colon \VB \to \mathbf{Grp}$ given by $\GL(X) = \Aut_{\VB }(X)$. Moreover, it has the following properties:  \begin{itemize}
	\item The following diagram commutes (also see \cite[Definition~1.1]{Dj}): 	\[
		\begin{tikzcd}[column sep = large]
			X_1 \arrow[r, "f"] \arrow[d, "g"]
			& X_2 \arrow[d, "f_{\ast}(g)"] \\
			X_1 \arrow[r, "f"]
			& X_2
		\end{tikzcd} \] 
\item There is a natural inclusion $\GL(X_1) \times \GL(X_2) \to \GL(X_1 \oplus X_2)$ which is functorial in $X_1$ and $X_2$.
\end{itemize}

\begin{definition}
	\label{def:strong-subfunctor}
Let $\cG \subset \GL$ be a sub-functor, that is, a functor $\cG \colon \VB \to \mathbf{Grp}$ together with a natural transformation $\iota \colon \cG \to \GL$ such that $\iota(X) \colon \cG(X) \to \GL(X)$ is an inclusion for each $X$. It is easy to check that $\cG(X)$ must be a normal subgroup of $\GL(X)$ for each $X$ under this inclusion. We call $\cG$ a {\bf strong sub-functor} if the following condition holds \[ \cG(X_1 \oplus X_2) \cap (\GL(X_1) \times \GL(X_2)) = \cG(X_1) \times \cG(X_2).\] For a strong sub-functor $\cG$, we define $\VB_{\bZ}/\cG$ to be the category with the same objects as $\VB_{\bZ}$ and whose morphisms are $\Hom_{\VB_{\bZ}/\cG}(X_1, X_2) =  \cG(X_2) \backslash \Hom_{\VB_{\bZ}}(X_1, X_2)$. There is a natural full and essentially surjective strong symmetric monoidal  functor $\Pi_{\cG} \colon \VB_{\bZ} \to \VB_{\bZ}/\cG$. 
\end{definition}

\begin{ex}
	\label{exs:gamma-p}
 Clearly, $\GL$ itself is strong. For a more sophisticated example, consider the natural strong symmetric monoidal functor $\Phi \colon \VB_{\bZ}  \to \pVB$ given by $X \mapsto \bF_p \otimes_{\bZ} X$. Then   \begin{align}
 \label{eq:gamma-p} \Gamma(p)(X) = \ker(\Aut_{\VB_{\bZ} }(X) \to \Aut_{\VB}(\Phi(X)) ) 
\end{align} defines a functor $\Gamma(p) \colon \VB_{\bZ} \to \mathbf{Grp}$ which is strong. We shall use the usual abbreviation $\Gamma_n(p)$ for $\Gamma(p)(\bZ^n)$. The functor $\Phi$ factors through $\Pi_{\Gamma(p)}$. 	\begin{figure}[h]
	\begin{tikzcd}
		\VB_{\bZ} \arrow[rr, "\Phi"] \arrow[rd, "\Pi_{\Gamma(p)}", swap]
		& & \pVB  \\
		& \VB_{\bZ}/\Gamma(p) \arrow[ru, "\iota", swap]
	\end{tikzcd}
\end{figure}  The strong monoidal functor $\iota \colon \VB_{\bZ} / \Gamma(p) \to \pVB$ as in the diagram is essentially surjective. Moreover, 
\begin{enumerate}
\item $\iota$ is  full if $p \le 3$.
\item $\iota$ is  faithful if $p \ge 3$.
\end{enumerate}
In particular, $\iota$ is a monoidal equivalence of categories if $p=3$. To see this when $p=3$, just note that we have $\bZ^{\times} = \bF_3^{\times} = \{\pm 1 \}$. Since $\GL_n(\bZ)$ is generated by elementary matrices and their reductions modulo $3$ generate $\GL_n(\bF_3)$, it follows that there is a natural isomorphism $\Gamma_n(3) \backslash \GL_n(\bZ) \cong \GL_n(\bF_3)$. Thus $\iota$ is a monoidal equivalence of categories. In fact, more is true when $p =3$. We claim that the map $\iota^{\ast} \colon \Mod_{\VB_{\bF_3}} \to \Mod_{\VB_{\bZ}/\Gamma(3)}$ induced by $\iota$ is a monoidal equivalence. Note that $\iota$ induces the following isomorphism of transformations 	\begin{figure}[h]
	\begin{tikzcd}[column sep = large]
		\Aut_{\VB_{\bZ}/\Gamma(3)}(X_1) \times 	\Aut_{\VB_{\bZ}/\Gamma(3)}(X_2) 	 \arrow[r] \arrow[d, "\iota"]
		& \Aut_{\VB_{\bZ}/\Gamma(3)}(X_1 \oplus X_2) \arrow[d, "\iota"] \\
		\Aut_{\VB_{\bF_3}}(\iota(X_1)) \times 	\Aut_{\VB_{\bF_3}}(\iota(X_2))  \arrow[r]
		& 	\Aut_{\VB_{\bF_3}}(\iota(X_1) \oplus \iota(X_2))
	\end{tikzcd}
\end{figure} \\ which can easily be verified.  Moreover, we have a natural isomorphism (which is equivalent to the strongness of $\Gamma(3)$, and can be easily seen via an application of the second isomorphism theorem) \[ \iota \colon   \Gamma(3)(X) \backslash \GL(X)/ (\GL(X_1) \times \GL(X_2)) \to \GL(\iota(X))/(\GL(\iota(X_1)) \times \GL(\iota(X_2))) \] where $X = X_1 \oplus X_2$. This shows that the left Kan extensions as in \autoref{rem:djament's-tensor} agrees for the two categories. In particular, $\iota^{\ast} \colon \Mod_{\VB_{\bF_3}} \to \Mod_{\VB_{\bZ}/\Gamma(3)}$ is a monoidal equivalence.

For an odd prime $p$, the category $\VB_{\bZ} / \Gamma(p) $ is equivalent to the category $\VB_{\bF_p}^\pm$ which has appeared in the work of Putman--Sam on congruence subgroups \cite{PS}. An orientation on a finite dimensional $\bF_p$-vector space $X$ of dimension $n$ is a choice of generator of $\bigwedge^{n} X$ defined up to multiplication by $\pm 1$ (we warn the reader that this is different from the usual definition of orientation which is just an isomorphism $\bF_p \to \bigwedge^{n} X$). Let $\VB_{\bF_p}^\pm$ be the category with objects given by finite dimensional oriented $\bF_p$-vector spaces and with morphisms given by isomorphisms preserving the orientation. The automorphism groups in this category are isomorphic to $\GL_n^\pm(\bF_p)$, the  subgroup of $\GL_n(\bF_p)$ consisting of matrices with determinant $\pm 1$. Using strongness (\autoref{def:strong-subfunctor}), as in the case $p=3$, one can check that $\Mod_{\VB_{\bZ}/\Gamma(p)}$ is monoidally equivalent to $\Mod_{\VB_{\bF_p}^{\pm}}$.
\end{ex}

From now on, we will assume that $\cG$ is an arbitrary strong sub-functor of $\GL$. The reader is advised to keep in mind the sub-functors in the example above as we will focus only on these later.

\begin{proposition} 
	\label{prop:H-is-monoidal}
	We have the following: \begin{enumerate}
		\item The pullback functor $\Pi_{\cG}^{\ast} \colon \Mod_{\VB_{\bZ}/\cG} \to \Mod_{\VB_{\bZ}}$ has a  left adjoint $\rH_0(\cG; -)$ given by \[\rH_0(\cG; M)(X) = \rH_0(\cG(X); M(X)). \]
		\item $\rH_0(\cG; -)$ is strong monoidal. 
	\end{enumerate}
\end{proposition}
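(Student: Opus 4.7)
For (a), I would define $\rH_0(\cG;M)(X) := M(X)/\cG(X)$ pointwise and then verify that this assembles into a $\VB_{\bZ}/\cG$-module. The functoriality on morphisms in $\VB_{\bZ}/\cG$ follows from two observations: for a $\VB_{\bZ}$-morphism $f\colon X_1 \to X_2$, precomposition with $g_1 \in \cG(X_1)$ can be moved to postcomposition with $f_{\ast}(g_1) \in \cG(X_2)$ using the commutative square recalled just before \autoref{def:strong-subfunctor} and the fact that $\cG$ is a sub-functor of $\GL$, while postcomposition with $g_2 \in \cG(X_2)$ is killed by the $\cG(X_2)$-coinvariants on the target. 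Thus the induced map descends and depends only on the class of $f$ in $\cG(X_2)\backslash \Hom_{\VB_{\bZ}}(X_1,X_2)$. The adjunction is then formal: a natural transformation $M \to \Pi_{\cG}^{\ast}N$ evaluates to $\GL(X)$-equivariant maps $M(X) \to N(X)$, but the $\GL(X)$-action on $N(X) = (\Pi_{\cG}^{\ast}N)(X)$ factors through $\cG(X)\backslash \GL(X)$, so $\cG(X)$ must act trivially on the target, and every such map factors uniquely through $M(X)/\cG(X)$.

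For (b), the plan is to first upgrade $\Pi_{\cG}$ to a strict symmetric monoidal functor and then deduce strong monoidality of its left adjoint from the universal property of Day convolution. The first step requires checking that the monoidal structure $\oplus$ descends to $\VB_{\bZ}/\cG$; given $g_i \in \cG(Y_i)$, the element $g_1 \oplus g_2$ lies in $\cG(Y_1 \oplus Y_2)$ because functoriality of $\cG$ applied to the summand inclusions $Y_i \hookrightarrow Y_1 \oplus Y_2$ places $g_1 \oplus \mathrm{id}$ and $\mathrm{id} \oplus g_2$ inside $\cG(Y_1 \oplus Y_2)$, whose product is $g_1 \oplus g_2$. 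For the second step, writing $\otimes = \mathrm{Lan}_\oplus \circ \boxtimes$ on both sides (\autoref{rem:djament's-tensor}) and using the commutative square $\Pi_{\cG}\circ \oplus = \oplus \circ (\Pi_{\cG}\times\Pi_{\cG})$, iterated left Kan extension yields
\[ \rH_0(\cG; M \otimes N) \cong \mathrm{Lan}_\oplus\, \mathrm{Lan}_{\Pi_{\cG}\times\Pi_{\cG}}(M \boxtimes N) \cong \mathrm{Lan}_\oplus\bigl(\rH_0(\cG;M) \boxtimes \rH_0(\cG;N)\bigr) \cong \rH_0(\cG;M) \otimes \rH_0(\cG;N), \]
the middle isomorphism being the Fubini theorem for coends (left Kan extension along a product of functors commutes with the external tensor product).

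The argument as outlined requires only that $\cG$ is closed under $\oplus$, which is automatic for a sub-functor of $\GL$. The strong sub-functor hypothesis enters when one translates the abstract isomorphism into the explicit rank-$n$ formula: by the second isomorphism theorem, strongness $\cG(X_1 \oplus X_2) \cap (\GL(X_1) \times \GL(X_2)) = \cG(X_1) \times \cG(X_2)$ is exactly what forces the natural map $(\cG_i \backslash \GL_i) \times (\cG_j\backslash \GL_j) \to \cG_{i+j}\backslash \GL_{i+j}$ to be injective, so that the Day convolution $(\rH_0(\cG;M) \otimes \rH_0(\cG;N))(n)$ takes its expected form as an induction from a product of automorphism groups. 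The main obstacle is keeping the monoidal bookkeeping straight and confirming that the Kan-extension manipulations give a \emph{natural} isomorphism compatible with the unit and symmetry constraints; the rest is an exercise in the Day convolution formalism.
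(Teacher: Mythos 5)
For part (a), your construction and adjunction argument are correct and implement what the cited result in Djament's paper establishes. For part (b), you take a genuinely different route: the paper verifies strong monoidality by a pointwise computation using the Mackey decomposition theorem and Shapiro's lemma, whereas you derive it from the formal fact that left Kan extension along a strict monoidal functor between small symmetric monoidal groupoids is strong monoidal with respect to Day convolution. Your argument is cleaner, the iterated Kan-extension and coend-Fubini manipulations are carried out correctly, and you isolate more explicitly where the hypotheses are used.

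However, there is a real gap in your preparatory step. You claim that $g_1 \oplus g_2 \in \cG(Y_1 \oplus Y_2)$ follows from ``functoriality of $\cG$ applied to the summand inclusions $Y_i \hookrightarrow Y_1 \oplus Y_2$'' and that closure of $\cG$ under $\oplus$ is ``automatic for a sub-functor of $\GL$''. Neither assertion holds. The groupoid $\VB_{\bZ}$ has only $R$-linear bijections as morphisms, so the non-invertible summand inclusions are not morphisms to which $\cG$ can be applied; being a sub-functor of $\GL$ only means that $\cG(X)$ is stable under conjugation by isomorphisms, and says nothing about $\oplus$. A concrete counterexample: the assignment $\cG(X) = \{\pm\mathrm{id}_X\}$ is a sub-functor of $\GL$ (conjugation fixes $\pm\mathrm{id}$), yet for $R = \bZ$ and $X_1, X_2$ both nonzero the element $(\mathrm{id}_{X_1}, -\mathrm{id}_{X_2})$ does not lie in $\{\pm\mathrm{id}_{X_1 \oplus X_2}\}$, so $\cG$ is not closed under $\oplus$. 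What you actually need is the containment $\cG(X_1) \times \cG(X_2) \subseteq \cG(X_1 \oplus X_2)$, and this is precisely one of the two inclusions making up the strong sub-functor condition of \autoref{def:strong-subfunctor}. With that correction your argument closes: this inclusion makes $\oplus$ descend to $\VB_{\bZ}/\cG$ and $\Pi_{\cG}$ strict monoidal, after which the Kan-extension chain gives strong monoidality. Your final remark should also be adjusted accordingly: strongness is not confined to the ``explicit rank-$n$ formula''; one half of it is already needed for the abstract isomorphism, while the other half is what makes the map of quotient automorphism groups injective and permits rewriting the Day convolution on $\Mod_{\VB_{\bZ}/\cG}$ as the induction the paper uses.
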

\begin{proof}
We note that our tensor product is the same as Djament's; see \autoref{rem:djament's-tensor}. Part (a) is proven in \cite[Proposition~1.3, 1.4]{Dj}. Now we prove Part (b). In \cite[\S 1.5]{Dj}, it is shown that $\rH_0(\cG; -)$ is lax monoidal. We claim $\rH_0(\cG; -)$ is in fact strong monoidal which we verify as follows: \begin{align*}
\rH_0(\cG; M \otimes N)(X) &= \rH_0(\cG(X); (M \otimes N)(X)) \\
&= \rH_0\left(\cG(X); \bigoplus_{X_1 \oplus X_2 = X \text{ in } \VB_{\bZ}} M(X_1) \otimes_{\bk} N(X_2)\right) \\
&= \rH_0\left(\cG(X); \bigoplus_{X_1 \oplus X_2 = X \text{ in } \VB_{\bZ}/\cG} \Ind_{\cG(X) \cap (\GL(X_1) \times \GL(X_2))}^{\cG(X)} M(X_1) \otimes_{\bk} N(X_2)\right) \\
&= \bigoplus_{X_1 \oplus X_2 = X \text{ in } \VB_{\bZ}/\cG} \rH_0(\cG(X);  \Ind_{\cG(X_1) \times \cG(X_2)}^{\cG(X)} M(X_1) \otimes_{\bk} N(X_2)) \\
& = \bigoplus_{X_1 \oplus X_2 = X \text{ in } \VB_{\bZ}/\cG} \rH_0(\cG(X_1) \times \cG(X_2);   M(X_1) \otimes_{\bk} N(X_2)) \\
& = (\rH_0(\cG; M ) \otimes \rH_0(\cG;  N))(X).
\end{align*} where the third equality follows from the Mackey decomposition theorem, the fourth equality follows from the fact that $\cG$ is strong, and the last equality follows from the K\"unneth formula for group homology. This completes the proof.
\end{proof}

Since $\rH_0(\cG; -)$ is monoidal, it takes a monoid $A$ in $\Mod_{\VB_{\bZ}}$ to a monoid $B \coloneq \rH_0(\cG; A)$  in $\Mod_{\VB_{\bZ}/\cG}$. Moreover, $\rH_0(\cG; -)$ restricts to a functor  \[ \rH_0^A(\cG; - ) \colon \Mod_{A} \to \Mod_B.\] The functors $\rH_0(\cG; -)$ and $\rH_0^A(\cG; -) $ are right exact. We denote their left derived functors by $\rH_{\ast}(\cG; -)$ and $\rH^A_{\ast}(\cG; -)$ respectively.

\begin{remark}
	\label{rem:homology-warning}
	A $\VB_{\bZ}$-module $P$ is projective if and only if $P(X)$ is projective as a $\bk[\GL(X)]$-module for all $X$. This shows that  $\rH_i(\cG; M)(X) \cong \rH_i(\cG(X); M(X))$. We warn the readers that, for a general monoid $A$, we do not have any natural relationship between $\rH^A_i(\cG; M)(X)$ and $\rH_i(\cG(X); M(X))$ for $i > 0$. This is because a projective module in $\Mod_{A}$ may not be projective (or even flat) as a $\VB_{\bZ}$-module. See \autoref{lem:char0-is-good} for more on this. 
\end{remark}

\begin{proposition}
	\label{prop:tensor-commute}
	$\rH_0(\Gamma(p); -)$ commutes with tensor, symmetric and exterior algebras. In other words, if $M$ is a $\VB_{\bZ}$-module, then we have \begin{enumerate}
		\item $\rH_0(\Gamma(p); \rT(M)) = \rT(\rH_0(\Gamma(p); M))$ where $\rT$ is the tensor algebra.
		\item $\rH_0(\Gamma(p); \Sym(M)) = \Sym(\rH_0(\Gamma(p); M))$.
			\item $\rH_0(\Gamma(p); \lw(M)) = \lw(\rH_0(\Gamma(p); M))$.
	\end{enumerate}
\end{proposition}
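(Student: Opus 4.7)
My plan is to deduce all three statements uniformly from two properties of the functor $\rH_0(\Gamma(p); -)$ already established in the previous proposition: it is a left adjoint (so it preserves all colimits), and it is \emph{strong} symmetric monoidal (so the comparison map $\rH_0(\Gamma(p); M \otimes N) \to \rH_0(\Gamma(p); M) \otimes \rH_0(\Gamma(p); N)$ is an isomorphism, naturally in $M$ and $N$). I would also note that the symmetric monoidal structures on $\Mod_{\VB_{\bZ}}$ and $\Mod_{\VB_{\bZ}/\Gamma(p)}$ both descend from $\oplus$ on the underlying $\VB$-categories, and that the projection $\Pi_{\Gamma(p)}$ is symmetric monoidal, so the symmetry isomorphisms on both sides are intertwined by the strong monoidality data.

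From these inputs, all three claims are formal. The tensor algebra, symmetric algebra and exterior algebra functors are the left adjoints of the forgetful functors from (respectively) monoids, commutative monoids, and skew-commutative monoids in $\Mod_{\VB_{\bZ}}$ or $\Mod_{\VB_{\bZ}/\Gamma(p)}$ back to the underlying module category. A strong symmetric monoidal left adjoint $F$ between cocomplete symmetric monoidal categories commutes with all three of these free constructions by an adjunction argument: its right adjoint preserves the respective algebraic structures (because strong monoidal right adjoints do), so $F$ composed with the relevant free functor on the source and the free functor on the target composed with $F$ are both left adjoint to the same forgetful functor, hence canonically isomorphic.

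To make this concrete and avoid invoking too much abstract machinery, I would unravel it as follows. For (a), write $\rT(M) = \bigoplus_{n \geq 0} M^{\otimes n}$; applying $\rH_0(\Gamma(p);-)$ and using preservation of direct sums and iterated strong monoidality gives $\rH_0(\Gamma(p); M^{\otimes n}) \cong \rH_0(\Gamma(p); M)^{\otimes n}$, and these isomorphisms assemble to a map of graded monoids. For (b) and (c), one has the presentations
\[ \Sym(M) = \rT(M)/\langle x \otimes y - \tau(x \otimes y) \rangle, \qquad \lw(M) = \rT(M)/\langle x \otimes y - (-1)^{|x||y|} \tau(x \otimes y) \rangle, \]
where $\tau$ is the braiding. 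Since $\rH_0(\Gamma(p);-)$ is right exact and the comparison isomorphisms intertwine the braidings in the two symmetric monoidal categories, it carries each relation to the corresponding relation on the target side, yielding the required isomorphisms.

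The only real thing to verify, and the part most worth checking explicitly, is the compatibility of the strong monoidality isomorphism with the symmetry constraints (so that sign conventions for $\lw$ are preserved). This reduces to the fact that the Mackey decomposition used in the proof of strong monoidality in the previous proposition is natural with respect to swapping the two summands, which is immediate from the definition.
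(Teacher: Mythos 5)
Your proposal takes essentially the same route as the paper: part (a) is read off from the strong monoidality of $\rH_0(\Gamma(p);-)$ established in \autoref{prop:H-is-monoidal}, and parts (b) and (c) follow because the symmetric group action (the braiding) on tensor powers commutes with the $\Gamma(p)$-action, which is exactly what your compatibility-with-symmetry check amounts to. One small imprecision worth flagging: the right adjoint $\Pi_{\Gamma(p)}^{\ast}$ is only lax symmetric monoidal, not strong monoidal as your parenthetical suggests, but lax monoidality already makes it preserve (commutative, skew-commutative) monoid objects, which is all the adjunction argument needs.
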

\begin{proof}
Part (a) follows immediately from the fact that $\rH_0(\cG; -)$ is strong monoidal. Moreover, since the action of the symmetric group on tensors commutes with the action of the congruence subgroup, we see that parts (b) and (c) hold as well.
\end{proof}

\begin{proposition}
Let $A$ be an augmented monoid in  $\Mod_{\VB_{\bZ}}$. Set $B = \rH_0(\cG; A)$.	Then the following diagram commutes. \begin{figure}[h]
		\begin{tikzcd}[column sep = large]
			\Mod_A \arrow[r, "\rH_0^A(\cG; -)"] \arrow[d, "\bk \otimes_{A} -"]
			& \Mod_B \arrow[d, "\bk \otimes_B - "] \\
			\Mod_{\VB_{\bZ}} \arrow[r, "\rH_0(\cG; -)"]
			& \Mod_{\VB_{\bZ}/ \cG}
		\end{tikzcd}
	\end{figure} 
\end{proposition}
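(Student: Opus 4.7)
The plan is to exploit the fact that $\rH_0(\cG; -)$ is both strong monoidal (by \autoref{prop:H-is-monoidal}(b)) and a left adjoint (by \autoref{prop:H-is-monoidal}(a)), hence it preserves colimits and tensor products simultaneously. This will let us interchange $\rH_0(\cG; -)$ with the relative tensor product $\bk \otimes_A -$, which is itself constructed from colimits and tensor products.

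First I would observe that the underlying $\VB_{\bZ}/\cG$-module of $\rH_0^A(\cG; M)$ is just $\rH_0(\cG; M)$, with its $B$-action coming from applying the monoidal functor $\rH_0(\cG; -)$ to the $A$-action $A \otimes M \to M$. This is essentially built into the definition of $\rH_0^A(\cG; -)$. Next, I would note that $\rH_0(\cG; \bk) = \bk$, because $\bk$ is concentrated in degree $0$ where $\cG$ acts trivially (in fact $\bk$ lies in the image of $\Pi_{\cG}^{\ast}$, so the counit is an isomorphism).

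The main step is then the following computation. The object $\bk \otimes_A M$ is by definition the coequalizer of the two natural maps $\bk \otimes A \otimes M \rightrightarrows \bk \otimes M$ in $\Mod_{\VB_{\bZ}}$. Applying $\rH_0(\cG; -)$, which preserves colimits as a left adjoint and is strong monoidal, yields the coequalizer in $\Mod_{\VB_{\bZ}/\cG}$ of
\[
\rH_0(\cG; \bk) \otimes \rH_0(\cG; A) \otimes \rH_0(\cG; M) \rightrightarrows \rH_0(\cG; \bk) \otimes \rH_0(\cG; M),
\]
which, after substituting $\rH_0(\cG; \bk) = \bk$, $\rH_0(\cG; A) = B$, and $\rH_0(\cG; M) = \rH_0^A(\cG; M)$, is precisely the coequalizer defining $\bk \otimes_B \rH_0^A(\cG; M)$. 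This gives the desired natural isomorphism $\rH_0(\cG; \bk \otimes_A M) \cong \bk \otimes_B \rH_0^A(\cG; M)$.

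The main obstacle to spell out carefully is bookkeeping: verifying that under the strong monoidal isomorphism, the two parallel arrows in the coequalizer diagram (one coming from the $A$-module structure on $M$, the other from the augmentation $A \to \bk$) correspond to the two arrows defining $\bk \otimes_B \rH_0^A(\cG; M)$. This reduces to functoriality of $\rH_0(\cG; -)$ applied to the augmentation $A \to \bk$ and to the action map $A \otimes M \to M$, both of which are compatible by construction of $B = \rH_0(\cG; A)$ and the $B$-module structure on $\rH_0^A(\cG; M)$. Alternatively, one could avoid this bookkeeping by passing to right adjoints: both $\rH_0(\cG; -) \circ (\bk \otimes_A -)$ and $(\bk \otimes_B -) \circ \rH_0^A(\cG; -)$ are left adjoints to the functor sending $N \in \Mod_{\VB_{\bZ}/\cG}$ to $\Pi_{\cG}^{\ast} N$, viewed as an $A$-module via the augmentation $A \to \bk$, and uniqueness of adjoints finishes the proof.
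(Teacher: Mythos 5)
Your proposal is correct, and both of the routes you sketch are genuinely different from the paper's. The paper first uses the identification of the right adjoints $I_A$, $I_B$ of $\bk\otimes_A-$ and $\bk\otimes_B-$ with restriction along the augmentations to produce a natural transformation $(\bk\otimes_B-)\circ\rH_0^A(\cG;-)\Rightarrow\rH_0(\cG;-)\circ(\bk\otimes_A-)$ by adjunction, and then verifies it is an isomorphism by right exactness of all the functors involved together with a direct computation on the free objects $A\otimes V$. Your main argument instead applies the fact that $\rH_0(\cG;-)$ is simultaneously a left adjoint (so preserves the coequalizer defining $\bk\otimes_A M$) and strong monoidal (so turns the parallel pair $\bk\otimes A\otimes M\rightrightarrows\bk\otimes M$ into the parallel pair defining $\bk\otimes_B\rH_0^A(\cG;M)$), obtaining the isomorphism in one step without needing to first build a map and then check it on generators; the bookkeeping you flag (that the two arrows really match under the monoidal isomorphism) is exactly what the paper's generator check implicitly confirms, so you are not avoiding work, just packaging it differently. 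Your one-line alternative via uniqueness of right adjoints is the closest in spirit to what the paper actually does — the paper's first paragraph is precisely the identification of the common right adjoint $I_A\circ\Pi_\cG^\ast = \Pi_\cG^\ast$-then-restrict-along-$A\to\bk$ — but you take the extra step of concluding immediately by uniqueness of adjoints rather than extracting a comparison map and checking it is an isomorphism; this is cleaner and is a reasonable shortcut given that the hypothesis on $A_+$ makes the restriction descriptions of $I_A$ and $I_B$ transparent. All three arguments are sound; yours trades the paper's explicit computation for categorical generalities, which buys concision at the cost of requiring the reader to trust that the monoidal coherence isomorphisms line up.
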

\begin{proof}
Let $I_A$ and $I_B$ be the right adjoints of $ \bk \otimes_A -$ and $ \bk \otimes_B -$  respectively. By our assumption on $A$, $I_A(M)$ is the same as $M$ regarded as an $A$-module via the augmentation map $A \to \bk$. A similar statement is true for $I_B$. This shows that $\rH_0^A(\cG; -) \circ I_A = I_B \circ \rH_0(\cG; -)$. 

Let $\epsilon \colon \id \to I_A \circ (\bk\otimes_A -) $ be the unit of the adjunction. By the previous paragraph, we have a natural transformation $\rH_0^A(\cG; -) \to I_B \circ \rH_0(\cG; -) \circ (\bk\otimes_A -) $ obtained by composing $\rH_0^A(\cG; -)$ with the unit $\epsilon$. By adjunction, there exists a natural transformation \[(\bk\otimes_B -) \circ  \rH_0^A(\cG; -) \to  \rH_0(\cG; -) \circ (\bk\otimes_A -)\]  which we claim is an isomorphism. Since all the functors involved are right exact, it suffices to prove the claim for objects of the form $A \otimes V$ (projective objects are of this form). We now check this as follows: \begin{align*}
(\bk\otimes_B -) \circ  \rH_0^A(\cG; - )(A \otimes V) &= \bk \otimes_B \rH_0^A(\cG; A \otimes V) \\
& = \bk \otimes_B (\rH_0(\cG; A) \otimes    \rH_0(\cG; V)) \qquad \text{since $\rH_0(\cG; -)$ is monoidal} \\
& = \bk \otimes_B (B \otimes \rH_0(\cG; V)) \\
& = \rH_0(\cG; V) =  \rH_0(\cG; \bk \otimes_A (A \otimes V)) \\
& =  \rH_0(\cG; -) \circ (\bk\otimes_A -) (A \otimes V).
\end{align*} This completes the proof.
\end{proof}

Note that $\Mod_{\VB_{\bZ}}$ has enough projectives -- $P$ is a projective $\VB_{\bZ}$-module if and only if $P(\bZ^n)$ is a projective $\bZ[\GL_n(\bZ)]$-module for each $n \ge 0$. In particular, we can define left-derived functors $\rL_i F$ of a given right exact functor $F$ defined on $\Mod_{\VB_{\bZ}}$.

\begin{theorem}
	\label{thm:spectral sequence}
	Let $A$ be an augmented monoid in  $\Mod_{\VB_{\bZ}}$. Set $B = \rH_0(\cG; A)$. Then there exists a spectral sequence \[E^2_{ab}(-) = \Tor^B_a(\bk, \rH^A_b(\cG; -))  \implies \rL_{a+b}(\rH_0(\cG; -) \circ (\bk\otimes_A -) ). \]  In particular, if $M$ is an $A$-module such that $\max_{ i \le n}(\deg \Tor^A_i(\bk, M) ) \le d$, then $E^{\infty}_{ab}(M)$ is supported in degrees $\le d$ for all $a+b \le n$.  
\end{theorem}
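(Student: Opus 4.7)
The plan is to obtain the spectral sequence as the Grothendieck spectral sequence for a composition of right-exact functors, reading the commutative square of the preceding proposition across the top and down the right. Set
\[ F = \rH_0^A(\cG;-) \colon \Mod_A \to \Mod_B, \qquad G = \bk \otimes_B - \colon \Mod_B \to \Mod_{\VB_\bZ/\cG}, \]
so that by that proposition $G \circ F = \rH_0(\cG;-) \circ (\bk \otimes_A -)$. Grothendieck's theorem then produces
\[ E^2_{ab}(M) = L_a G(L_b F(M)) = \Tor^B_a(\bk, \rH^A_b(\cG; M)) \implies L_{a+b}(G \circ F)(M) \]
provided that $F$ sends projective $A$-modules to $G$-acyclic $B$-modules. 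A projective $A$-module is a summand of $A \otimes P$ with $P$ a projective $\VB_\bZ$-module, and since $\rH_0(\cG;-)$ is strong monoidal (\autoref{prop:H-is-monoidal}), one computes $F(A \otimes P) \cong B \otimes \rH_0(\cG; P)$. This is a free $B$-module, in particular flat and thus $G$-acyclic, so the spectral sequence exists.

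For the ``In particular'' clause, I would apply Grothendieck's theorem a second time, reading the same commutative square down the left and across the bottom. Set
\[ F' = \bk \otimes_A - \colon \Mod_A \to \Mod_{\VB_\bZ}, \qquad G' = \rH_0(\cG;-) \colon \Mod_{\VB_\bZ} \to \Mod_{\VB_\bZ/\cG}; \]
we still have $G' \circ F' = G \circ F$. Because $\bk[\GL(X)]$ is a free module over the subring $\bk[\cG(X)]$ (by coset decomposition), every projective $\VB_\bZ$-module $P$ has $P(X)$ projective as a $\bk[\cG(X)]$-module, so the pointwise formula recorded in \autoref{rem:homology-warning} shows that $P$ is $G'$-acyclic. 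Hence $F'(A \otimes P) = P$ is $G'$-acyclic, and we obtain a second spectral sequence
\[ \widehat{E}^2_{pq}(M) = \rH_p(\cG; \Tor^A_q(\bk, M)) \implies L_{p+q}(G \circ F)(M). \]

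To conclude, I would compare the two abutments. The pointwise formula of \autoref{rem:homology-warning} also shows that $\rH_p(\cG;-)$ never enlarges the support of a module, so the hypothesis $\deg \Tor^A_q(\bk, M) \le d$ for every $q \le n$ forces $\widehat{E}^2_{pq}(M)$ to be supported in degrees $\le d$ whenever $q \le n$. Consequently, the abutment $L_{a+b}(G \circ F)(M)$ is supported in degrees $\le d$ for $a+b \le n$, and since in the first spectral sequence $E^\infty_{ab}(M)$ is a subquotient of this abutment, the same degree bound persists for $E^\infty_{ab}(M)$. The main work lies in the two acyclicity verifications needed to invoke Grothendieck's theorem; the degree bookkeeping that finishes the argument is then essentially automatic.
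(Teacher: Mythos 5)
Your proposal matches the paper's proof in essentially every respect: both arguments run the Grothendieck spectral sequence twice across the commutative square of the preceding proposition, first to obtain the stated spectral sequence, and second to produce $\rH_a(\cG;\Tor^A_b(\bk,-))$ converging to the same abutment, whose degree bound is then transferred to $E^\infty_{ab}(M)$ as a subquotient. The only cosmetic difference is that where you observe $F(A\otimes P)\cong B\otimes\rH_0(\cG;P)$ is ``free, hence flat, hence $G$-acyclic,'' the paper records the slightly stronger fact that $\rH_0^A(\cG;-)$ preserves projectives, using that $\rH_0(\cG;-)$ is left adjoint to the exact functor $\Pi_{\cG}^*$ and hence sends projectives to projectives; both routes verify the same Grothendieck hypothesis.
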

\begin{proof} We first verify that $\rH_0^A(\cG; -)$ preserves projectives. It is an easy fact that every projective $A$-module is of the form $A\otimes V$ where $V$ is a projective $\VB$-module, and the same holds for $B$-modules; see \cite[Proposition~3.2]{vimod} for a proof in a similar setting. Since  $\rH_0(\cG; -)$ is monoidal, we see that $\rH_0^A(\cG; A \otimes V) = B \otimes \rH_0(\cG; V)$. By definition, $\rH_0(\cG; -)$ is a left adjoint of an exact functor $\Pi^{\ast}_{\cG}$ and so it preserves projectives. Since $\rH_0^A(\cG; A \otimes V) = B \otimes \rH_0(\cG; V)$, we conclude that $\rH_0^A(\cG; -)$ preserves projectives. This verifies our claim. 

Note that $\Mod_B$ has enough projectives. We conclude that there exists a Grothendieck spectral sequence \[E^2_{ab}(-) = \Tor^B_a(\bk, \rH^A_b(\cG; -))  \implies \rL_{a+b}((\bk\otimes_B -) \circ  \rH_0^A(\cG; - ) ). \] By the previous proposition $(\bk\otimes_B -) \circ  \rH_0^A(\cG; - ) = \rH_0(\cG; -) \circ (\bk\otimes_A -) $, and so the first assertion follows.
	
	It is clear from the last paragraph that $\bk \otimes_A - $ preserves projectives. Also, $\Mod_{\VB}$ has enough projectives. This shows that we have another  Grothendieck spectral sequence \[ \rH_a(\cG; \Tor^A_b(\bk, - )) \implies \rL_{a+b}(\rH_0(\cG; -) \circ (\bk\otimes_A -) ). \] If $\max_{ i \le n}(\deg \Tor^A_i(\bk, M) ) \le d$, then we have $\deg  \rL_{a+b}(\rH_0(\cG; -) \circ (\bk\otimes_A -) ) \le d$ for all $a+b \le n$ (also see \autoref{rem:homology-warning}). This proves the second assertion.
\end{proof}

The following lemma shows that our spectral sequence is particularly useful when $A$ is the exterior algebra (see \autoref{rem:homology-warning}). Similar results hold for tensor and symmetric algebras.

\begin{lemma}
	\label{lem:char0-is-good}
	Assume that $\cG(X)$ is torsion-free for all $X$ (this happens, for example, when $p$ is an odd prime and $\cG = \Gamma(p)$). Let $\bA = \lw(\triv_1)$ be the apartment monoid in $\Mod_{\VB_{\bZ}}$. Then for every $\bA$-module $M$ and every $X \in \VB_{\bZ}$, we have an isomorphism  \[\rH^{\bA}_i(\cG; M)(X) \cong \rH_i(\cG(X); M(X)) \] of $\GL(X)$-modules  for each $i \ge 0$. 
\end{lemma}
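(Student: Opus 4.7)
The plan is to show that if $Q_\ast \to M$ is a projective resolution in $\Mod_{\bA}$, then for every $X \in \VB_{\bZ}$ the complex $Q_\ast(X)$ is a $\bZ[\cG(X)]$-projective resolution of $M(X)$. Granting this, since $\rH^{\bA}_i(\cG; M)(X) = \rH_i(\rH_0(\cG(X); Q_\ast(X)))$ by the definition of the derived functor followed by evaluation at $X$, we obtain the desired identification $\rH^{\bA}_i(\cG; M)(X) \cong \rH_i(\cG(X); M(X))$ of $\GL(X)$-modules, with equivariance following from functoriality in $X$.

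As noted in the proof of \autoref{thm:spectral sequence}, every projective $\bA$-module is a direct summand of one of the form $\bA \otimes V$ with $V$ a projective $\VB_{\bZ}$-module. Passing to summands and reducing to the representable case $V = \bZ[\Hom_{\VB_{\bZ}}(\bZ^m, -)]$, it suffices to show that $(\bA \otimes V)(X)$ is $\bZ[\cG(X)]$-projective. Writing $n = \operatorname{rk}(X)$ and $k = n - m$, the definition of $\otimes$ gives $(\bA \otimes V)(X) \cong \Ind_{\GL_k(\bZ) \times \GL_m(\bZ)}^{\GL_n(\bZ)}\bigl(\bA(k) \otimes_{\bZ} \bZ[\GL_m(\bZ)]\bigr)$. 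The central computation is then an explicit identification of $\bA(k)$: unwinding the presentation in \autoref{prop:apartment-definition} and tracking the action on the cyclic generator $[e_1, \ldots, e_k]$, one verifies that $\bA(k) \cong \Ind_{H_k}^{\GL_k(\bZ)} \chi$, where $H_k \subset \GL_k(\bZ)$ is the hyperoctahedral subgroup of signed permutation matrices and $\chi \colon H_k \to \{\pm 1\}$ sends a signed permutation to the sign of its underlying element of $S_k$. Combining this with induction in stages yields
\[ (\bA \otimes V)(X) \;\cong\; \Ind_{H_k \times \{1\}}^{\GL_n(\bZ)} \chi, \]
where $H_k \times \{1\}$ is a finite subgroup of $\GL_k(\bZ) \times \GL_m(\bZ) \subset \GL_n(\bZ)$.

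Finally, Mackey's double coset formula gives a decomposition of the restriction to $\cG(X)$,
\[ \Ind_{H_k \times \{1\}}^{\GL_n(\bZ)} \chi \,\big|_{\cG(X)} \;\cong\; \bigoplus_{g} \Ind_{\cG(X) \cap g(H_k \times \{1\})g^{-1}}^{\cG(X)} (g \cdot \chi), \]
summed over double cosets in $\cG(X) \backslash \GL_n(\bZ) / (H_k \times \{1\})$. Since $g(H_k \times \{1\})g^{-1}$ is finite and $\cG(X)$ is torsion-free by hypothesis, every intersection on the right is trivial; hence the restriction is a direct sum of copies of the regular representation $\bZ[\cG(X)]$, which is free and in particular projective. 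This establishes the claim that $Q_\ast(X)$ is a $\bZ[\cG(X)]$-projective resolution of $M(X)$, completing the proof. The main obstacle is the identification $\bA(k) \cong \Ind_{H_k}^{\GL_k(\bZ)} \chi$, which is elementary but requires careful sign bookkeeping through the relations in \autoref{prop:apartment-definition}.
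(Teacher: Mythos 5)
Your proof is correct in substance and takes a noticeably different route from the paper's, though the two arguments share the same engine.

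The paper reduces as you do to showing that projective $\bA$-modules $P = \bA \otimes V$ are $\rH_0(\cG;-)$-acyclic, but then uses strong monoidality of $\rH_0(\cG;-)$ to write
\[
\rH_i(\cG(X); P(X)) \cong (\rH_i(\cG;\bA)\otimes \rH_0(\cG;V))(X),
\]
thereby reducing to the single fact that $\rH_i(\cG;\bA) = 0$ for $i>0$, which it deduces from the observation that $\bA(\bZ^n)$ is induced up from the finite hyperoctahedral group $\bZ/2\wr S_n$ so that the torsion-free group $\cG(\bZ^n)$ acts freely on its $\bk$-basis. You instead avoid the monoidality step entirely: after reducing to representable $V$, you compute $(\bA\otimes V)(X)$ in closed form as $\Ind_{H_k\times\{1\}}^{\GL_n(\bZ)}\chi$ and restrict via Mackey, reaching the same conclusion. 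Your identification $\bA(k)\cong\Ind_{H_k}^{\GL_k(\bZ)}\chi$ (with the sign character $\chi$) is exactly the same hyperoctahedral input the paper uses, just phrased as an induced module rather than as coinvariants. What your approach buys is independence from the monoidality formalism developed in \autoref{prop:H-is-monoidal}; what it costs is the explicit bookkeeping of Mackey double cosets, and a reduction to representable $V$ that the paper sidesteps.

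One slip worth fixing: you assert that $Q_\ast(X)$ is a $\bZ[\cG(X)]$-projective resolution and that the Mackey decomposition produces copies of the regular representation $\bZ[\cG(X)]$. But the coefficient ring throughout the paper is $\bk$, so everything in sight is a $\bk$-module; the Mackey argument shows that $(\bA\otimes V)(X)$ is $\bk[\cG(X)]$-free, not $\bZ[\cG(X)]$-projective (the latter generally fails unless $\bk$ is $\bZ$-projective). This does not damage the argument: group homology $\rH_i(\cG(X); N)$ for a $\bk[\cG(X)]$-module $N$ equals $\Tor^{\bk[\cG(X)]}_i(\bk,N)$, so a resolution by $\bk[\cG(X)]$-projective modules is exactly what is needed. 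You should simply replace $\bZ[\cG(X)]$ by $\bk[\cG(X)]$ throughout, and likewise the representable $V$ should be $\bk[\Hom_{\VB_{\bZ}}(\bZ^m,-)]$ rather than $\bZ[\Hom_{\VB_{\bZ}}(\bZ^m,-)]$.
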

\begin{proof} Note that $\rH^{\bA}_i(\cG; M)$  is calculated by first taking an $\bA$-module projective resolution $P_{\ast} \to M \to 0$ of $M$, applying $\cG$-coinvariants to the resolution, and then taking $i$th homology of the resulting complex $(P_{\ast})_{\cG}$. Thus to prove the isomorphism in the assertion, it suffices to show that a projective $\bA$-module  is $\rH_0(\cG;  -)$-acyclic when regarded as a $\VB_{\bZ}$-module (recall that $\Mod_{\VB_{\bZ}}$ is equivalent to the product category $\prod_{\ge 0}\Mod_{\bk[\GL_n(\bZ)]}$, and so a module is projective if and only if it is pointwise projective). Let $P$ be a projective $\bA$-module. Then $P$ is of the form $\bA \otimes V$ for some projective $\VB_{\bZ}$-module $V$. We claim that $\rH_i(\cG(X); P(X)) =0$ for all $X$ and all $i >0$. Note that we have:  \begin{align*}
	\rH_i(\cG(X); P(X)) &= \rH_i(\cG; \bA \otimes V)(X)  \\
	&= (\rH_i(\cG; \bA ) \otimes \rH_0(\cG;  V)) (X) 
	\end{align*} where the first equality follows from  \autoref{rem:homology-warning}. The second equality is obtained by taking a projective resolution $Q_{\ast} \to \bA$ in $\Mod_{\VB_{\bZ}}$, and then calculating $\rH_i(\cG; \bA \otimes V)$ using the projective resolution $Q_{\ast} \otimes V \to \bA \otimes V$ and monoidality of $\rH_0(\cG;  -)$. Thus, it suffices to show that $\rH_i(\cG; \bA) = 0$ for $i >0$. We have that $\bA(\Z^n)$ is isomorphic to $\rH_0(\Z/2 \wr S_n;  \bk[ \GL_n(\bZ)])$ where $\Z/2 \wr S_n$ acts on the right as in the definition of the apartment monoid.  Since 	$\Z/2 \wr S_n$ is a finite group,  the stabilizer of $\cG(\Z^n)$ acting on  $ \bA(\Z^n)$ must have finite order. Since $\cG(\Z^n) $ is torsion free, this stabilizer is trivial. Thus, $\bA(X)$ is a free $\bk[\cG(X)]$-module. This shows that $\rH_i(\cG; \bA)(X) = 0$ for $i >0$. This verifies our claim, and the proof is complete.
\end{proof}

Often a $\rH_0(\cG; A)$ structure on $\rH_i(\cG; A)$ is defined directly using the K\"unneth map and the Shapiro lemma instead of relying on any isomorphisms as in the previous lemma. We now describe it and compare it with our setup. For this, we define an enrichment \[\rH_{\ast}(\cG ; -) \colon \Mod_{\VB_{\bZ}} \to \Mod_{\VB_{\bZ}/\cG}^{\gr}\] of $\rH_0(\cG ; -)$, where $\Mod_{\VB_{\bZ}/\cG}^{\gr}$ is the category of graded $\VB_{\bZ}/\cG$-modules, by \[\rH_{\ast}(\cG ; -) = \bigoplus_{t \ge 0} \rH_t(\cG ; - ). \] Note $\rH_{\ast}(\cG ; M)$ is bigraded with one grading coming from the $\VB_{\bZ}/\cG$-module structure and the other grading coming from homological degree. The tensor product on $\Mod_{\VB_{\bZ}/\cG}^{\gr}$ is the usual convolutional tensor product with respect to both of the gradings.

\begin{proposition}  $\rH_{\ast}(\cG ; - )$ is a lax monoidal functor. \end{proposition}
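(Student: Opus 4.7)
The plan is to construct the lax monoidal structure morphisms by composing the external K\"unneth cross product with the Shapiro isomorphism and the Mackey decomposition already used in the proof that $\rH_0(\cG;-)$ is strong monoidal, and then to verify the coherence axioms.

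First, I would produce the unit morphism $\bk \to \rH_{\ast}(\cG;\bk)$ in $\Mod_{\VB_{\bZ}/\cG}^{\gr}$, where $\bk$ on the source side is placed in bidegree $(0,0)$. This is simply the strong monoidality isomorphism $\bk \xrightarrow{\sim} \rH_0(\cG;\bk)$ from \autoref{prop:H-is-monoidal}, composed with the inclusion of homological degree zero into $\rH_{\ast}(\cG;\bk)$.

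Second, I would build the multiplication. Fix $X\in\VB_{\bZ}$. Strongness of $\cG$ combined with the Mackey formula (exactly as in the chain of equalities in the proof of \autoref{prop:H-is-monoidal}) yields a $\cG(X)$-equivariant isomorphism
\[ (M \otimes N)(X) \;\cong\; \bigoplus_{X_1 \oplus X_2 = X \text{ in } \VB_{\bZ}/\cG} \Ind_{\cG(X_1)\times\cG(X_2)}^{\cG(X)} M(X_1) \otimes_{\bk} N(X_2). \]
Applying $\rH_{\ast}(\cG(X);-)$ and invoking Shapiro's lemma on each summand rewrites the right-hand side as $\bigoplus \rH_{\ast}(\cG(X_1)\times\cG(X_2);\,M(X_1) \otimes_{\bk} N(X_2))$. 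The external K\"unneth cross product then provides natural maps
\[ \rH_{\ast}(\cG(X_1); M(X_1)) \otimes_{\bk} \rH_{\ast}(\cG(X_2); N(X_2)) \;\longrightarrow\; \rH_{\ast}(\cG(X_1)\times\cG(X_2);\, M(X_1) \otimes_{\bk} N(X_2)), \]
and summing over splittings of $X$ in $\VB_{\bZ}/\cG$ gives the lax multiplication
\[ (\rH_{\ast}(\cG;M) \otimes \rH_{\ast}(\cG;N))(X) \;\longrightarrow\; \rH_{\ast}(\cG;M\otimes N)(X). \]
Compatibility with the bigrading is automatic since the cross product is additive on homological degrees, and the direct sum already respects the $\VB_{\bZ}/\cG$-grading.

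Third, I would verify naturality and the coherence axioms. Naturality with respect to morphisms in $\VB_{\bZ}/\cG$ amounts to equivariance under the residual $\GL(X)/\cG(X)$-action on both sides, which follows because the Mackey-Shapiro identification and the K\"unneth cross product are both natural under conjugation and subgroup inclusion. The unit axiom is immediate from the degree-zero strong monoidality, since cross product with a class of homological degree $0$ recovers the canonical inclusion. Associativity follows from associativity of the K\"unneth cross product, transitivity of induction (Shapiro applied iteratively), and the associativity of the Mackey decomposition for triple tensor products $M\otimes N\otimes P$. The main obstacle is precisely this last step: one must check that the triple double-coset sum arising from $\Res_{\cG(X)}\Ind_{\GL(X_1)\times\GL(X_2)\times\GL(X_3)}^{\GL(X)}(\cdots)$ collapses to the sum indexed by splittings $X = X_1\oplus X_2\oplus X_3$ in $\VB_{\bZ}/\cG$, and that the two ways of parenthesizing give the same map. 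Strongness of $\cG$ is what makes this collapse work, and the verification is a direct three-fold elaboration of the argument in \autoref{prop:H-is-monoidal}.
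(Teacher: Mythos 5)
Your construction is precisely the paper's: the lax structure map is the composite of the Mackey/Shapiro identification from \autoref{prop:H-is-monoidal}(b) with the K\"unneth cross product, and the coherence checks reduce to naturality of those two maps. The paper leaves the details implicit; your elaboration is a faithful expansion of the same argument.
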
 
\begin{proof} The natural map $\rH_{\ast}(\cG ; M) \otimes \rH_{\ast}(\cG ; N) \to \rH_{\ast}(\cG ; M \otimes N)$  given by composing the K\"unneth map with the Shapiro isomorphism provides the required lax monoidal structure. The details are very similar to the ones in \autoref{prop:H-is-monoidal} Part (b). We get lax monoidality instead of strong monoidality because the K\"unneth map is not an isomorphism in general.
\end{proof}

The proposition above shows that if $A$ is a monoid and $M$ is an $A$-module, then $\rH_{\ast}(\cG ; A )$ is a monoid and $\rH_{\ast}(\cG ; M )$ is an $\rH_{\ast}(\cG ; A )$-module. Note that $\rH_{0}(\cG ; A )$ is naturally a sub-monoid of $\rH_{\ast}(\cG ; A )$, and so $\rH_{\ast}(\cG ; M )$ is an $\rH_0(\cG ; A )$-module. Let \[\rH_{\ast}^A(\cG ; M) = \bigoplus_{t \ge 0} \rH_t^A(\cG ; M). \] By definition, $\rH_{\ast}^A(\cG ; M)$ is an $\rH_0(\cG ; A )$-module. We now relate the $\rH_0(\cG ; A )$-modules $\rH_{\ast}(\cG ; M )$ and $\rH_{\ast}^A(\cG ; M )$. 

\begin{proposition}
	\label{prop:factoring-through}
	Suppose $f \colon A \to B$ is a map of monoids in $\Mod_{\VB_{\bZ}}$ and assume that projective $A$-modules are $\rH_{\ast}(\cG ; -)$-acyclic. Then we have the following: \begin{enumerate}
		\item For any $A$-module $M$, $\rH_{\ast}^A(\cG ; M)$ and $\rH_{\ast}(\cG ; M)$ are isomorphic as $ \rH_0(\cG ; A)$-modules. 
		\item Denote the isomorphism in the previous part by $\phi$. For any $B$-module $M$, the following diagram commutes: \[
			\begin{tikzcd}
				& \rH_{0}(\cG ; A) \otimes \rH^A_{\ast}(\cG ; M) \arrow[rd, "\mathrm{product}"] \arrow[ld, "\mathrm{inclusion} \otimes \phi", swap] 
				 \\
				\rH_{\ast}(\cG ; A) \otimes \rH_{\ast}(\cG ; M) \arrow[rr, "\mathrm{product}"] \arrow[rd, "\rH_{\ast}(\cG ; f) \otimes \id", swap]
				& &   \rH_{\ast}(\cG ; M)  \\
				&  \rH_{\ast}(\cG ; B) \otimes \rH_{\ast}(\cG ; M) \arrow[ru, "\mathrm{product}", swap]
			\end{tikzcd}\]
	\end{enumerate}
\end{proposition}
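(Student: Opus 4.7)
The plan is to reduce both comparisons to a single concrete computation on a projective $A$-resolution of $M$. For part (a), fix a projective resolution $P_{\bullet} \to M$ in $\Mod_A$. By the earlier discussion in the proof of \autoref{thm:spectral sequence}, each $P_i$ has the form $A \otimes V_i$ with $V_i$ projective in $\Mod_{\VB_{\bZ}}$; by hypothesis each $P_i$ is $\rH_\ast(\cG; -)$-acyclic, and also $\rH_0^A(\cG; -)$-projective. Thus on the one hand $\rH_\ast^A(\cG; M) = H_\ast(\rH_0^A(\cG; P_{\bullet}))$, and $\rH_0^A(\cG; P_{\bullet})$ agrees with $\rH_0(\cG; P_{\bullet})$ as a $\VB_{\bZ}/\cG$-chain complex (the only difference is the extra $B$-action). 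On the other hand, choosing a Cartan--Eilenberg resolution $Q_{\bullet, \bullet} \to P_{\bullet}$ by projectives in $\Mod_{\VB_{\bZ}}$ and applying $\rH_0(\cG; -)$ to the double complex, one spectral sequence collapses using $\rH_\ast(\cG; -)$-acyclicity of each $P_i$ to give $H_\ast(\rH_0(\cG; P_{\bullet}))$, while the other converges to $\rH_\ast(\cG; M)$. The resulting natural isomorphism $\phi \colon \rH_\ast^A(\cG; M) \xrightarrow{\sim} \rH_\ast(\cG; M)$ is $B$-linear because on both sides the $B = \rH_0(\cG; A)$-action is induced by the $A$-action on $P_{\bullet}$ after applying the monoidal functor $\rH_0(\cG; -)$.

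For part (b), the lower triangle is immediate from naturality of the K\"unneth--Shapiro pairing: since the $A$-module structure on $M$ factors through $f \colon A \to B$, the pairing $\rH_\ast(\cG; A) \otimes \rH_\ast(\cG; M) \to \rH_\ast(\cG; M)$ coincides with the pairing $\rH_\ast(\cG; B) \otimes \rH_\ast(\cG; M) \to \rH_\ast(\cG; M)$ precomposed with $\rH_\ast(\cG; f) \otimes \id$. For the upper triangle, we compare the two $\rH_0(\cG; A)$-actions on $\rH_\ast(\cG; M)$ at the level of the Cartan--Eilenberg double complex used in part (a). The action coming from $\rH_\ast^A(\cG; M)$ via $\phi$ is, by construction, induced from the $A$-action on $P_{\bullet}$ descended through $\rH_0(\cG; -)$. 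The action coming from restricting the K\"unneth--Shapiro pairing along the inclusion $\rH_0(\cG; A) \hookrightarrow \rH_\ast(\cG; A)$ lives in the bottom row of the double complex, where the K\"unneth map is the identity and the Shapiro isomorphism, under the strongness of $\cG$ (as in the proof of \autoref{prop:H-is-monoidal}(b)), identifies it with the same descended $A$-action on $\rH_0(\cG; P_{\bullet})$. Passing to homology gives the required commutativity.

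The main obstacle is the bookkeeping needed for the top triangle: one must verify that when one factor of the K\"unneth--Shapiro pairing lies in homological degree zero, the pairing reduces on the chain level to the $B$-module structure on $\rH_0^A(\cG; -)$ obtained from the monoidality of $\rH_0(\cG; -)$. Once this identification is made on $\rH_0(\cG; P_{\bullet})$, it propagates to homology because the entire comparison is natural in $P_{\bullet}$, and both pairings are computed by the same double-complex model. Everything else reduces to standard manipulations of Cartan--Eilenberg resolutions together with the acyclicity hypothesis and the strong monoidality of $\rH_0(\cG; -)$ established in \autoref{prop:H-is-monoidal}.
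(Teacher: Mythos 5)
Your argument follows the paper's approach: compute both sides from an $A$-projective resolution $P_\bullet \to M$ (whose terms are $\rH_\ast(\cG;-)$-acyclic by hypothesis, so a Cartan--Eilenberg/hyper-derived functor argument gives $\rH_\ast(\cG;M)\cong H_\ast(\rH_0(\cG;P_\bullet))$), and then identify the two $\rH_0(\cG;A)$-actions on the homology via the observation that both are computed by the K\"unneth--Shapiro map applied to the descended $A$-action on $\rH_0(\cG;P_\bullet)$. Your part (b) top triangle essentially re-derives part (a) on the chain level, whereas the paper simply notes that the top triangle \emph{is} the statement of part (a); the paper's phrasing ``both actions are given by composing the K\"unneth map and the Shapiro isomorphism, and these maps are functorial'' is the same key step you isolate as ``the main obstacle.''
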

\begin{proof} Proof of Part (a). It is clear from the $\rH_{\ast}(\cG ; -)$-acyclicity of projective $A$-modules that $\rH_{\ast}^A(\cG ; M)$ and $\rH_{\ast}(\cG ; M)$ are isomorphic as $\Mod_{\VB_{\bZ}/\cG}^{\gr}$-modules. In particular, an $A$-projective resolution $P_{\ast} \to M$ of $M$ can be used to calculate both  $\rH_{\ast}(\cG ; M )$ and $\rH_{\ast}^A(\cG ; M )$. The $ \rH_0(\cG ; A)$ action on $ \rH_0(\cG ; P_i)$ is given by composing the K\"unneth map and the Shapiro isomorphism. Since these two maps are functorial, we see that the action of $ \rH_0(\cG ; A)$ on \[ \rH_i(\rH_0(\cG ; P_\ast)) = \rH_i^A(\cG ; M) \] is given by composing the K\"unneth map and the Shapiro isomorphism. This completes the proof of Part (a).

We now prove Part  (b). The commutativity of the top triangle is exactly Part (a). The bottom triangle commutes because of the proposition above and functoriality of the K\"unneth and the Shapiro maps. This finishes the proof.
\end{proof}

\subsection{Proof of \autoref{thm:main-congruence} (the main theorem on congruence subgroup)}
\label{sec:p-is-three}
We now use our main technical result (\autoref{thm:shably-main}) on the Steinberg monoid as an apartment module and the spectral sequence in \autoref{thm:spectral sequence}
 to prove \autoref{thm:main-congruence} on level 3 congruence subgroup. We first explain how to use our spectral sequence.
 
Let $\bA$ and $\St$ be the apartment and the Steinberg monoids in $\Mod_{\VB_{\bZ}}$. Let $p$ be a prime, and let $\Gamma(p)$ be as described in \eqref{eq:gamma-p}.  Applying \autoref{thm:spectral sequence} to the $\bA$-module $\St$, we obtain the following spectral sequence:
\[E^2_{ab} = \Tor^{\rH_0(\Gamma(p); \bA)}_{a}(\bk, \rH^{\bA}_{b}(\Gamma(p); \St))  \implies \rL_{a+b}(\rH_0(\Gamma(p); -) \circ (\bk\otimes_{\bA} -) )(\St). \] We now simplify this spectral sequence.  Since $ \rH_0(\Gamma(p); -)$ is monoidal, $\rH_0(\Gamma(p); \bA)$ is a monoid in $\Mod_{\VB_{\bZ}/\Gamma(p)}$ and $\rH_0(\Gamma(p); \St)$ is a module over it. For brevity, we set $\bA_{\Gamma(p)} \coloneq \rH_0(\Gamma(p); \bA)$ and $\St_{\Gamma(p)} \coloneq \rH_0(\Gamma(p); \St)$.  \autoref{lem:char0-is-good} tells us that $\rH^{\bA}_{b}(\Gamma(p); -)$ is isomorphic to $\rH_{b}(\Gamma(p); -)$ as $\VB_{\bZ}/\Gamma(p)$-modules, but comes equipped with an action of $\bA_{\Gamma(p)}$. Thus we can drop the superscript $\bA$ without causing any issues. We now obtain the following simplified spectral sequence \begin{align}
\label{eq:simplified-spectral-sequence}
E^2_{ab} = \Tor^{\bA_{\Gamma(p)}}_{a}(\bk, \rH_{b}(\Gamma(p); \St))  \implies \rL_{a+b}(\rH_0(\Gamma(p); -) \circ (\bk\otimes_{\bA} -) )(\St).
\end{align} We need one more ingredient to be able to use this spectral sequence  to prove our main theorem. The following proposition is this ingredient and is precisely the place where we use our main technical result (\autoref{thm:shably-main}) and the assumption that $p=3$. We defer the proof of this proposition until the next subsection and concentrate on using it first.

\begin{proposition}
	\label{prop:bound-p-3}
	We have $\deg \Tor^{\bA_{\Gamma(3)}}_{i}(\bk,  \St_{\Gamma(3)})  \le 2i$ for all $i \ge 0$. Moreover, we have $\dim_{\bk} \St_{\Gamma(3)}(\bZ^{n}) = 3^{\binom{n}{2}}$.
\end{proposition}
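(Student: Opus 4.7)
The plan is to use the monoidal equivalence $\iota^\ast\colon \Mod_{\VB_{\bF_3}} \to \Mod_{\VB_{\bZ}/\Gamma(3)}$ established in \autoref{exs:gamma-p} to transport both assertions to calculations inside $\Mod_{\VB_{\bF_3}}$, where \autoref{thm:shably-main} applies directly.

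First I would identify the monoid $\bA_{\Gamma(3)}$ and the module $\St_{\Gamma(3)}$ with $\iota^\ast$ of the apartment monoid $\bA_{\bF_3}$ and Steinberg monoid $\St_{\bF_3}$ in $\Mod_{\VB_{\bF_3}}$. For the apartment monoid this is nearly automatic: \autoref{prop:tensor-commute}(c) gives $\rH_0(\Gamma(3); \lw(\triv_1)) = \lw(\rH_0(\Gamma(3); \triv_1))$, and under $\iota^\ast$ the $\VB_{\bZ}/\Gamma(3)$-module $\rH_0(\Gamma(3); \triv_1)$ corresponds to the trivial $\VB_{\bF_3}$-module concentrated in degree $1$, which is the generator of $\bA_{\bF_3}$. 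For the Steinberg monoid I would compare Bykovskii's presentation (\autoref{presentationZ}) of $\St_n(\bZ)$ with the presentation of \autoref{fieldPresentation} for $\St_n(\bF_3)$. Taking $\Gamma_n(3)$-coinvariants identifies two $\bZ$-bases that have the same mod $3$ reduction, so the generators of $\rH_0(\Gamma_n(3); \St_n(\bZ))$ are indexed by $\bF_3$-bases of $\bF_3^n$. The coincidence $\bZ^\times = \bF_3^\times = \{\pm 1\}$ makes the scalar relation agree after reduction, antisymmetry is inherited trivially, and any three-term Bykovskii-type relation over $\bF_3$ lifts to one over $\bZ$ (lift $\bar v_1,\bar v_2$ to a $\bZ$-basis and set $v_0 = v_1 + v_2$). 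Thus the induced map $\rH_0(\Gamma_n(3); \St_n(\bZ)) \to \St_n(\bF_3)$ is an isomorphism, and since both multiplications are juxtaposition of basis vectors it upgrades to a monoid isomorphism $\bA_{\Gamma(3)} \cong \iota^\ast(\bA_{\bF_3})$ together with a compatible module isomorphism $\St_{\Gamma(3)} \cong \iota^\ast(\St_{\bF_3})$.

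With these identifications in hand, monoidality of $\iota^\ast$ yields a natural isomorphism
\[ \Tor^{\bA_{\Gamma(3)}}_i(\bk, \St_{\Gamma(3)}) \cong \iota^\ast\bigl(\Tor^{\bA_{\bF_3}}_i(\bk, \St_{\bF_3})\bigr), \]
and applying \autoref{thm:shably-main} with $K = \bF_3$ gives the degree bound $\le 2i$, since $\iota$ is bijective on isomorphism classes of objects and hence preserves support. The dimension formula is then the classical Solomon--Tits theorem (already invoked in \autoref{prop:unipotent-basis-of-steinbergs}): $\St_n(\bF_3)$ has a $\bk$-basis indexed by unit upper-triangular matrices in $\GL_n(\bF_3)$, of which there are exactly $3^{\binom{n}{2}}$.

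The main obstacle is purely bookkeeping: verifying that Bykovskii's presentation over $\bZ$ really descends, after taking $\Gamma_n(3)$-coinvariants, to the presentation of $\St_n(\bF_3)$, and that this isomorphism respects both the monoidal structures and the $\GL_n^\pm(\bF_3)$-action coming through $\iota$. Once this comparison is pinned down, the Tor bound and the dimension count follow formally from \autoref{thm:shably-main} and Solomon--Tits, respectively.
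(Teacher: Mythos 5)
Your proposal is correct and follows essentially the same route as the paper: the paper also uses the monoidal equivalence $\iota^\ast$, invokes \autoref{prop:tensor-commute} to identify $\bA_{\Gamma(3)}$ with the $\bF_3$-apartment monoid, and compares Bykovskii's presentation of $\St(\bZ)$ with the field presentation of $\St(\bF_3)$ (using $\bZ^\times=\bF_3^\times$ and lifting of bases via strong approximation) to show $\St_{\Gamma(3)}\cong\ol{\St}$, packaging this identification as \autoref{prop:LS-case3} before deducing the Tor bound from \autoref{thm:shably-main} and the dimension from Solomon--Tits. The only difference is presentational: the paper organizes the Steinberg comparison as an equality of degree-$2$ ideals inside the apartment monoid (via exact sequences of $\bA$-modules), whereas you work directly with the relation sets, which amounts to the same verification.
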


The following proposition is a more abstract version of our main theorem. 

\begin{proposition} \label{maintheoremfancy}
	$  \rH_1(\Gamma(3); \St)$ is an $\bA_{\Gamma(3)}$-module generated in degrees $\le 4$. 
\end{proposition}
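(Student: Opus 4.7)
The plan is to read off the claimed bound directly from the spectral sequence \eqref{eq:simplified-spectral-sequence}. Since $\rH_1(\Gamma(3);\St)$ is generated in degrees $\le 4$ as an $\bA_{\Gamma(3)}$-module precisely when $\Tor^{\bA_{\Gamma(3)}}_0(\bk, \rH_1(\Gamma(3);\St))$ is supported in degrees $\le 4$, the task reduces to controlling the slot $E^2_{0,1}$ of that spectral sequence.

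My first step would be to localize which differentials can alter the $(0,1)$-entry. The outgoing differentials $d_r\colon E^r_{0,1}\to E^r_{-r,r}$ vanish since the target lies in negative column. The only incoming differentials are $d_r\colon E^r_{r,2-r}\to E^r_{0,1}$, and for $r\ge 3$ the source is in negative $b$-grading. Hence only $d_2\colon E^2_{2,0}\to E^2_{0,1}$ contributes, yielding a short exact sequence
\[ 0 \to \mathrm{im}(d_2) \to E^2_{0,1} \to E^{\infty}_{0,1} \to 0. \]
It therefore suffices to bound $\mathrm{im}(d_2)$ and $E^{\infty}_{0,1}$ separately.

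For the image of $d_2$, note that $E^2_{2,0} = \Tor^{\bA_{\Gamma(3)}}_2(\bk,\St_{\Gamma(3)})$. Applying \autoref{prop:bound-p-3} with $i=2$ gives that this is supported in degrees $\le 4$, hence so is $\mathrm{im}(d_2)$. For the $E^\infty$-term, I invoke the second half of \autoref{thm:spectral sequence}: by \autoref{thm:shably-main} we have $\deg\Tor^{\bA}_i(\bk,\St)\le 2i \le 2$ for $i\le 1$, so with $n=1$ we conclude that $E^{\infty}_{a,b}$ is supported in degrees $\le 2$ for every $a+b\le 1$. In particular $E^{\infty}_{0,1}$ is supported in degrees $\le 2 \le 4$. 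Combining the two bounds through the short exact sequence, $E^2_{0,1}$ is supported in degrees $\le 4$, which is exactly the desired conclusion.

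The main obstacle of the paper is already absorbed into the cited inputs: \autoref{thm:shably-main} (the apartment-monoid syzygy bound for $\St$), and especially \autoref{prop:bound-p-3}, which is the single place where the hypothesis $p=3$ is needed and which is still to be proved. Granting those two results, the present proposition is a clean assembly via the spectral sequence, and involves no further calculation beyond tracking which differentials land in the $(0,1)$-slot.
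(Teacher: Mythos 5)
Your proposal is correct and follows essentially the same route as the paper: run the spectral sequence \eqref{eq:simplified-spectral-sequence}, bound $E^\infty_{0,1}$ via the second assertion of \autoref{thm:spectral sequence}, and absorb the rest of $E^2_{0,1}$ by the $d_2$-differential out of $E^2_{2,0}=\Tor^{\bA_{\Gamma(3)}}_2(\bk,\St_{\Gamma(3)})$, which \autoref{prop:bound-p-3} bounds in degrees $\le 4$. The one slip is the citation for the input $\deg\Tor^{\bA}_i(\bk,\St)\le 2i$ for $i\le 1$: you invoke \autoref{thm:shably-main}, but that statement is proved only for the apartment and Steinberg monoids over a field, whereas here $\bA$ and $\St$ live in $\Mod_{\VB_{\bZ}}$. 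The correct reference is \autoref{prop:tor-0-1}, which gives precisely the $i\le 1$ bound over $\bZ$ (and is all that is known integrally). This is not a cosmetic point: if the full range of \autoref{thm:shably-main} were available over $\bZ$, the paper's main theorem would extend to all codimensions, and the restriction to codimension one would be unnecessary. With the citation corrected the argument goes through unchanged.
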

\begin{proof}
	When $p=3$, the spectral sequence \eqref{eq:simplified-spectral-sequence} becomes
	\[E^2_{ab} = \Tor^{\bA_{\Gamma(3)}}_{a}(\bk, \rH_{b}(\Gamma(3); \St))  \implies \rL_{a+b}(\rH_0(\Gamma(3); -) \circ (\bk\otimes_{\bA} -) )(\St). \]  Note that $\deg \Tor_i^{\bA}(\bk, \St) \le 2i$ for $i =0,1$ (by \autoref{prop:tor-0-1}). Thus by the second assertion of \autoref{thm:spectral sequence}, we see that $E^{\infty}_{ab}$ is supported in degrees $\le 2$ for $a + b \le 1$. Now note that $E^{\infty}_{0,1}$ is the cokernel of the map \[ \Tor^{\bA_{\Gamma(3)}}_2(\bk, \rH_0(\Gamma(3); \St)) = \Tor^{\bA_{\Gamma(3)}}_2(\bk, \St_{\Gamma(3)}) \to \Tor^{\bA_{\Gamma(3)}}_0(\bk, \rH_1(\Gamma(3); \St)). \] By the previous proposition, $\Tor^{\bA_{\Gamma(3)}}_2(\bk, \St_{\Gamma(3)})$ is supported in degrees $\le 4$, and so we conclude that \[
	\deg \Tor^{\bA_{\Gamma(3)}}_0(\bk, \rH_1(\Gamma(3); \St)) \leq 4, \] completing the proof of the proposition.
\end{proof}

\begin{corollary}
	\label{cor:fancy-St}
	$  \rH_1(\Gamma(3); \St)$ is a $\St_{\Gamma(3)}$-module generated in degrees $\le 4$. 
\end{corollary}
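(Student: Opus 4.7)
The plan is to deduce this corollary from \autoref{maintheoremfancy} by transporting the generation statement across the surjection of monoids $\bA \to \St$ constructed in \autoref{secAp}. The core idea is that although $\rH_1(\Gamma(3); \St)$ is naturally a module over both $\bA_{\Gamma(3)}$ and $\St_{\Gamma(3)}$, these two actions are compatible: the $\bA_{\Gamma(3)}$-action factors through the $\St_{\Gamma(3)}$-action via the surjection $\bA_{\Gamma(3)} \to \St_{\Gamma(3)}$. Once this compatibility is in place, any collection of elements that generates under the $\bA_{\Gamma(3)}$-action automatically generates under the $\St_{\Gamma(3)}$-action.

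First, I would observe that the surjective map of monoids $f \colon \bA \to \St$ in $\Mod_{\VB_{\bZ}}$ remains a surjection after applying the strong monoidal functor $\rH_0(\Gamma(3); -)$ (which is right exact as a left adjoint, by \autoref{prop:H-is-monoidal}). Thus we obtain a surjective monoid map $\bA_{\Gamma(3)} \to \St_{\Gamma(3)}$, through which $\St_{\Gamma(3)}$ inherits an $\bA_{\Gamma(3)}$-module structure restricting along $f$.

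Next, I would invoke \autoref{prop:factoring-through} with $A = \bA$, $B = \St$, and $M = \St$ regarded as a $\St$-module. The hypothesis that projective $\bA$-modules are $\rH_{\ast}(\Gamma(3); -)$-acyclic is precisely \autoref{lem:char0-is-good}, which applies since $\Gamma(3)$ is torsion-free. The commutative diagram in part (b) of \autoref{prop:factoring-through} then shows that the $\bA_{\Gamma(3)}$-action on $\rH_{\ast}(\Gamma(3); \St)$ factors, under the identification of part (a), through the canonical $\St_{\Gamma(3)}$-action via $\rH_{\ast}(\Gamma(3); f)$. Restricting to homological degree $1$, we conclude that the $\bA_{\Gamma(3)}$-action on $\rH_1(\Gamma(3); \St)$ factors through the $\St_{\Gamma(3)}$-action.

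Finally, by \autoref{maintheoremfancy}, the natural map
\[ \bA_{\Gamma(3)} \otimes \bigoplus_{n \le 4} \rH_1(\Gamma(3); \St)_n \longrightarrow \rH_1(\Gamma(3); \St) \]
is surjective. Since this $\bA_{\Gamma(3)}$-action factors through $\St_{\Gamma(3)}$ as just noted, the surjection factors as
\[ \bA_{\Gamma(3)} \otimes \bigoplus_{n \le 4} \rH_1(\Gamma(3); \St)_n \to \St_{\Gamma(3)} \otimes \bigoplus_{n \le 4} \rH_1(\Gamma(3); \St)_n \to \rH_1(\Gamma(3); \St), \]
which forces the second map to be surjective as well. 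This is exactly the assertion that $\rH_1(\Gamma(3); \St)$ is generated in degrees $\le 4$ as a $\St_{\Gamma(3)}$-module. There is no real obstacle here; the entire content of the corollary is the compatibility supplied by \autoref{prop:factoring-through}, and the main theorem has already been proved with respect to the larger acting monoid $\bA_{\Gamma(3)}$.
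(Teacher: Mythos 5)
Your proof is correct and follows exactly the route the paper intends: the paper's own proof of \autoref{cor:fancy-St} consists of the single line ``immediate from \autoref{maintheoremfancy} and \autoref{prop:factoring-through} Part (b),'' and your write-up simply spells out what that entails (the surjection $\bA_{\Gamma(3)} \to \St_{\Gamma(3)}$, the acyclicity input from \autoref{lem:char0-is-good}, and the factorization of the $\bA_{\Gamma(3)}$-action through the $\St_{\Gamma(3)}$-action). Same argument, more detail.
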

\begin{proof}
	This is immediate from the proposition above and \autoref{prop:factoring-through} Part (b).
\end{proof}

\begin{remark}
Bounds on $\deg \Tor_2^{\bA}(\bk, \St)$ could be used together with the arguments of this paper to bound  $ \Tor^{\bA_{\Gamma(3)}}_1(\bk, \rH_1(\Gamma(3); \St))$. Similarly, bounds on $\deg \Tor_i^{\bA}(\bk, \St)$ seem likely to be useful for bounding degrees of the higher syzygies  $ \Tor^{\bA_{\Gamma(3)}}_i(\bk, \rH_j(\Gamma(3); \St))$. However, bounds on $\deg \Tor_i^{\bA}(\bk, \St)$ are not known for $i >1$. 
\end{remark}

\begin{proof}[Proof of \autoref{thm:main-congruence}] 
	 \autoref{maintheoremfancy} implies that  \[ \bigoplus_{\substack{X_1 \oplus X_2 = X \text{ in } \VB_{\bZ}/\Gamma(3), \\ \rank X_2 = 4}} \bA_{\Gamma(3)}(X_1) \otimes_{\bk}  \rH_1(\Gamma(3); \St)(X_2) \to \rH_1(\Gamma(3); \St)(X) \] is surjective for $\rank X \ge 4$. Setting $X =\bZ^n$ for $n \ge 4$, we obtain the following surjection: \[ \Ind_{\Aut_{\VB_{\bZ}/\Gamma(3)}(\bZ^{n-4}) \times \Aut_{\VB_{\bZ}/\Gamma(3)}(\bZ^4)}^{\Aut_{\VB_{\bZ}/\Gamma(3)}(\bZ^n)} \bA_{\Gamma(3)}(\bZ^{n-4}) \otimes_{\bk}  \rH_1(\Gamma(3); \St)(\bZ^4) \to \rH_1(\Gamma(3); \St)(\bZ^n). \] As noted in \autoref{exs:gamma-p}, we have $\Aut_{\VB_{\bZ}/\Gamma_{3}}(\bZ^n) = \GL_{n}(\bF_3)$. Using the monoidal equivalence $\iota^{\ast}$ as in that example, we see that the following map is a surjection: \begin{align}
	\label{eq:induction}
	 \Ind_{\GL_{n-4}(\bF_3) \times \GL_4(\bF_3)}^{\GL_n(\bF_3)} \bA_{\Gamma(3)}(\bZ^{n-4}) \otimes_{\bk}  \rH_1(\Gamma(3); \St)(\bZ^4) \to \rH_1(\Gamma(3); \St)(\bZ^n). \end{align} We also have a natural surjection:  \[\bk[\Aut_{\VB_{\bZ}/\Gamma(3)}(\bZ^{n-4})] = \bk[\GL_{n-4}(\bF_3)] \to \bA_{\Gamma(3)}(\bZ^{n-4}).\] Combining it with \eqref{eq:induction}, we obtain the desired surjection: \[ \Ind_{\GL_4(\bF_3)}^{\GL_n(\bF_3)}   \rH_1(\Gamma(3); \St)(\bZ^4) \to \rH_1(\Gamma(3); \St)(\bZ^n). \] The first assertion now follows from the equality $\rH_1(\Gamma(3); \St)(\bZ^n) = \rH_1(\Gamma_n(3); \St_n(\bZ))$. 
	 
	 For the second assertion, note that by \autoref{cor:fancy-St} we have the following analogue of \eqref{eq:induction}: \begin{align}
	 \Ind_{\GL_{n-4}(\bF_3) \times \GL_4(\bF_3)}^{\GL_n(\bF_3)} \St_{\Gamma(3)}(\bZ^{n-4}) \otimes_{\bk}  \rH_1(\Gamma(3); \St)(\bZ^4) \to \rH_1(\Gamma(3); \St)(\bZ^n). \end{align} The second assertion now follows from the fact that $\dim_{\bk} \St_{\Gamma(3)}(\bZ^{n-4}) = 3^{\binom{n-4}{2}}$.
\end{proof}

We now concentrate on proving the dimension bounds in \autoref{cor:dim-bound-5}.

\begin{lemma}
	\label{lem:dim-bound-3}
	Suppose $\bk$ is a field. We have $\dim_{\bk} \rH_1(\Gamma_3(3); \oSt_3(\bZ)\otimes_{\bZ} \bk)  \le 35$. 
\end{lemma}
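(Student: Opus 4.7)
The plan is to reduce to an explicit classical computation. First I would apply Borel--Serre duality (as stated in the paper) to rewrite
\[
\rH_1(\Gamma_3(3); \oSt_3(\bZ) \otimes_{\bZ} \bk) \cong \rH^{\binom{3}{2}-1}(\Gamma_3(3); \bk) = \rH^{2}(\Gamma_3(3); \bk).
\]
The point is that the integral cohomology of $\Gamma_3(3)$ near the virtual cohomological dimension was already computed by Lee--Szczarba: \cite[Theorem 1.4]{LS} gives
\[
\rH^{2}(\Gamma_3(3); \bZ) \cong \bZ^{26} \oplus (\bZ/3)^{8},
\]
and the codimension-zero formula \eqref{eq:LS-calc} together with Solomon--Tits identifies $\rH^{3}(\Gamma_3(3); \bZ) \cong \oSt_3(\bF_3)$ as a free abelian group of rank $3^{\binom{3}{2}} = 27$.

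Next I would feed these two pieces of data into the universal coefficient theorem for cohomology, giving the short exact sequence
\[
0 \to \rH^{2}(\Gamma_3(3); \bZ) \otimes_{\bZ} \bk \to \rH^{2}(\Gamma_3(3); \bk) \to \Tor^{\bZ}_{1}(\rH^{3}(\Gamma_3(3); \bZ), \bk) \to 0.
\]
Since $\rH^{3}(\Gamma_3(3); \bZ)$ is free abelian, the $\Tor$ term vanishes and
\[
\dim_{\bk} \rH^{2}(\Gamma_3(3); \bk) = \dim_{\bk} \bigl((\bZ^{26} \oplus (\bZ/3)^{8}) \otimes_{\bZ} \bk\bigr).
\]
This dimension equals $26$ when $\mathrm{char}\,\bk \ne 3$ and equals $26 + 8 = 34$ when $\mathrm{char}\,\bk = 3$; in either case it is at most $35$, which yields the claim.

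There is no real obstacle; the stated bound $35$ is just a convenient cushion on the sharp value $34$ extracted from Lee--Szczarba. The only things to double-check are that Borel--Serre duality applies with field coefficients as stated in the paper, and that the Lee--Szczarba computation of $\rH^2$ really holds in the form cited here for $p = 3$.
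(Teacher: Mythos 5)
Your argument is correct and lands on the stated bound (in fact on the sharper exact value, $26$ or $34$), but the route is genuinely different from the paper's. The paper applies the classical universal coefficient theorem expressing $\rH^{2}(\Gamma_3(3);\bk)$ in terms of $\Hom_{\bZ}$ and $\Ext^1_{\bZ}$ of the integral \emph{homology} groups, reading the sizes of the two terms off Lee--Szczarba's Theorem~1.4 and Lemma~12.1. You instead invoke the coefficient-change sequence for cohomology,
\[
0\to\rH^2(\Gamma_3(3);\bZ)\otimes_{\bZ}\bk\to\rH^2(\Gamma_3(3);\bk)\to\Tor^{\bZ}_1\bigl(\rH^3(\Gamma_3(3);\bZ),\bk\bigr)\to 0,
\]
which needs only the integral cohomology $\rH^2\cong\bZ^{26}\oplus(\bZ/3)^8$ together with the freeness of $\rH^3\cong\oSt_3(\bF_3)$, and so sidesteps the second Lee--Szczarba citation. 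The one step you should make explicit is why that sequence is valid: it is \emph{not} the standard UCT for spaces, but requires that $\rH^\ast(\Gamma_3(3);-)$ be computed from a cochain complex of free abelian groups, with the $\bk$-coefficient complex obtained from the $\bZ$-coefficient one by $-\otimes_{\bZ}\bk$. This holds here because $\Gamma_3(3)$ is torsion-free and of type $F$ (it acts freely and cocompactly on the contractible Borel--Serre bordification), so $\Hom_{\bZ[\Gamma_3(3)]}(P_\ast,-)$ for a finite-type free resolution $P_\ast$ provides such a cochain complex. Your two cautions at the end are non-issues: the paper asserts Borel--Serre duality for an arbitrary coefficient ring at the start of \autoref{sec:congruence}, and the $\rH^2$ input is exactly the Lee--Szczarba computation recorded in \autoref{notzero}.
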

\begin{proof}
	By \cite[Theorem~1.4]{LS}, we have $\dim_{\bk} \Hom_{\bZ}(\rH_2(\Gamma_3(3)), \bk ) \le 27 $. Moreover, by \cite[Lemma~12.1]{LS} we have \[\dim_{\bk} \Ext^1_{\bZ}(\rH_2(\Gamma_3(3)), \bk) = \begin{cases}
	8 & \mbox{if } \mathrm{Char}(\bk) = 3\\
	0 & \mbox{otherwise}.
	\end{cases}  \] Thus by the universal coefficient theorem, we have $\dim_{\bk} \rH^2(\Gamma_3(3); \bk)  \le 35$. By Borel--Serre duality, we conclude that $\dim_{\bk} \rH_1(\Gamma_3(3); \oSt_3(\bZ)\otimes_{\bZ} \bk)  \le 35$. 
\end{proof}

The following lemma and \autoref{thm:main-congruence} complete the proof of \autoref{cor:dim-bound-5}.

\begin{lemma}
	\label{lem:dim-bound-4}
	Suppose $\bk$ is a field.  Then we have \[\dim_{\bk} \rH_1(\Gamma_4(3); \oSt_4(\bZ) \otimes_{\bZ} \bk) = \dim_{\bk} \rH_1(\Gamma(3); \St)(\bZ^4) \le 227340.  \]
\end{lemma}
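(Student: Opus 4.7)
The plan is to extract a dimension bound on the $\Tor^{\bA_{\Gamma(3)}}_0$ term in position $(0,1)$ of the spectral sequence \eqref{eq:simplified-spectral-sequence} at rank $4$, then separately bound the image of the apartment action $\bA_{\Gamma(3),+}\cdot\rH_1(\Gamma(3);\St)$ at $\bZ^4$ using the fact that $\bA$ is generated in degree $1$, and add the two.

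First, by \autoref{prop:tor-0-1} we have $\deg\Tor^{\bA}_i(\bk,\St)\le 2$ for $i\le 1$. The second assertion of \autoref{thm:spectral sequence} therefore says that the abutment $\rL_1(\rH_0(\Gamma(3);-)\circ(\bk\otimes_{\bA}-))(\St)$ is supported in $\VB$-degrees $\le 2$, so it vanishes at $\bZ^4$. Exactly as in the proof of \autoref{maintheoremfancy}, the only possibly nonzero differential affecting $E^\infty_{0,1}$ is $d_2\colon\Tor^{\bA_{\Gamma(3)}}_2(\bk,\St_{\Gamma(3)})\to\Tor^{\bA_{\Gamma(3)}}_0(\bk,\rH_1(\Gamma(3);\St))$, and vanishing of the abutment at $\bZ^4$ forces this $d_2$ to be surjective there. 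Using the monoidal equivalence $\iota^{\ast}\colon \Mod_{\VB_{\bF_3}}\xrightarrow{\sim}\Mod_{\VB_\bZ/\Gamma(3)}$ from \autoref{exs:gamma-p} to identify $(\bA_{\Gamma(3)},\St_{\Gamma(3)})$ with the apartment and Steinberg monoids over $\bF_3$, I then apply \autoref{lem:tor-calculation} with $q=3$ to obtain $\dim_{\bk}\Tor^{\bA_{\Gamma(3)}}_0(\bk,\rH_1(\Gamma(3);\St))(\bZ^4)\le\dim_{\bk}\Tor^{\bA_{\Gamma(3)}}_2(\bk,\St_{\Gamma(3)})(\bZ^4)\le 189540$.

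Next I would invoke the short exact sequence
\[0\to(\bA_{\Gamma(3),+}\cdot\rH_1(\Gamma(3);\St))(\bZ^4)\to\rH_1(\Gamma(3);\St)(\bZ^4)\to\Tor^{\bA_{\Gamma(3)}}_0(\bk,\rH_1(\Gamma(3);\St))(\bZ^4)\to 0\]
and bound the kernel. The key point is that $\bA=\lw(\triv_1)$ is generated as a monoid by $\triv_1$, concentrated in degree $1$, and the same is therefore true of $\bA_{\Gamma(3)}$. Consequently, for each $i\ge 2$ the image of $\bA_{\Gamma(3)}(\bZ^i)\otimes\rH_1(\Gamma(3);\St)(\bZ^{4-i})\to\rH_1(\Gamma(3);\St)(\bZ^4)$ is already contained in the image of $\bA_{\Gamma(3)}(\bZ^1)\otimes\rH_1(\Gamma(3);\St)(\bZ^3)\to\rH_1(\Gamma(3);\St)(\bZ^4)$; and the pairs $(i,j)=(3,1),(4,0)$ contribute nothing since $\rH_1(\Gamma(3);\St)(\bZ^j)=0$ for $j\le 1$ by Borel--Serre duality (the relevant codimension is negative). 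Hence the kernel is a quotient of $\Ind_{\GL_1(\bF_3)\times\GL_3(\bF_3)}^{\GL_4(\bF_3)}\bigl(\bA_{\Gamma(3)}(\bZ^1)\otimes\rH_1(\Gamma(3);\St)(\bZ^3)\bigr)$. Using $\dim_{\bk}\bA_{\Gamma(3)}(\bZ^1)=1$, the bound $\dim_{\bk}\rH_1(\Gamma_3(3);\St_3(\bZ)\otimes_{\bZ}\bk)\le 35$ from \autoref{lem:dim-bound-3}, and the index $[\GL_4(\bF_3):\GL_1(\bF_3)\times\GL_3(\bF_3)]=1080$, the kernel has dimension at most $1080\cdot 1\cdot 35=37800$.

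Adding the two bounds would yield $\dim_{\bk}\rH_1(\Gamma(3);\St)(\bZ^4)\le 189540+37800=227340$, as claimed. The step that I expect to require the most care is the reduction of the lower-rank contribution to the pair $(1,3)$: the naive summation over all $(i,j)$ with $i\ge 1$ and $i+j=4$, in particular including a $(2,2)$-term with $\dim\bA_{\Gamma(3)}(\bZ^2)=6$ and $\dim\rH_1(\Gamma(3);\St)(\bZ^2)=1$, would produce a strictly larger bound. Recognizing that degree-$1$ generation of $\bA$ subsumes all these contributions into the $(1,3)$-piece is the only conceptual input; the remaining steps are bookkeeping with the spectral sequence, the Tor-bound \autoref{lem:tor-calculation}, and the estimate on rank $3$ from \autoref{lem:dim-bound-3}.
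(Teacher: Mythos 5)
Your proof is correct and follows essentially the same strategy as the paper's: bound $\Tor^{\bA_{\Gamma(3)}}_0(\bk,\rH_1(\Gamma(3);\St))(\bZ^4)$ by $\Tor^{\bA_{\Gamma(3)}}_2(\bk,\St_{\Gamma(3)})(\bZ^4)\le 189540$ via the spectral sequence, bound the image of $\bA_{\Gamma(3),+}\cdot\rH_1(\Gamma(3);\St)$ in degree $4$ by $1080\cdot 35=37800$ using the rank-$3$ estimate, and add. Your explicit justification that degree-$1$ generation of $\bA_{\Gamma(3)}$ subsumes the $(2,2)$ (and other) contributions into the $(1,3)$ piece is exactly the step the paper leaves implicit when it invokes ``$M'$ is generated in degrees $\le 3$.''
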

\begin{proof}
	Denote the $\bA_{\Gamma(3)}$-module  $\rH_1(\Gamma(3); \St)$ by $M$, and let $M'$ be the maximal submodule of $M$ generated in degrees $\le 3$. Let $V$ be the degree 4 piece of the  $\VB_{\bZ}/{\Gamma(3)}$-module $\Tor^{\bA_{\Gamma(3)}}_0(\bk, \rH_1(\Gamma(3); \St))$. Then by the definition of  $\Tor^{\bA_{\Gamma(3)}}_0(\bk, -)$, we have an isomorphism \[V(\bZ^4) \cong (M/M')(\bZ^4).  \] This shows that \[\dim_{\bk} M(\bZ^4) = \dim_{\bk} M'(\bZ^4) + \dim_{\bk} V(\bZ^4).\]   The proof of \autoref{maintheoremfancy} shows that the map $\Tor^{\bA_{\Gamma(3)}}_2(\bk, \St_{\Gamma(3)}) \to \Tor^{\bA_{\Gamma(3)}}_0(\bk, \rH_1(\Gamma(3); \St))$ is surjective in degrees 3 and 4. In particular, we have \[\dim_{\bk} V(\bZ^4) \le \dim_{\bk} \Tor^{\bA_{\Gamma(3)}}_2(\bk, \St_{\Gamma(3)})(\bZ^4). \] By \autoref{prop:LS-case3} and \autoref{lem:tor-calculation} (for $q = 3$), we have \[\dim_{\bk} \Tor^{\bA_{\Gamma(3)}}_2(\bk, \St_{\Gamma(3)})(\bZ^4) \le \frac{(3^4-1)(3^3-1)3^6}{8} = 189540. \] By the previous lemma, $\dim_{\bk} M'(\bZ^3) \le 35$. Since $M'$ is generated in degrees $\le 3$, we see that \[\dim_{\bk} M'(\bZ^4) \le  \frac{35|\GL_4(\bF^3)|}{|\GL_3(\bF^3) \times \GL_1(\bF^3)| } \le 37800.\] Finally, we conclude that $\dim_{\bk} M(\bZ^4) \le 189540 + 37800 = 227340$. This completes the proof.
\end{proof}

\subsection{Proof of \autoref{prop:bound-p-3}}

\label{sec:proof-of-technical-proposition}

 Using the monoidal equivalence $\iota^{\ast} \colon \Mod_{\VB_{\bF_3}} \to \Mod_{\VB_{\bZ}/\Gamma(3)}$ as in \autoref{exs:gamma-p}, we now identify $\Mod_{\VB_{\bZ}/\Gamma(3)}$ with $\Mod_{\VB_{\bF_3}}$. \autoref{prop:bound-p-3} is immediate from \autoref{thm:shably-main}, the following proposition, and the fact that $\dim_\bk \St_n(\mathbb F_q) = q^{\binom{n}{2}}$.

\begin{proposition}
	\label{prop:LS-case3}
	The monoid $\bA_{\Gamma(3)}$ is naturally isomorphic to the apartment monoid in $\Mod_{\VB_{\bF_3}}$, and under this isomorphism, $\St_{\Gamma(3)}$ is the Steinberg monoid in $\Mod_{\VB_{\bF_3}}$.
\end{proposition}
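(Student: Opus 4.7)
The plan is to reduce both assertions to identifying $\rH_0(\Gamma(3);-)$ applied to the generators and structure maps, and then invoking a known computation of Lee--Szczarba for the Steinberg part.

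First I would handle the apartment monoid. By \autoref{prop:tensor-commute}, $\rH_0(\Gamma(3);-)$ commutes with exterior algebras, so
\[ \bA_{\Gamma(3)} = \rH_0(\Gamma(3); \lw(\triv_1)) \cong \lw(\rH_0(\Gamma(3); \triv_1)). \]
Since $\triv_1$ is concentrated in degree $1$ with $\triv_1(\bZ)=\bZ$ carrying the trivial $\GL_1(\bZ)$-action, the $\VB_{\bZ}/\Gamma(3)$-module $\rH_0(\Gamma(3);\triv_1)$ is concentrated in degree $1$ with value $\bZ$ carrying the trivial $\GL_1(\bF_3)$-action; under the monoidal equivalence $\iota^*\colon \Mod_{\VB_{\bF_3}}\xrightarrow{\sim}\Mod_{\VB_{\bZ}/\Gamma(3)}$ of \autoref{exs:gamma-p}, this corresponds to $\triv_1$ in $\Mod_{\VB_{\bF_3}}$. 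Since $\iota^*$ is a monoidal equivalence, it commutes with $\lw$, giving $\bA_{\Gamma(3)} \cong \lw(\triv_1)$ in $\Mod_{\VB_{\bF_3}}$, which is by definition the apartment monoid over $\bF_3$.

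Next I would handle the Steinberg part. The reduction map $\bZ \twoheadrightarrow \bF_3$ induces a natural $\Gamma(3)$-equivariant map $\St(\bZ) \to \St(\bF_3)$ (where $\Gamma(3)$ acts trivially on the target), and hence a map of $\VB_{\bZ}/\Gamma(3)$-modules
\[ \rH_0(\Gamma(3);\St(\bZ)) = \St_{\Gamma(3)} \longrightarrow \St(\bF_3). \]
The content of Lee--Szczarba's computation in \cite{LS} (and the observation underlying \eqref{eq:LS-calc}) is precisely that this map is an isomorphism for each $n\ge 0$; indeed both sides have rank $3^{\binom{n}{2}}$ and the map is surjective by the Solomon--Tits basis, so an isomorphism after counting. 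I would simply cite this.

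It then remains to check that this isomorphism is compatible with the monoid structures and with the map from the apartment monoid. Both come down to naturality: the multiplication on $\St$ is defined on apartment symbols by concatenation (\autoref{fieldPresentation} and the paragraph above it), and the reduction map $\St(\bZ)\to \St(\bF_3)$ sends apartment symbols to apartment symbols with the same concatenation rule. Since $\rH_0(\Gamma(3);-)$ is strong monoidal by \autoref{prop:H-is-monoidal}, the monoid structure on $\St_{\Gamma(3)}$ is the one induced from $\St(\bZ)$, so these concatenation formulas match up to give the required isomorphism of monoids. Similarly, the $\bA_{\Gamma(3)}$-module structure on $\St_{\Gamma(3)}$ transports under these identifications to the standard $\bA(\bF_3)$-action on $\St(\bF_3)$.

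The only nontrivial step is the Lee--Szczarba identification of coinvariants in the second paragraph; the rest is bookkeeping with the monoidality results already established. I would present the proof essentially as three short paragraphs corresponding to the three steps above, citing \cite{LS} for the key input.
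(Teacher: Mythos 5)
Your first paragraph (the apartment monoid part) matches the paper's argument exactly: $\rH_0(\Gamma(3);-)$ commutes with $\lw$ by \autoref{prop:tensor-commute}, and then one transports across the monoidal equivalence $\iota^*$. No issues there.

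The Steinberg part is where you diverge from the paper, and there is a real gap. You cite Lee--Szczarba's identification of $\rH_0(\Gamma_n(3);\oSt_n(\bZ))$ with $\oSt_n(\bF_3)$ as the key input, obtaining an isomorphism by combining surjectivity of the reduction map with a rank count. But Lee--Szczarba's Theorem~1.2 (equivalently \eqref{eq:LS-calc} in this paper) is proved only for $n\ge 3$. The remark that follows \autoref{prop:LS-case3} in the paper makes precisely this point: the proposition is meant to \emph{extend} Lee--Szczarba's computation to all $n$, not to re-derive it from their theorem. So your proof establishes the statement only in degrees $\ge 3$, and would need a separate argument in degrees $0,1,2$; as written it is partly circular with respect to the remark the authors draw from it. The ``both sides have rank $3^{\binom n2}$'' step also presupposes the very rank computation of the $\Gamma_n(3)$-coinvariants that you are trying to establish, which is again Lee--Szczarba for $n\ge 3$ and not covered below that.

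The paper's actual proof avoids this entirely. It takes Bykovskii's presentation of $\St(\bZ)$ and the field presentation of $\St(\bF_3)$ as $\bA$-modules (the exact sequences $\bA\otimes M\to\bA\to\St\to0$ and $\ol{\bA}\otimes\ol M\to\ol{\bA}\to\ol{\St}\to0$ with the relation modules $M$, $\ol M$ concentrated in degree $2$), applies the right-exact monoidal functor $\rH_0(\Gamma(3);-)$ to the integral presentation, and then checks that the image of the degree-$2$ relation module matches the degree-$2$ relation module over $\bF_3$ under the reduction map. Because both relation modules have the same explicit generators $[v_1,v_2]-[v_0,v_2]+[v_0,v_1]$ with $v_0=v_1+v_2$, and because $\bZ^\times=\bF_3^\times=\{\pm1\}$, this comparison is elementary and works uniformly in all degrees and over all coefficient rings $\bk$. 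That is what buys the extension to $n\le 2$ and the monoid compatibility for free (both sides are presented as quotients of $\ol{\bA}$ by the same ideal, so they are equal as $\ol{\bA}$-algebras), whereas your route would need extra low-degree checking and a separate compatibility verification for the product. If you want to keep your approach, you should at least add the degrees $n\le 2$ by hand and make explicit that the comparison isomorphism is induced by reduction so that it is automatically a monoid map; but the presentation-comparison argument is cleaner and is what the authors intended.
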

\begin{proof}
The first assertion follows immediately from \autoref{prop:tensor-commute} and the fact that the apartment monoid is an exterior algebra. 

Let us denote the apartment and the Steinberg monoids in $\Mod_{\VB_{\bF_3}}$ by $\ol{\bA}$ and $\ol{\St}$ respectively.  By \autoref{presentationZ} and \autoref{fieldPresentation} (also see the proof of 	\autoref{prop:tor-0-1}), we have an exact sequence of $\bA$-modules \begin{align}\label{seqZ} \bA \otimes M \to \bA \to \St \to 0  \end{align} and an exact sequence of $\ol{\bA}$-modules \begin{align}\label{seqF3} \ol{\bA} \otimes \ol{M} \to \ol{\bA} \to \ol{\St} \to 0. \end{align} Here $M$ and $\ol{M}$ are supported only in degree 2 and are given by  \[M(\bZ^2) = \langle[v_1, v_2] - [v_0, v_2] + [v_0,v_1] \mid v_0 =  v_1 + v_2\rangle \subset \bA(\bZ^2) \] and  \[\ol{M}(\bF_3^2) = \langle[v_1, v_2] - [v_0, v_2] + [v_0,v_1] \mid v_0 = v_1 + v_2  \rangle \subset \ol{\bA}(\bF_3^2).\]   We note that the units in $\Z$ and $\bF_3$ are both $\{1,-1\}$.  Applying the monoidal functor $\rH_0(\Gamma(3); -)$ to \autoref{seqZ}, we obtain an exact sequence: \begin{align}\label{seqQuotient} \bA_{\Gamma(3)} \otimes \rH_0(\Gamma(3); M) \to \bA_{\Gamma(3)} \to \St_{\Gamma(3)} \to 0\end{align}  which we will show is isomorphic to \autoref{seqF3}. Under the isomorphism $\bA_{\Gamma(3)} \to \ol{\bA}$ as in the first assertion, we note that both $\St_{\Gamma(3)}$ and $\ol{\St}$ are quotients of $\ol{\bA}$ by an ideal generated in degree 2. To show that the two ideals are the same, all we need is to check that the image of $\rH_0(\Gamma_2(3); M(\bZ^2)) \to \rH_0(\Gamma_2(3); \bA(\bZ^2))$ is equal to the image of $\ol{M}(\bF_3^2) \to \ol{\bA}(\bF_3^2)$ under the isomorphism $\rH_0(\Gamma_2(3); \bA(\bZ^2)) \to \ol{\bA}(\bF_3^2)$. This last isomorphism is just reduction mod $p$ and is given by \[[v_1, v_2] \mapsto [\ol{v}_1, \ol{v}_2]. \] The statement about images being equal now follows immediately from the explicit descriptions of $M$ and $\ol{M}$ (see \cite[Lemma~5.2]{LS} for a similar argument).
\end{proof}

\begin{remark} Lee--Szczarba \cite[Theorem 1.2]{LS} proved that $\oSt_n(\bZ)_{\Gamma_n(3)} \cong \oSt_n(\bF_3)$ for $n \geq 3$. \autoref{prop:LS-case3} shows that this is in fact true for all $n$.
\end{remark}

\section{A stability conjecture}
\label{sec:stability-conjectures}

 In \cite{CFPconj}, Church, Farb, and Putman conjectured that for all $i$, the codimension $i$ rational cohomology of mapping class groups, automorphism groups of free groups, and $\SL_n(\Z)$ stabilize. Here codimension $i$ means $i$ below the virtual cohomological dimension. These groups are rational duality groups so this stability conjecture is equivalent to homological stability with coefficients in the dualizing modules tensor $\Q$. In particular, they conjectured that $\rH_i(\SL_n(\Z);\oSt_n(\Z) \otimes \Q)$ does not depend on $n$ for $n \gg i$.  In the case of $\rH_i(\SL_n(\Z);\oSt_n(\Z) \otimes \Q)$, if one could show that these groups stabilized, it would also imply that the stable homology groups are zero \cite[\S2]{CFPconj}. 
 
Since it is known that the cohomology groups of congruence subgroups in the virtual cohomological dimension are nonzero and in fact grow with $n$ \cite{LS,Par}, Church, Farb, and Putman did not conjecture that the cohomology of congruence subgroups vanishes or stabilizes  in high dimensions. Nevertheless, they said regarding the codimension $i$ cohomology of congruence subgroups that they ``do expect that the stability conjectured in Conjecture 1 should persist in some form'' \cite[Page 7]{CFPconj}. Given that the untwisted homology of congruence subgroups exhibits representation stability, it is reasonable to conjecture that the stability pattern exhibited by the homology  of congruence subgroups with coefficients in Steinberg modules should also be a form of representation stability. We propose the following conjecture as a way of making Church--Farb--Putman's statement more precise.

\begin{conjecture}
	\label{conj:stability-conjecture} Let  $p$ be a prime. For each $i , j \ge 0$, the $\VB_{\bZ}/\Gamma(p)$-module \[\Tor^{\bA_{\Gamma(p)}}_{i}(\bk, \rH_j(\Gamma(p); \St))\] is supported in finitely many degrees.
\end{conjecture}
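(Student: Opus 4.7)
The plan is to extend the spectral-sequence argument underlying \autoref{maintheoremfancy} and \autoref{thm:main-congruence} to arbitrary primes $p$, homological degrees $j$, and syzygy indices $i$. The central tool is the spectral sequence of \autoref{thm:spectral sequence},
\[
E^2_{a,b} \;=\; \Tor^{\bA_{\Gamma(p)}}_{a}\!\bigl(\bk,\, \rH_{b}(\Gamma(p); \St)\bigr) \;\Longrightarrow\; \rL_{a+b}\bigl(\rH_0(\Gamma(p); -)\circ(\bk\otimes_{\bA}-)\bigr)(\St),
\]
applied with $A = \bA$ the apartment monoid in $\Mod_{\VB_{\bZ}}$ and $M = \St$ the integral Steinberg monoid.

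Granting two technical inputs, the conjecture would follow by induction on $j$. The first input is an integral refinement of \autoref{thm:shably-main}: namely, $\deg \Tor^{\bA}_i(\bk, \St) < \infty$ (ideally with a linear bound $\le 2i$) for $\bA$ and $\St$ in $\Mod_{\VB_{\bZ}}$. The second clause of \autoref{thm:spectral sequence} would then force $E^{\infty}_{a,b}$ to have bounded degree on every antidiagonal. The second input is the analogous finiteness statement for $\Tor^{\bA_{\Gamma(p)}}_i(\bk, \St_{\Gamma(p)})$ inside $\Mod_{\VB_{\bZ}/\Gamma(p)}$. For $p=3$ this is furnished by \autoref{prop:LS-case3} combined with \autoref{thm:shably-main}; for general odd $p$ one would first identify $\bA_{\Gamma(p)}$ and $\St_{\Gamma(p)}$ with explicit monoids in $\Mod_{\VB_{\bF_p}^{\pm}}$ (following the strategy of \autoref{prop:LS-case3}) and then re-run the PBW argument of \autoref{prop:homotopy} in the oriented setting. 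With both inputs in hand, induction on $j$ closes the argument: by the inductive hypothesis $\Tor^{\bA_{\Gamma(p)}}_a(\bk, \rH_b(\Gamma(p); \St))$ has finite support for every $b < j$, so every incoming differential $d_r\colon E^r_{i+r, j-r+1}\to E^r_{i,j}$ has image of bounded degree; together with the bounded-degree abutment, one descends through the finitely many nonzero pages of the first-quadrant spectral sequence to extract a degree bound on $E^2_{i, j}$.

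The hardest step is unquestionably the first: lifting \autoref{thm:shably-main} from fields to $R = \bZ$. This is the open question raised at the end of \autoref{intro-koszul}, and a positive answer would already resolve Church--Farb--Putman's high-dimensional vanishing conjecture \cite[Conjecture 2]{CFPconj} for $\SL_n(\bZ)$. The obstruction is structural: the property (P3) used in the PBW argument of \autoref{prop:homotopy} --- that any direct-sum splitting of a subspace lifts to one of the ambient module --- fails for lattices in $\bQ^n$, so the column-reduced echelon bases on which the whole Priddy-style combinatorics rests have no integral analogue. A plausible path is to replace field-theoretic column reduction with integral reductions in the spirit of Bykovskii's proof of \autoref{presentationZ}, or to construct an integral PBW-type filtration whose associated graded recovers the $\bF_p$-case; even a weaker, non-linear bound for all $i$ would suffice to establish \autoref{conj:stability-conjecture} and would simultaneously extend \autoref{thm:main-congruence} to higher codimensions.
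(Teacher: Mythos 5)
The statement you are addressing is posed in the paper as an open \emph{conjecture}, not a theorem: the paper has no proof of it, and indeed proves only the special cases $(i,j,p)=(0,1,3)$ (\autoref{thm:main} via \autoref{maintheoremfancy}), $(i,0,3)$ for all $i$ (combining \autoref{prop:LS-case3} with \autoref{thm:shably-main}), and $(\leq 1, 0, p)$ for all odd $p$ (\autoref{otherPrimes}). So there is no ``paper's own proof'' to compare against; what can be assessed is whether your roadmap would succeed.

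You correctly identify the most visible obstacle: extending \autoref{thm:shably-main} from fields to $R=\bZ$, which the paper explicitly raises as an open question at the end of \autoref{intro-koszul}, and you correctly diagnose the structural failure of property (P3) over $\bZ$. You are also right that the second input, boundedness of $\Tor^{\bA_{\Gamma(p)}}_i(\bk,\St_{\Gamma(p)})$, reduces to identifying $\bA_{\Gamma(p)}$ and $\St_{\Gamma(p)}$ with the oriented apartment and Steinberg monoids in $\Mod_{\VB_{\bF_p}^{\pm}}$ and re-running the PBW argument there; this part is plausible and mirrors \autoref{prop:LS-case3} and \autoref{otherPrimes}.

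The gap is in the inductive step. You propose to induct on $j$, control the incoming differentials $d_r\colon E^r_{i+r, j-r+1}\to E^r_{i,j}$ using the inductive hypothesis (valid, since $j-r+1<j$), and then appeal to the bounded abutment. But for $i\geq 2$ the entry $E^r_{i,j}$ also has \emph{outgoing} differentials $d_r\colon E^r_{i,j}\to E^r_{i-r, j+r-1}$, whose targets sit in rows $j+r-1 > j$ and are therefore not controlled by induction on $j$ (nor by induction on $i$, on $i+j$, or on any weighted sum, since the incoming and outgoing differentials move these indices in opposite directions). Boundedness of $E^\infty_{i,j}$ plus boundedness of the incoming images is \emph{not} enough to conclude $E^2_{i,j}$ is bounded: you would also need the images of the outgoing differentials to be bounded, and those land in entries you have not yet controlled. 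The paper's own argument for \autoref{maintheoremfancy} sidesteps this precisely because it works at $i=0$, where there are no outgoing differentials. To close the induction for $i\geq 2$ you would need a third ingredient --- most naturally, local Noetherianity of the category of $\bA_{\Gamma(p)}$-modules (i.e.\ of modules over the oriented apartment monoid), so that the $i=0$ case (finite generation of $\rH_j(\Gamma(p);\St)$) forces finite presentation and bounded higher syzygies. Such a Noetherianity theorem is not in the paper, is not obviously a consequence of Putman--Sam's Noetherianity for $\VIC(\bF_p)$, and your proposal does not mention it. \autoref{rem:why-koszulness-is-important}... rather, the paper's own \autoref{conj:stability-conjecture} discussion and the remark following \autoref{maintheoremfancy} hedge with ``seem likely to be useful'' rather than ``would suffice,'' which reflects exactly this unresolved issue.
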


%Before we unpack what this conjecture means in the some special cases, we note that most representation stability results currently in the literature can be rephrased in terms of vanishing of certain $\Tor$  groups. 

For simplicity, we concentrate on the case when $p$ is an odd prime so that the units of $\Z$ will inject into the units of $\bF_p$ and so that $\Gamma_n(p)$ will be torsion free and hence exhibit integral Borel--Serre duality. As explained in \autoref{exs:gamma-p},  $\VB_{\bZ}/\Gamma(p)$ is monoidally equivalent to an oriented version $\VB_{\bF_p}^{\pm}$ of $\VB_{\bF_p}$. Moreover, it induces a monoidal equivalence between $\Mod_{\VB_{\bZ}/\Gamma(p)}$ and $\Mod_{\VB_{\bF_p}^{\pm}}$. We now define an oriented version of the apartment monoid in $\Mod_{\VB_{\bF_p}^{\pm}}$.  Recall that an orientation as in \autoref{exs:gamma-p} is a generator of top exterior power of $X$ up to multiplication by $\pm 1$. The {\bf oriented apartment monoid} denoted $\ol{\bA}^{\pm}$ is given on a vector space $X$ of dimension $n$ and orientation $\ro$  as follows. It is generated as a $\bk$-module by symbols $[v_1, \ldots, v_n]$, one for each basis $v_1, \ldots, v_n$ of $X$ satisfying $\lw_{i=1}^n v_i = \pm \ro$, subject to the following relations: \begin{enumerate}
		\item $[v_1, \ldots, v_n] =\sgn(\sigma)[v_{\sigma(1)}, \ldots, v_{\sigma(n)}] $ for $\sigma$ a permutation.
		\item $[- v_1, v_2, \ldots, v_n] = [v_1, \ldots, v_n]$. 
	\end{enumerate} In other words,  $\ol{\bA}^{\pm}$ is the exterior algebra $\lw(\triv_1)$ in $\Mod_{\VB_{\bF_p}^{\pm}}$ where $\triv_1$ is supported only in degree $1$, and $\triv_1(1)$ is the trivial representation of $\GL^{\pm}_1(\bF_p)$. By \autoref{prop:tensor-commute}, we know that the monoidal equivalence $ \Mod_{\VB_{\bZ}/\Gamma(p)} \to \Mod_{\VB_{\bF_p}^{\pm}}$ takes $\bA_{\Gamma(p)}$  to the oriented apartment monoid $\ol{\bA}^{\pm}$. By Borel--Serre duality, our conjecture for $i =0$ is the statement that the $\GL_n^\pm(\bF_p)$-equivariant map \[ \Ind_{\GL_{n-1}^\pm(\bF_p) \times \GL_{1}^\pm(\bF_p)}^{\GL_n^\pm(\bF_p)} \rH^{\binom{n-1}{2} -j}(\Gamma_{n-1}(p) ; \bk)  \to \rH^{\binom{n}{2} -j}(\Gamma_n(p);\bk ) \]  is surjective for $n$ sufficiently large. In other words,   the $i=0$ case of \autoref{conj:stability-conjecture}  is the statement that the sequences \[ \left\{ \rH^{\binom{n}{2} -j}\left(\Gamma_{n}(p) ; \bk \right) \right\}_{\geq n}\] have finite generation degrees. Since $\GL^\pm_n(\bF_3)=\GL_n(\bF_3)$, \autoref{thm:main} establishes this conjecture for $i=0$, $j=1$, and $p=3$.

Since exterior algebras are Koszul, there is a Koszul resolution \[ \ol{\bA}^{\pm} \otimes \Sym^{\ast}(\triv_1) \to \bk \to 0.  \] This resolution can be used to calculate $\Tor_{i}^{\ol{\bA}^{\pm}}(\bk, M)$ for any $\ol{\bA}^{\pm}$-module $M$. In particular, if $\deg \Tor_{i}^{\ol{\bA}^{\pm}}(\bk, M) \le d$ for $i = 0, 1$ then the following sequence is exact in degrees $> d$:  \[ M \otimes \Sym^{2}(\triv_1) \to  M \otimes \Sym^{1}(\triv_1) \to M \to 0.  \] Concretely, if we think of $M$ as a sequence $\{M_n \}_{\ge 0}$ of representations of $\GL_n^{\pm}(\bF_p)$, then we have \[M_n = \coker\left(\Ind_{\GL_{n-2}^\pm(\bF_p) \times (\Z/2 \wr S_2)}^{\GL_n^\pm(\bF_p)}  M_{n-2} \otimes_{\bk} \bk \to \Ind_{\GL_{n-1}^\pm(\bF_p) \times \GL_1^\pm(\bF_p)}^{\GL_n^\pm(\bF_p)} M_{n-1} \otimes_{\bk} \bk \right)  \] for $n > d$. In particular, in a stable range, the representation of $M_n$ is determined by the representations $M_{n-1}$ and $M_{n-2}$ along with the stabilization map $M_{n-1} \m M_{n-2}$. This phenomenon is often called {\bf central stability} \cite{PU, Pa2} or {\bf finite presentation degree} \cite{CE}. We shall refer to $d$ as the {\bf central stability degree} of the sequence $\{M_n \}_{n\ge 0}$. We see that the conjecture for $i=0$ and $1$ is the statement that the following sequence has finite central stability degree: \[\left\{ \rH^{\binom{n}{2} -j}\left(\Gamma_{n}(p) ; \bk \right) \right\}_{n\geq 0}.\] We prove this for $j=0$ and all $p$ in the proposition below. We need a definition first.  Given a prime field $R = \Fp$, we define the {\bf oriented Steinberg monoid}  $\ol{\St}^{\pm} = \St_{\Gamma(p)}$. Using Bykovskii's presentation (see \autoref{presentationZ}) the following presentation of $\ol\St^{\pm}$ is immediate.  On a vector space $X$ of dimension $n$ and orientation $\ro$  as follows, it is generated by as a $\bk$-module by symbols $[v_1, \ldots, v_n]$, one for each basis $v_1, \ldots, v_n$ of $X$ satisfying $\lw_{i=1}^n v_i = \pm \ro$, subject to the following relations: \begin{enumerate}
	\item $[v_1, \ldots, v_n] =\sgn(\sigma)[v_{\sigma(1)}, \ldots, v_{\sigma(n)}] $ for $\sigma$ a permutation.
	\item $[- v_1, v_2, \ldots, v_n] = [v_1, \ldots, v_n]$. 
	\item $[v_1, v_2, \ldots, v_n] - [v_0, v_2, \ldots, v_n] + [v_0, v_1, \ldots, v_n] = 0$ where $v_0= v_1 +  v_2$. 
\end{enumerate} Note that in $(b)$, we are only allowed to scale by $-1$, not arbitrary units. This is what differentiates $\ol{\St}^{\pm}_n(\bF_p)$ from $\St_n(\bF_p)$ for $p>3$. It is clear from this description that $ \deg \Tor^{\ol{\bA}^{\pm}}_{i}(\bk, \ol{\St}^{\pm}) \le 2i $ for $i =0,1$. Thus the following sequence has central stability degree $\le 2$: \[\left\{ \ol{\St}_n^{\pm}(\bF_p) \right \}_{n\geq 0} =\left\{ \rH^{\vcd}(\Gamma_n(p);\bk)\right \}_{n\geq 0}.\]

\begin{proposition} \label{otherPrimes}
	Let  $p$ be an odd prime.  Then  $ \deg \Tor^{\bA_{\Gamma(p)}}_{i}(\bk, \rH_0(\Gamma(p); \St)) \le 2i $ for $i =0,1$. In other words, the sequence $\left \{ \rH^{\binom{n}{2}}(\Gamma_n(p) ; \bk ) \right \}_{n \geq 0}$ has generation degree $0$ and central stability degree $\leq 2$. 
\end{proposition}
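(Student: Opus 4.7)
The plan is to mimic the argument used for \autoref{prop:tor-0-1} but carried out after applying the monoidal coinvariants functor $\rH_0(\Gamma(p); -)$. All the work has essentially been done in \autoref{prop:H-is-monoidal} and in Bykovskii's presentation (\autoref{presentationZ}); the proposition should fall out by assembling these ingredients.

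Concretely, recall from the proof of \autoref{prop:tor-0-1} that Bykovskii's presentation gives an exact sequence of $\bA$-modules
\[\bA \otimes M \to \bA \to \St \to 0,\]
where $M$ is a $\VB_{\bZ}$-module supported only in degree $2$, with $M(\bZ^2)$ generated by the three-term relators $[v_1,v_2]-[v_0,v_2]+[v_0,v_1]$ for $v_0 = v_1+v_2$. I would apply the right-exact, strong monoidal functor $\rH_0(\Gamma(p); -)$ (\autoref{prop:H-is-monoidal}); since strong monoidality gives $\rH_0(\Gamma(p); \bA \otimes M) \cong \bA_{\Gamma(p)} \otimes \rH_0(\Gamma(p); M)$, this produces an exact sequence of $\bA_{\Gamma(p)}$-modules
\[\bA_{\Gamma(p)} \otimes \rH_0(\Gamma(p); M) \to \bA_{\Gamma(p)} \to \St_{\Gamma(p)} \to 0\]
with $\rH_0(\Gamma(p); M)$ again supported in degree $2$.

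From this presentation the two $\Tor$-bounds are essentially free. For $i=0$: $\St_{\Gamma(p)}$ is cyclic as an $\bA_{\Gamma(p)}$-module (the image of the unit of $\bA_{\Gamma(p)}$), so $\Tor_0^{\bA_{\Gamma(p)}}(\bk, \St_{\Gamma(p)}) = \bk \otimes_{\bA_{\Gamma(p)}} \St_{\Gamma(p)}$ is concentrated in degree $0$. For $i=1$: let $K$ be the kernel of $\bA_{\Gamma(p)} \twoheadrightarrow \St_{\Gamma(p)}$, which is a quotient of $\bA_{\Gamma(p)}\otimes \rH_0(\Gamma(p);M)$ and hence is generated in degree $\le 2$. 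The long exact sequence for $0 \to K \to \bA_{\Gamma(p)} \to \St_{\Gamma(p)} \to 0$, together with $\Tor_1^{\bA_{\Gamma(p)}}(\bk, \bA_{\Gamma(p)}) = 0$ (as $\bA_{\Gamma(p)}$ is free of rank one over itself), yields an injection
\[\Tor_1^{\bA_{\Gamma(p)}}(\bk, \St_{\Gamma(p)}) \hookrightarrow \bk \otimes_{\bA_{\Gamma(p)}} K,\]
whose target is supported in degrees $\le 2$. This gives $\deg \Tor_1^{\bA_{\Gamma(p)}}(\bk, \St_{\Gamma(p)}) \le 2$.

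For the reformulation in the second sentence of the proposition: by \autoref{prop:tensor-commute}, $\bA_{\Gamma(p)}$ is an exterior algebra, hence Koszul, and the discussion immediately preceding the proposition shows that $\Tor$-degree bounds of $\le 2i$ for $i=0,1$ translate directly into ``generation degree $0$ and central stability degree $\le 2$'' via the Koszul resolution $\Sym^{\ast}(\triv_1) \otimes \ol{\bA}^{\pm} \to \bk \to 0$. The only nontrivial point in the whole plan is the step where one commutes $\rH_0(\Gamma(p); -)$ past the convolution tensor product $\bA \otimes M$; this is exactly the strong monoidality of \autoref{prop:H-is-monoidal}, which rests on $\Gamma(p)$ being a strong subfunctor of $\GL$ (\autoref{def:strong-subfunctor}), so no further obstacle arises.
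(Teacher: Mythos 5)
Your proof is correct and takes essentially the same approach as the paper: the paper's proof of \autoref{otherPrimes} invokes ``an argument similar to the one in \autoref{prop:LS-case3}'', and that argument is precisely your central step of applying the right-exact strong monoidal functor $\rH_0(\Gamma(p); -)$ to the Bykovskii presentation $\bA \otimes M \to \bA \to \St \to 0$ to obtain an $\bA_{\Gamma(p)}$-module presentation of $\St_{\Gamma(p)}$ with relations in degree $\le 2$. The only cosmetic difference is that the paper goes one step further and identifies $\St_{\Gamma(p)}$ with the explicitly presented oriented Steinberg monoid $\ol{\St}^{\pm}$ before reading off the $\Tor$ bounds, whereas you extract the bounds directly from the induced presentation; this buys nothing for the statement at hand, so your streamlining is harmless.
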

\begin{proof}
	An argument similar to the one in \autoref{prop:LS-case3}, shows that the monoidal equivalence $\Mod_{\VB_{\bZ}/\Gamma(p)} \to \Mod_{\VB_{\bF_p}^{\pm}}$ takes $\bA_{\Gamma(p)}$  to the oriented apartment monoid $\ol{\bA}^{\pm}$, and the $\bA_{\Gamma(p)}$-module $\St_{\Gamma(p)}$ to the $\ol{\bA}^{\pm}$-module $\ol{\St}^{\pm}$.  The result now follows from the previous paragraph.
\end{proof}

 When $p=3$, we know by Lee--Szczarba's result \cite[Theorem~1.2]{LS} that $\St_{\Gamma(p)} \cong \ol{\St}$  (see  \autoref{prop:bound-p-3} and \autoref{prop:LS-case3}). We see that \[\Tor^{\bA_{\Gamma(p)}}_{i}(\bk, \rH_0(\Gamma(p); \St)) \cong \Tor_i^{\ol{\bA}}(\bk, \ol{\St}).  \] Thus, \autoref{thm:shably-main} implies \autoref{conj:stability-conjecture} for $p=3$, $j=0$, and all $i$. 

 %\begin{remark}
%If one could prove \autoref{conj:stability-conjecture} for $i=0$ and $j=2$, the arguments of this paper would also establish a version of \autoref{thm:main} for all primes. By the work of Paraschivecu \cite{Par},
% \[ \rH^{\binom{n}{2} }(\Gamma_n(p)) \ncong \oSt_n(\bF_p) \text{ for } p>3. \] Thus, \autoref{thm:shably-main} is not directly relevant for primes larger than $3$. 
 
%There is no known conceptual description of the top homology group of congruence subgroups for $p>3$ and $n$ large nor are the ranks of these groups known.  Lee-Szczarba \cite[Page 28]{LS} conjectured that \[ \rH^{\binom{n}{2} }(\Gamma_n(p)) \cong \widetilde \rH_{n-2}(\cT_n(\Q)/\Gamma_n(p)).\] However, in \cite{MPP}, it is shown that this conjecture is wrong for $p>5$. 
%\end{remark}

\section{Homological vanishing for the Steinberg module}
\label{sec:homological-vanishing}

%Fix a field $K$, and set $\VB = \VB_{K}$. Let $\St$ be the Steinberg monoid in $\Mod_{\VB}$. 

In \cite{APS}, Ash--Putman--Sam proved that $\rH_i(\GL_n(K);\oSt_n(K))$ vanishes for $K$ a field and $n$ sufficiently large compared with $i$. Similarly, Church--Putman \cite{CP} proved that $\rH_1(\GL_n(\Z);\oSt_n(\Z) \otimes \bQ)$ vanishes for all $n \ge 0$ and that $\rH_1(\SL_n(\Z);\oSt_n(\Z) \otimes \bQ)$ vanishes for $n \ge 3$. In this section, we give a new proof of the result of Ash--Putman--Sam and give integral refinements of these results of Church--Putman.

\subsection{Homological vanishing for the Steinberg module of a field}

Ash--Putman--Sam proved the following. 

\begin{theorem}[Ash--Putman--Sam \cite{APS}]
	\label{thm:homological-vanishing} Let $K$ be a field. Then 	$\rH_i(\GL_n(K); \oSt_n(K) \otimes_{\bZ}\bk) = 0$ for all $n \ge 2 i + 2$. 
\end{theorem}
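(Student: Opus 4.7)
The plan is to derive the theorem as a special case of \autoref{intro:general-vanishing}, applied with $A = \oSt$ the Steinberg monoid and $R = K$. The required Koszulness is \autoref{thm:Koszulness-of-Steinberg}, and skew-commutativity follows directly from the antisymmetry relation in \autoref{fieldPresentation}: swapping the tensor factors of $[v_1,\ldots,v_n] \otimes [u_1,\ldots,u_m]$ amounts to the permutation carrying $(u_1,\ldots,u_m)$ past $(v_1,\ldots,v_n)$, contributing the sign $(-1)^{nm}$. By the Solomon--Tits theorem, $\oSt_n(K)$ is a free abelian group, so the coefficient change $\otimes_{\bZ} \bk$ commutes with every construction in sight and it suffices to treat the case $\bk = \bZ$.

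It remains to verify the two numerical hypotheses of \autoref{intro:general-vanishing}. Hypothesis (a), that $\rH_0(\GL_2(K); \oSt_2(K)) = 0$, will be a short direct calculation. Since $\GL_2(K)$ acts transitively on ordered bases of $K^2$, the coinvariant group is cyclic on the class of $[e_1, e_2]$. Applying relation (c) of \autoref{fieldPresentation} with $v_1 = e_1$, $v_2 = e_2$, $v_0 = e_1+e_2$ yields
\[ [e_1,e_2] \;=\; [e_1+e_2,\, e_2] \;-\; [e_1+e_2,\, e_1] \qquad \text{in } \oSt_2(K), \]
and in the coinvariants each summand on the right equals $[e_1,e_2]$, forcing $[e_1,e_2] = 0$.

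Hypothesis (b) will be the main obstacle. Since $\oSt_1(K) = \bZ$ carries the trivial $\GL_1(K)$-action, the asserted product map reduces to the stabilization $\rH_1(\GL_2(K); \oSt_2(K)) \to \rH_1(\GL_3(K); \oSt_3(K))$ induced by multiplication in the monoid $\oSt$. I plan to establish surjectivity by combining the short exact sequence of $\bA$-modules
\[ 0 \to J \to \bA \to \oSt \to 0 \]
coming from the field presentation (cf.\ the proof of \autoref{prop:tor-0-1}), in which $J$ is generated in rank $2$, with the long exact sequence in $\GL_3(K)$-homology. The terms involving $\bA_3$ are computable via Shapiro's lemma in terms of homology of Levi subgroups of $\GL_3(K)$, while the rank-$2$ generation of $J$ controls the connecting homomorphism. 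Matching the resulting decomposition of $\rH_1(\GL_3(K); \oSt_3(K))$ with the image of the rank-$2$ stabilization is the technical heart of the argument, and I expect it to reduce to a finite explicit computation involving apartment generators in ranks $\le 2$.

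Once both hypotheses are verified, \autoref{intro:general-vanishing} delivers conclusion (b') directly: $\rH_i(\GL_n(K); \oSt_n(K)) = 0$ for all $n \ge 2i+2$, which, combined with the coefficient reduction noted in the first paragraph, is exactly the statement of the theorem.
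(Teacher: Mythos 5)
Your overall strategy---applying \autoref{thm:general-vanishing} (equivalently \autoref{intro:general-vanishing}) to $A = \St$, using the Koszulness established in \autoref{thm:Koszulness-of-Steinberg}---is exactly the paper's approach, and your direct verification of Hypothesis~(a) by applying relation~(c) to $[e_1,e_2]$ and identifying orbit representatives in the coinvariants is a perfectly good alternative to the paper's citation of \cite[Lemma~2.6]{APS}. However, there are two genuine problems.

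First, your coefficient reduction goes in the wrong direction for your own argument. You reduce the statement for general $\bk$ to the case $\bk = \bZ$, but \autoref{thm:general-vanishing} explicitly hypothesizes that $\bk$ is a field (the K\"unneth decomposition in its proof, and specifically the pigeonhole/type analysis of the lemma preceding it, require this). So you cannot then invoke it over $\bZ$. The paper reduces in the opposite direction: it uses the universal coefficient theorem to deduce the general-$\bk$ statement \emph{from} the field case, and then proves the field case via \autoref{thm:general-vanishing}. As written, your reduction step blocks the very application you want to make.

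Second, Hypothesis~(b) is not actually verified; it is only sketched, and the sketch diverges substantially from what the paper does. The paper does not work with the short exact sequence $0 \to J \to \bA \to \St \to 0$ over the apartment monoid $\bA$; instead it passes to the tensor algebra $T = \rT(\reg_1)$ and the monoid $\bk[t] = \rH_0(\GL; T)$, obtains a two-column Grothendieck spectral sequence (\autoref{lem:L1-reduction}) reducing Hypothesis~(b) to the vanishing of $\rL_1\bigl((\bk\otimes_{\bk[t]}-)\circ\rH_0(\GL;-)\bigr)(\St)$ in degree $3$, and then verifies that vanishing by an explicit four-case computation in the Lee--Szczarba resolution $\LS_{\ast}$ (\autoref{lem:L1-vanishing}). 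Your proposed route through Shapiro's lemma on $\bA_3$ and the connecting map from $J$ is not obviously wrong, but it is untested, and the coinvariant bookkeeping for $\bA_3$ (which carries wreath-product stabilizers) is nontrivial; you would need to actually close the ``finite explicit computation'' you allude to. Until both the coefficient issue is corrected and Hypothesis~(b) is honestly established, the proposal does not yet constitute a proof.
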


We use Koszulness of the Steinberg monoid (\autoref{thm:Koszulness-of-Steinberg}) to give a new proof of the theorem above. Unlike the original proof, our proof does not make use of high connectivity of the complex of partial bases. We will deduce homological vanishing for the Steinberg module using the following general criterion for homological vanishing for Koszul monoids which may be of independent interest. Note this is equivalent to \autoref{intro:general-vanishing}.

\begin{theorem}
	\label{thm:general-vanishing}
	Let $A$ be a (skew) commutative Koszul monoid in $\Mod_{\VB}$. Assume that $\bk$ is a field such that the following holds: \begin{enumerate}
		\item $\rH_0(\GL_2; A_2) = 0$.
		\item The product map $\rH_0(\GL_1; A_1) \otimes \rH_1(\GL_2; A_2) \to \rH_1(\GL_3; A_3)$ is surjective.
	\end{enumerate} Then we have that: \begin{enumerate}[(a')]
	\item The product map $\rH_0(\GL_1; A_1) \otimes \rH_i(\GL_{n-1}; A_{n-1}) \to \rH_i(\GL_n; A_n)$ is surjective for $n \ge 2i+1$.
	\item $\rH_i(\GL_n; A_n) = 0$ in degrees $n \ge 2i+2$.
\end{enumerate}
\end{theorem}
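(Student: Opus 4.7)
I would prove (a') and (b') simultaneously by induction on $i$. For the base case $i=0$, Koszulness forces $\Tor_1^A(\bk,\bk)$ to be concentrated in internal degree $1$, so $A$ is generated as a monoid by $A_1$. The resulting surjection $\Ind_{\GL_1\times\GL_{n-1}}^{\GL_n}A_1\otimes A_{n-1} \twoheadrightarrow A_n$, together with Shapiro's lemma and the K\"unneth theorem, gives (a') at $i=0$; iterating this surjection twice and invoking hypothesis (a) gives $\rH_0(\GL_n;A_n)=0$ for $n\ge 2$, proving (b') at $i=0$.

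For the inductive step at level $i$, I would restrict the Koszul resolution $V_\bullet \otimes A \to \bk$ of $\bk$ in $\Mod_A$ to internal degree $n\ge 1$, producing an exact sequence
\[
\cdots \to \Ind_{\GL_2\times\GL_{n-2}}^{\GL_n} V_2(2)\otimes A_{n-2} \to \Ind_{\GL_1\times\GL_{n-1}}^{\GL_n} A_1\otimes A_{n-1} \to A_n \to 0
\]
in $\Mod_{\bk[\GL_n]}$, where $V_j \coloneq \Tor_j^A(\bk,\bk)$ is concentrated in internal degree $j$. Combining this acyclic resolution of $A_n$ with a projective $\bk[\GL_n]$-resolution of $\bk$ yields a hyperhomology spectral sequence
\[
E_1^{p,q} = \rH_p\bigl(\GL_n;\, \Ind_{\GL_{q+1}\times\GL_{n-q-1}}^{\GL_n} V_{q+1}(q+1)\otimes A_{n-q-1}\bigr) \Longrightarrow \rH_{p+q}(\GL_n;A_n),
\]
whose $E_1^{p,q}$ decomposes via Shapiro and K\"unneth into $\bigoplus_{t_1+t_2=p}\rH_{t_1}(\GL_{q+1};V_{q+1}(q+1))\otimes \rH_{t_2}(\GL_{n-q-1};A_{n-q-1})$. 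The inductive hypothesis (b') annihilates the second factor whenever $t_2<i$ and $n-q-1\ge 2t_2+2$.

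Tracking degrees: for $n\ge 2i+1$, the only $E_1^{p,q}$ at total degree $i$ not killed by induction is the ``corner'' term at $(p,q)=(i,0)$, which reduces to $\rH_0(\GL_1;A_1)\otimes \rH_i(\GL_{n-1};A_{n-1})$, yielding (a') provided the filtration on $\rH_i(\GL_n;A_n)$ is trivial. For $n\ge 2i+2$ this is automatic. For $n=2i+1$ there remains a single potential obstruction $\rH_0(\GL_2;V_2(2))\otimes \rH_{i-1}(\GL_{2i-1};A_{2i-1})$ sitting in $E_1^{i-1,1}$; hypothesis (b) is precisely the statement that this obstruction vanishes at the base case $(i,n)=(1,3)$, and it propagates to higher $i$ via compatibility of the spectral sequence with the $\rH_0(\GL;A)$-module structure on $\rH_*(\GL;A)$ (coming from the lax monoidality of $\rH_*(\GL;-)$) together with (a') at level $i-1$. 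Finally, (b') at $n\ge 2i+2$ follows from two applications of (a'): the surjection $\rH_0(\GL_1;A_1)^{\otimes 2}\otimes \rH_i(\GL_{n-2};A_{n-2}) \twoheadrightarrow \rH_i(\GL_n;A_n)$ factors by associativity of the monoid product through $\rH_0(\GL_2;A_2)\otimes\rH_i(\GL_{n-2};A_{n-2})=0$.

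The main obstacle is the spectral-sequence bookkeeping in the inductive step, particularly justifying the cancellation of the $E_1^{i-1,1}$ boundary class at $n=2i+1$ via hypothesis (b) propagated through the module structure on $\rH_*(\GL;A)$. This degree-tracking is analogous in spirit to the Chardin--Symonds-style arguments employed in the proof of \autoref{prop:map-of-Koszul-algebras}.
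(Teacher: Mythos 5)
Your proposal takes a genuinely different route from the paper: you restrict the Koszul resolution $V_\bullet \otimes A \to \bk$ to internal degree $n$ and obtain a hyperhomology spectral sequence that converges \emph{directly} to $\rH_{p+q}(\GL_n;A_n)$, whereas the paper uses the two-sided bar complex $\ol{\cB}_\bullet(A)$ and a spectral sequence converging to $\rL_{a+b}\bigl(\rH_0(\GL;-)\circ(\bk\otimes_A-)\bigr)(\bk)$. In the paper's setup the abutment is what Koszulness controls (it is supported in degrees $\leq a+b$), so the interesting target $\rH_i(\GL_n;A_n)$ sits as the entry $E^1_{i,1}$ and the argument shows that the bar differential $E^1_{i,2}\to E^1_{i,1}$ is surjective in degree $n$. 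In your setup the interesting target is the abutment itself and the argument must show that the off-corner terms $E_1^{p,q}$ with $q>0$ vanish at $E_\infty$. The base case and the $n\geq 2i+2$ part of the induction are correct, and your identification of the single surviving obstruction $\rH_0(\GL_2;V_2(2))\otimes\rH_{i-1}(\GL_{2i-1};A_{2i-1})$ in $E_1^{i-1,1}$ when $n=2i+1$ is accurate.

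However, the step you flag as the main obstacle is a genuine gap, not just bookkeeping. You need $E_\infty^{i-1,1}=0$ at internal degree $2i+1$, and since the outgoing differentials $d_r\colon E_r^{i-1,1}\to E_r^{i+r-2,1-r}$ vanish for $r\geq 2$, this requires that $\ker\bigl(d_1\colon E_1^{i-1,1}\to E_1^{i-1,0}\bigr)$ be covered by incoming differentials from $E_r^{i-1-r+1,1+r}$, which involve the groups $\rH_{t_1}(\GL_{q+1};V_{q+1}(q+1))$ for $q\geq 2$. These groups are opaque: $V_{q+1}=\Tor_{q+1}^A(\bk,\bk)$ has no explicit presentation, and neither the incoming differentials nor a putative $\rH_0(\GL;A)$-module structure on the spectral sequence gives an obvious handle on them. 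The "propagation via module structure together with (a$'$) at level $i-1$" is asserted but not a mechanism: the fact that $\rH_{i-1}(\GL_{2i-1};A_{2i-1})$ is generated under multiplication by $\rH_0(\GL_1;A_1)$ does not by itself show that tensoring with $\rH_0(\GL_2;V_2(2))$ lands in the image of a differential, because the obstruction class lives in a Shapiro-induced piece over the subgroup $\GL_2\times\GL_{2i-1}$, not over $\GL_1\times\GL_1\times\GL_{2i-1}$, and the splitting required to invoke the $i=1$, $n=3$ case is not available. By contrast, the paper handles the analogous obstruction by proving (in its Lemma, Part (b)) that the bar differential $\rH_a(\GL;\ol{\cB}_{b+1}(A))\to\rH_a(\GL;\ol{\cB}_b(A))$ is \emph{surjective} in degree $2a+b$ for $a>0$, $b>0$, via a direct combinatorial ("type $j_0$") argument that crucially exploits the concrete form $A_+^{\otimes b}$ of the bar terms and the (skew) commutativity of $A$. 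That argument has no counterpart for the Koszul resolution, where the degree-$(q+1)$ syzygy objects $V_{q+1}$ replace $A_+^{\otimes b}$ and are not given by generators and relations. So while the degree-tracking framework of your proof is sound, the central cancellation it needs remains unproven and, as stated, the proposal does not constitute a complete proof.
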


We shall refer to (a) and (b) in the theorem above as hypotheses or initial conditions. 

\begin{remark} \label{signRep}
	As noted earlier, one can generalize our homological vanishing criterion for Koszul monoids replacing general linear groups with certain other families of groups. 	Our hypotheses $(a)$ and $(b)$ are likely necessary. In particular, they are necessary if one replaces general linear groups by symmetric groups and if one takes $A_n$ to be the sign representations. A result contributed to Pierre Vogel \cite[Proposition~B]{MR483534} gives explicit calculations that show that the homology does not vanish in a slope $\frac{1}{2}$ range.  We note that without Hypothesis $(b)$, the homology still vanishes in a slope $\frac{1}{3}$ range. This can be proven by adapting the proof of \autoref{thm:general-vanishing}. In particular, Hypothesis $(a)$ implies that the product map $\rH_0(\GL_1; A_1) \otimes \rH_1(\GL_3; A_3) \to \rH_1(\GL_4; A_4)$ is surjective which can be used in place of Hypothesis $(b)$.

%	The exterior algebra $A$ in $\prod_{n \ge 0} \Mod_{\bZ[S_n]}$ is Koszul and we have $A_n = \sgn_n$. For $\bk$ a field of characteristic $2$, Hypothesis $(a)$ fails. We have $\rH_i(S_n; \sgn_n) \cong \rH_i(S_n;\bk)$ which does not vanish and so conclusion $(b')$ does not hold.
	%Now consider the case $\bk$ is a field of characteristic $3$. It follows from the work of Vogel (see \cite[Proposition~B]{MR483534}) that the Hypothesis (b) in the theorem above (with $\GL_n$ replaced by $S_n$) is false. Moreover, it is known that $\rH_i(S_n; \sgn_n)$ only vanishes for $n \geq 3i+2$ \cite{MR483534}. This is in some sense all that can go wrong.
	%If $\bk$ is a field of characteristic greater than $3$, then Hypotheses $(a)$ and $(b)$ hold \cite[Proposition~B]{MR483534}. This gives a new proof of vanishing for $\rH_i(S_n; \sgn_n)$ in the range $n \ge 2i + 2$ after inverting $6$. These results can be used to show homological stability for alternating groups. 

	%	If $\bk$ is a field of characteristic 3. Moreover, it is clear that  Hypothesis (a) above (with $\GL_n$ replaced by $S_n$) is false in this case if and only if $\bk$ is a field of characteristic 2. Thus our theorem states that $n \ge 2i + 2$ is the vanishing range for the homology of sign representation away from characteristics $2$ and $3$. 
\end{remark}

We use  the setup in \autoref{sec:spectral-sequence} with $\cG = \GL$ to prove the theorem. In this case,  $\Mod_{\VB/\GL}$ is equivalent to $\Mod_{\bk}^{\ge 0}$ -- the category of non-negatively graded $\bk$-modules.

\begin{proposition}
	\label{prop:bar-spectral-sequence}
There is a spectral sequence of non-negatively graded $\bk$-modules with \[E^1_{ab} =\rH_a(\GL; \ol{\cB}_{b}(A))   \implies \rL_{a+b}(\rH_0(\GL; -) \circ (\bk\otimes_{A} -) )(\bk). \] For $A$ Koszul, we have that $E^{\infty}_{ab}$ is supported in degrees $\le (a+b)$.
\end{proposition}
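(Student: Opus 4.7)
The proof plan is to realize the desired spectral sequence as one of the two spectral sequences of a hyperhomology double complex for $H \coloneq \rH_0(\GL; -)$ applied to the chain complex $\ol{\cB}_{\ast}(A)$, and then to extract the Koszul bound from the other spectral sequence of the same double complex.

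Concretely, let $Q_{\ast} \coloneq \cB(\bk, A, A)$ be the one-sided bar resolution of $\bk$ by free $A$-modules, so that $\bk \otimes_A Q_{\ast} = \ol{\cB}_{\ast}(A)$ tautologically as a chain complex of $\VB$-modules. Choose a $\VB$-projective Cartan--Eilenberg resolution $R_{\ast, \ast} \to \ol{\cB}_{\ast}(A)$, where the first index records resolution degree and the second records bar degree. The total complex $\rH_0(\GL; \mathrm{Tot}(R_{\ast, \ast}))$ computes the hyperhomology of $H$ applied to $\ol{\cB}_{\ast}(A)$. Filtering by bar degree yields the spectral sequence claimed in the statement,
\[E^1_{ab} = \rL_a H(\ol{\cB}_b(A)) = \rH_a(\GL; \ol{\cB}_b(A)).\]
To identify the abutment with $\rL_{a+b} F(\bk)$ for $F = H \circ (\bk \otimes_A -)$, I would invoke the fact recalled in the proof of \autoref{thm:spectral sequence} that every projective $A$-module has the form $A \otimes V$ for some projective $\VB$-module $V$, so that $\bk \otimes_A (A \otimes V) = V$ is itself projective, hence $H$-acyclic. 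This is precisely the Grothendieck hypothesis ensuring that the hyperhomology of $H$ applied to $\ol{\cB}_{\ast}(A)$ coincides with $\rL_{\ast} F(\bk)$.

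For the Koszul bound, I would run the other spectral sequence of the same double complex, obtained by filtering by resolution degree. Its $E^2$-page is the standard Grothendieck composite
\[E'^2_{ab} = \rL_a H(\rH_b(\ol{\cB}_{\ast}(A))) = \rH_a(\GL; \Tor^A_b(\bk, \bk)),\]
with the same abutment $\rL_{a+b} F(\bk)$. When $A$ is Koszul, $\Tor^A_b(\bk, \bk)$ is supported in internal degree exactly $b$, and since $\rH_a(\GL; -)$ is computed levelwise it preserves internal degree; hence $E'^2_{ab}$ is supported in internal degree $b$. Therefore $\rL_n F(\bk)$ is supported in internal degrees $\le n$ for every $n$, and because each $E^{\infty}_{ab}$ of the first spectral sequence is a subquotient of a filtration piece of $\rL_{a+b} F(\bk)$, it too is supported in internal degrees $\le a + b$.

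The main obstacle is purely formal bookkeeping: one must verify that Cartan--Eilenberg resolutions exist in $\Mod_{\VB}$ (immediate since this category has enough projectives) and that both spectral sequences converge. Convergence reduces to the standard bounded first-quadrant case upon restricting to a fixed internal degree $n$, where $\ol{\cB}^n_{\ast}(A)$ is supported in homological degrees $0 \le b \le n$, so the resulting spectral sequences are finite.
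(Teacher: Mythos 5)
Your proposal is correct and takes essentially the same route as the paper: both construct the claimed $E^1$-spectral sequence as the hyperhomology spectral sequence of $\rH_0(\GL;-)$ applied to the bar complex $\ol{\cB}_{\ast}(A)$ representing $\bk\otimes_A^{\rL}\bk$, identify the abutment via the observation that $\bk\otimes_A-$ carries projective $A$-modules to projective (hence $\rH_0(\GL;-)$-acyclic) $\VB$-modules, and then read off the degree bound from the companion Grothendieck spectral sequence $\rH_a(\GL;\Tor^A_b(\bk,\bk))$ together with Koszulness of $A$. The only cosmetic difference is that the paper phrases the argument using total derived functors and cites its general spectral sequence (\autoref{thm:spectral sequence}), whereas you unwind the Cartan--Eilenberg double complex explicitly; the content is identical.
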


\begin{proof}

We have the following equality of the total derived functors \[\rL(\rH_0(\GL; -) \circ (\bk\otimes_{A} -) )(\bk) = (\rL\rH_0(\GL; -) \circ (\bk\otimes_{A}^{\rL} -))(\bk) = \rL\rH_0(\GL; \bk\otimes_{A}^{\rL} \bk)  \] where the first equality holds because the hypotheses for the Grothendieck spectral sequence are satisfied. This involves noting that the functor $\bk \otimes_A - \colon \Mod_A \to \Mod_{\VB}$ preserves projectives -- any projective $A$-module is of the form $A \otimes V$ where $V$ is a projective $\VB$-module. Since the bar resolution for $A$ is acyclic with respect to the functor $\bk \otimes_{A} -$, the total derived functor $\bk\otimes_{A}^{\rL} \bk$ is represented by the complex $\ol{\cB}_{\ast}(A)$ from \autoref{intro-koszul}. Thus there exists a spectral sequence  given by \[E^1_{ab} =\rH_a(\GL; \ol{\cB}_{b}(A))   \implies \rL_{a+b}(\rH_0(\GL; -) \circ (\bk\otimes_{A} -) )(\bk). \]

For the second statement, consider the Grothendieck spectral sequence \['E^2_{ab} =\rH_a(\GL; \Tor^{A}_b(\bk,\bk))   \implies \rL_{a+b}(\rH_0(\GL; -) \circ (\bk\otimes_{A} -) )(\bk)  \] as in the proof of \autoref{thm:spectral sequence}. Since $A$ is Koszul, we see that $'E^{2}_{ab}$ is supported in degrees $\le (a+b)$. This shows that $E^{\infty}_{ab}$ is supported in degrees $\le (a+b)$.
\end{proof}

% We also note here that the second page of this spectral sequence is not the Grothendieck spectral sequence mentioned above. Note, we used the same symbol $E^r_{ab}$ for this spectral sequence and the Grothendieck spectral sequence. 
%but we shall not use the Grothendieck spectral sequence anymore.)

%Richard Hepworth used a different technique to calculate the total derived functor  $\rL(\rH_0(\GL; -) \circ (\bk\otimes_{A} -) )(\bk)$, which gives nontrivial information. We now describe this technique.  We have the following equality of the total derived functors \[\rL(\rH_0(\GL; -) \circ (\bk\otimes_{A} -) )(\bk) = (\rL\rH_0(\GL; -) \circ (\bk\otimes_{A}^{\rL} -))(\bk) = \rL\rH_0(\GL; \bk\otimes_{A}^{\rL} \bk)  \] where the first equality holds because the hypotheses for the Grothendieck spectral sequence are satisfied (involved functors preserve projectives; see \autoref{sec:spectral-sequence}). Since the bar resolution for $A$ is acyclic with respect to the functor $\bk \otimes_{A} -$, the total derived functor $\bk\otimes_{A}^{\rL} \bk$ is represented by the complex $\ol{\cB}^n_{\ast}(A)$ from \autoref{intro-koszul}. Thus there exists a spectral sequence  given by \[E^1_{ab} =\rH_a(\GL; \ol{\cB}^n_{b}(A))   \implies \rL_{a+b}(\rH_0(\GL; -) \circ (\bk\otimes_{A} -) )(\bk). \] This is the spectral sequence that Hepworth used for the Koszul algebra built out of trivial representations; see \cite{HepEdge}.

\begin{lemma}
	Assume that $\bk$ is a field. 
\begin{enumerate}
	\item 	If \autoref{thm:general-vanishing} holds for $i \le d$, then for any  $a \le d$ and $b >0$, we have \[\deg \rH_a(\GL; \ol{\cB}_{b}(A)) \le 2 a +b.\] 
	
	\item 	If \autoref{thm:general-vanishing} holds for $i \le d$, then for any  $0 < a \le d$ and $b >0$, the differential \[ \rH_a(\GL; \ol{\cB}_{b+1}(A)) \to \rH_a(\GL; \ol{\cB}_{b}(A))\] is surjective in degree $2 a + b$.

	\item If $b =0$, then we have $\deg \rH_a(\GL; \ol{\cB}_{b}(A)) \le 0$.
\end{enumerate}	
\end{lemma}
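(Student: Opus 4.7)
The plan is to use Shapiro's lemma and the K\"unneth formula (both valid because $\bk$ is a field) to break $\rH_a(\GL_n;\ol{\cB}_b(A)(n))$ into tensor products of individual homology groups $\rH_{a_i}(\GL_{n_i};A_{n_i})$, and then feed in the inductive hypothesis.

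Part (c) is immediate: $\ol{\cB}_0(A)=\bk$ is supported in internal degree $0$. For part (a), combine the Ind-decomposition $\ol{\cB}_b(A)(n)=\bigoplus \Ind_{\prod\GL_{n_i}}^{\GL_n}(A_{n_1}\otimes\cdots\otimes A_{n_b})$ with Shapiro and K\"unneth to obtain a natural isomorphism
\[
\rH_a(\GL_n;\ol{\cB}_b(A)(n)) \;\cong\; \bigoplus_{\substack{a_1+\cdots+a_b=a\\ n_1+\cdots+n_b=n,\,n_i\ge 1}} \bigotimes_{i=1}^b \rH_{a_i}(\GL_{n_i};A_{n_i}).
\]
Since each $a_i\le a\le d$, the inductive hypothesis \autoref{thm:general-vanishing}~(b$'$) forces $\rH_{a_i}(\GL_{n_i};A_{n_i})=0$ whenever $n_i\ge 2a_i+2$, so nonzero summands satisfy $n_i\le 2a_i+1$, yielding $n\le 2a+b$.

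For part (b), in top internal degree $n=2a+b$ the same bound forces $n_i=2a_i+1$ for every $i$, so it suffices to show that each pure tensor $x=x_1\otimes\cdots\otimes x_b$ in such a K\"unneth summand lies in the image of the bar differential $\partial$. Because $a>0$, the pattern $(a_1,\ldots,a_b)$ has a smallest index $i_0$ with $a_{i_0}>0$, and I argue by downward induction on $i_0$. Applying \autoref{thm:general-vanishing}~(a$'$) to $a_{i_0}\le d$, I write $x_{i_0}=\sum_k y_k\cdot z_k$ with $y_k\in\rH_0(\GL_1;A_1)$ and $z_k\in\rH_{a_{i_0}}(\GL_{2a_{i_0}};A_{2a_{i_0}})$, and lift $x$ to
\[
\tilde x=\sum_k[\,x_1\,|\,\cdots\,|\,x_{i_0-1}\,|\,y_k\,|\,z_k\,|\,x_{i_0+1}\,|\,\cdots\,|\,x_b\,]\in\rH_a(\GL;\ol{\cB}_{b+1}(A)),
\]
still in internal degree $2a+b$. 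Reading off the terms of $\partial\tilde x$ under the K\"unneth decomposition: the $j<i_0$ terms multiply two classes in $\rH_0(\GL_1;A_1)$ and therefore factor through $\rH_0(\GL_2;A_2)=0$, which vanishes by hypothesis~(a) of \autoref{thm:general-vanishing}; the $j=i_0$ term restores $y_k z_k=x_{i_0}$ and contributes $\pm x$; and for each $j>i_0$ the class $y_k$ is untouched at position $i_0$, so the resulting K\"unneth summand has $a$-value $0$ at position $i_0$ and first-nonzero $a$-position equal to $i_0+1$. By downward induction those "late" terms already lie in $\mathrm{im}(\partial)$, so $\pm x$ (hence $x$) does too; the base case $i_0=b$ has no $j>i_0$ terms and one gets $\partial\tilde x=\pm x$ on the nose.

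The main obstacle is the indexing bookkeeping in the last step: one must verify that each $j>i_0$ term of $\partial\tilde x$ really lands in a K\"unneth summand whose first-nonzero $a$-position is strictly larger than $i_0$, so that the downward induction closes. Once this is tracked using the explicit shape of $\tilde x$, the remainder is routine.
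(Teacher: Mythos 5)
Your argument is correct and is, in essence, the same as the paper's: K\"unneth plus pigeonhole for part (a), and for part (b) a lift that inserts an $\rH_0(\GL_1;A_1)$ factor via conclusion (a'), using the observation that merging two factors of rank $2a_j+1$ produces rank $2a+2$ and hence vanishes by (b'), while Hypothesis (a) kills merges of two rank-one factors. The only cosmetic difference is that you induct on the smallest index $i_0$ with $a_{i_0}>0$, whereas the paper inducts on the largest such index (its ``type''); both orderings close the same reverse induction.
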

\begin{proof}
Proof of Part (a): By definition, we have \[\rH_a(\GL; \ol{\cB}_{b}(A))  = \rH_a(\GL; A_{+}^{\otimes b}). \] If $\bk$ is a field, then by the K\"unneth formula, $\rH_a(\GL; A_{+}^{\otimes b})$ in degree $n$ is a direct sum of $\bk$-modules of the form \[ \bigotimes_{j = 1}^b \rH_{a_j}(\GL_{n_j}; A_{n_j})  \] where $\sum_{j=1}^b a_j = a$, $\sum_{j=1}^b n_j = n$ and $n_j >0$ for all $j$. If $n > 2 a + b$, then by the pigeonhole principle we have $n_j \ge 2 a_j +2$ for some $j$. Thus we have \[ \bigotimes_{j = 1}^b \rH_{a_j}(\GL_{n_j}; A_{n_j}) =0, \] completing the proof of Part (a).

Proof of Part (b): By the pigeonhole principle, $\rH_a(\GL; A_{+}^{\otimes b})$ in degree $ 2 a + b$ is a direct sum of $\bk$-modules of the form \[ \bigotimes_{j = 1}^b \rH_{a_j}(\GL_{n_j}; A_{n_j})  \] where $\sum_{j=1}^b a_j = a$, $n_j = 2 a_j + 1$ for all $j$. We say that such a direct summand is of {\bf type $j_0$} if $j_0$ is the largest such that $n_{j_0}  \ge 2$. This must exist as $a > 0$.  For a type $j_0$ direct summand as above, define  $a'_j$ and $n'_j$ for $1 \le j \le b+1$ by \begin{align*}
(a'_j, n'_j)  = \begin{cases}
(a_j, n_j) & \mbox{if } j < j_0,\\
(0, 1) & \mbox{if } j = j_0,\\
(a_{j_0}, n_{j_0} - 1) & \mbox{if } j = j_0 + 1,\\
(a_{j-1}, n_{j-1}) & \mbox{if } j > j_0 + 1.
\end{cases}
\end{align*} Then \[ \bigotimes_{j = 1}^{b+1} \rH_{a'_j}(\GL_{n'_j}; A_{n'_j})  \] is a direct summand of $\rH_a(\GL; \ol{\cB}_{b+1}(A)) $ and the differential is given by the alternating sum of multiplying two of the consecutive factors. Since the theorem holds for $a \le d$, we have the following: \begin{itemize}
	\item Multiplication of any of these consecutive factors besides $j_0$-th and $(j_0 + 1)$-th factors, or $(j_0 + 1)$-th and $(j_0 + 2)$-th factors is zero. 
	\item Multiplying $j_0$-th and $(j_0 + 1)$-th factors yields a surjective map \[ \bigotimes_{j = 1}^{b+1} \rH_{a'_j}(\GL_{n'_j}; A_{n'_j})  \to \bigotimes_{j = 1}^b \rH_{a_j}(\GL_{n_j}; A_{n_j}).\]
	\item Multiplying $(j_0 + 1)$-th and $(j_0 + 2)$-th factors takes  \[ \bigotimes_{j = 1}^{b+1} \rH_{a'_j}(\GL_{n'_j}; A_{n'_j}) \]  inside a direct summand of type $> j_0$.
\end{itemize} Let $\mathrm{Type}_{\ge j_0}$ denote the direct sum of summands of type $j$ for $j \ge j_0$. Our argument above shows, by an easy reverse induction on $j_0$,  that the image of the differential \[ \rH_a(\GL; \ol{\cB}_{b+1}(A)) \to \rH_a(\GL; \ol{\cB}_{b}(A)),\] in degree $2 a +b$,  contains $\mathrm{Type}_{\ge j_0}$ for any $j_0$. Thus the differential is surjective, completing the proof of Part (b).

Proof of Part (c): Part (c) is immediate from $A_{+}^{\otimes 0} = \bk$.
\end{proof}

We now use this lemma to prove our homological vanishing criterion for Koszul monoids.

\begin{proof}[Proof of \autoref{thm:general-vanishing}] 
The base case $i = -1$ is trivial. Assume now that $i \ge 0$ and that the theorem holds for $a < i$. Consider the spectral sequence \[E^1_{ab} =\rH_a(\GL; \ol{\cB}_{b}(A))   \implies \rL_{a+b}(\rH_0(\GL; -) \circ (\bk\otimes_{A} -) )(\bk) \] as in \autoref{prop:bar-spectral-sequence}, and assume that $n \ge 2i + 1$.   In the following two paragraphs, we analyze this spectral sequence of graded $\bk$-modules on the diagonals $a + b = i+1$ and $a+b = i+2$ in degree $n$.

Assume that $a + b = i+1$.  If $a < i$, then by induction and Part (a) of the previous lemma, we have \[\deg \rH_a(\GL; \ol{\cB}_{b}(A)) \le 2 a + b = 2(i+1) - b \le 2i < n.  \] Moreover if $a = i+1$, then by Part (c) of the previous lemma, we have $\deg \rH_a(\GL; \ol{\cB}_{b}(A)) < n$. In particular, the only nonzero entry in degree $n$ on the diagonal $a + b = i+1$ comes from $E^1_{i,1}$.

Assume that $a + b = i+2$. If  $a < i-1$, then by Part (a) of the previous lemma, we have \[\deg \rH_a(\GL; \ol{\cB}_{b}(A)) \le 2 a + b = 2(i+2) - b = 2i +4 - b < n.  \] If $a = i-1$ and $n > 2i+1$, then by Part (a) of the previous lemma, the differential \[ \rH_{i-1}(\GL; \ol{\cB}_{4}(A)) \to \rH_{i-1}(\GL; \ol{\cB}_{3}(A))\] is surjective in degree $n$ as the target vanishes. If $a = i-1 >0$ and $n = 2i+1$, then by Part (b) of the previous lemma, the differential \[ \rH_{i-1}(\GL; \ol{\cB}_{4}(A)) \to \rH_{i-1}(\GL; \ol{\cB}_{3}(A))\] is surjective in degree $n$. In the remaining case, $a = i-1 = 0$, $n = 2i + 1$, and the differential \[ \rH_{i-1}(\GL; \ol{\cB}_{4}(A)) \to \rH_{i-1}(\GL; \ol{\cB}_{3}(A))\] may not be surjective in degree $n$, and that is why we need an additional hypothesis, namely Hypothesis (b), in the case $i=1$.

We now use the paragraphs above to show that the differential \[ E^1_{i, 2} =  \rH_i(\GL; A_{+}^{\otimes 2}) \to \rH_i(\GL; A_{+}) = E^1_{i, 1} \] is surjective in degree $n$. The case $i = 0 $ and $n = 1$ follows from Hypothesis (a). In the remaining cases, we have $n \ge i + 2$. Since $E^{\infty}_{ab}$ is supported in degrees $\le (a+b)$, we see that $E^{\infty}_{ab}$ vanishes in degree $n$ on the line $a + b = i+1$. Since $n \ge i+2$, some differential at some page must kill the degree $n$ piece of the entry $E^1_{i,1}$. By the paragraph above, we have $E^2_{ab} = 0$ in degree $n$ for $a + b = i + 2$ for $a \le i-1$  except when $n = 2 i + 1 = i+2$. In this exceptional case, our claim follows from Hypothesis (b). Away from this exceptional case, we have $E^2_{ab} = 0$ in degree $n$ for $a + b = i + 2$ and $a \le i-1$. Since $E^{\infty}_{i,1} = 0$, we must have that \[ E^1_{i, 2} =  \rH_i(\GL; A_{+}^{\otimes 2}) \to \rH_i(\GL; A_{+}) = E^1_{i, 1} \] is surjective in degree $n$, proving our claim.

By induction, the theorem holds for $a < i$. Hence  the degree $n$ piece of $\rH_i(\GL; A_{+}^{\otimes 2})$ is given by \[\rH_i(\GL_{n-1}; A_{n-1}) \otimes \rH_0(\GL_{1}; A_{1}) \bigoplus   \rH_0(\GL_{1}; A_{1}) \otimes \rH_i(\GL_{n-1}; A_{n-1}). \] By the previous paragraph and the (skew) commutativity of $A$, we conclude that \[\rH_0(\GL_{1}; A_{1}) \otimes \rH_i(\GL_{n-1}; A_{n-1}) \to  \rH_i(\GL_{n}; A_{n}) \] is surjective for $n \ge 2 i+1$. This proves Part (a').

For Part (b'), note that \[  \rH_0(\GL_{1}; A_{1}) \otimes \rH_0(\GL_{1}; A_{1}) \otimes \rH_i(\GL_{n-2}; A_{n-2}) \to  \rH_i(\GL_{n}; A_{n}) \] is surjective by Part (a'). But, by associativity of multiplication, this map factors through \[ \rH_0(\GL_{2}; A_{2}) \otimes  \rH_i(\GL_{n-2}; A_{n-2}) \to  \rH_i(\GL_{n}; A_{n}), \] which vanishes by Hypothesis (a). This completes the proof.
\end{proof}

We now show that the initial conditions in \autoref{thm:general-vanishing} are satisfied when $A = \St$. Let  $T = \rT(\reg_1)$ be the algebra as in \autoref{prop:tensor-algebra-is-Koszul} for $R = K$. Then $\rH_0(\GL; T)$ is naturally isomorphic to the polynomial ring $\bk[t]$ which is a monoid in $\Mod_{\bk}^{\ge 0}$. Thus $\rH_0(\GL; -)$ induces a functor \[\rH^T_0(\GL; -) \colon \Mod_{T} \to \Mod_{\bk[t]}. \]  Note that there are  natural surjections $T \to \St \to \bk$ of monoids, and so $\St$ is a module over $T$. Note here that  \autoref{prop:map-of-Koszul-algebras} is not applicable as $T$ is not (skew) commutative. Now we apply the Grothendieck spectral sequence from the proof of \autoref{thm:spectral sequence} to the $T$-module $\St$ to obtain: \[E^2_{ab} = \Tor^{\bk[t]}_a(\bk, \rH^T_b(\GL; \St))  \implies \rL_{a+b}((\bk\otimes_{\bk[t]} -) \circ \rH^T_0(\GL; -) )(\St). \]  A version of \autoref{lem:char0-is-good} for $T$, implies that $\rH^{T}_{q}(\GL; -)$ is isomorphic to $\rH_{q}(\GL; -)$ as a graded $\bk$-module and so we shall now drop the superscript $T$ from our notation. Note that $\rH_{q}(\GL; -)$ has the structure of a $\bk[t]$-module. The spectral sequence above simplifies to \[E^2_{ab} = \Tor^{\bk[t]}_a(\bk, \rH_b(\GL; \St))  \implies \rL_{a+b}((\bk\otimes_{\bk[t]} -) \circ \rH_0(\GL; -) )(\St). \] 

\begin{lemma}
	\label{lem:L1-reduction}
If $\rL_{1}((\bk\otimes_{\bk[t]} -) \circ \rH_0(\GL; -) )(\St)$ vanishes in degree $d$, then the product map \[\rH_0(\GL_1; \St_1) \otimes \rH_1(\GL_{d-1}; \St_{d-1}) \to \rH_1(\GL_d; \St_d)\] is surjective.
\end{lemma}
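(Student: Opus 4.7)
The plan is to read the statement directly off the spectral sequence by exploiting that $\bk[t]$ is a polynomial ring in a single variable, which gives $\bk$ projective dimension at most one as a $\bk[t]$-module.

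First, I would identify the product map with multiplication by the canonical generator of $\bk[t]$. Since the Tits building of a one-dimensional vector space is empty, $\St_1 \cong \bk$ with the trivial $\GL_1$-action, so $\rH_0(\GL_1; \St_1) \cong \bk$. Under the surjection $T \to \St$, the generator of $\rH_0(\GL_1; \St_1)$ is the image of the generator $t$ of $\rH_0(\GL_1; T_1) = \bk$. Consequently, the product map $\rH_0(\GL_1; \St_1) \otimes \rH_1(\GL_{d-1}; \St_{d-1}) \to \rH_1(\GL_d; \St_d)$ coincides, after identifying the left factor with $\bk$, with the degree $(d-1) \to d$ component of the multiplication-by-$t$ map on the $\bk[t]$-module $\rH_1(\GL; \St)$. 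Its surjectivity in degree $d$ is therefore equivalent to the vanishing of $E^2_{0,1} = \bk \otimes_{\bk[t]} \rH_1(\GL; \St)$ in degree $d$.

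Next, I would extract this vanishing from the spectral sequence. The explicit free resolution
\[
0 \to \bk[t] \xrightarrow{\,\cdot t\,} \bk[t] \to \bk \to 0
\]
shows that $\Tor^{\bk[t]}_a(\bk, -) = 0$ for $a \geq 2$, so the $E^2$-page is concentrated in the columns $a = 0$ and $a = 1$. All higher differentials $d_r$ have horizontal degree at least $2$, so there is no room for nontrivial differentials into or out of $E^r_{0,1}$ for $r \ge 2$; this forces $E^\infty_{0,1} = E^2_{0,1}$. The assumed vanishing of $\rL_1((\bk \otimes_{\bk[t]} -) \circ \rH_0(\GL; -))(\St)$ in degree $d$ means the abutment is zero in total degree $1$ and internal degree $d$, hence every $E^\infty_{a,b}$ with $a + b = 1$ vanishes there; in particular $E^\infty_{0,1}$ does, yielding the desired vanishing of $E^2_{0,1}$ in degree $d$.

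The only genuine bookkeeping is verifying the identification of the product map with multiplication by $t$, but this is a direct consequence of the naturality of $\rH_0(\GL; -)$ together with the fact that, on the degree-one pieces, the monoid map $T_1 \to \St_1$ becomes the identity on $\bk$ after passing to $\GL_1$-coinvariants. Once this is in place, the argument is essentially a one-line read-off from the spectral sequence, and I do not anticipate any substantive obstacle.
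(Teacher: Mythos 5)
Your proposal is correct and follows essentially the same route as the paper: invoke the Grothendieck spectral sequence $E^2_{ab} = \Tor^{\bk[t]}_a(\bk, \rH_b(\GL; \St))$, observe that $\bk[t]$ has global dimension one so the $E^2$-page is concentrated in columns $a \in \{0,1\}$ and hence degenerates at $E^2$, read off that vanishing of the abutment in total degree $1$ and internal degree $d$ forces $E^2_{0,1}$ to vanish in degree $d$, and translate that into surjectivity of multiplication by $t$ from degree $d-1$ to degree $d$. One caveat: the step you characterize as ``the only genuine bookkeeping'' — identifying the abstract $\bk[t]$-action on $\rH_1(\GL;\St)$ (which a priori is defined through the derived functors $\rH^T_b(\GL;-)$ landing in $\Mod_{\bk[t]}$) with the concrete K\"unneth--Shapiro product map $\rH_0(\GL_1;\St_1)\otimes\rH_1(\GL_{d-1};\St_{d-1})\to\rH_1(\GL_d;\St_d)$ — is not merely naturality but is exactly the content of Proposition~\ref{prop:factoring-through}, applied to the monoid map $T\to\St$. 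That proposition in turn relies on the $\rH_*(\GL;-)$-acyclicity of projective $T$-modules (a $T$-version of Lemma~\ref{lem:char0-is-good}). So you have correctly flagged the one nontrivial ingredient; you just should not dismiss it as a one-line naturality check, since it is the reason the paper isolates Proposition~\ref{prop:factoring-through} as a standalone result.
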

\begin{proof}
	Since $\Tor^{\bk[t]}_i = 0$ for $i > 1$, we have $E^2_{ab} = E^{\infty}_{ab}$. Thus, if $\rL_{1}((\bk\otimes_{\bk[t]} -) \circ \rH_0(\GL; -) )(\St)$ vanishes in degree $d$, then $E^2_{0,1}$ vanishes in degree $d$. Since $\bk[t] = \rH_0(\GL; T)$, this is equivalent to the surjectivity of the product map \[\rH_0(\GL_1; T_1) \otimes \rH_1(\GL_{d-1}; \St_{d-1}) \to \rH_1(\GL_d; \St_d). \] The assertion now follows from \autoref{prop:factoring-through} Part (b) applied to $A = T$ and $B = \St$.
\end{proof}

To calculate $\rL_{1}((\bk\otimes_{\bk[t]} -) \circ \rH_0(\GL; -) )(\St)$, we need a resolution of $\St$ by projective $T$-modules. The Lee--Szczarba resolution $\LS_{\ast}$ is a such a resolution. 

Let $X$ be an object in $\VB = \VB_K$ of rank $n$. Let $\LS_i$ be the  $\VB$-module defined as follows.  The module $\LS_i(X)$ is a $\bk$-module generated by symbols of the form $[v_1, \ldots, v_{n+i}]$ where the $v_j$ are distinct nonzero elements of $X$ and where we impose the following relation: \[ [v_1, \ldots, v_{n+i}] = 0\text{ if  }v_1, \ldots, v_n \text{ do not span }X. \] There is an equivariant map \[T(X_1) \otimes \LS_i(X_2)  \to \LS_i(X) \] given by \[ [u_1, \ldots, u_{k}] [v_1, \ldots, v_{n-k + i}] = [u_1, \ldots, u_k, v_1, \ldots, v_{n-k+i}]. \] This gives $\LS_i$ the structure of a $T$-module. It is easy to see that $\LS_i$ is a projective $T$-module. There is a natural differential $\LS_{i} \to \LS_{i-1}$ given by the alternating sum of forgetting vectors. By \cite[Theorem~3.1]{LS}, $\LS_{\ast} \to \St$ is a $T$-projective resolution. In particular, we have \[\rL_{i}((\bk\otimes_{\bk[t]} -) \circ \rH_0(\GL; -) )(\St) = \rH_i(\bk\otimes_{\bk[t]} \rH_0(\GL; \LS_{\ast})). \] Since $\deg \bk\otimes_{\bk[t]} \rH_0(\GL; \LS_{0}) = 0$, any element of $\bk\otimes_{\bk[t]} \rH_0(\GL; \LS_1)$ in positive degrees is a cycle. Thus to show that $\rL_{1}((\bk\otimes_{\bk[t]} -) \circ \rH_0(\GL; -) )(\St)$ vanishes in degree $3$, it suffices to show that the following map is surjective in degree 3: \[ d_2 \colon \bk\otimes_{\bk[t]} \rH_0(\GL; \LS_2) \to \bk\otimes_{\bk[t]} \rH_0(\GL; \LS_1). \] Recall that a minimal linearly dependent subset of a vector space is called a {\bf circuit}. 

\begin{lemma} 
	\label{lem:L1-vanishing}
	$\rL_{1}((\bk\otimes_{\bk[t]} -) \circ \rH_0(\GL; -) )(\St)$ vanishes in degree $3$.
\end{lemma}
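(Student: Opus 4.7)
The plan is to verify that the differential $d_2\colon \bk\otimes_{\bk[t]} \rH_0(\GL; \LS_2)(K^3) \to \bk\otimes_{\bk[t]} \rH_0(\GL; \LS_1)(K^3)$ is surjective, which by the discussion immediately preceding the lemma is equivalent to the claim.

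First I would identify the target explicitly. Every class in $\rH_0(\GL; \LS_1)(K^3)$ admits, after using the $\GL_3$-action, a representative of the form $[v_1, v_2, v_3, v_4]$ with $(v_1, v_2, v_3)$ an ordered basis of $K^3$ and $v_4$ a distinct nonzero vector. Direct inspection of the $T$-action via splittings $K^3 = X_1 \oplus X_2$ shows that, after taking $\GL$-coinvariants, the image of $T_+ \otimes \LS_1 \to \LS_1$ is spanned by those classes with $v_4 \in \langle v_2, v_3\rangle$. Thus surjectivity of $d_2$ amounts to exhibiting, for each class $[v_1, v_2, v_3, v_4]$ with $v_4 = \alpha_1 v_1 + \alpha_2 v_2 + \alpha_3 v_3$ satisfying $\alpha_1 \neq 0$, an explicit preimage in $\bk\otimes_{\bk[t]} \rH_0(\GL; \LS_2)(K^3)$.

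Next I would consider $5$-tuples $[v_1, v_2, v_3, v_4, w] \in \LS_2(K^3)$ for auxiliary vectors $w$. The boundary $d_2[v_1, v_2, v_3, v_4, w]$ is a five-term alternating sum whose last summand is the target $[v_1, v_2, v_3, v_4]$, while the other four summands each have $w$ in the fourth slot and omit one of $v_1, v_2, v_3, v_4$. By the analysis in the previous paragraph, such a summand is killed in the quotient whenever its first three entries fail to span $K^3$ (so it vanishes already in $\LS_1$), or whenever $w$ lies in the linear span of its second and third entries (so it becomes trivial in $\bk\otimes_{\bk[t]}\rH_0(\GL; \LS_1)$). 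Circuits, i.e., minimal linearly dependent subsets, are the natural mechanism for enforcing these conditions: choosing $w$ so that various subsets $\{v_i, v_j, w\}$ or $\{v_i, w\}$ are circuits forces the appropriate triviality.

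The main obstacle is that no single choice of $w$ can simultaneously trivialize all four nontarget summands, because the resulting linear constraints on $w$ are mutually incompatible in $K^3$. The plan is therefore to sum over a carefully chosen finite family of $5$-tuples with auxiliary vectors $w$ lying, for example, on the lines $\langle v_i, v_4\rangle$, so that the remaining nontarget terms either cancel pairwise or reduce to classes already trivial in the quotient. The hardest step will be the combinatorial bookkeeping of this cancellation; once carried out, surjectivity of $d_2$ in degree $3$ follows, completing the proof.
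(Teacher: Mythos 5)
Your reduction to surjectivity of $d_2 \colon \bk\otimes_{\bk[t]}\rH_0(\GL;\LS_2) \to \bk\otimes_{\bk[t]}\rH_0(\GL;\LS_1)$ in degree $3$ is correct, your identification of which classes $[v_1,v_2,v_3,v_4]$ are trivial in the target (those with the first entry splittable off, i.e.\ $v_4 \in \langle v_2,v_3\rangle$ after normalizing $v_1,v_2,v_3$ to a basis) is essentially right, and your observation that no single auxiliary vector $w$ can simultaneously kill all four non-target boundary summands is a genuine and correct diagnosis of the difficulty. But the proposal stops precisely where the proof has to begin: "the hardest step will be the combinatorial bookkeeping of this cancellation; once carried out\ldots" is a promissory note, not an argument. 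Nothing is actually produced whose boundary hits $[v_1,v_2,v_3,v_4]$ modulo trivial terms, so the lemma is not proved.

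Beyond incompleteness, there is a structural obstruction to the specific plan as stated. You restrict to $5$-tuples of the form $[v_1,v_2,v_3,v_4,w]$ with the auxiliary vector $w$ always in the last slot. Because $\LS_i$ has no antisymmetry relation, the position of $w$ genuinely matters: the four non-target boundary terms then all have $w$ in the fourth slot, and triviality of $[v_i,v_j,v_k,w]$ forces $w$ into one of three pairwise-transverse planes whose intersection is $0$ when $\{v_1,\dots,v_4\}$ is a circuit. Summing several such $5$-tuples does not obviously help: the target term $[v_1,v_2,v_3,v_4]$ appears once per summand, so any signed combination producing exactly one copy leaves the non-target terms to cancel among themselves, and you give no mechanism for that. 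The paper's proof (its Cases (b), (c), (d)) works precisely because it places the auxiliary vector in other slots — e.g.\ $[v_1,v_2,v_3+v_4,v_3,v_4]$ for the circuit case, with the new vector in position $3$, so that the last two boundary terms are interchanged by an explicit $\GL_3$ involution and cancel in coinvariants, while the first two vanish because their last three entries are dependent — and then handles the remaining configurations by inserting the auxiliary vector in position $1$ and inducting through the cases. If you want your version to go through, you would at minimum need to allow $w$ in arbitrary positions (so that degenerate triples in positions $2,3,4$ and $\GL$-conjugate cancellations become available) and then carry out the case analysis rather than defer it.
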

\begin{proof} 
		Let $v_1, v_2, v_3, v_4$ be nonzero vectors in $K^3$. By the previous paragraph, it suffices to show that the image of $[v_1, v_2, v_3, v_4]$ in $\coker d_2$ vanishes. We do this case-by-case as follows.

\begin{enumerate}
	\item[{\bf Case (a)}.] $\{v_2, v_3, v_4 \}$ is not a basis of $K^3$. 
	
	We can assume without loss of generality that $\{v_1, v_2, v_3, v_4\}$ spans $K^3$. The hypothesis implies that  $v_1$ does not lie in the span of $\{v_2, v_3, v_4 \}$. Thus we have \[[v_1, v_2, v_3, v_4]  = [v_1][v_2, v_3, v_4] \in T_{+} \LS_1,\] and so its image  in $\rH_0(\GL; \LS_1)$ is contained in $\bk[t]_{+}\rH_0(\GL; \LS_1)$. The assertion follows from this.
	
	\item[{\bf Case (b)}.] $\{v_1, v_2, v_3, v_4\}$ is a circuit. 
	
	Note that we have \begin{align*}
	d_2([v_1,v_2,v_3 + v_4, v_3, v_4]) &= [v_2,v_3 + v_4, v_3, v_4] - [v_1,v_3 + v_4, v_3, v_4] + [v_1,v_2, v_3, v_4]\\ &-[v_1,v_2,v_3 + v_4, v_4] + [v_1,v_2,v_3 + v_4, v_3].
	\end{align*} By Case (a), the first two terms vanish in $\coker d_2$. Let $g \in \GL_3(K)$ be the linear involution which fixes $v_1$ and $v_2$ and takes $v_3$ to $v_4$. Then $g$ interchanges the last two terms in the expression above, and so the last two terms cancel each other out in $\coker d_2$. Thus the middle term vanishes in $\coker d_2$, completing the proof in Case (b).
	
	\item[{\bf Case (c)}.] $\{v_2, v_3, v_4 \}$ is a basis of $K^3$ and $v_1$ is a constant multiple of either $v_2$, $v_3$, or $v_4$.
		
	Let $u = v_2 + v_3 + v_4$. Then we have \begin{align*}
	d_2([u,v_1,v_2,v_3, v_4]) = [v_1,v_2,v_3,v_4] - [u, v_2,v_3,v_4] + [u, v_1, v_3,v_4] -[u, v_1, v_2, v_4] + [u,v_1,v_2,v_3].
	\end{align*} Each of the last four terms fall under Case (a) or Case (b), and so the images of the last four terms vanish in $\coker d_2$. Thus the image of the first term vanishes in $\coker d_2$, completing the proof in Case (c).
	
	\item[{\bf Case (d)}.] $\{v_2, v_3, v_4 \}$ is a basis of $K^3$ and $v_1 = a_2 v_2  + a_3 v_3 + a_4 v_4$ such that exactly one of $a_2, a_3, a_4$ is 0.
	
	The hypothesis implies that there exists a unique $i \in \{2,3,4\}$ such that $a_i = 0$. Set $u = v_i$. Then we have \begin{align*}
	d_2([u,v_1,v_2,v_3, v_4]) = [v_1,v_2,v_3,v_4] - [u, v_2,v_3,v_4] + [u, v_1, v_3,v_4] -[u, v_1, v_2, v_4] + [u,v_1,v_2,v_3].
	\end{align*}  For each of the last four terms, if $v_i$ is present in it then the term falls under Case (c), and if $v_i$ is not present in it then the term falls under Case (a). Thus the last four terms vanish in $\coker d_2$. This shows that the first term vanish in $\coker d_2$, completing the proof in Case (c).
\end{enumerate}
	
The four cases above are exhaustive, and so our proof is complete.
\end{proof}

\begin{proof}[Proof of \autoref{thm:homological-vanishing}] 
It suffices to prove the theorem when $\bk$ is a field as the general case follows from the field case via the universal coefficient theorem. By \autoref{thm:Koszulness-of-Steinberg}, we know that $\St$ is Koszul. Moreover, we have $\St_n(K) = \oSt_n(K) \otimes_{\bZ} \bk$. Thus it suffices to verify Hypotheses (a) and (b) in \autoref{thm:general-vanishing}. Hypothesis (a) follows from \cite[Lemma~2.6]{APS}, and Hypothesis (b) follows from \autoref{lem:L1-reduction} and \autoref{lem:L1-vanishing}.
\end{proof}

\subsection{Homological vanishing for the Steinberg module of the integers}

We now prove integral refinements of Church--Putman's homological vanishing theorem (\autoref{thm:CP-vanishing-char-0}) for the Steinberg module of the integers. 
  
\begin{theorem} 
	Let $\bk$ be an arbitrary commutative ring. For $n \ge 6$, we have that: \begin{align*}
	\rH_1(\GL_n(\bZ); \oSt_n(\bZ) \otimes_{\bZ}\bk) &= 0, \\
	\rH_1(\SL_n(\bZ); \oSt_n(\bZ) \otimes_{\bZ}\bk) &= 0.
	\end{align*} 
\end{theorem}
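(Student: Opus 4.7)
The plan is to derive this vanishing from the Hochschild--Serre spectral sequence, using our main stability result \autoref{thm:main-congruence} together with Ash--Putman--Sam's homological vanishing \autoref{intro-APS}. I will sketch the $\GL_n(\bZ)$ case in detail, then indicate the modifications needed for $\SL_n(\bZ)$.

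Since reduction modulo $3$ induces a bijection $\bZ^\times \to \bF_3^\times$, we have $\GL_n(\bZ)/\Gamma_n(3) \cong \GL_n(\bF_3)$ (and similarly $\SL_n(\bZ)/\Gamma_n(3) \cong \SL_n(\bF_3)$). The associated Hochschild--Serre spectral sequence
\[E^2_{pq} = \rH_p\bigl(\GL_n(\bF_3);\,\rH_q(\Gamma_n(3);\,\oSt_n(\bZ) \otimes_\bZ \bk)\bigr) \;\Rightarrow\; \rH_{p+q}\bigl(\GL_n(\bZ);\,\oSt_n(\bZ) \otimes_\bZ \bk\bigr)\]
reduces the task to showing $E^2_{1,0} = 0 = E^2_{0,1}$. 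For $E^2_{1,0}$, Lee--Szczarba (\autoref{prop:LS-case3}) identifies $\rH_0(\Gamma_n(3); \oSt_n(\bZ))$ with the $\bZ$-free module $\oSt_n(\bF_3)$, so $E^2_{1,0} = \rH_1(\GL_n(\bF_3); \oSt_n(\bF_3) \otimes_\bZ \bk)$; by the universal coefficient theorem this vanishes for any $\bk$ whenever $\rH_0(\GL_n(\bF_3); \oSt_n(\bF_3))$ and $\rH_1(\GL_n(\bF_3); \oSt_n(\bF_3))$ both vanish over $\bZ$, which is exactly what \autoref{intro-APS} provides for $n \ge 4$.

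For $E^2_{0,1}$ I would invoke \autoref{cor:fancy-St}: the module $\rH_1(\Gamma(3); \oSt)$ is generated in degrees $\le 4$ over $\St_{\Gamma(3)}$, which is identified with the Steinberg monoid of $\Mod_{\VB_{\bF_3}}$. Hence for $n \ge 4$ the multiplication map gives a $\GL_n(\bF_3)$-equivariant surjection
\[\bigoplus_{k \le 4} \Ind_{\GL_{n-k}(\bF_3) \times \GL_k(\bF_3)}^{\GL_n(\bF_3)} \oSt_{n-k}(\bF_3) \otimes_\bZ \rH_1\bigl(\Gamma_k(3);\,\oSt_k(\bZ) \otimes_\bZ \bk\bigr) \;\twoheadrightarrow\; \rH_1\bigl(\Gamma_n(3);\,\oSt_n(\bZ) \otimes_\bZ \bk\bigr).\]
Taking $\GL_n(\bF_3)$-coinvariants and using Shapiro's lemma together with the componentwise nature of the $\GL_{n-k} \times \GL_k$-action on a tensor product, $E^2_{0,1}$ is exhibited as a quotient of
\[\bigoplus_{k \le 4} \rH_0\bigl(\GL_{n-k}(\bF_3);\,\oSt_{n-k}(\bF_3) \otimes_\bZ \bk\bigr) \otimes_\bk \rH_0\bigl(\GL_k(\bF_3);\,\rH_1(\Gamma_k(3);\,\oSt_k(\bZ) \otimes_\bZ \bk)\bigr).\]
For $n \ge 6$, each $k \le 4$ satisfies $n-k \ge 2$, so the first factor vanishes by the $i=0$ case of \autoref{intro-APS}, finishing the $\GL$-case.

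The $\SL_n(\bZ)$-case follows the same template, with two adjustments. First, we need $\SL_n(\bF_3)$-analogues of Ash--Putman--Sam (vanishing of $\rH_0$ for $n \ge 2$ and of $\rH_1$ for $n \ge 4$). These I would establish by applying \autoref{thm:general-vanishing} to the Steinberg monoid viewed inside $\prod_n \Mod_{\bZ[\SL_n(\bF_3)]}$: Koszulness is inherited from \autoref{thm:Koszulness-of-Steinberg} because the chain homotopy built in \autoref{prop:homotopy} is $\bk$-linear and therefore unaffected by restricting the group action from $\GL_n$ to $\SL_n$, while the initial conditions (a) and (b) of \autoref{thm:general-vanishing} become low-rank computations inside $\SL_2(\bF_3)$ and $\SL_3(\bF_3)$. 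Second, the Mackey step now involves the index-$2$ subgroup $H_{n-k,k} := \SL_n(\bF_3) \cap (\GL_{n-k}(\bF_3) \times \GL_k(\bF_3))$ (there is a unique double coset, since $\det$ surjects from $\GL_{n-k}\times\GL_k$ onto $\bF_3^\times$); but $H_{n-k,k}$ contains $\SL_{n-k}(\bF_3) \times \SL_k(\bF_3)$, so the same Künneth-style argument still applies and reduces the problem to the $\SL$-analogue of the $i=0$ Ash--Putman--Sam vanishing. The main obstacle I anticipate is verifying Hypothesis~(b) of \autoref{thm:general-vanishing} in the $\SL$-setting, namely surjectivity of $\rH_0(\SL_1(\bF_3); \oSt_1(\bF_3)) \otimes \rH_1(\SL_2(\bF_3); \oSt_2(\bF_3)) \to \rH_1(\SL_3(\bF_3); \oSt_3(\bF_3))$; this is a concrete but somewhat involved computation in the representation theory of $\SL_n(\bF_3)$ for small $n$.
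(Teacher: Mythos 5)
Your high-level strategy -- run the Hochschild--Serre spectral sequence for $1 \to \Gamma_n(3) \to G \to G/\Gamma_n(3) \to 1$ and use generation degree $\le 4$ of $\rH_1(\Gamma(3);\St)$ over the Steinberg monoid to kill $E^2_{0,1}$, plus Ash--Putman--Sam for $E^2_{1,0}$ -- is exactly the paper's, and your $\GL_n(\bZ)$ argument is sound. The one real divergence is in the $\SL$ case, and it both adds substantial work and leaves an acknowledged gap. You propose to \emph{re-derive} the $\SL_n(\bF_3)$ version of Ash--Putman--Sam's vanishing by redoing Koszulness of $\St$ with $\SL_n$ actions and re-verifying the initial conditions of \autoref{thm:general-vanishing}, and you flag Hypothesis (b) (surjectivity of $\rH_0(\SL_1;\oSt_1) \otimes \rH_1(\SL_2;\oSt_2) \to \rH_1(\SL_3;\oSt_3)$) as an open computational issue. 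This detour is unnecessary: the result of Ash--Putman--Sam is already available in $\SL$ form (their Theorem~1.1 is the $\SL_n(K)$ vanishing of $\rH_i$ for $n \ge 2i+2$, and their Lemma~2.6 gives $\rH_0(\SL_m(K);\oSt_m(K)) = 0$ for $m \ge 2$; the $\GL$ statement reproduced in \autoref{intro-APS} is a consequence). The paper simply cites these directly for both $E^2_{1,0}$ ($n\ge 4$) and the $\rH_0(\SL_{n-4}(\bF_3);\oSt_{n-4}(\bF_3))$ factor ($n-4\ge 2$), so there is no gap to fill. Note also that the paper handles only $\SL_n(\bZ)$ and observes that the $\GL$ case follows: since $\rH_0(\SL_n(\bZ);\oSt_n(\bZ)\otimes\bk)$ already vanishes (from the three-term relation, all generators are $\SL$-equivalent, forcing the class to be $2$-torsion and also to satisfy $x - x + x = 0$), the $\bZ/2$-quotient spectral sequence gives $\rH_1(\GL_n(\bZ);\cdot)=0$ for free. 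Your parallel treatment of the two cases is not wrong, just redundant. If you replace your Koszulness-for-$\SL$ subroutine with the direct citation of Ash--Putman--Sam's $\SL$ statements, your argument closes.
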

\begin{proof} It suffices to prove the assertion for $\SL$. The Lyndon--Hochschild--Serre spectral sequence corresponding to the short exact sequence \[1 \to \Gamma_n(3) \to \SL_n(\bZ) \to \SL_n(\bF_3) \to 1  \] is given by \[\rH_p(\SL_n(\bF_3); \rH_q(\Gamma_n(3); \oSt_n(\bZ)\otimes_{\bZ} \bk )) \implies \rH_{p+q}(\SL_n(\bZ); \oSt_n(\bZ)\otimes_{\bZ} \bk )   \]  Thus it suffices to show that \begin{align*}
\rH_1(\SL_n(\bF_3); \rH_0(\Gamma_n(3); \oSt_n(\bZ)\otimes_{\bZ} \bk )), \\
\rH_0(\SL_n(\bF_3); \rH_1(\Gamma_n(3); \oSt_n(\bZ)\otimes_{\bZ} \bk ))
	\end{align*} vanish for $n \ge 6$. By \cite[Theorem~1.2]{LS}, we know that $\rH_0(\Gamma_n(3); \oSt_n(\bZ)\otimes_{\bZ} \bk ) = \oSt_n(\bF_3) \otimes_{\bZ} \bk$. Thus  $\rH_1(\SL_n(\bF_3); \rH_0(\Gamma_n(3); \oSt_n(\bZ)\otimes_{\bZ} \bk )) = 0 $ for $n \ge 4$ by the homological vanishing result of Ash--Putman--Sam (\cite[Theorem~1.1]{APS}).

 Let $\ol{\St}$ be the Steinberg monoid  in $\Mod_{\VB_{\bF_3}}$.  By \autoref{prop:LS-case3} and \autoref{cor:fancy-St},   $\rH_1(\Gamma(3), \St)$ is an  $\ol{\St}$-module generated in degrees $\le 4$. In particular, if $n \ge 4$ then we have a surjection \[
 \Ind_{\GL_{n-4}(\bF_3) \times \GL_4(\bF_3)}^{\GL_n(\bF_3)} \St_{n-4}(\bF_3) \otimes_{\bk}  \rH_1(\Gamma_4(3); \St_4(\bZ)) \to \rH_1(\Gamma_n(3); \St_n(\bZ)). \] Now if $n \ge 6$, then $n-4 \ge 2$, and so by \cite[Lemma~2.6]{APS} we have $\rH_{0}(\SL_{n-4}(\bF_3) ; \St_{n-4}(\bF_3)) = 0$.  This shows that $\rH_0(\SL_n(\bF_3); \rH_1(\Gamma_n(3); \St_n(\bZ))) = 0$ for $n \ge 6$. This completes the proof after noting that $\oSt_n(\bZ)\otimes_{\bZ} \bk = \St_n(\bZ)$.
\end{proof}

%\rohit{Shall we add a remark about the minor mistake in \cite[Lemma~3.1]{APS} or shall we just not bother about that?}

\bibliographystyle{amsalpha}
\bibliography{CodimOne}

%\begin{thebibliography}{CEFN}
%%	\small
%
%
%
%\bibitem[APS]{steinberg-ash-putman-sam}	Avner Ash, Andrew Putman, Steven V Sam. 
%Homological vanishing for the Steinberg representation. \arxiv{1704.08344}. 
%
%
%\end{thebibliography}

\end{document}